\definecolor{shadecolor}{rgb}{1,0.8,0.3}
\newtheorem{theorem}{Theorem}[section]
\newtheorem{lemma}[theorem]{Lemma}
\newtheorem{proposition}[theorem]{Proposition}
\newtheorem{corollary}[theorem]{Corollary}
\newtheorem{conjecture}[theorem]{Conjecture}
\newtheorem{question}[theorem]{Question}
\newtheorem{definition}[theorem]{Definition}
\newtheorem{example}[theorem]{Example}
\newtheorem{remark}[theorem]{Remark}
\numberwithin{equation}{section}
\begin{document}
\title{Local curvature estimates for the Ricci-harmonic flow}
\author{Yi Li}
\address{Faculty of Science, Technology and Communication (FSTC), Mathematic Research Unit\\
Campus Belval, Universite du Luxembourg\\
Maison du Nombre, 6, avenue de la Fonte\\
L-4364, Esch-sur-Alzette, Grand-Duchy of Luxembourg}
\email{yilicms@gmail.com}


\subjclass[2010]{Primary 53C44}
\keywords{Ricci-harmonic flow, Einstein scalar field equations, local curvature estimates}

\maketitle

\begin{abstract} In this paper we give an explicit bound of $\Delta_{g(t)}u(t)$ and the local curvature estimates for the Ricci-harmonic flow
\begin{equation*}
\partial_{t}g(t)=-2\!\ {\rm Ric}_{g(t)}+4\nabla_{g(t)}u(t)
\otimes \nabla_{g(t)}u(t), \ \ \ \partial_{t}u(t)
=\Delta_{g(t)}u(t)
\end{equation*}
under the condition that the Ricci curvature is bounded along the flow. In the second part these local curvature estimates are extended to a class of generalized Ricci flow, introduced by the author \cite{LY1}, whose stable points give Ricci-flat metrics on a
complete manifold, and which is very close to the $(K, N)$-super
Ricci flow recently defined by Xiangdong Li and Songzi Li \cite{LL2014}. Next we propose a conjecture for Einstein's scalar field equations motivated by a result in the first part and the bounded $L^{2}$-curvature conjecture recently solved by Klainerman, Rodnianski and Szeftel \cite{KRS2015}. In the last two parts of this paper, we discuss two notions of ``Riemann curvature tensor'' in the sense of Wylie-Yeroshkin \cite{KW2017, KWY2017, Wylie2015, WY2016}, respectively, and Li \cite{LY3}, whose ``Ricci curvature'' both give the standard Bakey-\'Emery Ricci curvature \cite{BE1985}, and the forward and backward uniqueness for the Ricci-harmonic flow.
\end{abstract}

\renewcommand{\labelenumi}{Case \theenumi.} %
\newtheoremstyle{mystyle}{3pt}{3pt}{\itshape}{}{\bfseries}{}{5mm}{} %
\theoremstyle{mystyle} \newtheorem{Thm}{Theorem} \theoremstyle{mystyle} %
\newtheorem{lem}{Lemma} \newtheoremstyle{citing}{3pt}{3pt}{\itshape}{}{%
\bfseries}{}{5mm}{\thmnote{#3}} \theoremstyle{citing} %
\newtheorem*{citedthm}{}

\tableofcontents

\section{Introduction}\label{section1}

In this paper we continue the study (see \cite{LY1, LY2}) of Ricci-harmonic flow
\begin{equation}
\partial_{t}g(t)=-2\!\ {\rm Ric}_{g(t)}+4\nabla_{g(t)}
u(t)\otimes\nabla_{g(t)}u(t), \ \ \
\partial_{t}u(t)=\Delta_{g(t)}u(t)\label{1.1}
\end{equation}
on a complete $n$-dimensional manifold $M$, where $g(t)$ is a family of smooth
Riemannian metrics on $M$ and $u(t)$ is a family of smooth functions on $M$. The short time existence established by
List \cite{List2005, List2008} and later extended by M\"uller \cite{Muller2009, Muller2012}, says that, given an initial data $(g_{0}, u_{0})$ with $g_{0}$
and $u_{0}$ being, respectively, a smooth Riemannian metric and a smooth function on $M$, the system {\color{blue}{(\ref{1.1})}} exists over a maximal time interval $[0,T_{\max})$, where $T_{\max}$ is a finite number or infinity.
\\

In the following we consider the Ricci-harmonic flow on $M\times[0,T]$ or on $M\times[0,T)$, depending on different situations, with $T\in(0,T_{\max})$. We brief our main results below.
\\

{\bf Convention:} From now on we always omit the time variable $t$ and
write $\Box, \Delta, \nabla$, $u, {\rm Rm}, {\rm Ric}, R, dV_{t}, |\cdot|$ for $\Box_{g(t)}:=
\partial_{t}-\Delta_{g(t)},
\Delta_{g(t)}, \nabla_{g(t)}, u(t), {\rm Rm}_{g(t)}$, ${\rm Ric}_{g(t)}$,
$R_{g(t)}$, $dV_{g(t)}$, $|\cdot|_{g(t)}$ in the concrete computations, respectively. We write $\mathcal{P}\lesssim
\mathcal{Q}$ or $\mathcal{Q}\gtrsim\mathcal{P}$ for two quantities $\mathcal{P}$ and $\mathcal{Q}$, if $\mathcal{P}\leq
C\!\ \mathcal{Q}$ for some
uniform constant\footnote{Given a flow on $M\times[0, T]$ or $M\times[0,T)$, a uniform
constant $C$ in this paper means a positive constant depending only on the initial
data of the flow, $M$, and $T$. Of course, when $T$ varies, $C$ varies too.} $C$ depending only on $g_{0}, u_{0}$, and the dimension $n$. The uniform constants $C$ may vary from
lines to lines.
\\

{\bf (O) Some known results.} It is known that the Ricci-harmonic flow shares the many properties with the Ricci flow. Here we list some results
both for the Ricci-harmonic flow and the Ricci flow, see Table 1. Besides these results, there are other works on the Ricci-harmonic flow including gradient estimates, eigenvalues, entropies, functionals, and solitons, etc., see \cite{Cao-Guo-Tran2015, CZ2013, Guo-Huang-Phong2015, Guo-Philipowski-Thalmaier2013, Guo-Philipowski-Thalmaier2014, Guo-Philipowski-Thalmaier2015, LY2, LY3, Muller2010}.

\begin{table}[ht]
\caption{Ricci-harmonic flow (RHF) via Ricci flow (RH)}
\begin{minipage}{\textwidth}
\begin{center}
\begin{tabular}{c||c|c}
\hline
   Known properties for $T_{\max}<\infty$    & RF & RHF\\
\hline
 & \\
$\lim_{t\to T_{\max}}\max_{M}|{\rm Rm}_{g(t)}|^{2}_{g(t)}=\infty
$ & \cite{H1982} & \cite{List2005}\rlap{${}^1$} \\
& \\
\hline
& \\
$\lim_{t\to T_{\max}}
\max_{M}|{\rm Ric}_{g(t)}|^{2}_{g(t)}=\infty$
& \cite{Sesum2005}\rlap{${}^2$} & \cite{CZ2013} \\
& \\
\hline
& \\
$\begin{array}{cc}
{\it either} \ \lim_{t\to T_{\max}}\max_{R_{g(t)}}
=\infty \ {\it or}\\
\max_{M}R_{g(t)}\lesssim 1 \ \text{and}\\
\lim_{t\to T_{\max}}\frac{|W_{g(t)}|_{g(t)}+|\nabla^{2}_{g(t)}u(t)|_{g(t)}}{R_{g(t)}}
=\infty
\end{array}
$            & \cite{Cao2011} & \cite{LY2}\rlap{${}^3$} \\
&\\
\hline
& \\
$\begin{array}{cc}
T<\infty, \ n=4, \ |R_{g(t)}|_{g(t)}\lesssim1\Rightarrow\\
\int_{M}|{\rm Ric}_{g(t)}|^{2}_{g(t)}dV_{g(t)}
\lesssim1 \ \text{and}\\
\int_{M}|{\rm Rm}_{g(t)}|^{2}_{g(t)}
dV_{g(t)}\lesssim1
\end{array}
$ & \cite{Bamler-Zhang2015, Simon2015} &
\cite{LY2}\\
&\\
\hline
& \\
$\begin{array}{cc}
{\bf Conjecture:}\\
\lim_{t\to T_{\max}}\max_{M}R_{g(t)}
=\infty\end{array}$ & see \cite{Cao2011}\rlap{${}^4$} &
\cite{LY3}\\
&\\
\hline
& \\
Pseudo-locality theorem & \cite{P1} &
\cite{Guo-Huang-Phong2015}\\
&\\
\hline
\end{tabular}
\end{center}
\vspace*{1pc}

{\Small
${}^1$ For the general Ricci-harmonic flow, this result was proved by
Muller \cite{Muller2009}.

${}^2$ Recently, a new and elementary proof was given by Kotschwer, Munteaun, and Wang \cite{KMW2016}.

${}^3$ Here $W_{g(t)}$ is the Weyl tensor of $g(t)$, and for RF we let $u(t)\equiv0$. According to the evolution equation for $R_{g(t)}$
(see {\color{blue}{(\ref{A.8})}} -- {\color{blue}{(\ref{A.9})}}) and the maximum principle, we can assume that $R_{g(t)}>0$
for all $t$.

${}^4$ This conjecture is due to Hamilton and was verified for the K\"ahler-Ricci flow by Zhang \cite{Zhang2010} and the Type-I Ricci flow by Enders, M\"uller and Topping \cite{Enders-Muller-Topping2011}.
\par
}
\end{minipage}
\vspace{.25\baselineskip}
\hrule width \textwidth
\vspace{.5\baselineskip}
\end{table}

In particular, we mention a result on the gradient estimates of $u(t)$.
Under the curvature condition
\begin{equation}
\sup_{M\times[0,T]}|{\rm Ric}_{g(t)}|_{g(t)}\leq K,\label{1.2}
\end{equation}
{\color{red}{Theorem \ref{tB.2}}} shows that
\begin{equation}
|\nabla_{g(t)}u(t)|^{2}_{g(t)}\lesssim K\label{1.3}
\end{equation}
on $M\times[0,T]$, where $\lesssim$ depends only on $n$. Moreover, under a stronger condition
\begin{equation}
\sup_{M\times[0,T]}|{\rm Rm}_{g(t)}|_{g(t)}
\leq K,\label{1.4}
\end{equation}
{\color{red}{Theorem \ref{tB.2}}} also gives
\begin{equation}
|\nabla^{2}_{g(t)}u(t)|^{2}_{g(t)}\lesssim K
\end{equation}
on $M\times[0,T]$, where $\lesssim$ depends only on $n$.

However, Cheng and Zhu \cite{CZ2013} proved that under the condition
{\color{blue}{(\ref{1.2})}} the Riemannian curvature remains bounded\footnote{Their
bound is
implicit, however, following the argument of \cite{KMW2016}, we can give an
explicit bound.} and hence the
Hessian $\nabla^{2}_{g(t)}u(t)$ is bounded along the flow.
\\

{\bf (A) Gradient estimates under the condition (\ref{1.2}).} In this section, we assume that $M$ is {\it closed}. It is clear from {\color{blue}{(\ref{A.8})}} that the gradient of $u(t)$ along the flow {\color{blue}{(\ref{1.1})}} is bounded\footnote{When $M$ is complete and noncompact,
the same result is still true for List's solution of the Ricci-harmonic flow, see {\color{red}{Theorem \ref{tA.3}}} and {\color{red}{Theorem \ref{tB.1}}}.}, without any curvature condition,
in terms of an initial data. According
to {\color{red}{Lemma \ref{lA.1}}}, integrating {\color{blue}{(\ref{A.8})}} over $M\times[0, T]$, one can prove that the total space-time $L^{2}$-norm of $\nabla^{2}_{g(t)}u(t)$ is bounded
\footnote{A general estimate is obtained in (\ref{3.28}), where a generalized Ricci flow is considered.}, i.e.,
\begin{equation*}
\int^{T}_{0}\int_{M}\left|\nabla^{2}_{g(t)}u(t)\right|^{2}_{g(t)}dV_{t}dt\lesssim e^{T}
\end{equation*}
for any $T<T_{\max}$, without any curvature conditions such like {\color{blue}{(\ref{1.2})}}
or {\color{blue}{(\ref{1.4})}}. For fixed $T\in(0,T_{\max})$, one can get an upper bound, without any curvature conditions, for $\nabla^{2}_{g(t)}u(t)$ and then $\Delta_{g(t)}u(t)$ on $M\times[0,T]$, however this bound may depend on time $T$ and even tends towards to infinity when $T\to T_{\max}$.
\\

The first result in this paper is to obtain the time-independent
(i.e., depends on $T_{\max}$) bound of $\Delta_{g(t)}u(t)$ under the
curvature condition {\color{blue}{(\ref{1.2})}}. We have mentioned that this time-independent bound has been implicitly obtained in \cite{CZ2013}, and our contribution is to obtain an {\it explicit} bound of $\Delta_{g(t)}u(t)$. Under the condition
{\color{blue}{(\ref{1.2})}}, one can prove (see {\color{red}{Proposition \ref{p2.2}}})
\begin{equation}
\int_{M}|\Delta_{g(t)}u(t)|^{2}dV_{g(t)}
\leq C(1+K)e^{C(1+K)T}\leq C(1+K) e^{C(1+K)T_{\max}}, \ t\in[0,T],\label{1.6}
\end{equation}
for some uniform constant $C$. This $L^{2}$-norm bound together with the non-collapsing result (see {\color{red}{Corollary \ref{c2.5}}}) implies that
\begin{eqnarray}
\left|\Delta_{g(t)}u(t)\right|
&\leq&\frac{C(1+K)}{(1+T)^{n/2}}\exp\left[C\left(1+T+KT
+e^{C\sqrt{K}}\right)\right]\nonumber\\
&\leq&C(1+K)\exp\left[C\left(1+T_{\max}
+K T_{\max}+e^{C\sqrt{K}}\right)\right]\label{1.7}
\end{eqnarray}
over $M\times[0,T]$, where $C$ is a uniform constant.

\begin{theorem}\label{t1.1} {\bf (See also Theorem \ref{t2.6})} If the
Ricci-harmonic flow {\color{blue}{(\ref{1.1})}} satisfies the curvature condition
{\color{blue}{(\ref{1.2})}}, then $\Delta_{g(t)}u(t)$ is bounded and an explicit bound is given by {\color{blue}{(\ref{1.7})}}.
\end{theorem}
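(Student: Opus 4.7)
My plan is to follow the two-stage structure suggested by the preceding discussion: first derive the $L^{2}$-in-space bound (\ref{1.6}) by a Gronwall argument, then upgrade it to the pointwise bound (\ref{1.7}) via a parabolic Moser iteration fed by the non-collapsing estimate of Corollary \ref{c2.5}.

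For the $L^{2}$ step (Proposition \ref{p2.2}), I would first compute $\Box(\Delta u)$ directly. Using $\partial_{t}g^{ij}=2R^{ij}-4\nabla^{i}u\nabla^{j}u$ together with the standard variation of the Christoffel symbols under (\ref{1.1}), and commuting $\partial_{t}$ with the Laplacian, one obtains a schematic identity $\Box(\Delta u) = 2R^{ij}\nabla_{i}\nabla_{j}u - 4(\nabla^{i}u\nabla^{j}u)\nabla_{i}\nabla_{j}u + \text{lower order in }\nabla u$. Differentiating $\int_{M}|\Delta u|^{2}\,dV_{g(t)}$ in $t$, using $\partial_{t}dV_{g(t)} = (-R+2|\nabla u|^{2})\,dV_{g(t)}$, and integrating by parts the $(\Delta u)\Delta^{2}u$ piece to extract the good dissipative term $-2\int_{M}|\nabla\Delta u|^{2}\,dV_{g(t)}$, the hypothesis (\ref{1.2}) together with the gradient bound $|\nabla u|^{2}\lesssim K$ from Theorem \ref{tB.2} lets me bound every remaining term by $C(1+K)\int_{M}|\Delta u|^{2}\,dV_{g(t)}$; the quadratic $|\nabla^{2}u|^{2}$ factors are first hit with Young's inequality and then converted into $|\Delta u|^{2}$ via the Bochner identity $\int_{M}|\nabla^{2}u|^{2}\,dV_{g(t)} = \int_{M}|\Delta u|^{2}\,dV_{g(t)} - \int_{M}R_{ij}\nabla^{i}u\nabla^{j}u\,dV_{g(t)}$. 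Gronwall then yields (\ref{1.6}).

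For the pointwise step, I would apply the identity $\Box\phi = 2(\Delta u)\Box(\Delta u) - 2|\nabla\Delta u|^{2}$ to $\phi := (\Delta u)^{2}$; the same absorptions performed algebraically, using the Kato-type bound $|\nabla^{2}u|^{2}\geq(\Delta u)^{2}/n$ to handle the bad source pointwise, turn $\phi$ into a non-negative subsolution $\Box\phi\leq C(1+K)\phi$. The non-collapsing estimate of Corollary \ref{c2.5} then supplies a uniform Sobolev inequality along the flow, with an explicit ball-volume lower bound producing the $(1+T)^{-n/2}e^{C\sqrt{K}}$ factors visible in (\ref{1.7}). A standard parabolic De~Giorgi--Nash--Moser iteration on the subsolution $\phi$ delivers a mean-value inequality $\|\phi\|_{L^{\infty}}\lesssim (\text{vol})^{-1}\|\phi\|_{L^{1}}$, and combining this with (\ref{1.6}) and the explicit non-collapsing constant produces (\ref{1.7}).

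The central difficulty I expect is not conceptual but a matter of \emph{bookkeeping}: tracking every constant through the Moser iteration so as to reproduce the exact exponential profile in (\ref{1.7}). A qualitative Moser argument of the kind implicit in \cite{CZ2013} would only recover boundedness, whereas the novelty of Theorem \ref{t1.1} lies precisely in quantifying the dependence on $K$, $T$, and the non-collapsing scale. A subsidiary technical point is that the source term $|\nabla^{2}u|^{2}$ appearing in $\Box(\Delta u)$ has to be controlled without any bound on $|{\rm Rm}|$; this is achieved by the Kato inequality in the Moser step and by the Bochner identity in the $L^{2}$ step.
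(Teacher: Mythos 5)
Your $L^{2}$ step (Proposition~\ref{p2.1}/\ref{p2.2}) matches the paper in substance, with one minor variation: the paper does not convert $\int_{M}|\nabla^{2}u|^{2}\,dV_{t}$ into $\int_{M}|\Delta u|^{2}\,dV_{t}$ via Bochner; it keeps $\int_{M}|\nabla^{2}u|^{2}\,dV_{t}$ as a separate quantity and uses the curvature-free space--time $L^{2}$ bound {\color{blue}{(\ref{2.6})}} on it after the time integration. Your Bochner substitution is a legitimate alternative and changes nothing essential. The real divergence is in the pointwise step, and there your plan has a gap.

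The paper does \emph{not} perform a Moser iteration on $(\Delta u)^{2}$. After rescaling so that $\widetilde{T}\in(0,1)$, it derives the lower bound {\color{blue}{(\ref{2.29})}} for the $\mu$-functional and thence the volume lower bound {\color{blue}{(\ref{2.30})}} for every ball $B_{g(t)}(p,\sqrt{1+T})$, and then argues by direct contradiction: if $|\Delta u|$ exceeded the claimed bound over a whole ball of that radius, integrating $|\Delta u|^{2}$ over that ball and invoking the non-collapsing volume lower bound would contradict the $L^{2}$ bound of Proposition~\ref{p2.1}. No parabolic regularity theory, no subsolution inequality, and hence no need to control the parabolic operator on $(\Delta u)^{2}$ enter the paper's proof.

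Your Moser plan does not close as written, and the sticking point is exactly the one you flag as ``subsidiary.'' From {\color{blue}{(\ref{2.7})}},
\begin{equation*}
\Box(\Delta u)^{2}=-2|\nabla\Delta u|^{2}-8|\nabla u|^{2}(\Delta u)^{2}
+4\bigl(R_{ij}-2\nabla_{i}u\nabla_{j}u\bigr)\nabla^{i}\nabla^{j}u\,\Delta u,
\end{equation*}
so the source term is of size $(K+L^{2})\,|\nabla^{2}u|\,|\Delta u|$ and to obtain a subsolution inequality $\Box\phi\leq C(1+K)\phi$ you must dominate $|\nabla^{2}u|$ by $|\Delta u|$ \emph{pointwise}. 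Kato's inequality gives $|\nabla^{2}u|\geq|\Delta u|/\sqrt{n}$, which is the wrong direction: it bounds $|\Delta u|$ by $|\nabla^{2}u|$, not conversely, and there is simply no pointwise converse. Under the hypothesis {\color{blue}{(\ref{1.2})}} alone, a pointwise bound for $|\nabla^{2}u|$ is available only through the Cheng--Zhu extension theorem \cite{CZ2013} (or through Theorem~\ref{tB.2} under the stronger hypothesis {\color{blue}{(\ref{1.4})}}), and invoking that would defeat the very purpose of the theorem, which is to obtain an explicit bound independent of that implicit argument. Consequently the iteration you describe cannot be started. If you want to pursue a Moser route, you would need to reorganize the proof so that the $|\nabla^{2}u|$ factor is absorbed in an integrated sense at every step of the iteration (using {\color{blue}{(\ref{2.6})}}), but that is a substantially different argument from what you have sketched and not the one the paper uses.
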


{\bf (B) Results for a generalized Ricci flow.} The author \cite{LY2} introduced a class of generalized Ricci flow (For motivation see {\bf Section} \ref{section3}),
called {\it $(\alpha_{1}, 0,\beta_{1}, \beta_{2})$-Ricci flow}:
\begin{eqnarray}
\partial_{t}g(t)&=&-2\!\ {\rm Ric}_{g(t)}+2\alpha_{1}\nabla_{g(t)}u(t)
\otimes \nabla_{g(t)}u(t),\label{1.8}\\
\partial_{t}u(t)&=&\Delta_{g(t)}u(t)+\beta_{1}|\nabla_{g(t)}u(t)|^{2}_{g(t)}
+\beta_{2}\!\ u(t).\label{1.9}
\end{eqnarray}
Here $\alpha_{1},\beta_{1},\beta_{2}$ are given constants. Under a technical condition ``{\it regular}'' (for definition, see subsection \ref{subsection3.1}) the system {\color{blue}{(\ref{1.8})}} -- {\color{blue}{(\ref{1.9})}} has the following estimate:
\begin{equation}
|\nabla_{g(t)}u(t)|_{g(t)}\leq C\label{1.10}
\end{equation}
for some uniform constant $C$ depending only on $\alpha_{1},\beta_{1},\beta_{2}, n$. Thus, roughly speaking, the regular condition on constants $\alpha_{1},\beta_{1},\beta_{2}$ guarantees the boundedness of $\nabla_{g(t)}u(t)$.
\\

The flow {\color{blue}{(\ref{1.8})}} -- {\color{blue}{(\ref{1.9})}} is connected with the $(K, N)$-super Ricci flow introduced in \cite{LL2014}, along which the $W$-entropy is constant. For
more detail, see {\bf Section} \ref{section3}.

\begin{theorem}\label{t1.2}{\bf (See also Theorem \ref{t3.2})} Let $(g(t),u(t))_{t\in[0,T)}$ be a solution to the regular $(\alpha_{1},0,\beta_{1},\beta_{2})$-Ricci flow {\color{blue}{(\ref{1.8})}} -- {\color{blue}{(\ref{1.9})}} on a closed $n$-dimensional manifold $M$ with $T\leq\infty$ and the initial data $(g_{0},u_{0})$. Assume that $S_{g(t)}+C\geq C_{0}>0$ along the flow for some uniform constants $C, C_{0}>0$. Then
\begin{equation}
\frac{|{\rm Sin}_{g(t)}|_{g(t)}}{S_{g(t)}+C}
\leq C_{1}+C_{2}\max_{M\times[0,t]}
\sqrt{\frac{|W_{g(s)}|_{g(s)}+|\nabla^{2}_{g(s)}u(s)|^{2}_{g(s)}}{S_{g(s)}
+C}}\label{1.11}
\end{equation}
where
\begin{eqnarray*}
{\rm Sic}_{g(t)}&=&{\rm Ric}_{g(t)}
-\alpha_{1}\nabla_{g(t)}u(t)\otimes\nabla_{g(t)}u(t),\\
S_{g(t)}&=&{\rm tr}_{g(t)}{\rm Sic}_{g(t)} \ \ = \ \
R_{g(t)}-\alpha_{1}|\nabla_{g(t)}u(t)|^{2}_{g(t)},\\
{\rm Sin}_{g(t)}&:=&{\rm Sic}_{g(t)}
-\frac{S_{g(t)}}{n}g(t)
\end{eqnarray*}
and $W_{g(t)}$ is the Weyl tensor field of $g(t)$.
\end{theorem}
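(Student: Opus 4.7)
\textbf{Proof proposal for Theorem \ref{t1.2}.}
The plan is to extend to the generalized $(\alpha_{1},0,\beta_{1},\beta_{2})$-Ricci flow the Cao-type strategy used in \cite{Cao2011} for the pure Ricci flow and adapted in \cite{LY2} for the Ricci-harmonic flow. The two essential inputs are the evolution equations for the modified Ricci tensor ${\rm Sic}_{g(t)}$ and its trace $S_{g(t)}$ under (\ref{1.8})--(\ref{1.9}), together with the uniform bound (\ref{1.10}) on $|\nabla_{g(t)}u(t)|$ supplied by the regularity assumption.

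First, I would derive explicit Weitzenb\"ock-type formulas for $\Box\!\ {\rm Sic}$ and $\Box\!\ S$ along the flow. Besides the classical curvature terms (schematically $2R_{ikjl}({\rm Sic})^{kl}-2({\rm Sic})_{ik}({\rm Sic})^{k}{}_{j}$), these equations pick up additional contributions: terms linear in $\nabla^{2}u$, terms quadratic in $\nabla u$, and lower-order terms proportional to $u$, coming both from the $\alpha_{1}\nabla u\otimes\nabla u$ correction to the metric equation and from the $\beta_{1}|\nabla u|^{2}+\beta_{2}u$ correction in the $u$-equation (the latter commuted with $\nabla^{2}$ in the Bochner identity for $\Delta u$). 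Using the Weyl decomposition
$$
R_{ikjl}=W_{ikjl}+\frac{1}{n-2}({\rm Ric}\owedge g)_{ikjl}-\frac{R}{2(n-1)(n-2)}(g\owedge g)_{ikjl},
$$
the cubic curvature term $R_{ikjl}({\rm Sic})^{kl}({\rm Sin})^{ij}$ arising in $\Box|{\rm Sin}|^{2}$ splits into a piece controlled by $|W|\!\ |{\rm Sin}|^{2}$, a clean $\frac{S}{n}|{\rm Sin}|^{2}$ term, and a cubic ${\rm Sin}$-term absorbable into $\frac{1}{n}|{\rm Sin}|^{2}(S+C)$ plus lower order.

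Next, I would form the test function $\Phi:=|{\rm Sin}|^{2}/(S+C)^{2}$ (or a variant $|{\rm Sin}|^{2}/(S+C)^{2-\varepsilon}$ if extra dissipation is needed to beat the drift) and compute $\Box\Phi$ via the quotient rule. The crucial miracle, as in Cao's argument, is that the quartic contributions $\frac{S}{n}|{\rm Sin}|^{2}/(S+C)^{2}$ coming from $\Box|{\rm Sin}|^{2}$ cancel exactly against those coming from $\Box(S+C)^{-2}$, so that schematically
$$
\Box\Phi\;\leq\;-\frac{2|\nabla{\rm Sin}|^{2}}{(S+C)^{2}}+2\langle\nabla\Phi,X\rangle+C_{3}\!\ \Phi\cdot\frac{|W|+|\nabla^{2}u|^{2}}{S+C}+C_{4}\!\ \Phi,
$$
for some drift vector field $X$, where the last term collects all the lower-order $\nabla u$- and $u$-contributions, which are controlled by (\ref{1.10}) and the hypothesis $S+C\geq C_{0}>0$.

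Since $M$ is closed, the spatial maximum of $\Phi(\cdot,t)$ is attained; at the maximum $\nabla\Phi=0$ and $\Delta\Phi\leq 0$, so the inequality above forces a pointwise bound $\Phi_{\max}(t)\lesssim 1+\max_{[0,t]\times M}(|W|+|\nabla^{2}u|^{2})/(S+C)$, and taking square roots yields (\ref{1.11}). The principal obstacle will be the careful bookkeeping and absorption of all the extra $\nabla u$- and $u$-terms generated by $\alpha_{1},\beta_{1},\beta_{2}\neq 0$, together with the precise choice of the exponent in the denominator so that the quartic cancellation goes through while the remaining $-2|\nabla{\rm Sin}|^{2}/(S+C)^{2}$ is still strong enough to dominate the drift term $\langle\nabla\Phi,X\rangle$ via Cauchy--Schwarz.
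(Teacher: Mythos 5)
Your proposal is correct and follows essentially the same route as the paper: the paper's proof of Theorem \ref{t3.2} considers exactly the quantity $f=|{\rm Sin}|^2/(S+C)^\gamma$ (working with the shifted tensor ${\rm Sic}'={\rm Sic}+\tfrac{C}{n}g$ and $S'=S+C$), derives the evolution inequality for general $\gamma$ and then specializes to $\gamma=2$, uses the Weyl decomposition to control the cubic curvature term, absorbs the $\alpha_1,\beta_1,\beta_2$-contributions via the regularity bound $|\nabla u|\lesssim 1$ and $S'\geq C_0>0$, and closes with the maximum principle at a spatial maximum (after assuming without loss of generality that $f\geq 1$).
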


Here the technical assumption $S_{g(t)}+C\geq C_{0}>0$ is necessary in the theorem, since, due to the undermined signs of $\alpha_{1}, \beta_{1}, \beta_{2}$, we can not in general deduce any bounds for $S_{g(t)}$ from the evolution equations of {\color{blue}{(\ref{1.8})}} -- {\color{blue}{(\ref{1.9})}} (see, for example, the evolution equation {\color{blue}{(\ref{3.11})}}). In the simplest case, $\alpha_{1}\geq0$ and $\beta_{1}
=\beta_{2}=0$ (i.e., Ricci-harmonic flow), we have a lower bound from {\color{red}{Lemma \ref{lA.1}}}.

This result is obtained by applying Hamilton-Cao's method (see for example \cite{Cao2011, LY2}) on {\color{blue}{(\ref{1.8})}} -- {\color{blue}{(\ref{1.9})}}. More precisely, we consider the quantity
\begin{equation}
f_{1}:=\frac{|{\rm Sic}_{g(t)}+\frac{C}{n}g(t)|^{2}_{g(t)}}{(S_{g(t)}
+C)^{2}},\label{1.12}
\end{equation}
and deduce the following evolution inequality
\begin{eqnarray}
\Box f&\leq& 2\langle\nabla f,\nabla \ln(S+C)\rangle\nonumber\\
&&+ \ 4(S+C)f
\left(-f+\frac{2}{n-2}f^{1/2}
+C+C\frac{|W|^{2}+|\nabla^{2}u|^{2}}{S+C}\right).\label{1.13}
\end{eqnarray}
A direct consequence of the maximum principle applied on {\color{blue}{(\ref{1.13})}} yields {\color{blue}{(\ref{1.11})}}. In conclusion, we can derive the long-time existence of
{\color{blue}{(\ref{1.8})}} -- {\color{blue}{(\ref{1.9})}}.

\begin{corollary}\label{c1.3}{\bf (See also Corollary \ref{c3.3})} Let $(g(t),u(t))_{t\in[0,T)}$ be a solution to the regular $(\alpha_{1},0,\beta_{1},\beta_{2})$-Ricci flow {\color{blue}{(\ref{1.8})}} -- {\color{blue}{(\ref{1.9})}} on a closed $n$-dimensional manifold $M$ with $T\leq\infty$ and the initial data $(g_{0},u_{0})$. Then only one of the followings cases occurs:

\begin{itemize}

\item[(a)] $T=\infty$;

\item[(b)] $T<\infty$ and $|{\rm Ric}_{g(t)}|_{g(t)}\leq C$ for some uniform constant $C$;

\item[(c)] $T<\infty$ and $|{\rm Ric}_{g(t)}|_{g(t)}\to\infty$ as $t\to T$. In this case, there are only two subcases:

    \begin{itemize}

    \item[(c1)] $|R_{g(t)}|_{g(t)}\to\infty$,

    \item[(c2)] $|R_{g(t)}|_{g(t)}\leq C$ for some uniform constant $C$ and there exist some uniform constants $C_{1}, C_{2}>0$ such that $S_{g(t)}+C_{1}\geq C_{2}>0$ and
        \begin{equation*}
        \frac{|W_{g(t)}|_{g(t)}+|\nabla^{2}_{g(t)}u(t)|^{2}_{g(t)}}{S_{g(t)}
        +C_{1}}\to\infty
        \end{equation*}
        as $t\to T$.

    \end{itemize}

\end{itemize}
\end{corollary}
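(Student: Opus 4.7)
\textbf{Proof plan for Corollary \ref{c1.3}.} The strategy is to set up two successive dichotomies --- first on the length of $T$, then, in the finite-time case, on the blow-up behaviour of the curvatures --- and to close the critical sub-case by feeding Theorem \ref{t1.2} against the regular-flow gradient bound (\ref{1.10}). Throughout I will write $C$ for a uniform constant that may change from line to line. The first dichotomy is immediate: either $T=\infty$ (giving case (a)), or $T<\infty$. Assume $T<\infty$ and consider whether $|{\rm Ric}_{g(t)}|_{g(t)}$ remains uniformly bounded on $[0,T)$. If yes, we are in case (b); if not, by a standard continuation/maximal-time argument $\limsup_{t\to T}|{\rm Ric}_{g(t)}|_{g(t)}=+\infty$, so $|{\rm Ric}_{g(t)}|_{g(t)}\to\infty$ as $t\to T$, landing us in case (c).

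To split case (c), I would first translate the hypothesis on $|{\rm Ric}|$ (and on $R$) into the corresponding hypothesis on $|{\rm Sic}|$ (and on $S$). This is where regularity is used: by (\ref{1.10}) one has $|\nabla_{g(t)}u(t)|_{g(t)}\leq C$, so
\begin{equation*}
\bigl|{\rm Sic}_{g(t)}-{\rm Ric}_{g(t)}\bigr|_{g(t)}\leq|\alpha_{1}|\!\ |\nabla_{g(t)}u(t)|^{2}_{g(t)}\leq C,\qquad \bigl|S_{g(t)}-R_{g(t)}\bigr|\leq C,
\end{equation*}
so the blow-up behaviours of $|{\rm Ric}|$ and $|{\rm Sic}|$ (respectively of $R$ and $S$) coincide. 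Within case (c), either $|R_{g(t)}|\to\infty$, which is sub-case (c1), or $|R_{g(t)}|\leq C$. In the latter situation $|S_{g(t)}|\leq C$, so choosing $C_{1}$ sufficiently large yields the required lower bound $S_{g(t)}+C_{1}\geq C_{2}>0$ for some uniform $C_{2}>0$. Theorem \ref{t1.2} then applies and gives
\begin{equation*}
\frac{|{\rm Sin}_{g(t)}|_{g(t)}}{S_{g(t)}+C_{1}}\leq C+C\max_{M\times[0,t]}\sqrt{\frac{|W_{g(s)}|_{g(s)}+|\nabla^{2}_{g(s)}u(s)|^{2}_{g(s)}}{S_{g(s)}+C_{1}}}.
\end{equation*}
If the ratio $(|W|+|\nabla^{2}u|^{2})/(S+C_{1})$ stayed uniformly bounded on $[0,T)$, then so would $|{\rm Sin}_{g(t)}|_{g(t)}$; combined with the orthogonal decomposition $|{\rm Sic}|^{2}=|{\rm Sin}|^{2}+S^{2}/n$ and the bound $|S|\leq C$, this would force $|{\rm Sic}|\leq C$ and hence $|{\rm Ric}|\leq C$, contradicting the assumption of case (c). Therefore the displayed ratio must tend to $+\infty$, placing us in sub-case (c2).

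The main obstacle is structural rather than computational: for a general regular $(\alpha_{1},0,\beta_{1},\beta_{2})$-flow the signs of $\alpha_{1},\beta_{1},\beta_{2}$ are undetermined, so one cannot expect a Sesum-type statement ruling out case (b) outright, and the corollary wisely keeps (b) as a genuine alternative rather than trying to exclude it. The novelty of the argument is therefore concentrated in closing the contradiction in sub-case (c2), for which the curvature estimate (\ref{1.11}) of Theorem \ref{t1.2} is precisely the right tool; the supplementary ingredients --- the gradient bound (\ref{1.10}), the algebraic identity $|{\rm Sic}|^{2}=|{\rm Sin}|^{2}+S^{2}/n$, and the elementary translation between $({\rm Ric},R)$ and $({\rm Sic},S)$ --- are all routine given the regularity hypothesis.
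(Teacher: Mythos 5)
Your proposal is correct and reconstructs precisely the argument the paper leaves implicit (the paper states Corollary \ref{c3.3} merely as ``an immediate consequence'' of Theorem \ref{t3.2}, without spelling out the dichotomies). The two key ingredients you identify --- the gradient bound (\ref{1.10}) from regularity that makes $|{\rm Ric}|-|{\rm Sic}|$ and $|R-S|$ uniformly comparable, and the contradiction in subcase (c2) obtained by combining the estimate (\ref{1.11}), the orthogonal splitting $|{\rm Sic}|^{2}=|{\rm Sin}|^{2}+S^{2}/n$, and the bound on $S$ --- are exactly the paper's intended route. The only stylistic caveat, which the paper itself glosses over, is that ``not uniformly bounded'' gives $\limsup=\infty$ rather than a genuine limit; the corollary's phrasing ``$\to\infty$'' should be read in that looser sense, and your argument is in fact stated in the correct language of uniform boundedness where it matters.
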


This is a general picture of the long time existence for regular $(\alpha_{1},0,\beta_{1},\beta_{2})$-Ricci
flows, containing results in \cite{Cao2011, LY2}. Since the signs of $\alpha_{1},\beta_{1},\beta_{2}$ are not determined, we can not discard the case (b) which is of course true for Ricci-harmonic flow and Ricci flow.
\\

In the case of dimension $n=4$, we can consider another quantity
\begin{equation}
f_{2}:=\frac{|{\rm Sic}_{g(t)}|^{2}_{g(t)}}{S_{g(t)}+C}.\label{1.14}
\end{equation}
Since the sign of $\alpha_{1}$ is undermined, some term in the evolution inequality (see {\color{blue}{(\ref{3.37})}}) for $f_{2}$ will contain the $L^{4}$-norm of $\nabla^{2}_{g(t)}
u(t)$. Though we can prove
\begin{equation}
\left|\left|\nabla^{2}_{g(t)}u(t)\right|\right|_{L^{1}_{[0,T]}
L^{2}(M)}\leq Ce^{CT}\leq C e^{C T_{\max}}\label{1.15}
\end{equation}
for any $T\in(0, T_{\max})$, it is not clear whether we can get its (spatial) $L^{4}$-norm\footnote{Under the curvature condition (\ref{1.4}), we can always get a bound for the $L^{4}$-norm of $\nabla^{2}_{g(t)}u(t)$ by {\color{red}{Theorem \ref{tB.2}}}.}.

\begin{corollary}\label{c1.4}{\bf (See also Corollary \ref{c3.6})} Let $(g(t),u(t))_{t\in[0,T)}$ be a solution to the regular $(\alpha_{1},0,\beta_{1},\beta_{2})$-Ricci flow {\color{blue}{(\ref{1.8})}} -- {\color{blue}{(\ref{1.9})}} on a closed $4$-manifold $M$ with $T\leq\infty$ and the initial data $(g_{0},u_{0})$. Assume that $S_{g(t)}+C\geq C_{0}>0$ and $S^{2}_{g(t)}
\leq C_{1}<\infty$ along the flow for some uniform constants $C, C_{0},C_{1}>0$. Then
\begin{eqnarray}
\int^{s}_{0}\int_{M}|{\rm Sic}_{g(t)}|^{2}_{g(t)}
dV_{g(t)}dt&\leq&C'(1+s) e^{C's}+\nonumber\\
&&C'[1-{\rm sgn}(\alpha_{1},0)]e^{C's}\int^{s}_{0}\int_{M}|\nabla^{2}_{g(t)}u(t)|^{4}
dV_{g(t)}dt,
\label{1.16}
\end{eqnarray}
\begin{eqnarray}
\int^{s}_{0}\int_{M}|{\rm Rm}_{g(t)}|^{2}_{g(t)}dV_{g(t)}dt&\leq&C'(1+s) e^{C's}+\nonumber\\
&&C'[1-{\rm sgn}(\alpha_{1},0)]e^{C's}\int^{s}_{0}\int_{M}|\nabla^{2}_{g(t)}u(t)
|^{4}dV_{g(t)}dt,
\label{1.17}
\end{eqnarray}
for all $s\in[0,T)$, where $C'=C'(g_{0},u_{0},\alpha_{1},
\beta_{1},\beta_{2},C,C_{0},C_{1}, A_{1},\chi(M))$ is a uniform constant. Here $|\nabla_{g(t)}u(t)|^{2}_{g(t)}\leq A_{1}$ holds along the flow (by the regularity) for some uniform constant $A_{1}>0$ (which depends only on $g_{0},u_{0}$ and $\alpha_{0},\beta_{1},\beta_{2}$).
\end{corollary}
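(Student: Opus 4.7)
The plan is to apply the Hamilton--Cao style maximum-principle/integration argument (already used in the proof of Theorem \ref{t1.2}) to the auxiliary quantity $f_{2}=|\mathrm{Sic}|^{2}/(S+C)$ defined in \eqref{1.14}, and then to convert the resulting spatio-temporal estimate for $|\mathrm{Sic}|^{2}$ into one for $|\mathrm{Rm}|^{2}$ by invoking the four-dimensional Gauss--Bonnet--Chern identity.

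First I would compute $\Box f_{2}$ directly from the system \eqref{1.8}--\eqref{1.9} along the lines of \cite{LY2}. As indicated in the excerpt, this yields a pointwise inequality of the schematic form
$$\Box f_{2}\;\leq\;2\langle\nabla f_{2},\nabla\ln(S+C)\rangle+A\,f_{2}+B\,|W||\mathrm{Sic}|+D\,|\mathrm{Sic}||\nabla^{2}u|^{2}+E,$$
with coefficients $A,B,D,E$ depending only on the parameters listed in the constant $C'$. Because the sign of $\alpha_{1}$ is undetermined, the cubic coupling $\alpha_{1}\,\mathrm{Sic}\ast\nabla^{2}u\ast\nabla^{2}u$ cannot be absorbed into the natural good term $-f_{2}^{2}$ produced by the Laplacian of $|\mathrm{Sic}|^{2}$; Young's inequality then forces a residual $|\nabla^{2}u|^{4}$ contribution on the right, and this is the source of the factor $1-\mathrm{sgn}(\alpha_{1},0)$ that appears in \eqref{1.16}--\eqref{1.17}.

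Next I would integrate this inequality against $dV_{g(t)}$. The drift $\langle\nabla f_{2},\nabla\ln(S+C)\rangle$ becomes a manageable divergence thanks to the hypothesis $S+C\geq C_{0}>0$, and the evolution of the volume form contributes an extra term bounded by a constant depending on $\sqrt{C_{1}}$ and $A_{1}$, using $|S|\leq\sqrt{C_{1}}$ together with the regularity bound $|\nabla u|^{2}\leq A_{1}$. The upshot is a Gronwall-type differential inequality
$$\frac{d}{dt}\int_{M}f_{2}\,dV_{g(t)}\;\leq\;C'\int_{M}f_{2}\,dV_{g(t)}+C'+C'\bigl[1-\mathrm{sgn}(\alpha_{1},0)\bigr]\int_{M}|\nabla^{2}u|^{4}\,dV_{g(t)},$$
whose integration on $[0,s]$, together with the two-sided bound $C_{0}\leq S+C\leq\sqrt{C_{1}}+C$ used to pass between $f_{2}$ and $|\mathrm{Sic}|^{2}$, yields \eqref{1.16}.

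Finally, for \eqref{1.17} I would invoke the Gauss--Bonnet--Chern identity in dimension four,
$$32\pi^{2}\chi(M)=\int_{M}\bigl(|\mathrm{Rm}|^{2}-4|\mathrm{Ric}|^{2}+R^{2}\bigr)\,dV_{g(t)},$$
giving $\int_{M}|\mathrm{Rm}|^{2}\,dV_{g(t)}\leq 4\int_{M}|\mathrm{Ric}|^{2}\,dV_{g(t)}+\int_{M}R^{2}\,dV_{g(t)}+32\pi^{2}|\chi(M)|\cdot\mathrm{Vol}(M,g_{0})^{0}$ after absorbing constants. The $R^{2}$-integral is uniformly bounded in time because $R=S+\alpha_{1}|\nabla u|^{2}$ with both summands controlled by $\sqrt{C_{1}}$ and $A_{1}$, while the elementary inequality $|\mathrm{Ric}|^{2}\leq 2|\mathrm{Sic}|^{2}+2\alpha_{1}^{2}A_{1}^{2}$ reduces the $\mathrm{Ric}$-integral to the $\mathrm{Sic}$-integral already bounded by \eqref{1.16}. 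The main obstacle in the whole argument is precisely the cubic term singled out in the second step: without a sign assumption on $\alpha_{1}$ one cannot trade $|\nabla^{2}u|^{4}$ against the weaker $L^{1}_{t}L^{2}_{x}$-norm \eqref{1.15}, which is why the corollary only delivers a conditional inequality rather than an unconditional integral curvature bound.
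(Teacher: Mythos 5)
Your strategy matches the paper's in broad outline — apply the Hamilton--Cao evolution inequality to $f_{2}=|\mathrm{Sic}|^{2}/(S+C)$, identify the cubic $\alpha_{1}\,\mathrm{Sic}\ast\nabla^{2}u\ast\nabla^{2}u$ coupling as the source of the $[1-\mathrm{sgn}(\alpha_{1},0)]\,|\nabla^{2}u|^{4}$ term via Young, run Gronwall, and then convert to $\mathrm{Rm}$ via Gauss--Bonnet--Chern and the bound $|\nabla u|^{2}\leq A_{1}$. The use of $S^{2}\leq C_{1}$ to absorb the residual $\int S^{2}$ term, which is exactly how the paper derives Corollary~\ref{c3.6} from Theorem~\ref{t3.5}, is also correct.

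There is, however, a genuine gap in the Gronwall step. After integrating the evolution inequality for $f_{2}$ over $M$, you are left with the term
\begin{equation*}
4\int_{M}\frac{\mathrm{Sm}(\mathrm{Sic},\mathrm{Sic})}{S+C}\,dV_{t}
\end{equation*}
(which, in your Weyl decomposition, contributes $\int_{M}|W||\mathrm{Sic}|\,dV_{t}$, appearing as the schematic term $B|W||\mathrm{Sic}|$). Your proposal passes directly from the pointwise inequality to ``$\tfrac{d}{dt}\int_{M}f_{2}\leq C'\int_{M}f_{2}+C'+\cdots$'' without explaining how that Weyl-curvature contribution is controlled, but $|W|$ is not bounded a priori and cannot be absorbed by the good term $-\int_{M}f_{2}^{2}\,dV_{t}$ by Young alone, since $|W|^{2}$ is comparable to $|\mathrm{Rm}|^{2}$, which is precisely what you are trying to estimate. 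This is where the Gauss--Bonnet--Chern identity must enter already, not only at the final $\mathrm{Rm}$ step: the paper applies a Cauchy--Schwarz to turn $\mathrm{Sm}(\mathrm{Sic},\mathrm{Sic})/(S+C)$ into an $\epsilon^{2}(|\mathrm{Sm}|^{2}-4|\mathrm{Sic}|^{2}+S^{2})$ surplus and then invokes the four-dimensional identity~(\ref{3.41}) to replace $\int_{M}(|\mathrm{Sm}|^{2}-4|\mathrm{Sic}|^{2}+S^{2})$ by $32\pi^{2}\chi(M)$ plus terms involving $\int|\nabla u|^{4}$ and $\int\mathrm{Sic}(\nabla u,\nabla u)$, which are controlled by $A_{1}$ and~(\ref{3.28}); see~(\ref{3.42})--(\ref{3.43}). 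Without this topological input the Gronwall inequality does not close.

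A minor remark: the drift term $2\langle\nabla f_{2},\nabla\ln(S+C)\rangle$ you write down is not actually present for $f_{2}$. In the general computation the coefficient of the drift is $2(\gamma-1)$, so it vanishes for the $\gamma=1$ quantity $f_{2}$ (it appears for $f_{1}$, the $\gamma=2$ case). This does not break anything, since the drift is a benign divergence, but it signals that your schematic form is not the one that the computation actually produces.
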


In the corollary, the shorthand notion ${\rm sgn}(\alpha_{1},0)$ is defined to be $1$ if $\alpha_{1}\geq0$, and $0$ otherwise. When $\alpha_{1}$ is nonnegative, the inequalities {\color{blue}{(\ref{1.16})}} and {\color{blue}{(\ref{1.17})}} give us uniform $L^{1}_{[0,T)}
L^{2}(M)$-norms to ${\rm Sic}_{g(t)}$ and ${\rm Rm}_{g(t)}$. Moreover, using
an equality (above {\color{blue}{(\ref{3.43})}}) for ${\rm Vol}_{t}$ and $|\nabla_{g(t)}
u(t)|^{2}_{g(t)}\leq A_{1}$, the integral of ${\rm Sic}_{g(t)}$ can be
replaced by the integral of ${\rm Ric}_{g(t)}$. Thus, in the case that $\alpha_{1}\geq0$, {\color{red}{Corollary \ref{c1.4}}} gives uniform $L^{1}_{[0,T)}L^{2}(M)$-norms to ${\rm Ric}_{g(t)}$ and ${\rm Rm}_{g(t)}$.
\\

To derive $L^{1}_{[0,T)}L^{p}(M)$-norms for ${\rm Sic}_{g(t)}$ and $
{\rm Rm}_{g(t)}$, for simplicity, introduce the basic assumption {\bf BA}, original defined in \cite{Bamler-Zhang2015, Simon2015} (see also \cite{LY2}), for a solution $(g(t),u(t))_{t\in[0,T)}$ to the regular $(\alpha_{1},0,\beta_{1},\beta_{2})$-Ricci flow:

\begin{itemize}

\item[(a)] $M$ is a closed $4$-manifold;

\item[(b)] $T<\infty$;

\item[(c)] $-1\leq S_{g(t)}\leq 1$ along the flow;

\item[(d)] $|\nabla_{g(t)}u(t)|^{2}_{g(t)}\leq A_{1}$ along the flow.

\end{itemize}
The last condition is obtained from the regularity of the flow and the third condition implies $S_{g(t)}+C\geq C_{0}>0$, where $C=2$ and $C_{0}=1$.

\begin{theorem}\label{t1.5} {\bf (See also Theorem \ref{t3.7})} Suppose that $(g(t),u(t))_{t\in[0,T)}$ satisfies {\bf BA}. Then
\begin{eqnarray}
\int^{s}_{0}\int_{M}|{\rm Sic}_{g(t)}|^{4}_{g(t)}dV_{g(t)}dt
&\leq&\widetilde{C}(1+s)e^{\widetilde{C}s}\nonumber\\
&&+ \ \widetilde{C}[1-{\rm sgn}(\alpha_{1},0)]e^{\widetilde{C}s}
\int^{s}_{0}\int_{M}|\nabla^{2}_{g(t)}u(t)|^{4}dV_{g(t)}dt,\label{1.18}\\
\int^{T}_{s}\int_{M}|{\rm Sic}_{g(t)}|^{2}_{g(t)}dV_{g(t)}dt
&\leq&\left[(T-s)e^{T}{\rm Vol}_{0}\right]^{\frac{4-p}{4}}e^{\widetilde{C}T}
\bigg[\widetilde{C}(1+T)\nonumber\\
&&+ \ \widetilde{C}[1-{\rm sgn}(\alpha_{1},0)]
\int^{s}_{0}\int_{M}|\nabla^{2}_{g(t)}u(t)|^{4}dV_{g(t)}dt\bigg]^{\frac{p}{4}},\label{1.19}
\end{eqnarray}
for any $s\in[0,T)$. Here $\widetilde{C}$ is a uniform constant which depends only on $g_{0},u_{0}, \alpha_{1},\beta_{1},\beta_{2}$, $A_{1}, \chi(M)$.
\end{theorem}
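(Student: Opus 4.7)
The plan is to establish the $L^4$-bound (1.18) by extending the Hamilton--Cao computation that underlies Corollary~\ref{c1.4} one level higher, and then to deduce (1.19) from (1.18) by a Hölder interpolation against the space-time volume, which is controllable under BA.

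For (1.18), I would work directly with the quantity $|{\rm Sic}_{g(t)}|^4_{g(t)}$ and compute its heat operator $\Box|{\rm Sic}|^4$ using the evolution equations (1.8)--(1.9). The reaction terms that appear are cubic in ${\rm Sic}$ coupled once to ${\rm Rm}$ or to $\nabla^2 u$, together with lower-order couplings involving $\nabla u\otimes \nabla u$ and $\beta_1|\nabla u|^2+\beta_2 u$. Integrating over $M\times[0,s]$ and integrating by parts disposes of the $-\Delta$ part of $\Box$ and produces a good gradient term $-\int|\nabla|{\rm Sic}|^2|^2$ which can be discarded. Under BA the bounds $|S|\le 1$ and $|\nabla u|^2\le A_{1}$ control all scalar-field couplings, while the dangerous cubic curvature term $\int {\rm Rm}\ast{\rm Sic}\ast{\rm Sic}\ast{\rm Sic}$ is treated via the dimension-$4$ algebraic identity $|{\rm Rm}|^2=|W|^2+2|{\rm Ric}^0|^2+R^2/6$ and the Chern--Gauss--Bonnet formula, which converts $\int|W|^2\,dV$ into a topological piece proportional to $\chi(M)$ plus integrals of $|{\rm Ric}^0|^2$ and $R^2$. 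Combined with the $L^2$-estimate of Corollary~\ref{c1.4}, this closes a Grönwall inequality in $s$.

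The sign of $\alpha_1$ enters through the additional Bochner-type term $\int\nabla^2 u\ast{\rm Sic}\ast{\rm Sic}\ast{\rm Sic}$ coming from the $2\alpha_1\,\nabla u\otimes\nabla u$ piece of $\partial_t g$. When $\alpha_1\ge 0$ the sign of this term is compatible with the good gradient term and it can be absorbed, which is exactly why $1-{\rm sgn}(\alpha_1,0)=0$ kills the correction; when $\alpha_1<0$ I would majorise it by Cauchy--Schwarz and Young as $\varepsilon\int|{\rm Sic}|^4+C_\varepsilon\int|\nabla^2 u|^4$, reproducing the correction term in (1.18). For (1.19), apply Hölder's inequality with conjugate exponents $4/(4-p)$ and $4/p$ to write
\begin{equation*}
\int_s^T\!\!\int_M|{\rm Sic}|^2\,dV\,dt\le\Bigl(\int_s^T\!\!\int_M 1\,dV\,dt\Bigr)^{(4-p)/4}\Bigl(\int_s^T\!\!\int_M|{\rm Sic}|^{8/p}\,dV\,dt\Bigr)^{p/4},
\end{equation*}
and control the volume factor using $\partial_t\,dV=-S\,dV$ and $|S|\le 1$, which yields ${\rm Vol}_t\le e^T{\rm Vol}_0$ and hence $\int_s^T\int_M 1\,dV\,dt\le(T-s)e^T{\rm Vol}_0$; feeding (1.18) into the second factor (with $p=2$, so $8/p=4$) gives (1.19).

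The principal obstacle is the closure of the $L^4$ Grönwall step: at the $L^2$ level the cubic self-interaction of ${\rm Sic}$ is controlled pointwise by $|{\rm Rm}|\cdot|{\rm Sic}|^2$ and then by the $L^2$-output of Corollary~\ref{c1.4}; at the $L^4$ level the analogous cubic $\int|{\rm Rm}|\cdot|{\rm Sic}|^3$ does not close without integrated control of $|{\rm Rm}|$, so the Chern--Gauss--Bonnet identity must enter at the integrated level rather than pointwise, and the finiteness hypothesis $T<\infty$ in BA is used essentially to absorb the factor $e^{\widetilde Cs}$ into a uniform constant.
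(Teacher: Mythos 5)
Your plan for~(\ref{1.19}) via H\"older against the space-time volume controlled by ${\rm Vol}_t\le e^t{\rm Vol}_0$ matches the paper's proof exactly, so that part is fine. The essential gap is in the mechanism for~(\ref{1.18}): you propose to compute $\Box|{\rm Sic}|^4$ directly, but you yourself flag the resulting closure problem without resolving it, and in fact it does not close. Expanding $\Box|{\rm Sic}|^4 = 2|{\rm Sic}|^2\Box|{\rm Sic}|^2 - 2|\nabla|{\rm Sic}|^2|^2$ and feeding in the reaction $4{\rm Sm}({\rm Sic},{\rm Sic})\sim {\rm Rm}\ast{\rm Sic}\ast{\rm Sic}$ from~(\ref{3.17}) gives a leading reaction of order $\int|{\rm Rm}||{\rm Sic}|^4$, not $\int|{\rm Rm}||{\rm Sic}|^3$ as you wrote. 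No Young/Cauchy--Schwarz splitting plus Gauss--Bonnet--Chern will bound this by $\int|{\rm Sic}|^4$ and topological data: you either produce an uncontrollable $\int|{\rm Sic}|^5$ or $\int|{\rm Sic}|^6$, or you would need $L^\infty$ control on ${\rm Rm}$, which is exactly what is \emph{not} assumed under~{\bf BA}. The heuristic ``Chern--Gauss--Bonnet at the integrated level'' gives a bound on $\int|W|^2$, not on the mixed quartic $\int|{\rm Rm}||{\rm Sic}|^4$, so the Gr\"onwall step you describe fails.

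The paper's proof works at one lower power and gets the $L^4$ estimate for free. It studies the Hamilton--Cao quantity $f=|{\rm Sic}|^2/(S+2)$ and exploits that its evolution~(\ref{3.32}) contains a \emph{negatively signed} reaction term $-2f^2$. Integrating the resulting differential inequality~(\ref{3.51}) over $[0,s]$ gives a Gr\"onwall bound that simultaneously controls both $\int_M f\,dV_s$ and $\int_0^s\int_M f^2\,dV_t\,dt$; since $|S|\le1$ under~{\bf BA} one has $\tfrac13|{\rm Sic}|^2\le f\le|{\rm Sic}|^2$, so $f^2$ is comparable to $|{\rm Sic}|^4/(S+2)^2$ and the second integral \emph{is} the left-hand side of~(\ref{1.18}). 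The Gauss--Bonnet--Chern identity only needs to control the $\int{\rm Sm}({\rm Sic},{\rm Sic})/(S+2)$ term, which lives at the $f$ (not $f^2$) level and is tractable, and the sign dichotomy in $\alpha_1$ enters through the pointwise estimate~(\ref{3.33}) versus~(\ref{3.34}) of the $\Lambda$-quantity rather than through any cancellation against a gradient term. You need to replace your direct $\Box|{\rm Sic}|^4$ computation by this $-f^2$ extraction to make~(\ref{1.18}) work.
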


{\bf (C) A conjecture for the Einstein scalar field equations.} This conjecture is based on the following, where $M$ is a complete manifold,

\begin{theorem}\label{t1.6}{\bf (See also Theorem \ref{t2.7})} Let $(g(t),u(t))_{t\in[0,T]}$ be a solution to the Ricci-haronic
flow {\color{blue}{(\ref{1.1})}}. Suppose there exist constants $\rho, K, L, P>0$ and $x_{0}\in M$ such that $B_{g(0)}(x_{0},\rho/\sqrt{K})$ is compactly contained on $M$
and the following conditions\footnote{The second assumption $|\nabla_{g(t)}u(t)|_{g(t)}
\leq L$ can not be derived by {\color{red}{Theorem \ref{tB.1}}}. Actually, by {\color{red}{Theorem \ref{tB.1}}}, one has the estimate
\begin{equation*}
|\nabla_{g(t)}u(t)|_{g(t)}
\leq\rho^{2}C_{n}\left(\frac{K}{\rho^{2}}+\frac{1}{t}\right)
=C_{n}\left(K+\frac{\rho^{2}}{t}\right)
\end{equation*}
which depends on time $t$.}
\begin{equation}
|{\rm Ric}_{g(t)}|_{g(t)}
\leq K, \ \ \ |\nabla_{g(t)}u(t)|_{g(t)}
\leq L, \ \ \ |\nabla^{2}_{g(t)}u(t)|_{g(t)}
\leq P\label{1.20}
\end{equation}
hold on $B_{g(0)}(x_{0},\rho/\sqrt{K})\times[0,T]$. For any $p\geq3$, there is a
constant $C$, depending only on $n$ and $p$, such that
\begin{equation*}
\int_{B_{g(0)}(x_{0},\rho/2\sqrt{K})}|{\rm Rm}_{g(t)}|^{p}_{g(t)}
dV_{g(t)} \ \ \leq \ \ C\Lambda_{1} e^{C\Lambda_{2} T}\int_{B_{g(0)}(x_{0},\rho/\sqrt{K})}
|{\rm Rm}_{g(0)}|^{p}_{g(0)}dV_{g(0)}
\end{equation*}
\begin{equation}
+ \ CK^{p}\left(1+\rho^{-2p}\right)
e^{C\Lambda_{2} T}{\rm Vol}_{g(t)}
\left(B_{g(0)}\left(x_{0},\frac{\rho}{\sqrt{K}}\right)
\right).\label{1.21}
\end{equation}
Here $\Lambda_{1}:=1+K$ and $\Lambda_{2}:=K+L+L^{2}+P^{2}(1+K^{-1})$.
\end{theorem}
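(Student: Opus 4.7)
My plan is to establish the local $L^p$ curvature estimate by a Kotschwar–Munteanu–Wang \cite{KMW2016} style energy method, adapted from the Ricci flow setting to the Ricci-harmonic flow. The structure is: (i) derive a Bochner-type evolution inequality for $|\mathrm{Rm}_{g(t)}|^p$; (ii) localize by multiplying against a time-independent spatial cutoff $\eta$ supported in $B_{g(0)}(x_{0},\rho/\sqrt{K})$, identically $1$ on $B_{g(0)}(x_{0},\rho/(2\sqrt{K}))$, with $|\nabla_{g(0)}\eta|^{2}\leq CK/\rho^{2}$; (iii) integrate against $dV_{g(t)}$ and integrate by parts; (iv) close with Grönwall in $t$. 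The bounded Ricci hypothesis enters twice: through Hamilton's distance-distortion lemma (combined with $|\nabla u|\leq L$), which guarantees that $\mathrm{supp}(\eta)$ remains a $g(t)$-ball of comparable size for all $t\leq T$ so the cutoff still localizes in $g(t)$, and through the Bianchi-identity absorption of the cubic curvature term described below.

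Using Müller's evolution formula for $\mathrm{Rm}$ under (\ref{1.1}), a direct but careful computation yields, for $p\geq 3$,
\begin{equation*}
\Box|\mathrm{Rm}|^{p} \leq -c_{p}|\mathrm{Rm}|^{p-2}|\nabla\mathrm{Rm}|^{2} + C_{p}|\mathrm{Rm}|^{p+1} + C_{p}\bigl(|\nabla^{2}u|^{2}+|\nabla u|^{2}+1\bigr)|\mathrm{Rm}|^{p} + C_{p}|\nabla u||\nabla\mathrm{Rm}||\mathrm{Rm}|^{p-1},
\end{equation*}
with $c_{p},C_{p}$ depending only on $n,p$. The hypotheses (\ref{1.20}) bound the $|\nabla^{2}u|^{2}$ and $|\nabla u|^{2}$ factors by $P^{2}$ and $L^{2}$, and Cauchy–Schwarz splits the last cross-term into a small multiple of $|\mathrm{Rm}|^{p-2}|\nabla\mathrm{Rm}|^{2}$ (absorbable into the good first term) plus $L^{2}|\mathrm{Rm}|^{p}$. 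Multiplying by $\eta^{2p}$, integrating over $M$, and integrating the Laplacian by parts produces an additional $CK\rho^{-2}\int\eta^{2p-2}|\mathrm{Rm}|^{p}\,dV_{g(t)}$ remainder from $|\nabla\eta|^{2}\leq CK/\rho^{2}$, which by Young's inequality further splits into $C\int\eta^{2p}|\mathrm{Rm}|^{p}\,dV_{g(t)}$ plus a volume contribution of order $CK^{p}\rho^{-2p}\mathrm{Vol}_{g(t)}(\mathrm{supp}\,\eta)$.

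The main obstacle is controlling the cubic contribution $\int\eta^{2p}|\mathrm{Rm}|^{p+1}\,dV_{g(t)}$, for which none of the a priori hypotheses yields a direct bound. Following \cite{KMW2016}, I would apply the second Bianchi identity to rewrite the scalar $R_{ijkl}R^{ij}{}_{pq}R^{klpq}$ (which controls $|\mathrm{Rm}|^{3}$ up to dimensional constants) as a divergence plus terms of type $\nabla\mathrm{Rm}\ast\mathrm{Ric}$; one integration by parts against $\eta^{2p}|\mathrm{Rm}|^{p-2}$ then trades the cube for contributions of the form $|\mathrm{Ric}||\nabla\mathrm{Rm}||\mathrm{Rm}|^{p-1}\eta^{2p}$ and $|\nabla\eta||\nabla\mathrm{Rm}||\mathrm{Rm}|^{p-1}\eta^{2p-1}$. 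Using $|\mathrm{Ric}|\leq K$ and Young, each piece absorbs into $\tfrac{c_{p}}{2}\int\eta^{2p}|\mathrm{Rm}|^{p-2}|\nabla\mathrm{Rm}|^{2}\,dV_{g(t)}$ plus $C(K+K^{2}+K\rho^{-2})\int\eta^{2p}|\mathrm{Rm}|^{p}\,dV_{g(t)}$ and a further volume term of order $CK^{p}(1+\rho^{-2p})\mathrm{Vol}_{g(t)}(\mathrm{supp}\,\eta)$. Collecting, $F(t):=\int\eta^{2p}|\mathrm{Rm}|^{p}\,dV_{g(t)}$ satisfies $F'(t)\leq C\Lambda_{2}F(t)+CK^{p}(1+\rho^{-2p})\mathrm{Vol}_{g(t)}(\mathrm{supp}\,\eta)$, with the $(1+K^{-1})P^{2}$ summand in $\Lambda_{2}$ arising once the Hessian-of-$u$ coefficient is re-expressed in the natural parabolic scale $\rho/\sqrt{K}$. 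Grönwall integration delivers (\ref{1.21}), with the $\Lambda_{1}=1+K$ prefactor emerging from the initial-time absorption of the first post-IBP remainder against $F(0)$. The principal bookkeeping difficulty throughout is tracking constants precisely enough to land exactly on $\Lambda_{1}e^{C\Lambda_{2}T}$ and $K^{p}(1+\rho^{-2p})e^{C\Lambda_{2}T}$ rather than on a cruder combination.
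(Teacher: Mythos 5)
Your high-level plan (KMW-style localized $L^p$ energy estimate, cutoff supported in $B_{g(0)}(x_0,\rho/\sqrt{K})$, Grönwall) is the right skeleton and matches the paper's {\color{red}{Theorem \ref{t2.7}}}, but the mechanism you propose for the crucial cubic curvature term does not work, and you are missing the key structural device that makes the argument close.

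First, the Bianchi step is not sound as stated. You begin from the Bochner form $\Box|{\rm Rm}|^{p}\leq -c_p|{\rm Rm}|^{p-2}|\nabla{\rm Rm}|^{2}+C_p|{\rm Rm}|^{p+1}+\cdots$ and propose to rewrite the cubic contraction (schematically $\langle Q({\rm Rm}),{\rm Rm}\rangle$) as $\mathrm{divergence}+\nabla{\rm Rm}\ast{\rm Ric}$. This is dimensionally inconsistent: under the scaling $g\mapsto r^{2}g$, a cubic in ${\rm Rm}$ has weight $r^{-6}$, while $\nabla{\rm Rm}\ast{\rm Ric}$ has weight $r^{-5}$, so no such pointwise or integral identity can hold. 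The second Bianchi identity relates first covariant derivatives of ${\rm Rm}$; it cannot turn a purely algebraic cubic into a divergence plus a term of lower homogeneity. What Bianchi genuinely does is relate $\Delta R_{ijkl}$ to $\nabla_i\nabla_k R_{jl}$-type terms plus the quadratic $Q({\rm Rm})$, i.e.\ it relates the two forms of $\partial_{t}{\rm Rm}$ — it does not give you a cheap integration by parts on the cubic. If you push your version through, the best you produce after IBP is $\int|\nabla{\rm Ric}||\nabla{\rm Rm}||{\rm Rm}|^{p-2}\eta^{2p}\,dV$ (not $|{\rm Ric}||\nabla{\rm Rm}|$), and $|\nabla{\rm Ric}|$ is {\it not} bounded by the hypothesis $|{\rm Ric}|\leq K$.

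The paper (following \cite{KMW2016}) instead avoids the cubic from the outset: it uses the pre-Bianchi form of the evolution, $\partial_{t}{\rm Rm}=g^{-1}\nabla^{2}{\rm Ric}+g^{-1}{\rm Ric}\ast{\rm Rm}+g^{-1}\nabla^{2}u\ast\nabla^{2}u+g^{-1}{\rm Rm}\ast\nabla u\ast\nabla u$ (eqs.\ {\color{blue}{(\ref{2.34})}} and {\color{blue}{(\ref{A.11})}}), in which the non-derivative curvature term is ${\rm Ric}\ast{\rm Rm}\ast{\rm Rm}\leq CK|{\rm Rm}|^{2}$ — quadratic, with bounded coefficient — rather than the cubic $\langle Q({\rm Rm}),{\rm Rm}\rangle$. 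The $\nabla^{2}{\rm Ric}$ term is then integrated by parts, producing the ``bad'' integrals $B_1=\frac{1}{K}\int|\nabla{\rm Ric}|^{2}|{\rm Rm}|^{p-1}\phi^{2p}\,dV$ and $B_2=\int|\nabla{\rm Rm}|^{2}|{\rm Rm}|^{p-3}\phi^{2p}\,dV$. Since neither $|\nabla{\rm Ric}|^{2}$ nor $|\nabla{\rm Rm}|^{2}$ is a priori bounded, the paper controls them by exchanging them for $-\frac{1}{2}\Box|{\rm Ric}|^{2}$ and $-\frac{1}{2}\Box|{\rm Rm}|^{2}$ via {\color{blue}{(\ref{2.32})}} and {\color{blue}{(\ref{2.33})}}, and then absorbs the resulting time-derivative terms into a {\it modified} energy $U=A_{1}+CKA_{2}+\frac{1}{2K}\int|{\rm Ric}|^{2}|{\rm Rm}|^{p-1}\phi^{2p}\,dV$. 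Your Grönwall is run on the naive $F(t)=\int\eta^{2p}|{\rm Rm}|^{p}\,dV_{g(t)}$, which has no way to cancel those $\partial_{t}$ contributions; $F$ alone does not obey a closed differential inequality. You will need both the pre-Bianchi form of $\partial_{t}{\rm Rm}$ and the modified energy $U$ to make the argument close, after which the rest of your plan (the cutoff scaling, Hamilton's distortion estimate, Young's inequality to split $A_{4}$ into $A_{1}$ plus a $K^{p}\rho^{-2p}\mathrm{Vol}$ term, the $u$-terms bounded by $P^{2},L,L^{2}$) does line up with the paper's computation.
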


This theorem extends \cite{KMW2016} to the Ricci-harmonic flow.
Along the argument in \cite{KMW2016} we consider the quantity
\begin{equation*}
\frac{d}{dt}\int_{M}|{\rm Rm}|^{p}\phi^{2p}dV_{t}
\end{equation*}
where $\phi$ is a Lipschitz function with support in $B_{g(0)}
(x_{0}, \rho/\sqrt{K})$. For the Ricci-harmonic flow, extra integrals
involving derivatives of $u(t)$ appear in the computation, however,
these integrals can be treated with the help of the last assumptions in {\color{blue}{(\ref{1.20})}}.

By H\"older's inequality we can get a similar upper bound for $p=2$. From {\color{red}{Theorem \ref{t1.6}}}, we can get an upper bound for the $L^{2}$-norm of ${\rm Rm}_{g(t)}$. Motivated by the inequality {\color{blue}{(\ref{1.21})}}, we in this section impose a conjecture for the Einstein scalar field equations, which is analogous to the corresponding conjecture for the Einstein vacuum equations proved by Klainerman, Rodnianski, and Szeftel \cite{KRS2015, S1, S2, S3, S4, S5}.
\\

Consider Einstein's scalar field equation or Einstein Klein-Gordon
system
\begin{equation}
\boldsymbol{R}_{\alpha\beta}-\frac{1}{2}\boldsymbol{R}{\bf g}_{\alpha\beta}=\boldsymbol{T}_{\alpha\beta}, \ \ \ \boldsymbol{T}_{\alpha\beta}
=2\partial_{\alpha}u\partial_{\beta}u
-\frac{1}{2}|{\bf D}u|^{2}{\bf g},\label{1.22}
\end{equation}
where $u$ is a smooth function on a four dimensional Lorentzian space-time $({\bf M},
{\bf g})$, $\boldsymbol{R}_{\alpha\beta}$, $\boldsymbol{R}$, and ${\bf D}$ denote, respectively, the Ricci curvature tensor, scalar curvature, and the Levi-Civita connection of ${\bf g}$. In this case, the Einstein equation {\color{blue}{(\ref{1.15})}} can be written as
\begin{equation*}
\boldsymbol{R}_{\alpha\beta}-2\partial_{\alpha}u
\partial_{\beta}u=0.
\end{equation*}
As discussed in \cite{R2009}, we should impose a matter equation
\begin{equation*}
\boldsymbol{\Delta}u=0
\end{equation*}
for $u$, where $\boldsymbol{\Delta}:={\bf D}^{\alpha}{\bf D}_{\alpha}$. Hence we should consider a system of PDEs
\begin{equation}
\boldsymbol{R}_{\alpha\beta}
-2\partial_{\alpha}u\partial_{\beta}u=0, \ \ \
\boldsymbol{\Delta}u=0.\label{1.23}
\end{equation}
An initial data set $(\Sigma, g,k,u_{0},u_{1})$ for {\color{blue}{(\ref{1.23})}} consists of a three dimensional manifold $\Sigma$, a Riemannian metric $g$, a symmetric $2$-tensor $k$, together with two functions $u_{0}$ and $u_{1}$ on $\Sigma$, all assumed to be smooth, verifying the constraint equations,
\begin{eqnarray}
\nabla^{j}k_{ij}-\nabla_{i}{\rm tr}\!\ k&=&u_{1}\nabla_{i}u_{0},\label{1.24}\\
R-|k|^{2}+({\rm tr}\!\ k)^{2}&=&u^{2}_{1}+|\nabla u_{0}|^{2},\label{1.25}
\end{eqnarray}
where $\nabla$ is the Levi-Civita connection of $g$.
\\

Given an initial data set $(\Sigma,g,k, u_{0},u_{1})$, the Cauchy problem consists in finding a four-dimensional Lorentzian manifold $({\bf M},
{\bf g})$ and a smooth function $u$ on ${\bf M}$ satisfying {\color{blue}{(\ref{1.22})}}, and also an embedding $\iota: \Sigma\to{\bf M}$ such that
\begin{equation}
\iota^{\ast}{\bf g}=g, \ \ \ \iota^{\ast}u=u_{0}, \ \ \
\iota^{\ast}K=k,\ \ \ \iota^{\ast}(\boldsymbol{N}u)=u_{1},\label{1.26}
\end{equation}
where $\boldsymbol{N}$ is the future-directed unit normal to $\iota(\Sigma)$ and $K$ is the second fundamental form of $\iota(\Sigma)$. The local existence and uniqueness result for globally hyperbolic developments can be found
in, for example, \cite{R2009}, Theorem 14.2. For stability and 
instability for Einstein's scalar field equation, we refer to
\cite{D2003, DW2018, LM2016, LM2017, V2018, Wang2016, Wang, WJH2018}.
\\

For Einstein's equations (i.e., $u=0$ in {\color{blue}{(\ref{1.22})}}, and the corresponding initial data set is denoted by $(\Sigma, g,k)$), Klainerman \cite{K1999} proposed the following conjecture:
\begin{quote}
{\it The Einstein vacuum equations admit local Cauchy developments for initial data sets $(\Sigma,g,k)$ with locally finite $L^{2}$-curvature and locally finite $L^{2}$-norm of the first covariant derivatives of $k$.}
\end{quote}

This conjecture was recently solved by Klainerman, Rodnianski and
Szeftel \cite{KRS2015}. Motivated by {\color{red}{Theorem \ref{t1.6}}} and Klainerman's
conjecture, we propose the following

\begin{conjecture}\label{c1.7}{\bf (See also Conjecture \ref{c4.2})} The Einstein scalar field equations admit local Cauchy developments for initial data sets $(\Sigma, g, k, u_{0},u_{1})$ with locally finite $L^{2}$-curvature, locally finite $L^{2}$-norm of the first covariant derivatives of $k$, locally finite $L^{2}$-norm of the covariant derivatives (up to second order) of $u_{0}$, and locally finite $L^{2}$-norm of the covariant derivatives (up to first order) of $u_{1}$.
\end{conjecture}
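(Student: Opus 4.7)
The plan is to adapt the proof of the bounded $L^{2}$-curvature conjecture for the Einstein vacuum equations due to Klainerman--Rodnianski--Szeftel \cite{KRS2015, S1, S2, S3, S4, S5} to the coupled system (\ref{1.23}). In wave (harmonic) coordinates $x^{\alpha}$ on $\boldsymbol{M}$, the system reduces to a quasilinear system of nonlinear wave equations
\[
\boldsymbol{g}^{\mu\nu}\partial_{\mu}\partial_{\nu}\boldsymbol{g}_{\alpha\beta}=N_{\alpha\beta}(\boldsymbol{g},\partial\boldsymbol{g})+4\partial_{\alpha}u\,\partial_{\beta}u, \qquad \boldsymbol{g}^{\mu\nu}\partial_{\mu}\partial_{\nu}u=\widetilde{N}(\boldsymbol{g},\partial u),
\]
where $N_{\alpha\beta}$ is of classical null-form type in $\partial\boldsymbol{g}$. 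The regularity hypotheses in Conjecture \ref{c1.7} are exactly the scaling-critical threshold for such a system: $L^{2}$-curvature corresponds to $\boldsymbol{g}\in H^{2}$ at scaling level, the $L^{2}$-bound on $\nabla k$ yields $\partial\boldsymbol{g}\in H^{1}$, while the $L^{2}$-hypotheses on derivatives of $u_{0}$ and $u_{1}$ give $u\in H^{2}$ and $\partial_{t}u\in H^{1}$. This matches $\partial u\otimes\partial u\in L^{\infty}_{t}H^{1}_{x}$, which is precisely the regularity of the Ricci right-hand side of (\ref{1.22}).

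The proof I envision then runs a bootstrap paralleling the KRS scheme with three ingredients. First, construct a geometric parametrix for the scalar wave operator $\boldsymbol{\Box}_{\boldsymbol{g}}$ on a rough Einstein--scalar background via a plane-wave representation, extending \cite{S1, S2, S3} from backgrounds verifying $\boldsymbol{R}_{\alpha\beta}=0$ to backgrounds verifying $\boldsymbol{R}_{\alpha\beta}=2\partial_{\alpha}u\,\partial_{\beta}u$. Second, re-derive the bilinear and trilinear $L^{2}$-estimates of \cite{S4} for products of parametrices, and apply them both to $N_{\alpha\beta}(\boldsymbol{g},\partial\boldsymbol{g})$ and to the new source $\partial u\otimes \partial u$; crucially, the matter equation $\boldsymbol{\Box}_{\boldsymbol{g}}u=\widetilde{N}$ is treated through the \emph{same} parametrix, yielding the required $H^{2}$-control on $u$ whenever bilinear bounds for $\widetilde{N}$ are in hand. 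Third, transplant the low-regularity control of canonical null foliations and their connection coefficients from \cite{S5}; in the null structure and null Bianchi equations, curvature sources are replaced by $\text{(curvature)}+2\partial u\otimes\partial u$, and both pieces lie in $L^{\infty}_{t}L^{2}_{x}$ by hypothesis, so the sharp trace, transport, and mass-aspect estimates should persist. Preservation of the harmonic gauge would be checked at the end using that the coordinate functions also satisfy $\boldsymbol{\Box}_{\boldsymbol{g}}x^{\alpha}=0$.

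The main obstacle is the parametrix construction and its bilinear estimates in the presence of the scalar field. In the vacuum case the $TT^{\ast}$ arguments of \cite{S2, S3} exploit delicate cancellations in the eikonal equation that rely on $\boldsymbol{R}_{\alpha\beta}=0$; once this is relaxed to a rank-one matter term, one must verify that the principal symbol analysis and the null Bianchi identities continue to produce Strichartz estimates with the same loss, or equivalently that the perturbation $\partial u\otimes\partial u$ can be absorbed as a forcing term at the correct functional level. A secondary, closely related difficulty is closing the equation for $u$: $\widetilde{N}$ contains products of $\partial\boldsymbol{g}$ with $\partial u$, both only in $L^{\infty}_{t}H^{1}_{x}$, so propagating the $H^{2}$-bound on $u$ forces the use of the full bilinear parametrix theory on the matter equation as well, creating a genuine self-coupling in the bootstrap. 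I expect this scheme to succeed precisely because the scalar field sits at the same scaling as the curvature it sources, making the regularity demands on $(u_{0},u_{1})$ in Conjecture \ref{c1.7} consistent with the $L^{2}$-curvature assumption on $(g,k)$.
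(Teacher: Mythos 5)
This statement is a \emph{conjecture}, not a theorem, and the paper does not prove it. It is offered in Section \ref{section4} as an open problem, motivated by the local curvature estimates of Theorem \ref{t2.7} and by analogy with Klainerman's bounded $L^{2}$-curvature conjecture in the vacuum case, which was settled by Klainerman--Rodnianski--Szeftel. The paper even poses a companion open question (Question \ref{q4.3}) asking whether their Theorem \ref{t4.1} can be extended to the Einstein scalar field system. So there is no proof in the paper to compare your argument against.

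Your text is also not a proof. It is, by your own framing, a \emph{research program}: you sketch the KRS scheme (wave-coordinate reduction, parametrix for $\boldsymbol{\Box}_{\boldsymbol{g}}$ on a rough background, bilinear/trilinear $L^{2}$-estimates, low-regularity control of null foliations) and explain where the scalar field enters as a rank-one matter source $2\partial_{\alpha}u\,\partial_{\beta}u$. But you then honestly flag the two key steps that remain unresolved: (i) whether the $TT^{\ast}$ and eikonal arguments of the KRS parametrix construction, which exploit $\boldsymbol{R}_{\alpha\beta}=0$, survive once that vanishing is relaxed; and (ii) how to close the bootstrap for $u$ itself given that $\widetilde{N}$ contains products $\partial\boldsymbol{g}\cdot\partial u$ at only $L^{\infty}_{t}H^{1}_{x}$ regularity. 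Either of these could fail, and without resolving them you have a heuristic, not an argument. The scaling consistency you point out is a genuinely encouraging observation, and the roadmap is sensible, but it establishes nothing. If you want to make progress on Conjecture \ref{c1.7}, the place to start is the first obstacle: verify whether Szeftel's geometric parametrix and its bilinear estimates degrade gracefully under the perturbation $\boldsymbol{R}_{\alpha\beta}=2\partial_{\alpha}u\,\partial_{\beta}u$ with $\partial u\in L^{\infty}_{t}H^{1}_{x}$, since that is where the null Bianchi structure is used most delicately.
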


{\bf (D) Some notions on Riemann curvatures of Bakry-\'Emery Ricci curvature.} We now compare our curvature ${\rm Sm}$ with $\alpha_{1}=2$ (see {\color{blue}{(\ref{3.10})}}) with a notion of curvature introduced recently by Wylie and Yeroshkin \cite{WY2016}. Let $(M,g)$ be a Riemannian manifold with a smooth function $u$. Wylie and Yeroshkin introduced the following weighted connection
\begin{equation}
\nabla^{u}_{X}Y:=\nabla_{X}Y-(Yu)X-(Xu)Y.\label{1.27}
\end{equation}
By Proposition 3.3 in \cite{WY2016}, we have
\begin{equation}
R^{u}_{ijk\ell}=R_{ijk\ell}
+\nabla_{j}\nabla_{k}u\!\ g_{i\ell}
-\nabla_{i}\nabla_{k}u\!\ g_{j\ell}
+\nabla_{j}u\nabla_{k}u\!\ g_{i\ell}
-\nabla_{i}u\nabla_{k}u\!\ g_{j\ell},\label{1.28}
\end{equation}
where $R^{u}_{ijk\ell}:=\langle{\rm Rm}^{\alpha}(\partial_{i},
\partial_{j})\partial_{k},\partial_{\ell}\rangle$ and ${\rm Rm}^{\alpha}$ is the induced Riemann curvature tensor associated to the connection $\nabla^{u}$. The Ricci curvature associated to $\nabla^{u}$ is defined by
\begin{equation}
R^{u}_{jk}:=g^{i\ell}R^{u}_{ijk\ell}
=R_{ik}+(n-1)\nabla_{j}\nabla_{k}u+(n-1)\nabla_{j}u\nabla_{k}u.\label{1.29}
\end{equation}
Here the last formula also follows from Proposition 3.3 in \cite{WY2016}. Recall from {\color{blue}{(\ref{3.10})}} that (with $\alpha_{1}=2$)
\begin{equation}
S_{ijk\ell}=R_{ijk\ell}-\nabla_{i}u\nabla_{k}u\!\ g_{j\ell}-\nabla_{i}u\nabla_{j}u\!\ g_{k\ell}.\label{1.30}
\end{equation}
From now on, we are given a smooth function $u$ on $M$ and write
\begin{eqnarray}
R^{{\bf L}}_{ijk\ell}&:=&S_{ijk\ell}, \ \ \
R^{{\bf WY}}_{ijk\ell} \ \ := \ \ R^{u}_{ijk\ell},\nonumber\\
R^{{\bf L}}_{jk}&:=&g^{i\ell}R^{{\bf L}}_{ijk\ell}, \ \ \
R^{{\bf WY}}_{jk} \ \ := \ \ R^{u}_{jk} \ \ = \ \ g^{i\ell}R^{{\bf WY}}_{ijk\ell},\label{1.31}\\
R^{{\bf L}}&:=&g^{jk}R^{{\bf L}}_{jk}, \ \ \
R^{{\bf WY}} \ \ := \ \ g^{jk}R^{{\bf WY}}_{jk}.\nonumber
\end{eqnarray}
From {\color{blue}{(\ref{1.29})}} and {\color{blue}{(\ref{1.30})}}, we have
\begin{equation*}
{\rm Ric}^{{\bf L}}={\rm Ric}-2du\otimes du, \ \ \
{\rm Ric}^{{\bf WY}}={\rm Ric}+(n-1)du\otimes du
+(n-1)\nabla^{2}u.
\end{equation*}
Consider another Ricci curvature of $R^{{\bf WY}}_{ijk\ell}$:
\begin{equation*}
\widehat{R}^{{\bf WY}}_{jk}:=g^{i\ell}R^{{\bf WY}}_{ji\ell k}
=R_{jk}+\left(\Delta u+|\nabla u|^{2}\right) g_{jk}
-\nabla_{j}\nabla_{k}u-\nabla_{j}u\nabla_{k}u.
\end{equation*}
There are some relations between those two notions on ``Riemann curvature tensors, e.g.,
\begin{equation}
R^{{\bf WY}}_{ijk\ell}-R^{{\bf L}}_{ijk\ell}
=\nabla_{i}u\nabla_{j}u\!\ g_{k\ell}
+\nabla_{k}u\nabla_{j}u\!\ g_{i\ell}
+\nabla_{j}\nabla_{k}u\!\ g_{i\ell}-\nabla_{i}\nabla_{k}u\!\ g_{j\ell},
\label{1.32}
\end{equation}
and
\begin{equation}
\widehat{{\rm Ric}}{}^{{\bf WY}}
-{\rm Ric}^{{\bf WY}}=\left(\Delta u+|\nabla u|^{2}\right)g
-n\left(\nabla^{2}u+du\otimes du\right).\label{1.33}
\end{equation}
We note that ${\rm Ric}^{{\bf L}}$ and ${\rm Ric}^{{\bf WY}}$ are
actually the Ricci curvatures in the sense of Bakey-\'Emery \cite{BE1985}. We here use our notions to keep the paper smoothly.
\\

We now have four different types of Ricci curvatures, ${\rm Ric}$, ${\rm Ric}^{{\bf L}}$, ${\rm Ric}^{{\bf WY}}$,
and $\widehat{{\rm Ric}}^{{\bf WY}}$, and three different
types of scalar curvatures, $R$, $R^{{\bf L}}$, and $R^{{\bf WY}}$. In order to compare those quantities, we introduce a notation $\mathcal{P}\leq_{{\bf I},\mu}\mathcal{Q}$, which is an integral inequality with respect to the measure $\mu$.

Given two scalar quantities $\mathcal{P}, \mathcal{Q}$
on $(M,g)$, and a measure $\mu$, we write $\mathcal{P}\leq_{{\bf I},\mu}
\mathcal{Q}$ if the following inequality
\begin{equation}
\int_{M}\mathcal{P}\!\ d\mu\leq\int_{\mathcal{M}}\mathcal{Q}\!\ d\mu
\label{1.34}
\end{equation}
holds. When $d\mu$ is the volume form $dV$, we simply write {\color{blue}{(\ref{1.34})}} as $\mathcal{P}\leq_{{\bf I}}
\mathcal{Q}$. When $d\mu$ is the measured volume form $e^{f}dV$, we write {\color{blue}{(\ref{1.34})}} as $
\mathcal{P}_{\leq{\bf I},f}\mathcal{Q}$. Similarly, we can define
$\mathcal{P}_{{\bf I},\mu}\mathcal{Q}$.

\begin{proposition}\label{p1.8}{\bf (See also Proposition \ref{p5.4})} For any measure $\mu$ on $M$ and smooth function $u$ on $M$, we have
\begin{equation}
R^{{\bf L}}\leq_{{\bf I},\mu} R, \ \ \ R\leq_{{\bf I}}
R^{{\bf WY}}, \ \ \ R=_{{\bf I},u}R^{{\bf WY}}.
\label{1.35}
\end{equation}
\end{proposition}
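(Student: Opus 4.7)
The plan is to reduce all three comparisons to explicit pointwise formulas for $R^{\bf L}$ and $R^{\bf WY}$ in terms of $R$ and derivatives of $u$, and then to handle the three measures case by case.

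First I would take the double metric trace in (1.30) and (1.29). Since $g^{i\ell}g_{j\ell}=\delta^i_j$, tracing (1.30) twice yields
\begin{equation*}
R^{\bf L}=g^{jk}R^{\bf L}_{jk}=R-2|\nabla u|^{2},
\end{equation*}
and tracing (1.29) once more against $g^{jk}$ gives
\begin{equation*}
R^{\bf WY}=R+(n-1)\bigl(\Delta u+|\nabla u|^{2}\bigr).
\end{equation*}
These two identities encode the entire content of the proposition.

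Next I would verify the three inequalities in turn. For $R^{\bf L}\leq_{{\bf I},\mu}R$, the pointwise estimate $R^{\bf L}-R=-2|\nabla u|^{2}\leq 0$ integrates against any (nonnegative) measure $\mu$, giving the claim with no further work. For $R\leq_{\bf I}R^{\bf WY}$ we use
\begin{equation*}
\int_{M}\bigl(R^{\bf WY}-R\bigr)\,dV=(n-1)\int_{M}\Delta u\,dV+(n-1)\int_{M}|\nabla u|^{2}\,dV;
\end{equation*}
on a closed $M$ the first term vanishes by the divergence theorem, while the second is manifestly nonnegative. For the equality $R=_{{\bf I},u}R^{\bf WY}$, the key observation is that the integrand $(n-1)(\Delta u+|\nabla u|^{2})$ is precisely $(n-1)e^{-u}\Delta e^{u}$, so multiplying by the measure weight $e^{u}$ produces a pure divergence:
\begin{equation*}
\int_{M}\bigl(R^{\bf WY}-R\bigr)e^{u}\,dV=(n-1)\int_{M}\Delta e^{u}\,dV=0.
\end{equation*}

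There is essentially no difficulty in this proposition once the two trace formulas are in hand; the only point needing a word of care is the closedness of $M$ (or, in the noncompact setting, suitable decay on $u$) in order to discard $\int_{M}\Delta u\,dV$ and $\int_{M}\Delta e^{u}\,dV$ via Stokes' theorem. Apart from that boundary/integrability issue, the proof is an immediate pointwise computation combined with one integration by parts.
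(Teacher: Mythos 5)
Your proof is correct and matches the paper's own argument almost exactly: both derive the trace identities $R^{\bf L}=R-2|\nabla u|^2$ and $R^{\bf WY}=R+(n-1)(\Delta u+|\nabla u|^2)$ and then use the pointwise sign of $-2|\nabla u|^2$, the vanishing of $\int_M\Delta u\,dV$, and the observation that $(\Delta u+|\nabla u|^2)e^u=\Delta(e^u)$ has zero integral. Your explicit remark about needing $M$ closed (or decay at infinity) for the Stokes step is a reasonable caveat, consistent with the paper's standing assumption in this section.
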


This proposition shows that $R^{{\bf L}}\leq_{{\bf I}}
R\leq_{{\bf I}} R^{{\bf WY}}$ and $R^{{\bf L}}\leq_{{\bf I},u} R=_{{\bf I},u}R^{{\bf WY}}$. Thus, in the sense of integrals, $R_{{\bf L}}$ is
weaker and $R^{{\bf WY}}$ is stronger than $R$, respectively.
\\

Next we consider the similar question on Ricci curvatures. Let $(M,g)$ be a closed Riemannian manifold with a smooth function $u$, and $\mu$ be a given measure on $M$. Given two Ricci curvatures ${\rm Ric}^{\clubsuit}, {\rm Ric}^{\diamondsuit}\in\mathfrak{Ric}_{4}
:=\{{\rm Ric}, {\rm Ric}^{{\bf L}}, {\rm Ric}^{{\bf WY}},
\widehat{{\rm Ric}}^{{\bf WY}}\}$, we say
\begin{equation}
{\rm Ric}^{\clubsuit}\leq_{{\bf I},\mu}{\rm Ric}^{\diamondsuit}
\label{1.36}
\end{equation}
if ${\rm Ric}^{\clubsuit}(X,X)\leq_{{\bf I},\mu}{\rm Ric}^{\diamondsuit}
(X,X)$ holds for all vector fields $X
\in\mathfrak{X}(M)$. Similarly we can define ${\rm Ric}^{\clubsuit}\leq_{{\bf I}}{\rm Ric}^{\diamondsuit}$ and ${\rm Ric}^{\clubsuit}\leq_{{\bf I},f}{\rm Ric}^{\diamondsuit}$. We say
\begin{equation}
{\rm Ric}^{\clubsuit}\leq_{{\bf IK},\mu}{\rm Ric}^{\diamondsuit}\label{1.37}
\end{equation}
if ${\rm Ric}^{\clubsuit}(X,X)\leq_{{\bf I},\mu}{\rm Ric}^{\diamondsuit}
(X,X)$ holds for all Killing vector fields $X
\in\mathfrak{X}_{{\bf K}}(M)$, where $\mathfrak{X}_{{\bf K}}(M)$ is the space of all Killing vector
fields on $M$. Similarly we can define ${\rm Ric}^{\clubsuit}\leq_{{\bf IK}}{\rm Ric}^{\diamondsuit}$ and ${\rm Ric}^{\clubsuit}\leq_{{\bf IK},f}{\rm Ric}^{\diamondsuit}$.

Consider the subset $\mathfrak{X}_{{\bf KC}}(M)$ of $\mathfrak{X}_{{\bf K}}
(M)$, which consists of Killing vector fields on $M$ with constant norm. we say
\begin{equation}
{\rm Ric}^{\clubsuit}\leq_{{\bf IKC},\mu}{\rm Ric}^{\diamondsuit}
\label{1.38}
\end{equation}
if ${\rm Ric}^{\clubsuit}(X,X)\leq_{{\bf I},\mu}{\rm Ric}^{\diamondsuit}
(X,X)$ holds for all $X
\in\mathfrak{X}_{{\bf KC}}(M)$. Similarly we can define ${\rm Ric}^{\clubsuit}\leq_{{\bf IKC}}{\rm Ric}^{\diamondsuit}$ and ${\rm Ric}^{\clubsuit}\leq_{{\bf IKC},f}{\rm Ric}^{\diamondsuit}$. We then obtain the following two results.

\begin{theorem}\label{t1.9}{\bf (see also Theorem \ref{t5.6})} Let $(M,g)$ be a closed Riemannian manifold with a smooth function $u$ and $\mu$ be a given measure on $M$. Then we have
\begin{itemize}

\item[(i)] ${\rm Ric}^{{\bf L}}\leq_{{\bf I}, \mu}{\rm Ric}$.

\item[(ii)] ${\rm Ric}\leq_{{\bf IKC}}{\rm Ric}^{{\bf
WY}}$.

\item[(iii)] ${\rm Ric}\leq_{{\bf IKC}}\widehat{{\rm Ric}}{}^{{\bf WY}}$.

\end{itemize}

\end{theorem}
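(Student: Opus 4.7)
Part (i) is pointwise: from ${\rm Ric}^{{\bf L}}={\rm Ric}-2\,du\otimes du$ one reads off
\begin{equation*}
{\rm Ric}^{{\bf L}}(X,X)-{\rm Ric}(X,X)=-2(Xu)^{2}\leq0
\end{equation*}
for every vector field $X$, so the inequality survives integration against any measure $\mu$.

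For (ii) and (iii) the main input is a pair of algebraic identities satisfied by a Killing vector field $X$ of constant norm on a closed manifold. Tracing the Killing equation $\nabla_{i}X_{j}+\nabla_{j}X_{i}=0$ yields ${\rm div}(X)=0$. Differentiating $|X|^{2}=\mathrm{const}$ gives $X^{i}\nabla_{j}X_{i}=0$, which combined with the Killing equation produces $X^{i}\nabla_{i}X_{j}=-X^{i}\nabla_{j}X_{i}=0$, i.e., $\nabla_{X}X=0$. Applying the divergence theorem on the closed manifold $M$ to the vector field with components $X^{i}X^{j}\nabla_{j}u$ and using both identities then forces
\begin{equation*}
\int_{M}\nabla^{2}u(X,X)\,dV=\int_{M}X^{i}X^{j}\nabla_{i}\nabla_{j}u\,dV=0.
\end{equation*}

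From the expressions for ${\rm Ric}^{{\bf WY}}$ and $\widehat{{\rm Ric}}^{{\bf WY}}$ displayed just above Proposition \ref{p1.8}, I compute
\begin{eqnarray*}
{\rm Ric}^{{\bf WY}}(X,X)-{\rm Ric}(X,X)&=&(n-1)(Xu)^{2}+(n-1)\nabla^{2}u(X,X),\\
\widehat{{\rm Ric}}^{{\bf WY}}(X,X)-{\rm Ric}(X,X)&=&\left(\Delta u+|\nabla u|^{2}\right)|X|^{2}-\nabla^{2}u(X,X)-(Xu)^{2}.
\end{eqnarray*}
For (ii), integrating the first difference over $M$ and using the vanishing of $\int_{M}\nabla^{2}u(X,X)\,dV$ leaves the manifestly nonnegative quantity $(n-1)\int_{M}(Xu)^{2}\,dV$. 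For (iii), set $c:=|X|^{2}$, a nonnegative constant; then $\int_{M}c\,\Delta u\,dV=0$ by the divergence theorem on the closed manifold, the Hessian term again integrates to zero by the identity above, and pointwise Cauchy-Schwarz gives $(Xu)^{2}\leq c|\nabla u|^{2}$, so
\begin{equation*}
\int_{M}(Xu)^{2}\,dV\leq c\int_{M}|\nabla u|^{2}\,dV=\int_{M}|\nabla u|^{2}\,|X|^{2}\,dV,
\end{equation*}
yielding (iii). The only step requiring care is the identity $\nabla_{X}X=0$ for Killing fields of constant norm; once this is in hand the result reduces to integration by parts on a closed manifold and Cauchy-Schwarz.
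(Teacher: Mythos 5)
Your proof is correct, and for parts (ii) and (iii) it reaches the same crucial fact as the paper — that $\int_{M}\nabla^{2}u(X,X)\,dV=0$ for a Killing field $X$ of constant norm — but by a more direct route. The paper establishes this via Lemma \ref{l5.7}, a general integration-by-parts identity for arbitrary vector fields that invokes Yano's formula (\ref{5.21})--(\ref{5.22}) and expresses $\int_{M}X^{i}X^{j}\nabla_{i}\nabla_{j}u\,dV$ in terms of $\int_{M}u\,\Delta|X|^{2}\,dV$; the vanishing then follows by specializing to the constant-norm case. Your argument instead observes that $\operatorname{div}(X)=0$ (from tracing the Killing equation) and $\nabla_{X}X=0$ (from differentiating $|X|^{2}=\mathrm{const}$ and combining with the Killing equation) make the vector field $V^{i}=X^{i}(Xu)$ satisfy $\operatorname{div}V=X^{i}X^{j}\nabla_{i}\nabla_{j}u$, so the integral vanishes by the divergence theorem on the closed manifold. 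This is shorter and more elementary for the statement at hand; the paper's Lemma \ref{l5.7} has the advantage of generality, which it exploits in Theorem \ref{t5.8} where the $e^{\widetilde f}dV$-weighted version requires tracking the $u\,\Delta|X|^{2}$ term rather than discarding it. Your handling of (i) (pointwise) and the Cauchy--Schwarz step in (iii) matches the paper's, and the bookkeeping of the terms $\int_{M}|X|^{2}\Delta u\,dV=0$ is correct.
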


A consequence of Theorem \ref{t1.9} indicates
\begin{equation}
{\rm Ric}^{{\bf L}}\leq_{{\bf IKC}}{\rm Ric}\leq_{{\bf IKC}}{\rm Ric}^{{\bf WY}} \ \ \ \text{and} \ \ \
{\rm Ric}^{{\bf L}}\leq_{{\bf IKC}}{\rm Ric}\leq_{{\bf IKC}}\widehat{{\rm Ric}}{}^{{\bf WY}}.
\label{1.39}
\end{equation}

\begin{theorem}\label{t1.10}{\bf (See also Theorem \ref{t5.8})} Let $(M,g)$ be a closed Riemannian manifold with a smooth function $u$ and $\mu$ be a given measure on $M$. Then we have
\begin{itemize}

\item[(i)] ${\rm Ric}\leq_{{\bf IKC},\widetilde{f}}{\rm Ric}^{{\bf
WY}}$, and

\item[(ii)] ${\rm Ric}\leq_{{\bf IKC},\widetilde{f}}\widehat{{\rm Ric}}{}^{{\bf WY}}$.

\end{itemize}
where $\widetilde{f}:=u-u_{\min}+c_{0}$ and $c_{0}\geq1/e$.
\end{theorem}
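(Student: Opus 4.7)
The plan is to prove (i) and (ii) together via a single integration-by-parts against the weighted measure $e^{\tilde f}\,dV$, exploiting two algebraic identities satisfied by every $X\in\mathfrak{X}_{\mathbf{KC}}(M)$: tracing the Killing equation $\nabla_a X_b+\nabla_b X_a=0$ gives $\operatorname{div}(X)=0$, while constancy of $|X|^2$ combined with the same skew-symmetry yields $\nabla_X X=0$, since $\langle\nabla_X X,Y\rangle=-\langle\nabla_Y X,X\rangle=-\tfrac12\nabla_Y|X|^2=0$ for every $Y$. From (1.29) and the displayed formula for $\widehat R^{\mathbf{WY}}_{jk}$ one obtains the pointwise identities
\begin{equation*}
{\rm Ric}^{\mathbf{WY}}(X,X)-{\rm Ric}(X,X)=(n-1)\bigl[\nabla^{2}u(X,X)+(Xu)^{2}\bigr],
\end{equation*}
\begin{equation*}
\widehat{\rm Ric}^{\mathbf{WY}}(X,X)-{\rm Ric}(X,X)=(\Delta u+|\nabla u|^{2})|X|^{2}-\nabla^{2}u(X,X)-(Xu)^{2},
\end{equation*}
so the whole task is to show that the integrals of these right-hand sides against $e^{\tilde f}dV$ are nonnegative.

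The heart of the argument is the following calculation. Using $\nabla_i\nabla_j u=\nabla_j\nabla_i u$, $\nabla\tilde f=\nabla u$, and integrating by parts on the closed manifold $M$,
\begin{equation*}
\int_{M}X^{i}X^{j}\nabla_{i}\nabla_{j}u\,e^{\tilde f}dV=-\int_{M}(\nabla_i u)\,\nabla_{j}\bigl(X^{i}X^{j}e^{\tilde f}\bigr)dV.
\end{equation*}
Expanding via the product rule, the $(\operatorname{div}X)X^{i}e^{\tilde f}$ term and the $(\nabla_X X)^{i}e^{\tilde f}$ term both vanish by the two identities above, leaving only $X^{i}X^{j}e^{\tilde f}\nabla_j u$. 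Therefore
\begin{equation*}
\int_{M}\nabla^{2}u(X,X)e^{\tilde f}dV=-\int_{M}(Xu)^{2}e^{\tilde f}dV,
\end{equation*}
and substituting into the first displayed identity yields $\int_{M}[{\rm Ric}^{\mathbf{WY}}(X,X)-{\rm Ric}(X,X)]e^{\tilde f}dV=0$, which establishes (i) (in fact as an equality of weighted integrals).

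For (ii), I would additionally use the identity $\Delta(e^{\tilde f})=e^{\tilde f}(\Delta u+|\nabla u|^{2})$, so that $\int_{M}(\Delta u+|\nabla u|^{2})e^{\tilde f}dV=0$ by the divergence theorem on the closed manifold. Since $|X|^{2}$ is constant, the first piece of the second displayed identity drops out, and the remaining $-\int_{M}[\nabla^{2}u(X,X)+(Xu)^{2}]e^{\tilde f}dV$ is zero by the previous step; so (ii) also reduces to an equality. The main obstacle is really just the bookkeeping: matching the three product-rule summands with $\operatorname{div}X$, $\nabla_X X$, and $\nabla\tilde f$ in the correct order, so no delicate analysis is needed. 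The hypothesis $c_0\geq 1/e$ plays no role in the identity itself, but guarantees $\tilde f\geq 1/e>0$ everywhere --- the critical value at which $y\mapsto y\log y$ attains its global minimum $-1/e$ --- keeping the weighted measure uniformly nondegenerate and compatible with the Bakry--\'Emery framework of the preceding subsections.
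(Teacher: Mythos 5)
Your computation is correct \emph{for the weight as literally printed}, but there is a notational slip in the paper's statement of Theorem~\ref{t1.10}/\ref{t5.8} that you have inadvertently reproduced, and your own observation that ``$c_0\geq 1/e$ plays no role'' is exactly the flag that something has gone wrong. From equation~\eqref{5.29}, the paper defines $\widetilde{u}:=u-u_{\min}+c_0$ and then $\widetilde{f}:=\ln\ln\widetilde{u}$; the weight intended in Theorem~\ref{t5.8} is $e^{\widetilde f}\,dV=(\ln\widetilde u)\,dV$, not $e^{u-u_{\min}+c_0}\,dV$. The choice $\widetilde f=\ln\ln\widetilde u$ is forced in the paper's argument by the requirement $\widetilde u\,\widetilde f'\,e^{\widetilde f}=1$, which the author uses to cancel the $I_{\widetilde u^2\widetilde f'e^{\widetilde f}}$ term in the integration by parts; with that weight one arrives at
\begin{equation*}
\int_{M}\bigl[{\rm Ric}^{\mathbf{WY}}(X,X)-{\rm Ric}(X,X)\bigr]e^{\widetilde f}dV
=(n-1)\int_{M}\Bigl(1-\tfrac{1}{\widetilde u\ln\widetilde u}\Bigr)\langle X,\nabla\widetilde u\rangle^2 e^{\widetilde f}dV,
\end{equation*}
and the sign of the right-hand side depends on $\widetilde u\ln\widetilde u\geq c_0\ln c_0\geq 1$, which is where the (implicit) requirement $c_0\ln c_0\geq 1$ enters --- not as a soft nondegeneracy condition but as the actual source of nonnegativity. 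So the hypothesis is essential in the paper's proof, and a correct reading should have sent you back to~\eqref{5.29}.

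That said, your argument is internally sound and proves a genuinely different (and in a sense sharper) fact: for the exponential weight $e^u\,dV$, the quantities $\int_M{\rm Ric}^{\mathbf{WY}}(X,X)e^{u}dV$ and $\int_M{\rm Ric}(X,X)e^{u}dV$, and likewise with $\widehat{\rm Ric}^{\mathbf{WY}}$, agree exactly for every $X\in\mathfrak{X}_{\mathbf{KC}}(M)$. Your integration by parts is correctly executed --- $\operatorname{div}X=0$ and $\nabla_X X=0$ for constant-length Killing fields kill the unwanted terms, $\nabla\tilde f=\nabla u$ turns the remaining term into $-\langle X,\nabla u\rangle^2$, and $\Delta(e^u)=e^u(\Delta u+|\nabla u|^2)$ handles the scalar piece in~(ii). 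This is in fact the weighted-measure analogue of the cancellation the author performs at the scalar level in Proposition~\ref{p5.4} (where $\int_M(\Delta u+|\nabla u|^2)e^u\,dV=0$ is used to get $R=_{\mathbf{I},u}R^{\mathbf{WY}}$). The paper's argument, by contrast, works with the weight $\ln\widetilde u$ and produces a strict inequality controlled by the pointwise lower bound $\widetilde u\ln\widetilde u\geq 1$. Both conclusions are correct for their respective weights; they are simply about different measures, and the one the paper actually proves is the one with the logarithmic weight.
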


For a given odd-dimensional sphere, we can always find a Riemannian metric $g$ and a Killing
vector field $X$ of constant length with respect to $g$.

\begin{proposition}\label{p1.11} {\bf (See also Proposition \ref{p5.9})} On each of $28$ homotopical seven-dimensional spheres $M$, there exist a Riemannian metric $g$ and a nonzero vector field $X$, such that

\begin{itemize}

\item ${\rm Ric}^{{\bf L}}(X,X)\leq_{{\bf I}}{\rm Ric}(X,X)\leq_{\bf I}{\rm Ric}^{{\bf WY}}(X,X)$ and ${\rm Ric}^{{\bf L}}(X,X)\leq_{{\bf I}}{\rm Ric}(X,X)
    \leq_{{\bf I}}\widehat{{\rm Ric}}{}^{{\bf WY}}(X,X)$ hold.

\item for any smooth function $u$ on $M$, ${\rm Ric}^{{\bf L}}(X,X)\leq_{{\bf I},\widetilde{f}}{\rm Ric}(X,X)\leq_{{\bf I},\widetilde{f}}{\rm Ric}^{{\bf WY}}(X,X)$ and ${\rm Ric}^{{\bf L}}(X,X)\leq{\rm Ric}(X,X)
    \leq_{{\bf I},\widetilde{f}}\widehat{{\rm Ric}}{}^{{\bf WY}}(X,X)$ hold, where $\tilde{f}:=u-u_{\min}+c_{0}$ with $c_{0}\geq1/e$.

\end{itemize}

\end{proposition}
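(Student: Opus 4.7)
The plan is to reduce Proposition \ref{p1.11} to Theorems \ref{t1.9} and \ref{t1.10}. Both bullets are about inequalities applied to a fixed vector field $X$, and the two theorems already supply the required comparisons for any Killing vector field of constant length. So the only real content is the production, on each of the $28$ smooth structures on $S^7$, of a Riemannian metric $g$ and a nonzero $X\in\mathfrak{X}_{\mathbf{KC}}(M)$.

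For the existence step, I would invoke the result of Boyer--Galicki--Koll\'ar that every one of the $28$ homotopy $7$-spheres admits a Sasakian--Einstein metric. On any Sasakian manifold the canonical Reeb vector field $\xi$ is, by definition, a Killing vector field with $g(\xi,\xi)\equiv 1$; in particular $\xi\in\mathfrak{X}_{\mathbf{KC}}(M)$ and $\xi\ne 0$. Taking $g$ to be the Sasakian--Einstein metric on the given homotopy sphere and $X:=\xi$ provides the desired pair. (For the standard $S^7$ one can of course bypass this and use the Reeb field of the standard Sasakian structure, equivalently the unit Killing field coming from a left-invariant vector field on $S^3\hookrightarrow S^7$; the exotic cases are the ones that require the Boyer--Galicki--Koll\'ar theorem.)

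With such an $X$ in hand, the first bullet follows by specializing parts (i), (ii), (iii) of Theorem \ref{t1.9} to this single $X$: part~(i), valid for any measure, gives ${\rm Ric}^{\mathbf{L}}(X,X)\le_{\mathbf{I}}{\rm Ric}(X,X)$; parts (ii) and (iii), valid on $\mathfrak{X}_{\mathbf{KC}}(M)$, supply ${\rm Ric}(X,X)\le_{\mathbf{I}}{\rm Ric}^{\mathbf{WY}}(X,X)$ and ${\rm Ric}(X,X)\le_{\mathbf{I}}\widehat{{\rm Ric}}{}^{\mathbf{WY}}(X,X)$. Chaining these yields both chains in the first bullet. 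For the second bullet, fix an arbitrary smooth $u$ on $M$, set $\widetilde f:=u-u_{\min}+c_{0}$ with $c_{0}\ge 1/e$, and apply Theorem \ref{t1.10}(i)--(ii) to the same $X$ to obtain ${\rm Ric}(X,X)\le_{\mathbf{I},\widetilde f}{\rm Ric}^{\mathbf{WY}}(X,X)$ and ${\rm Ric}(X,X)\le_{\mathbf{I},\widetilde f}\widehat{{\rm Ric}}{}^{\mathbf{WY}}(X,X)$; combine with the $\widetilde f$-instance of Theorem \ref{t1.9}(i), which is valid because (i) holds with respect to any measure $\mu$, to obtain the full two chains.

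The only real obstacle is the existence input: producing a Killing vector field of constant length on the $27$ exotic homotopy $7$-spheres is highly non-trivial and genuinely requires the Sasakian--Einstein theorem of Boyer--Galicki--Koll\'ar (or, alternatively, explicit bundle constructions over $S^{4}$ yielding $K$-contact metrics). Once that input is quoted, the rest of the argument is purely formal, amounting to plugging $X=\xi$ into Theorems \ref{t1.9} and \ref{t1.10}; no new analytic estimate needs to be performed here.
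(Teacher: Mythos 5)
Your proof is correct, and the reduction to Theorems \ref{t1.9} and \ref{t1.10} (equivalently \ref{t5.6} and \ref{t5.8}) applied to a fixed $X\in\mathfrak{X}_{\mathbf{KC}}(M)$ is exactly what the paper does. The difference lies entirely in the existence input. The paper cites Berestovskii--Nikonorov \cite{BN2006} (Corollary~11) together with Montgomery--Yang \cite{MY1973}: the latter produces pseudo-free $\mathbb{S}^{1}$-actions on the homotopy $7$-spheres, and the former converts such an action into a Riemannian metric admitting a Killing vector field of constant length. You instead invoke Boyer--Galicki--Koll\'ar's theorem that every one of the $28$ oriented diffeomorphism types on $S^7$ carries a Sasaki--Einstein metric, and take $X$ to be the Reeb field, which is automatically unit-length and Killing. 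Both yield a pair $(g,X)$ with $X\in\mathfrak{X}_{\mathbf{KC}}(M)$, so the downstream argument is identical; but your existence input is considerably stronger than needed (Sasaki--Einstein is much more special than merely admitting a unit Killing field), whereas the paper's route via circle actions is the more economical one for this purpose. Either citation settles the proposition.
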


We say that a Riemannian metric $g$ on $M$ is of {\it cohomogeneity 1} if some compact Lie group $G$ acts smoothly and isometrically on $M$ and the space of orbits $M/G$ with respect to this action is one-dimensional.

\begin{proposition}\label{p1.12}{\bf (See also Proposition \ref{p5.10})} Let $n\geq2$ and $\epsilon>0$. On the sphere $\mathbb{S}^{2n-1}$, there are a (real-analytic) Riemannian
metric $g_{\epsilon}$, of cohomogeneity $1$, with the property that all section curvatures of $g_{\epsilon}$ differ from $1$ at most by $\epsilon$, and a (real-analytic) nonzero vector field $X_{\epsilon}$, such that

\begin{itemize}

\item ${\rm Ric}^{{\bf L}}_{g_{\epsilon}}(X_{\epsilon},X_{\epsilon})\leq_{{\bf I}}{\rm Ric}_{g_{\epsilon}}(X_{\epsilon},X_{\epsilon})\leq_{\bf I}{\rm Ric}^{{\bf WY}}_{g_{\epsilon}}(X_{\epsilon},X_{\epsilon})$ and ${\rm Ric}^{{\bf L}}_{g_{\epsilon}}(X_{\epsilon},X_{\epsilon})\leq_{{\bf I}}{\rm Ric}_{g_{\epsilon}}(X_{\epsilon},X_{\epsilon})
    \leq_{{\bf I}}\widehat{{\rm Ric}}{}^{{\bf WY}}_{g_{\epsilon}}
    (X_{\epsilon},X_{\epsilon})$ hold.

\item for any smooth function $u$ on $M$, ${\rm Ric}^{{\bf L}}_{g_{\epsilon}}(X_{\epsilon},X_{\epsilon})\leq_{{\bf I},\widetilde{f}}{\rm Ric}_{g_{\epsilon}}(X_{\epsilon},X_{\epsilon})\leq_{{\bf I},\widetilde{f}}{\rm Ric}^{{\bf WY}}_{g_{\epsilon}}$ $(X_{\epsilon},X_{\epsilon})$ and ${\rm Ric}^{{\bf L}}_{g_{\epsilon}}(X_{\epsilon},X_{\epsilon})\leq{\rm Ric}_{g_{\epsilon}}(X_{\epsilon},X_{\epsilon})
    \leq_{{\bf I},\widetilde{f}}\widehat{{\rm Ric}}{}^{{\bf WY}}_{g_{\epsilon}}(X_{\epsilon},X_{\epsilon})$ hold, where $\tilde{f}:=u-u_{\min}+c_{0}$ with $c_{0}\geq1/e$.

\end{itemize}

\end{proposition}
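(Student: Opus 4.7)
The plan is to reduce the asserted chains of inequalities to Theorems~\ref{t1.9} and~\ref{t1.10}, and then to build $(g_\epsilon,X_\epsilon)$ explicitly. The leftmost inequality ${\rm Ric}^{{\bf L}}(X,X)\leq_{{\bf I}}{\rm Ric}(X,X)$ (and its $\widetilde{f}$-weighted analogue) is pointwise, since ${\rm Ric}^{{\bf L}}={\rm Ric}-2\,du\otimes du$, so it holds for any $X_\epsilon$ and any smooth $u$. The rightmost inequalities, against ${\rm Ric}^{{\bf WY}}$ or $\widehat{{\rm Ric}}{}^{{\bf WY}}$, are delivered by parts (ii)--(iii) of Theorem~\ref{t1.9} in the unweighted case and by Theorem~\ref{t1.10} in the $\widetilde{f}$-weighted case, once we know that $X_\epsilon$ is a Killing field of constant $g_\epsilon$-length. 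The entire proposition thus reduces to producing such an $(g_\epsilon,X_\epsilon)$ with the stated cohomogeneity and near-constant-curvature properties.

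To construct the metric, I view $\mathbb{S}^{2n-1}\subset\mathbb{C}^n$ and let $G:=U(1)\times U(n-1)$ act by $(e^{i\alpha},A)\cdot(z_1,w):=(e^{i\alpha}z_1,Aw)$. The orbits are parameterized by $|z_1|\in[0,1]$, with singular orbits at the two endpoints, so the action is of cohomogeneity one. Writing $|z_1|=\sin s$ for $s\in[0,\pi/2]$, every $G$-invariant metric has the doubly-warped form
\[
g_\epsilon=\beta(s)^{2}\,ds^{2}+\sin^{2}s\,d\phi^{2}+\cos^{2}s\,g_{\mathbb{S}^{2n-3}},
\]
and I pick $\beta(s):=1+\epsilon\sin^{2}(2s)$. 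This $\beta$ is real-analytic, satisfies $\beta(0)=\beta(\pi/2)=1$, and is an even analytic function of the orbit distance at each singular orbit, so $g_\epsilon$ extends to a real-analytic Riemannian metric on $\mathbb{S}^{2n-1}$. At $\epsilon=0$ one recovers the round metric, and continuous dependence of the curvature tensor on the metric forces every sectional curvature of $g_\epsilon$ to lie within the prescribed tolerance of $1$ for all sufficiently small $\epsilon$.

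For the vector field, take the Hopf field $X_\epsilon(p):=ip$, which generates the one-parameter subgroup $\alpha\mapsto(e^{i\alpha},e^{i\alpha}I)\subset G$ and is therefore Killing for every $G$-invariant metric. In the adapted frame one has $X_\epsilon=\partial_\phi+\widetilde{Y}$, with $\widetilde{Y}$ the lift to $\mathbb{S}^{2n-1}$ of the unit Hopf field of the round $\mathbb{S}^{2n-3}$; the two summands live on disjoint complex-coordinate blocks of $\mathbb{C}^n$ and so are $g_\epsilon$-orthogonal, whence a direct evaluation gives
\[
|X_\epsilon|_{g_\epsilon}^{2}=\sin^{2}s+\cos^{2}s=1,
\]
constant in $s$ and $\epsilon$. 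With this $(g_\epsilon,X_\epsilon)$ the reduction above closes the proof.

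The step I expect to be most delicate is verifying that $g_\epsilon$ really extends smoothly (not merely formally) across the two singular orbits. This reduces to the cone-smoothness criterion at each endpoint, which in turn asks only that $\beta$ be an even analytic function of the distance to the singular orbit and take value $1$ there; both requirements are met at $s=0$ and, via $t:=\pi/2-s$, also at $s=\pi/2$ by the choice $\beta(s)=1+\epsilon\sin^{2}(2s)$.
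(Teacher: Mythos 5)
Your reduction to Theorems~\ref{t1.9} and~\ref{t1.10} (equivalently Theorems~\ref{t5.6} and~\ref{t5.8}) is exactly the paper's, but where the paper simply invokes Theorem~21 of Berestovskii--Nikonorov~\cite{BN2006} to produce the cohomogeneity-one pinched metric $g_\epsilon$ and the unit-length Killing field $X_\epsilon$, you build them explicitly, so your argument is genuinely more self-contained at that step. Your construction is sound: $U(1)\times U(n-1)$ does act on $\mathbb{S}^{2n-1}\subset\mathbb{C}^n$ with one-dimensional orbit space $[0,\pi/2]$; any $G$-invariant metric has your doubly-warped form; with $\beta(s)=1+\epsilon\sin^2(2s)$ one has $\beta(0)=\beta(\pi/2)=1$ and even parity in $s$ (resp.\ in $\pi/2-s$), so in arclength coordinate $r$ the profiles $f(r)=\sin s(r)$, $h(r)=\cos s(r)$ are odd/even with $f'(0)=1$, $h(0)=1$, which is the real-analytic smoothness criterion at both singular orbits; the Hopf field $X_\epsilon(p)=ip$ comes from the diagonal $U(1)\subset G$, so it is Killing, and the orthogonal split $X_\epsilon=\partial_\phi+\widetilde Y$ gives $|X_\epsilon|^2_{g_\epsilon}=\sin^2 s+\cos^2 s=1$. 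One small notational caveat: you overload $\epsilon$ as both the deformation parameter in $\beta$ and the curvature tolerance; to close the pinching claim cleanly you should take the deformation parameter $\epsilon'$ small enough (by $C^2$-continuity of the curvature tensor in the metric) that all sectional curvatures of $g_{\epsilon'}$ lie within the prescribed $\epsilon$ of $1$, then relabel. The two approaches buy different things: the paper's citation is shorter and inherits whatever sharpness BN's construction carries (e.g.\ non-trivialities of the metric beyond near-roundness), while your construction makes the metric, the Killing field, and the constant-length computation fully transparent, which is useful if one wants to push the example further.
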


Berestovskii and Nikonorov \cite{BN2006} observed that (see {\color{red}{Remark
\ref{r5.11}}}) that if the ${\bf S}^{1}$-action obtain by Tuschmann \cite{Tuschmann1997} is free, then we can find for $\epsilon>0$ a Killing vector field $X_{\epsilon}$ of unit length with respect to $g_{\epsilon}$.
\\

The nonnegativity of $R^{{\bf L}}_{ijk\ell}$ was used in \cite{Wu2018}
to prove the compactness for gradient shrinking Ricci harmonic solitons. There is no useful relation between ${\rm Rm}^{{\bf L}}$ and ${\rm Rm}^{{\bf WY}}$. More precisely, we can find (see {\color{red}{Example \ref{e5.12}}}) a Riemannian manifold $(M,g)$ so that ${\rm Rm}^{{\bf L}}(X,Y,Y,X)< {\rm Rm}^{{\bf WY}}(X,Y,Y,X)$
for some triple $(X, Y, u)$ of smooth vector fields $X, Y$ and smooth function $u$, and ${\rm Rm}^{{\bf L}}(X,Y,Y,X)> {\rm Rm}^{
{\bf WY}}(X,$ $Y,Y,X)$ for another such triple $(X', Y', u')$.
\\

{\bf (E) Uniqueness for the Ricci-harmonic flow.} The uniqueness
problem for the Ricci flow was proved by Hamilton \cite{H1982} in the
compact setting, Chen-Zhu \cite{CZ2006} for forward uniqueness of complete solutions, and Kotschwar \cite{K2010, K2014} for forward and backward uniqueness of complete solutions.

It is a natural problem to prove the forward and
backward uniqueness for the Ricci-harmonic flow. Actually,
the forward uniqueness of the Ricci-harmonic flow when the underlying
manifold is compact, was proved by List \cite{List2005}. In this paper, we use the strategy of Kotschwar to prove the uniqueness for complete solutions
of the Ricci-harmonic flow. List \cite{List2005} has proved that (see
{\color{red}{Theorem \ref{tA.3}}}) if $(M, g)$ is a complete and
non-compact Riemannian manifold satisfying
\begin{equation*}
|{\rm Rm}_{g}|_{g}+
|u|+|\nabla_{g} u|_{g}+|\nabla^{2}_{g}u|_{g}\lesssim1
\end{equation*}
then a local time existence holds for the Ricci-harmonic flow with
the initial data $(g,u)$, and moreover
\begin{equation*}
|{\rm Rm}_{g(t)}|_{g(t)}
+|u(t)|+|\nabla_{g(t)}u(t)|_{g(t)}+|\nabla^{2}_{g(t)}u(t)|_{g(t)}\lesssim1.
\end{equation*}
Hence one may expect that the uniqueness of the Ricci-harmonic flow holds in the
class $\{(g,u): {\rm Rm}_{g}, \nabla_{g}u, \nabla^{2}_{g}u \
\text{bounded}\}$. Surprisingly we prove that the uniqueness of
the Ricci-harmonic flow holds in a larger class $\{(g,u): {\rm Rm}_{g} \
\text{bounded}\}$. The reason is that if the Riemann curvature is bounded
along the flow {\color{blue}{(\ref{1.1})}} then all derivatives of $u(t)$ is still bounded by {\color{red}{Theorem \ref{tB.2}}}. To state the results, consider the following curvature condition
\begin{equation}
\sup_{M\times[0,T]}\left(|{\rm Rm}_{g(t)}|_{g(t)}+|{\rm Rm}_{\tilde{g}(t)}|_{\tilde{g}(t)}\right)\leq K,\label{1.40}
\end{equation}
where $K$ is some uniform constant.

\begin{theorem}\label{t1.13}{\bf (See also Theorem \ref{t6.1})} Suppose that $(g(t),u(t))$ and $(\tilde{g}(t),\tilde{u}(t))$ are two smooth
complete solutions of {\color{blue}{(\ref{1.1})}} satisfying {\color{blue}{(\ref{1.40})}}. If $(g(0), u(0))=(\tilde{g}(0),
\tilde{u}(0))$, then $(g(t),u(t))\equiv(\tilde{g}(t),\tilde{u}(t))$ for each $t\in[0,T]$.
\end{theorem}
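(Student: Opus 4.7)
The plan is to adapt Kotschwar's energy-method proof of forward uniqueness for the Ricci flow \cite{K2010} to the coupled system (\ref{1.1}). The first reduction exploits (\ref{1.40}): by Theorem \ref{tB.2} applied to both flows, all covariant derivatives $\nabla^{k}_{g(t)}u(t)$ and $\tilde{\nabla}^{k}_{\tilde{g}(t)}\tilde{u}(t)$ are uniformly bounded on $M\times[0,T]$ for every $k\geq 0$. Consequently every coefficient arising from curvature or from derivatives of $u,\tilde{u}$ that will appear in our difference equations can be treated as a bounded function.

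Next, introduce the difference tensors
\[
h:=g-\tilde{g},\quad A:=\Gamma-\tilde{\Gamma},\quad B:=\mathrm{Rm}-\widetilde{\mathrm{Rm}},\quad w:=u-\tilde{u},\quad V:=\nabla w,\quad W:=\nabla^{2}w,
\]
with norms measured in $g(t)$. Subtracting the two copies of (\ref{1.1}) and repeatedly commuting derivatives yields a closed Ricci-type system of inequalities of schematic shape
\[
\partial_{t}|h|^{2}\lesssim |h|^{2}+|A|^{2}+|V|^{2}+|w|^{2},\qquad \partial_{t}|A|^{2}\lesssim |h|^{2}+|A|^{2}+|B|^{2}+|V|^{2}+|W|^{2},
\]
\[
\Box|B|^{2}+2|\nabla B|^{2}\lesssim |h|^{2}+|A|^{2}+|B|^{2}+|V|^{2}+|W|^{2},
\]
\[
\Box|w|^{2}+2|V|^{2}\lesssim |w|^{2},\qquad \Box|V|^{2}+2|W|^{2}\lesssim |h|^{2}+|A|^{2}+|V|^{2}+|W|^{2},
\]
\[
\Box|W|^{2}+2|\nabla W|^{2}\lesssim |h|^{2}+|A|^{2}+|B|^{2}+|V|^{2}+|W|^{2},
\]
where all implied constants depend only on $K$, $n$, and the derivative bounds above. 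The genuinely new feature compared with the pure Ricci flow is the coupling term $4\nabla u\otimes\nabla u$ in $\partial_{t}g$, which forces $|V|^{2}$ into the right-hand side of the $h$-inequality; fortunately $V$ and $W$ carry their own good dissipation via $\Box|w|^{2}$ and $\Box|V|^{2}$.

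To pass from these pointwise inequalities to a global estimate on the complete manifold, fix $x_{0}\in M$ and, following \cite{K2010}, introduce the exponentially weighted energy
\[
\mathcal{E}(t):=\int_{M}\bigl(|h|^{2}+|A|^{2}+|B|^{2}+|w|^{2}+|V|^{2}+|W|^{2}\bigr)e^{-\varphi}\,dV_{g(t)},\qquad \varphi(x):=a\sqrt{1+r_{g(0)}(x,x_{0})^{2}},
\]
with $a=a(K,n,T)$ chosen sufficiently large. Since $|\mathrm{Rm}|\le K$ along both flows, $g(t)$ and $\tilde{g}(t)$ are uniformly equivalent to $g(0)$, so $\mathcal{E}(t)$ is finite on $[0,T]$ and $\mathcal{E}(0)=0$. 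Differentiating $\mathcal{E}$ in $t$, substituting the pointwise inequalities, and integrating by parts using $|\nabla\varphi|\lesssim 1$, one absorbs the resulting gradient terms into the good dissipative contributions $-|\nabla B|^{2},\,-|\nabla W|^{2}$ and obtains $\mathcal{E}'(t)\le C\,\mathcal{E}(t)$. Gronwall's lemma combined with $\mathcal{E}(0)=0$ then forces $\mathcal{E}\equiv 0$, hence $g\equiv\tilde{g}$ and $u\equiv\tilde{u}$ throughout $M\times[0,T]$.

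The main obstacle is making the global integration by parts rigorous: the weighted energy identity requires the weighted $L^{2}$-norms of the six basic quantities and of $\nabla B,\nabla W$ to be finite on each time slice and to produce no boundary contribution at spatial infinity. This is delivered by the uniform metric equivalence, the uniform derivative bounds of Theorem \ref{tB.2}, and an exhaustion by smooth cutoffs supported in $\{r_{g(0)}(\cdot,x_{0})\le R\}$ with $R\to\infty$, exactly following Kotschwar's scheme; the only new verification is that the cross terms generated by the Ricci-harmonic coupling $\nabla u\otimes\nabla u$ fit into the schematic system above, which is where the uniform derivative bounds on $u,\tilde{u}$ are indispensable.
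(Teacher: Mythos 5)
The proposal follows the general Kotschwar energy-method philosophy, and you correctly identify the key reduction (boundedness of all derivatives of $u,\tilde u$ via Theorem~\ref{tB.2} under (\ref{1.40})). But the route you take diverges from the paper's in several ways, and there is a concrete gap in the central schematic estimate.

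The paper proves Theorem~\ref{t1.13} via Kotschwar's 2014 \emph{forward}-uniqueness machinery, with the time-weighted energy
\[
\mathcal{E}(t)=\int_{M}\Bigl(t^{-1}|h|^{2}+t^{-\beta}|A|^{2}+|T|^{2}+|v|^{2}+|w|^{2}\Bigr)e^{-\eta}\,dV_{g(t)},
\qquad \beta\in(0,1),
\]
and a carefully constructed \emph{time-dependent} cutoff $\eta$ satisfying $\partial_{t}\eta\geq B|\nabla\eta|^{2}$ and $e^{-\eta}\leq e^{-Br_{0}^{2}/T}$ (Lemma~\ref{l6.3}). Your proposal replaces both devices: you drop the $t^{-1},t^{-\beta}$ prefactors entirely and use a static exponential weight $e^{-\varphi}$ with $\varphi=a\sqrt{1+r^{2}}$. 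You also add $\nabla A$ and $\nabla^{2}w$ to the energy, which is closer in spirit to the paper's \emph{backward}-uniqueness argument (Theorem~\ref{t6.6}, which invokes Kotschwar's 2010 theorem); but that theorem requires the data to vanish at the \emph{final} time and does not apply here.

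The concrete gap is in your second schematic inequality,
\[
\partial_{t}|A|^{2}\lesssim |h|^{2}+|A|^{2}+|B|^{2}+|V|^{2}+|W|^{2}.
\]
This cannot hold: the evolution of the Christoffel difference $A=\Gamma-\widetilde{\Gamma}$ genuinely contains the term $\nabla T=\nabla B$ (see (\ref{6.13}) and (\ref{6.17})), because $\partial_{t}\Gamma$ involves $\nabla\mathrm{Ric}$. So the honest pointwise estimate is $\partial_{t}|A|^{2}\lesssim |A|\,|\nabla B|+(\text{terms controlled by }\mathcal{E})$, and after Young's inequality the $|A|\,|\nabla B|$ contribution costs $\epsilon|\nabla B|^{2}+\epsilon^{-1}|A|^{2}$. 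That $\epsilon|\nabla B|^{2}$ must be absorbed into the dissipation $-2|\nabla B|^{2}$ produced by the $T$-equation, and simultaneously the divergence-structure term $\langle\operatorname{div}S,T\rangle$ in $\Box T$ must be integrated by parts against the weight, producing another copy of $|\nabla B|$ paired with $|h|+|A|$, plus a $|T|\,|\nabla\eta|$ term. Managing this bookkeeping without losing a uniform Grönwall constant is precisely where the time weights $t^{-1},t^{-\beta}$ and the property $\partial_{t}\eta\geq B|\nabla\eta|^{2}$ earn their keep in Kotschwar's 2014 argument: the time-derivative of $\eta$ absorbs the $|T|^{2}|\nabla\eta|^{2}$ term, and the time-derivative of the prefactors supplies extra dissipation $-t^{-2}|h|^{2}$, $-\beta t^{-1-\beta}|A|^{2}$ that soaks up the $t$-divergent coefficients appearing in the $\mathcal{I}'$ estimate (compare (\ref{6.32})--(\ref{6.39})). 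Because your schematic system omits the $|\nabla B|$ coupling altogether, the closed ODE system you write is materially simpler than what one actually obtains, and the claim $\mathcal{E}'\leq C\mathcal{E}$ with a uniform constant is asserted rather than earned. You need to (i) include the $|A||\nabla B|$ term, (ii) explain how to balance the resulting $\epsilon|\nabla B|^{2}$ against the $T$-equation dissipation, and (iii) verify the Grönwall constant does not blow up as $t\to 0$; as written, the proposal silently skips over the core technical obstruction the time weights were designed to overcome.
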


The Basic idea on proving {\color{red}{Theorem \ref{t1.13}}} follows from the approach of Kotschwar \cite{K2014} who considered the quantity
\begin{eqnarray}
\mathcal{E}(t)&:=&\int_{M}\bigg[t^{-1}|g(t)-\tilde{g}(t)|^{2}_{g(t)}
+t^{-\beta}\left|\Gamma_{g(t)}-\Gamma_{\tilde{g}(t)}\right|^{2}_{g(t)}\nonumber\\
&&+ \ \left|{\rm Rm}_{g(t)}-{\rm Rm}_{\tilde{g}(t)}
\right|^{2}_{g(t)}\bigg]e^{-\eta}dV_{g(t)}\label{1.41}
\end{eqnarray}
for the Ricci flow, where $\beta\in(0,1)$ and $\eta$ is a cutoff function (so that the integral is well-defined as $t$ tends to zero). In our setting, the corresponding quantity for the Ricc-harmonic flow takes the form
\begin{eqnarray}
\mathcal{E}(t)&:=&\int_{M}\bigg[t^{-1}|g(t)-\tilde{g}(t)|^{2}_{g(t)}
+t^{-\beta}\left|\Gamma_{g(t)}-\Gamma_{\tilde{g}(t)}\right|^{2}_{g(t)}
+\left|{\rm Rm}_{g(t)}-{\rm Rm}_{\tilde{g}(t)}
\right|^{2}_{g(t)}\nonumber\\
&&+ \ |u(t)-\tilde{u}(t)|^{2}_{g(t)}
+\left|\nabla_{g(t)} u(t)-\nabla_{\tilde{g}(t)}\tilde{u}(t)\right|^{2}_{g(t)}
\bigg]e^{-\eta}dV_{g(t)}.\label{1.42}
\end{eqnarray}
It can be showed
\begin{equation}
\mathcal{E}'(t)\leq N\mathcal{E}(t)\label{1.43}
\end{equation}
on $[0,T_{0}]$, for some $T_{0}\ll1$ and $N>0$. From {\color{blue}{(\ref{1.43})}} together
with the initial data $\mathcal{E}(0)=0$, we get $\mathcal{E}(t)
\equiv0$ on $[0,T_{0}]$ and then on $[0,T]$.

\begin{theorem}\label{t1.14}{\bf (See also Theorem \ref{t6.6})} Suppose that $(g(t), u(t))$ and $(\tilde{g}(t), \tilde{u}(t))$ are two smooth complete solutions of {\color{blue}{(\ref{1.1})}} satisfying {\color{blue}{(\ref{1.40})}}. If $(g(T), u(T))=(\tilde{g}(T),
\tilde{u}(T))$, then $(g(t), u(t))\equiv(\tilde{g}(t), \tilde{u}(t))$
for each $t\in[0,T]$.
\end{theorem}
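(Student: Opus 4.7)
The plan is to mirror Kotschwar's Carleman-estimate argument \cite{K2014} for backward uniqueness of the Ricci flow, enlarging the system of differential inequalities to accommodate the scalar field $u$. The key observation is that once the curvatures of both flows are bounded by $K$ as in (\ref{1.40}), Theorem \ref{tB.2} furnishes uniform bounds on $\nabla u,\nabla^{2}u,\tilde\nabla\tilde u,\tilde\nabla^{2}\tilde u$ and in fact on all higher derivatives of $u$ and $\tilde u$, and these bounds play the role that the uniform curvature bound alone plays in the pure Ricci case.

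First I would reverse time by writing $\tau := T - t$, so that the hypothesis becomes trivial Cauchy data at $\tau = 0$ for the system of differences
$$h := \tilde g - g,\quad A := \tilde\Gamma - \Gamma,\quad S := \widetilde{{\rm Rm}} - {\rm Rm},\quad v := \tilde u - u,\quad W := \tilde\nabla\tilde u - \nabla u,\quad H := \tilde\nabla^{2}\tilde u - \nabla^{2}u,$$
and the goal is to show that these all vanish on a short interval $[T - \delta, T]$ for some $\delta$ depending only on $K$ and $n$. Once established there, the forward uniqueness Theorem \ref{t1.13} together with a standard open-closed continuation argument produces the equality on the whole of $[0,T]$.

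Second, I would derive a closed system of mixed-type differential inequalities. The quantities $h$ and $A$ satisfy transport-type inequalities $|\partial_{\tau}h|\lesssim|h|+|A|$ and $|\partial_{\tau}A|\lesssim|A|+|S|+|H|+|h|$, which follow from $\partial_{t}g=-2{\rm Ric}+4\nabla u\otimes\nabla u$ and the standard relation between $\partial_{t}\Gamma$, $\nabla{\rm Ric}$ and $\nabla(\nabla u\otimes\nabla u)$. The remaining quantities $S$, $v$ and $H$ satisfy heat-type inequalities of the schematic form
$$|(\partial_{\tau}+\Delta)S|+|(\partial_{\tau}+\Delta)v|+|(\partial_{\tau}+\Delta)H| \ \lesssim \ |h|+|A|+|\nabla A|+|S|+|v|+|W|+|H|,$$
obtained from the evolution equations $\partial_{t}{\rm Rm}=\Delta{\rm Rm}+\cdots$, $\partial_{t}u=\Delta u$ and the commutator identity for $\partial_{t}\nabla^{2}u$; including $H$ as an independent variable is what prevents uncontrolled third derivatives of $u$ from entering when one Laplacianizes $W$.

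Finally, Kotschwar's Carleman inequality applied to the heat-type components $S,v,H$, coupled with Gr\"onwall-type estimates for the transport components $h,A,W$, produces for each sufficiently small $\varepsilon>0$ a weighted integral bound that absorbs the right-hand side into itself and forces every difference to vanish on $[T-\delta,T]$; this weighted absorption step is precisely the reason an unweighted energy, as in the proof of Theorem \ref{t1.13}, does not suffice in the backward direction. The hardest part will be closing the system of inequalities: one must verify that, after writing down all evolution equations and carefully converting between $g$- and $\tilde g$-covariant derivatives, contractions, and volume forms, only the listed quantities (and at most one of their spatial derivatives) appear on the right-hand sides, so that the Carleman machinery of \cite{K2014} can be invoked without modification. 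All the higher-order bounds on $u$ and $\tilde u$ needed to carry out this bookkeeping are supplied by Theorem \ref{tB.2} applied to both flows.
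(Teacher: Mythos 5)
Your high-level strategy is the right one—differences, a PDE/ODE split, and Kotschwar's backward-uniqueness machinery—and it is essentially the one the paper uses in Section~6.2. But the system of variables you propose does \emph{not} close, and two of the schematic inequalities you write down are incorrect.

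First, the heat-type inequality you assert,
\[
\bigl|(\partial_\tau+\Delta)S\bigr|\ \lesssim\ \cdots + |\nabla A|,
\]
is already incompatible with the framework you want to invoke. In Kotschwar's theorem (Theorem~\ref{t6.5}, from~\cite{K2010}, \emph{not}~\cite{K2014}; the latter is the forward energy argument), the parabolic component $\Box\mathbf{X}$ may be bounded by $|\mathbf{X}|+|\nabla\mathbf{X}|+|\mathbf{Y}|$, but \emph{not} by $|\nabla\mathbf{Y}|$. Since $A$ is a transport variable in your scheme, $|\nabla A|$ cannot sit on the right of the heat inequality. And the appearance of $\nabla A$ is not an artifact: converting $\tilde\Delta\widetilde{\rm Rm}$ into $\Delta\widetilde{\rm Rm}$ genuinely produces a term $B\ast\widetilde{\rm Rm}$ with $B:=\nabla A$ (this is (\ref{6.53}) in the paper). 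The only way out is to promote $B$ to a new ODE variable. But then $\partial_\tau B$ involves $\nabla U$, where $U:=\nabla{\rm Rm}-\widetilde\nabla\widetilde{\rm Rm}$ (which is \emph{not} the same as $\nabla S$, and unlike $\nabla S$ satisfies a clean heat equation), so $U$ must enter as a second PDE variable. Tracing through $\partial_\tau B$ also forces $x:=\nabla w$ into the ODE slot and $z:=\nabla^3 u-\widetilde\nabla{}^3\tilde u$ into the PDE slot. This is precisely the enlarged system the paper uses,
\[
\mathbf{X}=T\oplus U\oplus y\oplus z,\qquad \mathbf{Y}=h\oplus A\oplus B\oplus v\oplus w\oplus x,
\]
and with fewer variables the inequalities (\ref{1.44}) simply cannot be verified. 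Your remark that one must check ``at most one of their spatial derivatives appears'' understates the constraint: for the parabolic group, derivatives of the transport variables must \emph{not} appear at all.

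Second, two transport inequalities you record are wrong as written. From $\partial_t h=-2({\rm Ric}-\widetilde{\rm Ric})+4(\nabla u\otimes\nabla u-\widetilde\nabla\tilde u\otimes\widetilde\nabla\tilde u)$ one gets $|\partial_\tau h|\lesssim|S|+|W|$, not $|h|+|A|$. And from $\partial_t\Gamma=g^{-1}\ast\nabla{\rm Ric}+\cdots$ one sees that $|\partial_\tau A|$ must involve $|\nabla S|$ (equivalently $|U|$); the form $|\partial_\tau A|\lesssim|A|+|S|+|H|+|h|$ omits exactly the term that necessitates $U$'s presence. Finally, once vanishing is established on $[\delta,T]$ one simply lets $\delta\downarrow 0$ (as the paper does); invoking forward uniqueness Theorem~\ref{t1.13} for a continuation step is not needed and would not apply at $t=0$ directly.
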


To prove {\color{red}{Theorem \ref{t1.14}}}, we use an idea of Kotschwar \cite{K2010} and set
\begin{eqnarray*}
T&:=&{\rm Rm}-\widetilde{{\rm Rm}}, \ \  \
U \ \ := \ \ \nabla{\rm Rm}
-\widetilde{\nabla}\widetilde{{\rm Rm}},\\
h&:=&g-\tilde{g}, \ \ \ A \ \ := \ \ \nabla-\widetilde{\nabla}, \ \ \
B \ \ := \ \ \nabla A\\
y&:=&\nabla^{2}u-\widetilde{\nabla}{}^{2}
\tilde{u}, \ \ \ z \ \ := \ \
\nabla^{3}u-\widetilde{\nabla}{}^{3}\tilde{u},\\
v&:=&u-\tilde{u}, \ \ \ w \ \ := \ \
\nabla u-\widetilde{\nabla}\tilde{u}, \ \ \ x \ \ := \ \ \nabla w,\\
{\bf X}&:=&T\oplus U\oplus y\oplus z, \ \ \ {\bf Y} \ \ := \ \ h\oplus A\oplus B\oplus v\oplus w\oplus x.
\end{eqnarray*}
As the same argument of \cite{K2010}, we can prove
\begin{eqnarray}
|\Box_{g(t)}{\bf X}|^{2}_{g(t)}&\lesssim&|{\bf X}|^{2}_{g(t)}
+|{\bf Y}|^{2}_{g(t)},\nonumber\\
|\partial_{t}{\bf Y}|^{2}_{g(t)}&\lesssim&
|{\bf X}|^{2}_{g(t)}+|{\bf Y}|^{2}_{g(t)}
+|\nabla{\bf X}|^{2}_{g(t)}\label{1.44}
\end{eqnarray}
on any $[\delta, T]$, where $\delta\in(0,T)$. A result of Kotschwar (see
{\color{red}{Theorem \ref{t6.5}}} below) implies
${\bf X}={\bf Y}\equiv0$ on $[\delta, T]$, and then on $[0,T]$.
\\

{\bf Acknowledgments.} The main results were announced in the Conference GeoProb 2017 in the University of Luxembourg from July 10 to 14, and 2018 Mini-Workshop about Function Theory on Riemannian Manifolds in the University of Science and Technology of China from October 5 to 6. The author thanks Professor Anton Thalmaier and Zuoqin Wang, respectively, for his invitation. Some part was done when the author visited Tsinghua University invited by Professor Guoyi Xu with whom I discussed the boundedness of potentials in the Ricci-harmonic flow, and Chinese Academy of Sciences invited by Professor Xiang-Dong Li with whom I discussed the super-Ricci flow.

\section{Gradient and local curvature estimates}\label{section2}

In this section we assume that $(g(t),u(t))_{[0,T]}$ is a solution of {\color{blue}{(\ref{1.1})}} on a closed $n$-dimensional manifold $M$ and use the convention
in {\bf Section} \ref{section1}. From the equation
{\color{blue}{(\ref{A.8})}} in {\color{red}{Lemma \ref{lA.1}}}, we see that
$|\nabla u|^{2}$ is uniformly bounded, i.e.,
\begin{equation}
|\nabla u|\leq L\label{2.1}
\end{equation}
on $M\times[0,T]$ for some uniform constant $L$ depending only on
the initial data $(g_{0},u_{0})=(g(0), u(0))$. We also notice from {\color{blue}{(\ref{A.9})}} that
\begin{equation}
R-2|\nabla u|^{2}\gtrsim -1\Longrightarrow R\gtrsim-1.\label{2.2}
\end{equation}
Moreover, integrating over the space-time $M\times[0,T]$, we get
\begin{eqnarray}
\frac{d}{dt}\int_{M}|\nabla u|^{2}dV_{t}&=&\int_{M}
\partial_{t}|\nabla u|^{2}dV_{t}+\int_{M}|\nabla u|^{2}\partial_{t}dV_{t}
\nonumber\\
&=&\int_{M}\left[-2|\nabla^{2}u|^{2}
-4|\nabla u|^{4}-\left(R-2|\nabla u|^{2}\right)\right]dV_{t}
\label{2.3}
\end{eqnarray}
and then
\begin{equation*}
\frac{d}{dt}\int_{M}|\nabla u|^{2}dV_{t}
+2\int_{M}|\nabla^{2}u|^{2}dV_{t}
=-4\int_{M}|\nabla u|^{4}dV_{t}
-\int_{M}\left(R-2|\nabla u|^{2}\right)dV_{t}.
\end{equation*}
Denoting ${\rm Vol}_{t}$ the volume of $(M, g(t))$, we
have
\begin{equation}
\int^{t}_{0}\int_{M}|\nabla^{2}u|^{2}dV_{t}dt
\leq L^{2}{\rm Vol}_{0}+C\int^{t}_{0}{\rm Vol}_{s}\!\ ds\label{2.4}
\end{equation}
from {\color{blue}{(\ref{2.1})}} and {\color{blue}{(\ref{2.2})}}, where $C$ is a uniform
constant. Using {\color{blue}{(\ref{A.10})}} and {\color{blue}{(\ref{2.2})}}, we have
\begin{equation*}
\partial_{t}{\rm Vol}_{t}=\int_{M}\partial_{t}dV_{t}
=\int_{M}\left(-R+2|\nabla u|^{2}\right)dV_{t}\leq C\!\ {\rm Vol}_{t};
\end{equation*}
consequently,
\begin{equation}
{\rm Vol}_{t}\leq e^{Ct}{\rm Vol}_{0}.\label{2.5}
\end{equation}
Plugging {\color{blue}{(\ref{2.5})}} into {\color{blue}{(\ref{2.4})}} we conclude that
\begin{equation}
\int^{t}_{0}\int_{M}|\nabla^{2}u|^{2}dV_{t}dt
\leq\left(L^{2}+e^{Ct}\right){\rm Vol}_{0}\leq C(1+L^{2})e^{Ct}
\lesssim e^{Ct}.\label{2.6}
\end{equation}
According to {\color{blue}{(\ref{A.5})}} we obtain
\begin{equation}
\Box\Delta u=-4|\nabla u|^{2}\Delta u
+2R_{ij}\nabla^{i}\nabla^{j}u-4\nabla_{i}u\nabla_{j}u
\nabla^{i}\nabla^{j}u.\label{2.7}
\end{equation}
In particular, the square of $\Delta u$ satisfies
\begin{eqnarray*}
\partial_{t}|\Delta u|^{2}&=&2\Delta u\!\ \partial_{t}\Delta u \ \ = \ \ \Delta|\Delta u|^{2}-2|\nabla\Delta u|^{2}
-8|\nabla u|^{2}|\Delta u|^{2}+4\left(R_{ij}\nabla^{i}\nabla^{j}u\right)
\Delta u\\
&&- \ 8\left(\nabla_{i}u\nabla_{j}u\nabla^{i}\nabla^{j}u\right)
\Delta u\\
&\leq&\Delta|\Delta u|^{2}-2|\nabla\Delta u|^{2}
-8|\nabla u|^{2}|\Delta u|^{2}
+4\left(|{\rm Ric}|+2|\nabla u|^{2}\right)
|\nabla^{2}u||\Delta u|\\
&\leq&\Delta|\Delta u|^{2}+2\left(|{\rm Ric}|^{2}+2|\nabla u|^{2}
\right)|\Delta u|^{2}+2\left(|{\rm Ric}|+2|\nabla u|^{2}
\right)|\nabla^{2}u|^{2}.
\end{eqnarray*}
Taking integrations on both sides yields
\begin{equation}
\frac{d}{dt}\int_{M}|\Delta u|^{2}dV_{t} \ \ = \ \ \int_{M}\partial_{t}|\Delta u|^{2}\!\ dV_{t}
+\int_{M}|\Delta u|^{2}\left(-R+2|\nabla u|^{2}\right)dV_{t}\label{2.8}
\end{equation}
$$
\leq \ \ \int_{M}\left(2|{\rm Ric}|-R+6|\nabla u|^{2}\right)|\Delta
u|^{2}dV_{t}+\int_{M}\left(2|{\rm Ric}|+4|\nabla u|^{2}\right)|
\nabla^{2}u|^{2}dV_{t}.
$$
When the Ricci curvature is uniformly bounded, together with {\color{blue}{(\ref{2.6})}},
we can prove that the $L^{2}$-norm of $\Delta u$ is finite.

\begin{proposition}\label{p2.1} If the curvature condition {\color{blue}{(\ref{1.2})}} holds, then
\begin{equation}
\int_{M}|\Delta u|^{2}dV_{t}\leq C(1+K) e^{C(1+K)T}\label{2.9}
\end{equation}
for some uniform constant $C>0$.
\end{proposition}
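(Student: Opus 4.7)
The plan is to turn the pointwise inequality leading to (2.8) into a Gronwall-type differential inequality for the scalar quantity $F(t):=\int_M|\Delta u|^2 dV_t$, and then close it using the a priori space-time bound (2.6) on $\int\int |\nabla^2 u|^2$.

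First I would estimate the coefficients on the right-hand side of (2.8) using the hypotheses at hand. The curvature condition (1.2) gives $|{\rm Ric}|\leq K$, and since $|R|\leq\sqrt{n}\!\ |{\rm Ric}|$, also $-R\lesssim K$. Combined with the already-established uniform bound $|\nabla u|^2\leq L^2$ from (2.1), the two coefficients
\[
2|{\rm Ric}|-R+6|\nabla u|^2,\qquad 2|{\rm Ric}|+4|\nabla u|^2
\]
are each bounded above by a uniform multiple of $1+K$. Substituting into (2.8) yields the ODI
\[
F'(t)\ \leq\ C(1+K)\!\ F(t)\ +\ C(1+K)\!\ G(t),\qquad G(t):=\int_M|\nabla^2 u|^2 dV_t.
\]

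Next I would integrate this ODI by the standard integrating factor $e^{-C(1+K)t}$, obtaining
\[
F(t)\ \leq\ e^{C(1+K)t}\!\ F(0)\ +\ C(1+K)\!\ e^{C(1+K)t}\int_0^t G(s)\,ds.
\]
The first term is bounded by a uniform constant since $F(0)$ depends only on the initial data $(g_0,u_0)$. For the second term I would invoke the already-proved bound (2.6), which gives $\int_0^t G(s)\,ds\leq C(1+L^2)e^{Ct}\lesssim e^{Ct}$ with a uniform constant. Combining,
\[
F(t)\ \leq\ C\!\ e^{C(1+K)t}+C(1+K)\!\ e^{C(1+K)t}e^{Ct}\ \leq\ C(1+K)e^{C(1+K)T}
\]
after absorbing the $e^{Ct}$ factor into $e^{C(1+K)T}$ at the cost of enlarging the uniform constant $C$. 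This is exactly (2.9).

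There is no real obstacle here: the only thing to be careful about is that the coefficient of $F$ in the ODI and the prefactor $C(1+K)$ in front of $G$ both scale linearly in $1+K$, so that the exponential that Gronwall produces is of the stated form $e^{C(1+K)T}$ rather than something worse. The space-time $L^2$-bound (2.6) on the Hessian of $u$, which in turn rested only on the universal gradient bound (2.1), is what makes the Gronwall step work in the absence of any pointwise control on $\nabla^2 u$.
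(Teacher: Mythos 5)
Your proposal is correct and follows essentially the same route as the paper: bound the coefficients in (2.8) by $C(1+K)$ using the Ricci bound and the gradient bound (2.1), apply Gronwall with the integrating factor $e^{-C(1+K)t}$, and close using the space-time $L^2$-bound (2.6) on $\nabla^2 u$. The only difference is cosmetic: you correctly retain the factor $e^{C(1+K)t}$ in front of $\int_0^t G$ after integrating, whereas the paper's displayed intermediate line drops that prefactor (a minor oversight that does not affect the final bound (2.9)).
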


\begin{proof} Compute, using {\color{blue}{(\ref{2.8})}} and {\color{blue}{(\ref{2.2})}},
\begin{equation*}
\frac{d}{dt}\int_{M}|\Delta u|^{2}dV_{t}
\leq\left(2K+C+6L^{2}\right)\int_{M}|\Delta u|^{2}dV_{t}
+\left(2K+4L^{2}\right)\int_{M}|\nabla^{2}u|^{2}dV_{t}.
\end{equation*}
Therefore
\begin{equation*}
\frac{d}{dt}\left[e^{-(2K+C+6K^{2})t}
\int_{M}|\Delta u|^{2}dV_{t}\right]
\leq\left(2K+4L^{2}\right)e^{-(2K+C+6L^{2})t}
\int_{M}|\nabla^{2}u|^{2}dV_{t}.
\end{equation*}
From {\color{blue}{(\ref{2.6})}}, we obtain
\begin{eqnarray*}
\int_{M}|\Delta u|^{2}dV_{t}
&\leq& Ce^{(2K+C+6L^{2})t}+\left(2K+4 L^{2}\right)
\int^{t}_{0}\int_{M}|\nabla^{2}u|^{2}dt\\
&\leq& C e^{(2K+C+6 L^{2})t}+C\left(2K+4L^{2}\right)
\left(1+L^{2}\right) e^{Ct}
\end{eqnarray*}
which implies {\color{blue}{(\ref{2.9})}}.
\end{proof}

\subsection{The boundedness of $\Delta_{g(t)}u(t)$}\label{subsection2.1}

According to \cite{CZ2013}, Chen and Zhu proved an analog of Sesum's theorem for the Ricci-harmonic flow. As a consequence, we see that $\Delta_{g(t)}u(t)$ is uniformly bounded. Our contribution in this paper is to give an {\it explicit} bound for $\Delta_{g(t)}u(t)$.

We first review a non-collapsing theorem for the Ricci-harmonic flow. Suppose that $M$ is a closed manifold. For any Riemannian metric $g$,
any smooth functions $u, f$, and any positive number $\tau$, define (see \cite{List2005})
\begin{equation}
\mathcal{W}(g,u,f,\tau):=\int_{M}\left[\tau\left(S_{g}
+|\nabla_{g}f|^{2}_{g}\right)+f-n\right]\frac{e^{-f}}{(4\pi\tau)^{n/2}}
dV_{g}\label{2.10}
\end{equation}
with $S_{g}:=R_{g}-2|\nabla_{g}u|^{2}_{g}$, and
\begin{equation}
\mu(g,u,\tau):=\inf\left\{\mathcal{W}(g,u,f,\tau): f\in C^{\infty}(M) \ \text{and} \ \int_{M}\frac{e^{-f}}{(4\pi\tau)^{n/2}}dV_{g}=1\right\}.\label{2.11}
\end{equation}
Observe that
\begin{equation}
\mu(\tau\!\ g,u,\tau)=\mu(g,u,1), \ \ \ \tau>0.\label{2.12}
\end{equation}

\begin{proposition}\label{p2.2} If $(g(t),u(t),\tau(t))_{t\in[0,T]}$ solves
\begin{eqnarray}
\partial_{t}g(t)&=&-2{\rm Ric}_{g(t)}+4\nabla_{g(t)}u(t)
\otimes \nabla_{g(t)}u(t),\nonumber\\
\partial_{g}u(t)&=&\Delta_{g(t)}u(t),\label{2.13}\\
\frac{d}{dt}\tau(t)&=&-1,\nonumber
\end{eqnarray}
then $\mu(g(t),u(t),\tau(t))$ is monotone nondecreasing in time $t$.
\end{proposition}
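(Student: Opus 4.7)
The plan is to follow Perelman's original strategy for the Ricci flow entropy, adapted to the Ricci-harmonic setting (in the spirit of List's thesis). Fix $0 \leq t_1 < t_2 \leq T$. Because $M$ is closed, standard coercivity arguments on $\mathcal{W}$ produce a smooth minimizer $f_2$ for $\mu(g(t_2), u(t_2), \tau(t_2))$ subject to the normalization. Construct $f(t)$ on $[t_1, t_2]$ by solving, backward in time from terminal data $f(t_2) = f_2$, the conjugate heat equation
\begin{equation*}
\partial_t f \;=\; -\Delta f + |\nabla f|^2 - S + \frac{n}{2\tau},
\end{equation*}
where $S = R - 2|\nabla u|^2$. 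The substitution $v = e^{-f/2}$ turns this into a linear backward heat-type equation on a closed manifold over the compact interval $[t_1,t_2]$, hence it is solvable. A direct check using $\partial_t dV_t = -S\, dV_t$ and $\partial_t \log(4\pi\tau)^{n/2} = -n/(2\tau)$ shows that this choice preserves the normalization $\int_M (4\pi\tau)^{-n/2} e^{-f}\, dV_t = 1$ along the coupled system (\ref{2.13}).

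The main step is then to evaluate $\frac{d}{dt}\mathcal{W}(g(t),u(t),f(t),\tau(t))$ along this coupled flow. After differentiating under the integral, substituting the evolution equations for $g, u, f, \tau$, integrating by parts against the weighted measure $e^{-f}(4\pi\tau)^{-n/2} dV_t$, and invoking the Bochner identity to rewrite $\Delta |\nabla f|^2$, the target identity is
\begin{equation*}
\frac{d}{dt}\mathcal{W} \;=\; 2\tau \int_M \Bigl[\, \bigl|\, R_{ij} - 2\nabla_i u\nabla_j u + \nabla_i\nabla_j f - \tfrac{1}{2\tau} g_{ij}\, \bigr|^2 + 2\bigl(\Delta u - \langle \nabla u, \nabla f\rangle\bigr)^2\, \Bigr]\, \frac{e^{-f}}{(4\pi\tau)^{n/2}} \, dV_t \;\geq\; 0.
\end{equation*}
The extra term quadratic in $\Delta u - \langle\nabla u,\nabla f\rangle$ is the Ricci-harmonic contribution; it arises precisely because $\partial_t u = \Delta u$ and because $S$ contains the $-2|\nabla u|^2$ correction which interacts with $\partial_t g = -2\,\mathrm{Ric} + 4\, du\otimes du$. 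With monotonicity of $\mathcal{W}$ in hand,
\begin{equation*}
\mu(g(t_1),u(t_1),\tau(t_1)) \;\leq\; \mathcal{W}(g(t_1),u(t_1),f(t_1),\tau(t_1)) \;\leq\; \mathcal{W}(g(t_2),u(t_2),f_2,\tau(t_2)) \;=\; \mu(g(t_2),u(t_2),\tau(t_2)),
\end{equation*}
which is the claim.

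The principal technical obstacle is the sum-of-squares identity for $\frac{d}{dt}\mathcal{W}$: the bookkeeping is nontrivial, since terms of the form $(\partial_t g)^{ij}\nabla_i f \nabla_j f$ generate both a contraction with $\mathrm{Ric}$ and a contraction with $du \otimes du$, and one must use the evolution equation $\partial_t S = \Delta S + 2|\mathrm{Sic}|^2 + 2|\nabla^2 u|^2 - 2\langle \nabla S, \nabla u\rangle\cdot(\text{correction})$ (where $\mathrm{Sic} := \mathrm{Ric} - 2 du\otimes du$) to cancel the cross terms generated by the $|\nabla u|^2$ contribution to $S$. Once those cancellations are executed correctly, completing the square yields exactly the two nonnegative blocks displayed above. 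A subsidiary point is to justify the existence of a smooth minimizer at $t_2$; alternatively one can run the argument with any smooth $f_2$ attaining $\mu$ up to $\varepsilon$ and let $\varepsilon \to 0$, avoiding the compactness discussion entirely.
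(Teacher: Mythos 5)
Your proposal reconstructs exactly the Perelman-style entropy-monotonicity argument that the paper defers to: the paper's own ``proof'' of {\color{red}{Proposition \ref{p2.2}}} is simply a citation to List and M\"uller, and the argument in those references is the one you sketch --- take a smooth minimizer at $t_2$ (or an $\varepsilon$-approximate minimizer), run the conjugate heat equation
\begin{equation*}
\partial_t f = -\Delta f + |\nabla f|^2 - S + \frac{n}{2\tau}, \qquad S = R - 2|\nabla u|^2,
\end{equation*}
backward in time, verify that the normalization is preserved using $\partial_t dV_t = -S\,dV_t$, and establish the pointwise-nonnegative monotonicity density
\begin{equation*}
\frac{d}{dt}\mathcal{W} = 2\tau\int_M\Bigl[\bigl|R_{ij} - 2\nabla_i u\nabla_j u + \nabla_i\nabla_j f - \tfrac{1}{2\tau}g_{ij}\bigr|^2 + 2\bigl(\Delta u - \langle\nabla u,\nabla f\rangle\bigr)^2\Bigr]\frac{e^{-f}}{(4\pi\tau)^{n/2}}\,dV_t,
\end{equation*}
which is correct and matches List's formula with $\alpha_n=2$. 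So the route is the same as the paper's, merely spelled out.

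One factual slip in your ``technical obstacle'' paragraph: the evolution equation for $S$ is not what you wrote. From {\color{blue}{(\ref{A.9})}} one has
\begin{equation*}
\Box S = 2\,|{\rm Ric} - 2\,du\otimes du|^2 + 4\,|\Delta u|^2,
\end{equation*}
i.e.\ $\partial_t S = \Delta S + 2|{\rm Sic}|^2 + 4|\Delta u|^2$; there is no $\langle\nabla S,\nabla u\rangle$ cross-term, and the second source term is $4|\Delta u|^2$, not $2|\nabla^2 u|^2$. Since your final displayed sum-of-squares identity is nevertheless the correct one, this does not affect the conclusion, but the bookkeeping remark as written would mislead a reader trying to reproduce the computation.
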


\begin{proof} See \cite{List2005, List2008, Muller2009, Muller2012}.
\end{proof}

In the definition {\color{blue}{(\ref{2.10})}}, introduce the function
\begin{equation}
w:=\left[\frac{e^{-f}}{(4\pi\tau)^{n/2}}\right]^{1/2}\label{2.14}
\end{equation}
so that we can rewrite the functional $\mathcal{W}$ as
\begin{eqnarray}
\mathcal{W}(g,u,f,\tau)&=&\int_{M}\left[\tau
\left(w^{2}S_{g}+4|\nabla_{g}w|^{2}_{g}\right)
-\left(2\ln w+\frac{n}{2}\ln(4\pi\tau)+n\right)w^{2}\right]dV_{g}
\nonumber\\
&=&\tau\int_{M}S_{g}w^{2}dV_{g}-\left[\frac{n}{2}\ln(4\pi\tau)
+n\right]\int_{M}w^{2}dV_{g}\label{2.15}\\
&&- \ 2\left[\int_{M}w^{2}\ln w\!\ dV_{g}-2\tau|\nabla_{g}w|^{2}_{g}dV_{g}
\right].\nonumber
\end{eqnarray}
The last term in {\color{blue}{(\ref{2.15})}} can be handed by the logarithmic Sobolev
inequality (see \cite{CCGGIIKLLN3}, Lemma 17.1): for any $a>0$ there exists a constant $C(a,g)$ such that if $\varphi>0$ satisfies $\int_{M}\varphi^{2}dV_{g}=1$, then
\begin{equation}
\int_{M}\varphi^{2}\ln\varphi\!\ dV_{g}-
a\int_{M}|\nabla_{g}\varphi|^{2}_{g}dV_{g}\leq C(a,g),\label{2.16}
\end{equation}
where
\begin{equation}
C(a,g)=a{\rm Vol}(M,g)^{-2/n}+\frac{n^{2}}{4ae^{2}C_{s}(M,g)}\label{2.17}
\end{equation}
and $C_{s}(M,g)$ denotes the $L^{2}$-Sobolev constant.

\begin{lemma}\label{l2.3} Let $M$ be a closed $n$-dimensional manifold. For any Riemannian metric $g$,
any smooth function $u$, and any $\tau>0$, we have
\begin{eqnarray}
\mu(g,u,\tau)&\geq&\tau\!\ S_{g,\min}-2C(2\tau,g)-\frac{n}{2}\ln(4\pi
\tau)-n,\label{2.18}\\
\mu(g,u,\tau)&\leq&\tau S_{g,{\rm avg}}+\ln{\rm Vol}(M,g)-\frac{n}{2}
\ln(4\pi\tau)-n.\label{2.19}
\end{eqnarray}
Here $S_{g,\min}:=\min_{M}S_{g}$ and $S_{g,{\rm avg}}$ denotes the average of $S_{g}$ over $(M, g)$.
\end{lemma}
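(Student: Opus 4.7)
\textbf{Proof plan for Lemma \ref{l2.3}.} The plan is to exploit the rewriting (\ref{2.15}) of $\mathcal{W}$ in terms of $w=[e^{-f}/(4\pi\tau)^{n/2}]^{1/2}$, under which the normalization in (\ref{2.11}) becomes simply $\int_M w^2\,dV_g=1$. From there, the two inequalities correspond to the two natural moves: bound the ``entropy minus Dirichlet'' piece by the log-Sobolev inequality (\ref{2.16}) to get a lower bound holding for every admissible $w$, and then test $\mathcal{W}$ against a specific $w$ to get a matching upper bound on the infimum.

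For the lower bound (\ref{2.18}), I would start from (\ref{2.15}),
\begin{equation*}
\mathcal{W}(g,u,f,\tau)
=\tau\int_M S_g w^2\,dV_g
-\Bigl[\tfrac{n}{2}\ln(4\pi\tau)+n\Bigr]
-2\Bigl[\int_M w^2\ln w\,dV_g-2\tau\int_M|\nabla_g w|^2_g\,dV_g\Bigr],
\end{equation*}
and apply (\ref{2.16}) with $a=2\tau$ and $\varphi=w$ (legitimate because $\int_M w^2\,dV_g=1$) to bound the last bracket from above by $C(2\tau,g)$. Estimating $\int_M S_g w^2\,dV_g\ge S_{g,\min}$ then gives $\mathcal{W}(g,u,f,\tau)\ge\tau S_{g,\min}-2C(2\tau,g)-\frac{n}{2}\ln(4\pi\tau)-n$ uniformly in the admissible $f$ (equivalently $w$), and the lower bound follows by taking the infimum defining $\mu$.

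For the upper bound (\ref{2.19}), I would test on the constant $w\equiv{\rm Vol}(M,g)^{-1/2}$, i.e.\ the trial function $f\equiv\ln{\rm Vol}(M,g)-\frac{n}{2}\ln(4\pi\tau)$, which satisfies the normalization and has $\nabla_g w=0$. Substituting directly into (\ref{2.15}), the gradient term vanishes, $\tau\int_M S_g w^2\,dV_g=\tau S_{g,\mathrm{avg}}$, and $-2\int_M w^2\ln w\,dV_g=\ln{\rm Vol}(M,g)$, so $\mathcal{W}=\tau S_{g,\mathrm{avg}}+\ln{\rm Vol}(M,g)-\frac{n}{2}\ln(4\pi\tau)-n$, and since $\mu$ is an infimum, this yields (\ref{2.19}).

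There is essentially no obstacle beyond bookkeeping; the only subtlety is making sure the constants in (\ref{2.16})--(\ref{2.17}) are applied with the correct value $a=2\tau$ so that the $4\tau|\nabla_g w|^2$ coefficient in $\mathcal{W}$ is exactly absorbed by the $2\cdot 2\tau|\nabla_g w|^2$ produced when the factor of $-2$ is distributed over the log-Sobolev bracket. The argument makes no use of the flow or of the $u$-dependence beyond its appearance inside $S_g=R_g-2|\nabla_g u|^2_g$, so the statement is really a static Riemannian lemma adapted to the modified scalar curvature $S_g$.
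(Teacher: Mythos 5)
Your proof is correct and follows essentially the same route as the paper: the lower bound \eqref{2.18} comes from rewriting $\mathcal{W}$ via \eqref{2.15} and absorbing the entropy-minus-Dirichlet bracket with the log-Sobolev inequality \eqref{2.16} at $a=2\tau$, while the upper bound \eqref{2.19} comes from testing the constant trial function $f=\ln{\rm Vol}(M,g)-\frac{n}{2}\ln(4\pi\tau)$. (The paper's one-line proof attributes the test function to the ``first inequality'' and the log-Sobolev step to the ``second'', which is evidently a transposition; your matching of arguments to inequalities is the correct one.)
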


\begin{proof} Taking $f=\ln{\rm Vol}(M,g)-\frac{n}{2}\ln(4\pi\tau)$
in {\color{blue}{(\ref{2.11})}} gives the first inequality. The second estimate follows
from {\color{blue}{(\ref{2.15})}} and {\color{blue}{(\ref{2.16})}}.
\end{proof}

\begin{lemma}\label{l2.4} For every $n\geq2$, $\rho\in(0,\infty)$,
and $D>0$, there exists $c=c(n,\rho,D)>0$ such that if $(M,g)$ is a closed
$n$-dimensional Riemannian manifold, $u$ is a smooth function on $M$, and if for some $r\in(0,\rho]$
and $A<\infty$ we have $\mu(g,u,r^{2})>-A$, then for any $p\in M$
with ${\rm Ric}_{g}\geq- D r^{-2}$ on $B_{g}(p,r)$ and $R_{g}
\leq D r^{-2}$ on $B_{g}(p,r)$, we have
\begin{equation}
{\rm Vol}_{g}(B_{g}(p,r))\geq\kappa\!\ r^{n}\label{2.20}
\end{equation}
where $\kappa:=ce^{-A}$.
\end{lemma}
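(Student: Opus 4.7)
The plan is to imitate Perelman's original non-collapsing argument, adapted to the Ricci--harmonic setting through the presence of $S_g$ rather than $R_g$ in the functional $\mathcal{W}$. I would construct a specific test function in the variational problem defining $\mu(g,u,r^2)$, so that the hypothesis $\mu(g,u,r^{2})>-A$ forces a lower bound on $\mathrm{Vol}_{g}(B_{g}(p,r))/r^{n}$.

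\medskip

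First I would pick a standard Lipschitz cutoff $\phi\in C_{c}^{\infty}(B_{g}(p,r))$ with $\phi\equiv 1$ on $B_{g}(p,r/2)$, $0\le\phi\le 1$, and $|\nabla_{g}\phi|_{g}\le C/r$. Let $w:=\phi/\|\phi\|_{L^{2}(M,g)}$, so $\int_{M}w^{2}\,dV_{g}=1$; this corresponds (in the notation of \eqref{2.14}) to a test function $f$ admissible in the infimum \eqref{2.11} with $\tau=r^{2}$. Using the rewriting \eqref{2.15} of $\mathcal{W}$ in the variable $w$, I would estimate the three contributions separately:
\begin{itemize}
\item For the scalar term, the bound $S_{g}=R_{g}-2|\nabla_{g}u|_{g}^{2}\le R_{g}\le Dr^{-2}$ on $B_{g}(p,r)$ combined with $\int w^{2}\,dV_{g}=1$ gives $r^{2}\int_{M}S_{g}w^{2}\,dV_{g}\le D$, so the potential $u$ is absorbed for free.
\item The middle term contributes $-\frac{n}{2}\ln(4\pi r^{2})-n$, which is bounded above in terms of $n$ and $\rho$ using $r\le\rho$.
\item For the logarithmic--gradient term, expanding $w=\phi/\|\phi\|$ gives
$$\int_{M}w^{2}\ln w\,dV_{g}-2r^{2}\int_{M}|\nabla_{g}w|_{g}^{2}\,dV_{g}=\frac{1}{\|\phi\|^{2}}\!\int_{M}\!\phi^{2}\ln\phi\,dV_{g}-\ln\|\phi\|-\frac{2r^{2}}{\|\phi\|^{2}}\!\int_{M}|\nabla_{g}\phi|_{g}^{2}\,dV_{g}.$$
Since $\phi\le 1$ the first term on the right is nonpositive, and $|\nabla_{g}\phi|_{g}^{2}\le C^{2}/r^{2}$ gives $2r^{2}\|\phi\|^{-2}\int|\nabla_{g}\phi|_{g}^{2}\,dV_{g}\le 2C^{2}\mathrm{Vol}_{g}(B_{g}(p,r))/\|\phi\|^{2}$.
\end{itemize}

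\medskip

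The main technical obstacle, as in Perelman's argument, is to show that $\mathrm{Vol}_{g}(B_{g}(p,r))/\|\phi\|^{2}$ is bounded by a constant depending only on $n$ and $D$. For this I would invoke Bishop--Gromov volume comparison: the lower Ricci bound $\mathrm{Ric}_{g}\ge -Dr^{-2}$ on $B_{g}(p,r)$ gives a constant $V(n,D)$ with $\mathrm{Vol}_{g}(B_{g}(p,r))\le V(n,D)\,\mathrm{Vol}_{g}(B_{g}(p,r/2))\le V(n,D)\|\phi\|_{L^{2}}^{2}$ (since $\phi\equiv 1$ on $B_{g}(p,r/2)$). Combining this with $\|\phi\|^{2}\le\mathrm{Vol}_{g}(B_{g}(p,r))$ (because $\phi\le 1$ and $\mathrm{supp}\,\phi\subset B_{g}(p,r)$), the estimate above becomes
$$\mathcal{W}(g,u,f,r^{2})\le C_{1}(n,D,\rho)+\ln\!\bigl(\mathrm{Vol}_{g}(B_{g}(p,r))\bigr)-\tfrac{n}{2}\ln r^{2}=C_{1}(n,D,\rho)+\ln\!\Bigl(\frac{\mathrm{Vol}_{g}(B_{g}(p,r))}{r^{n}}\Bigr).$$
Since $f$ is admissible, the left side dominates $\mu(g,u,r^{2})$, and the hypothesis $\mu(g,u,r^{2})>-A$ then yields
$$-A<C_{1}(n,D,\rho)+\ln\!\Bigl(\mathrm{Vol}_{g}(B_{g}(p,r))/r^{n}\Bigr),$$
from which \eqref{2.20} follows with $c:=e^{-C_{1}(n,D,\rho)}$ and $\kappa:=ce^{-A}$.

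\medskip

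The only genuine departure from the Ricci flow proof is the scalar--curvature term, where $S_{g}$ appears in place of $R_{g}$; since $|\nabla_{g}u|_{g}^{2}\ge 0$ enters with a \emph{minus} sign, the upper bound on $R_{g}$ (and not any bound on $u$) suffices to control $r^{2}\int S_{g}w^{2}\,dV_{g}$. All remaining ingredients---the cutoff construction, Bishop--Gromov, and the bookkeeping of the $-\frac{n}{2}\ln(4\pi r^{2})-n$ term (where $r\le\rho$ is needed)---are identical to the standard argument.
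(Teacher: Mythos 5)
Your approach is the same as the paper's: the paper proves Lemma~\ref{l2.4} by citing Proposition~5.37 of \cite{CLN2006} and noting that the only change in the Ricci--harmonic setting is $S_g = R_g - 2|\nabla_g u|_g^2 \le R_g$, so the upper bound on $R_g$ is enough to control the scalar term in $\mathcal{W}$. You correctly identify this as the only new ingredient and then reproduce the standard Perelman non-collapsing argument (cutoff test function, Bishop--Gromov, and the admissibility of $w=\phi/\|\phi\|$).

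However, there is a sign slip in your handling of the logarithmic term that leaves a genuine gap. In the expansion
\begin{equation*}
\int_{M}w^{2}\ln w\,dV_{g}-2r^{2}\int_{M}|\nabla_{g}w|_{g}^{2}\,dV_{g}=\frac{1}{\|\phi\|^{2}}\int_{M}\phi^{2}\ln\phi\,dV_{g}-\ln\|\phi\|-\frac{2r^{2}}{\|\phi\|^{2}}\int_{M}|\nabla_{g}\phi|_{g}^{2}\,dV_{g},
\end{equation*}
you observe that the first term on the right is nonpositive, apparently to discard it. But this entire expression enters $\mathcal{W}$ in {\color{blue}{(\ref{2.15})}} with a factor of $-2$, so $-\tfrac{2}{\|\phi\|^{2}}\int\phi^{2}\ln\phi\,dV_{g}$ is a \emph{nonnegative} contribution to $\mathcal{W}$ and must be bounded from above; nonpositivity of $\int\phi^2\ln\phi$ alone does not help, since you need an upper bound on $\mathcal{W}$, not a lower bound. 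The standard fix, which matches Proposition~5.37 of \cite{CLN2006}, is to use the pointwise estimate $-t\ln t\le 1/e$ on $[0,1]$ to get $-\int_M\phi^2\ln\phi\,dV_g\le \tfrac{1}{2e}\mathrm{Vol}_g(B_g(p,r))$, after which this term is controlled by the \emph{same} Bishop--Gromov ratio $\mathrm{Vol}_g(B_g(p,r))/\|\phi\|^2\le V(n,D)$ that you already invoke for the gradient term. A secondary, purely expository point: the intermediate claim that $-\tfrac{n}{2}\ln(4\pi r^2)-n$ is ``bounded above in terms of $n$ and $\rho$'' is not true by itself, since $-n\ln r\to\infty$ as $r\to 0$; this term is not bounded in isolation but must be combined with $2\ln\|\phi\|\le\ln\mathrm{Vol}_g(B_g(p,r))$ to form $\ln(\mathrm{Vol}_g(B_g(p,r))/r^n)$, which you do correctly in the final display. (This also shows $\rho$ is not actually needed in $c$, consistent with {\color{red}{Corollary \ref{c2.5}}} where the constant depends only on $n$ and $D$.)
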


\begin{proof} Since $S_{g}=R_{g}-2|\nabla_{g}u|^{2}_{g}
\leq R_{g}$, the proof is almost the same as the proof of Proposition
5.37 in \cite{CLN2006}.
\end{proof}

Actually, the constant $c$ in {\color{red}{Lemma \ref{l2.4}}} can be explicitly
determined. We can take the constant $c$ in such a way that it depends
only on $n$ and $C$. From the proof of Proposition 5.37 in \cite{CLN2006}, we have
\begin{equation*}
\mu(g,u,r^{2})\leq \ln\frac{{\rm Vol}_{g}(B_{g}(p,r))}{r^{n}}
+C'(n,r)+\frac{1}{e}(4\pi)^{-n/2}e^{C_{1}(n,r)}
\end{equation*}
where
\begin{equation*}
C'(n,r):=36(4\pi)^{-n/2}e^{C_{1}(n,r)}+D, \ \ \
C_{1}(n,r):=\frac{n}{2}\ln(4\pi)+\ln C(n,r)
\end{equation*}
and
\begin{equation*}
C(n,r)=\int^{r}_{0}\frac{\sinh(\sqrt{K'}t)}{\sqrt{K'}}dt\bigg/\int^{r/2}_{0}
\frac{\sinh(\sqrt{K'}t)}{\sqrt{K'}}dt, \ \ \ K':=\frac{D}{(n-1)r^{2}}.
\end{equation*}
The last quantity $C(n,r)$ can be bounded as
\begin{eqnarray*}
C(n,r)&=&\int^{r}_{0}\left(e^{\sqrt{K'}t}-e^{-\sqrt{K'}t}\right)dt\bigg/
\int^{r/2}_{0}\left(e^{\sqrt{K'}t}-e^{-\sqrt{K't}}\right)dt\\
&=&\frac{e^{\sqrt{k'}t}+e^{-\sqrt{K'}t}\big|^{r}_{0}}{e^{\sqrt{K'}t}
+e^{-\sqrt{K'}t}\big|^{r/2}_{0}} \ \ = \ \
\frac{e^{\sqrt{K'}r}+e^{-\sqrt{K'}r}-2}{e^{\sqrt{K'}r/2}
+e^{-\sqrt{K'}r/2}-2}\\
&=&\frac{(e^{\sqrt{K'}r/2}-e^{-\sqrt{K'}r/2})^{2}}{(e^{\sqrt{K'}r/4}
-e^{-\sqrt{K'}r/4})^{2}} \ \ = \ \ \frac{(e^{\sqrt{K'}r}-1)^{2}}{e^{\sqrt{K'}r/2}
(e^{\sqrt{K'}r/2}-1)^{2}}\\
&=&\frac{(e^{\sqrt{K'}r/2}+1)^{2}}{e^{\sqrt{K'}r/2}} \ \ = \ \
e^{\sqrt{K'}r/2}+2+e^{-\sqrt{K'}r/2} \ \ \leq \ \ 3+e^{\sqrt{K'}r/2}.
\end{eqnarray*}
Hence
\begin{equation}
C(n,r)\leq 3+e^{\sqrt{D/4(n-1)}}\label{2.21}
\end{equation}
and the constant $c$ in {\color{red}{Lemma \ref{l2.4}}} can be taken to be
\begin{equation}
c=C(n)\exp
\left[-C(n)\exp\left(C(n)\sqrt{D}\right)\right],\label{2.22}
\end{equation}
where $C(n)$ is a uniform constant depending only on $n$.
\\

From the rescaling
\begin{equation}
\left|{\rm Ric}_{r^{2}g}\right|_{r^{2}g}=
r^{-2}|{\rm Ric}_{g}|_{g}, \ \ \
B_{r^{2}g}(p,r)=B_{g}(p,1),\label{2.23}
\end{equation}
we can conclude from {\color{red}{Lemma \ref{l2.4}}} that

\begin{corollary}\label{c2.5} For every $n\geq2$ and $D>0$, there
exists $c=c(n,D)>0$ such that if $(M,g)$ is a closed $n$-dimensional Riemannian
manifold, $u$ is a smooth function on $M$, and if for some $A<\infty$
we have $\mu(g,u,1)>-A$, then for any $p\in M$ with $|{\rm Ric}_{g}|_{g}
\leq D$ on $B_{g}(p,1)$, we have
\begin{equation}
{\rm Vol}_{g}(B_{g}(p,1))\geq\kappa\label{2.24}
\end{equation}
where $\kappa=ce^{-A}$. Moreover, $c=c(n,D)$ can be taken to be given in {\color{blue}{(\ref{2.22})}} for some constant $C(n)$ depending only on $n$.
\end{corollary}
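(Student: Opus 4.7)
The plan is a routine reduction of the Corollary to Lemma \ref{l2.4} at the scale $r=1$. The only real work is to convert the pointwise bound $|{\rm Ric}_g|_g\le D$ on $B_g(p,1)$ into the two one-sided conditions that Lemma \ref{l2.4} actually requires, namely a lower bound on ${\rm Ric}_g$ and an upper bound on $R_g$.

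First I would note that if $|{\rm Ric}_g|_g\le D$ on $B_g(p,1)$, then every eigenvalue $\lambda_i$ of ${\rm Ric}_g$ satisfies $|\lambda_i|\le D$ there. This immediately gives ${\rm Ric}_g\ge -D\,g$, and summing the eigenvalues yields $R_g=\sum_i\lambda_i\le n D$. Setting $D':=nD$, both ${\rm Ric}_g\ge -D'\, g$ and $R_g\le D'$ hold on $B_g(p,1)$. Thus the hypotheses of Lemma \ref{l2.4} are in force with $\rho=1$, $r=1$, and curvature parameter $D'$, while the assumption $\mu(g,u,1)>-A$ is exactly the condition $\mu(g,u,r^{2})>-A$ at $r=1$.

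Second, applying Lemma \ref{l2.4} produces a constant $c=c(n,1,D')>0$ with
\begin{equation*}
{\rm Vol}_g\bigl(B_g(p,1)\bigr)\ge c(n,1,D')\,e^{-A}\cdot 1^{n}=c(n,D)\,e^{-A},
\end{equation*}
where I set $c(n,D):=c(n,1,nD)$, which establishes the non-collapsing bound (\ref{2.24}). The rescaling identities (\ref{2.23}) are then not even needed, since we are already working at the unit scale; they would matter only if the Corollary were formulated at a general radius.

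Third, for the explicit form of $c$ asserted in (\ref{2.22}), I would simply carry the substitution $D\mapsto nD$ through the chain of estimates for $C(n,r)$ and $C'(n,r)$ preceding (\ref{2.22}). Each occurrence of $\sqrt{D/4(n-1)}$ is replaced by $\sqrt{nD/4(n-1)}$, which differs from $\sqrt{D}$ by an $n$-dependent multiplicative factor that can be absorbed into the constant $C(n)$; the functional form $c=C(n)\exp[-C(n)\exp(C(n)\sqrt{D})]$ is therefore preserved. There is no genuine obstacle here: the only subtlety is that a bound on $|{\rm Ric}_g|_g$ does not automatically furnish an upper bound on $R_g$ with the same constant, which is precisely why the factor $n$ enters in $D'$ and then gets hidden inside the dimensional constant $C(n)$ at the end.
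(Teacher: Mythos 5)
Your proof is correct and uses the same key reduction to Lemma \ref{l2.4} as the paper. The only difference is that you work directly at the unit scale $r=\rho=1$, whereas the paper rescales $g\mapsto r^2 g$ before invoking the lemma; as you observe, the rescaling is not needed here, and your version is also more careful in noting explicitly that $|{\rm Ric}_g|_g\le D$ only gives $R_g\le nD$ (not $R_g\le D$), a step the paper leaves implicit.
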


\begin{proof} Write $\hat{g}:=r^{2}g$ for any given $r>0$. Then the conditions
$\mu(g,u,1)>-A$ and $|{\rm Ric}_{g}|_{g}\leq C$ on $B_{h}(p,1)$ become
\begin{equation*}
\mu(\hat{g},u,r^{2})=\mu(g,u,1)>-A, \ \ \
|{\rm Ric}_{\hat{g}}|_{\hat{g}}
=\frac{1}{r^{2}}|{\rm Ric}_{g}|_{g}\leq\frac{C}{r^{2}} \
\text{on} \ B_{\hat{g}}(p,r).
\end{equation*}
We obtain from {\color{red}{Lemma \ref{l2.4}}} that $\kappa r^{n}\leq{\rm Vol}_{\hat{g}}
(B_{\hat{g}}(p,r))=r^{n}{\rm Vol}_{g}(B_{g}(p,1))$.
\end{proof}

To prove the boundedness of $\Delta_{g(t)}u(t)$, we first
verify that $\mu(g(t),u(t),1)$ is always bounded from below by some
uniform constant. Set, for each real number $\tau$,
\begin{equation*}
C^{\infty}_{\tau}(M):=\left\{f\in C^{\infty}(M): \int_{M}\frac{e^{-f}}{(4\pi
\tau)^{n/2}}dV_{g}=1\right\}.
\end{equation*}
The mapping
\begin{equation*}
C^{\infty}_{\tau_{2}}(M)\ni f\longmapsto\tilde{f}:=f+\frac{n}{2}\ln\frac{\tau_{2}}{\tau_{1}}
\in C^{\infty}_{\tau_{1}}(M)
\end{equation*}
is one-to-one and onto. Choose $\tilde{f}\in C^{\infty}_{\tau_{1}}(M)$
so that $\mu(g,u,\tau_{1})=\mathcal{W}(g,u,\tilde{f},\tau_{1})$ and define $
f:=\tilde{f}-\frac{n}{2}\ln\frac{\tau_{2}}{\tau_{1}}\in
C^{\infty}_{\tau_{2}}(M)$. Hence, for $\tau_{1}, \tau_{2}>0$,
\begin{equation*}
\mu(g,u,\tau_{2}) \ \ \leq \ \ \mathcal{W}(g,u,f,\tau_{2})
\end{equation*}
\begin{equation*}
= \ \ \int_{M}\left[\tau_{2}\left(S_{g}+|\nabla_{g}f|^{2}_{g}\right)+
f-n\right]\frac{e^{-f}}{(4\pi\tau_{2})^{n/2}}dV_{g}
\end{equation*}
\begin{equation*}
= \ \ \int_{M}\left[\tau_{2}\left(S_{g}+|\nabla\tilde{f}|^{2}_{g}\right)
+\tilde{f}-n-\frac{n}{2}\ln\frac{\tau_{2}}{\tau_{1}}\right]\frac{e^{-\tilde{f}}}{(4
\pi\tau_{1})^{n/2}}dV_{g}
\end{equation*}
\begin{equation*}
= \ \ \int_{M}\left[\tau_{1}\left(S_{g}|\nabla_{g}\tilde{f}|^{2}_{g}\right)
+\tilde{f}-n\right]\frac{e^{-\tilde{f}}}{(4\pi\tau_{1})^{n/2}}dV_{g}
\end{equation*}
\begin{equation*}
-\ \ \frac{n}{2}\ln\frac{\tau_{2}}{\tau_{1}}
+(\tau_{2}-\tau_{1})\int_{M}\left(S_{g}+|\nabla_{g}\tilde{f}|^{2}_{g}
\right)\frac{e^{-\tilde{f}}}{(4\pi\tau_{1})^{n/2}}dV_{g}
\end{equation*}
\begin{equation*}
=\ \ \mu(g,u,\tau_{1})-\frac{n}{2}\ln\frac{\tau_{2}}{\tau_{1}}
+(\tau_{2}-\tau_{1})\int_{M}\left(S_{g}+|\nabla_{g}\tilde{f}|^{2}_{g}
\right)\frac{e^{-\tilde{f}}}{(4\pi\tau_{1})^{n/2}}dV_{g}
\end{equation*}
When $\tau_{1}\geq\tau_{2}>0$, one has
\begin{equation}
\mu(g,u,\tau_{2})\leq
\mu(g,u,\tau_{1})-\frac{n}{2}\ln\frac{\tau_{2}}{\tau_{1}}
+(\tau_{2}-\tau_{1})S_{g,\min}.\label{2.25}
\end{equation}
In particular, if $0<\tau(t)\leq1$, then the inequality {\color{blue}{(\ref{2.25})}} implies
\begin{equation}
\mu(g(t),u(t),\tau(t))\leq\mu(g(t),u(t),1)-\frac{n}{2}\ln\tau(t)
+[\tau(t)-1]S_{g(t),\min}.\label{2.26}
\end{equation}
By the monotonicity of the Ricci-harmonic flow, {\color{red}{Proposition \ref{p2.2}}},
we obtain from {\color{blue}{(\ref{2.26})}} that
\begin{equation*}
\mu(g(t),u(t),1)\geq\mu(g(0),u(0),\tau(0))
+\frac{n}{2}\ln[\tau(0)-t]+[1+t-\tau(0)]S_{g(t),\min}
\end{equation*}
when $1+t-\tau(0)\geq0$ and $\tau(0)-t>0$. In particular, together
with {\color{red}{Lemma \ref{l2.4}}} and {\color{blue}{(\ref{2.4})}},
\begin{equation}
\mu(g(t),\mu(t),1)\geq\frac{n}{2}\ln\frac{\tau(0)-t}{4\pi\tau(0)}
+(1+t)S_{g(0),\min}-2C(2\tau(0),g(0))-n\label{2.27}
\end{equation}
whenever $\tau(0)-1\leq t<\tau(0)$.

\begin{theorem}\label{t2.6} There exists a uniform constant $C$ depending only on $n,
g(0)$, and $u(0)$ such that the following statement is true: If $|{\rm Ric}_{g(t)}|_{g(t)}\leq K$ on $M\times[0,T]$, then
\begin{equation}
|\Delta_{g(t)}u(t)|_{g(t)}\leq\frac{C(1+K)}{(1+T)^{n/2}}
\exp\left[C\left(1+T+1+KT+e^{C\sqrt{K}}\right)\right]\label{2.28}
\end{equation}
over any geodesic ball $B_{g(t)}(p,\sqrt{1+T})$. In particular, the estimate
{\color{blue}{(\ref{2.28})}} holds on $M\times[0,T]$.
\end{theorem}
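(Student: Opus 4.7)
The plan is to convert the spatial $L^{2}$-bound on $\Delta_{g(t)}u(t)$ established in Proposition \ref{p2.1} into a pointwise estimate via a parabolic mean-value inequality, using the volume non-collapsing of Corollary \ref{c2.5} (driven by the monotonicity of Perelman's $\mu$-functional) to secure the uniform local Sobolev constant required for the iteration.

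First I would secure a uniform lower bound $\mu(g(t),u(t),1)\geq -A$ on $[0,T]$. Running the auxiliary flow $\tau(s)=\tau(0)-s$ with initial value $\tau(0)=1+t$ and invoking the monotonicity of Proposition \ref{p2.2} together with Lemma \ref{l2.3} yields precisely the estimate (\ref{2.27}); since $\tau(0)\leq 1+T$, the right-hand side is bounded below by a constant $-A$ depending only on $n$, $g_{0}$, $u_{0}$, and $T$. Applied under the curvature hypothesis (\ref{1.2}), the rescaled form of Corollary \ref{c2.5} furnishes the non-collapsing
\begin{equation*}
{\rm Vol}_{g(t)}\!\bigl(B_{g(t)}(p,\sqrt{1+T}\,)\bigr) \;\geq\; \kappa\,(1+T)^{n/2},\qquad \kappa \;=\; C(n)\exp\!\bigl[-C(n)\exp(C(n)\sqrt{K})-A\bigr],
\end{equation*}
which accounts simultaneously for the prefactor $(1+T)^{-n/2}$ and for the nested exponential $\exp(e^{C\sqrt{K}})$ appearing in (\ref{2.28}).

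Next I would apply a parabolic Moser--De Giorgi iteration to the nonnegative quantity $v:=|\Delta_{g(t)}u(t)|^{2}$. The differential inequality derived in the proof of Proposition \ref{p2.1} reads
\begin{equation*}
\Box v \;\leq\; 2\bigl(|{\rm Ric}|^{2}+2|\nabla u|^{2}\bigr)\,v \;+\; 2\bigl(|{\rm Ric}|+2|\nabla u|^{2}\bigr)\,|\nabla^{2}u|^{2},
\end{equation*}
which under (\ref{1.2}) and the universal gradient bound (\ref{2.1}) simplifies to $\Box v \leq C(1+K^{2})\,v + C(1+K)|\nabla^{2}u|^{2}$. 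The curvature bound together with the volume non-collapsing yields a uniform local $L^{2}$-Sobolev inequality on $B_{g(t)}(p,\sqrt{1+T}\,)$ with constants explicit in $K$, $T$, and $\kappa$, while the space-time integral bound (\ref{2.6}) controls the inhomogeneous source. Standard iteration on parabolic cylinders of size $\sqrt{1+T}$ then produces an $L^{\infty}$-bound of the schematic form
\begin{equation*}
\sup_{B_{g(t)}(p,\sqrt{1+T}/2)} v \;\leq\; \frac{C\,e^{C(1+K)T}}{\kappa\,(1+T)^{n/2+1}}\int_{M}|\Delta u|^{2}\,dV_{t} \;+\; (\text{lower order}),
\end{equation*}
into which one inserts the $L^{2}$-bound (\ref{2.9}); taking square roots delivers (\ref{2.28}), and the conclusion on $M\times[0,T]$ follows since $p$ is arbitrary.

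The main obstacle is keeping the dependence on $K$ explicit while preventing it from degrading through the iteration. The double-exponential factor $\exp(e^{C\sqrt{K}})$ enters only through $\kappa^{-1}$, so it must be quarantined outside the iteration loop rather than compounded at each De Giorgi step. A secondary technicality is the source term $|\nabla^{2}u|^{2}$ in the evolution of $v$: instead of passing to an $L^{\infty}$-control on $|\nabla^{2}u|$ (which would require invoking the implicit Cheng--Zhu curvature bound), one absorbs it at the level of $L^{2}$-forcing via its space-time bound (\ref{2.6}), which is of the same quality as the bound on $\Delta u$ itself and therefore enters only into a lower-order contribution in the final estimate.
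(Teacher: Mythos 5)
Your route diverges from the paper at the last step: the paper does not run a Moser iteration at all. Having secured the $L^{2}$-bound (\ref{2.9}) and the volume lower bound (\ref{2.30}), the paper concludes by a direct comparison, positing $|\Delta u| > \lambda$ over an entire ball $B_{g(t)}(p,\sqrt{1+T})$ and deriving $\lambda^{2}\,{\rm Vol}(B) \leq \int_{B}|\Delta u|^{2} \leq C(1+K)e^{C(1+K)T}$, which forces $\lambda$ below the claimed threshold. Your replacement of this comparison with a parabolic De Giorgi--Moser iteration is more work but would, in principle, upgrade the argument to a genuine pointwise estimate without having to assume the failure of (\ref{2.28}) on an entire ball rather than at a single point.

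The iteration, however, cannot be closed with the tools you cite. The evolution inequality $\Box v \leq C(1+K^{2})v + C(1+K)|\nabla^{2}u|^{2}$ carries a source $|\nabla^{2}u|^{2}$ that (\ref{2.6}) controls only in the space-time $L^{1}$-norm $\int_{0}^{T}\int_{M}|\nabla^{2}u|^{2}\,dV_{t}\,dt$; a parabolic Moser scheme requires the source in $L^{q}_{t}L^{r}_{x}$ with $\tfrac{n}{2r}+\tfrac{1}{q}<1$ to deliver $L^{\infty}$, and $q=r=1$ fails this for every $n\geq1$. The source also cannot be absorbed into the reaction term $C(1+K^{2})v$, because $|\Delta u|$ bounds only the trace of $\nabla^{2}u$ and not its full norm --- the inequality $|\Delta u|^{2}\leq n|\nabla^{2}u|^{2}$ points the wrong way. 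Iterating on $|\nabla^{2}u|^{2}$ directly via (\ref{A.5}) would re-introduce the full Riemann tensor in the reaction term, which the theorem deliberately avoids assuming bounded. As written, your iteration step does not close: either a stronger control on $\nabla^{2}u$ must first be established (which is essentially what the implicit Cheng--Zhu curvature bound supplies), or one must replace the iteration by a comparison argument carried out entirely at the $L^{2}$ level, as the paper does.
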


\begin{proof} Let
\begin{equation*}
\tilde{t}:=\frac{t}{T+1}, \ \ \ \widetilde{T}:=\frac{T}{T+1}, \ \ \
\tilde{g}(\tilde{t}):=\frac{1}{T+1}g\left((T+1)\tilde{t}\right), \ \ \
\tilde{u}(\tilde{t}):=u\left((T+1)\tilde{t}\right).
\end{equation*}
Then $(\tilde{g}(\tilde{t}),\tilde{u}(\tilde{t}))_{\tilde{t}\in[0,
\widetilde{T}]}$ is a solution of the Ricci flow with $\widetilde{T}
\in(0,1)$. In this case we choose $\tilde{\tau}(0):=
(1+\widetilde{T})/2$ so that $\tilde{\tau}(0)
-\tilde{t}\geq(1+\widetilde{T})/2-\widetilde{T}=(1-\widetilde{T})/2>0$ and $
1+\tilde{t}-\tilde{\tau}(0)\geq1-(1+\widetilde{T})/2
=(1-\widetilde{T})/2>0$. Therefore the estimate {\color{blue}{(\ref{2.27})}}
applied to the rescaling Ricci-harmonic flow holds for all $
\tilde{t}\in[0,\widetilde{T}]$, i.e.,
\begin{equation*}
\mu(\tilde{g}(\tilde{t}),\tilde{u}(\tilde{t}),1)\geq\frac{n}{2}
\ln\frac{1-\widetilde{T}}{1+\widetilde{T}}
-(1+\widetilde{T})|\tilde{S}_{\tilde{g}(0),\min}|-2C(1+\widetilde{T},\tilde{g}(0))-\ln4\pi-n.
\end{equation*}
Since the $L^{2}$-Sobolev constant $C_{s}(M,g)$ is invariant under scaling
the metric, it follows from {\color{blue}{(\ref{2.22})}} and {\color{blue}{(\ref{A.9})}} that
\begin{equation*}
\mu(\tilde{g}(\tilde{t}),\tilde{u}(\tilde{t}),1) \ \ \geq \ \
\frac{n}{2}\ln\frac{1-\frac{T}{1+T}}{1+\frac{T}{1+T}}
-\left(1+\frac{T}{1+T}\right)\left|(T+1)S_{g(0),\min}\right|
-\ln 4\pi-n
\end{equation*}
\begin{equation*}
- \ \ 2\left[\frac{1+2T}{1+T}\left[(1+T)^{-n/2}{\rm Vol}(M,g(0))\right]^{-2/n}
+\frac{n^{2}}{4 e^{2}\frac{1+2T}{1+T}C_{s}(M,g(0))}\right]
\end{equation*}
\begin{eqnarray*}
&=&-\frac{n}{2}\ln(1+2T)
-(1+2T)\left[|S_{g(0),\min}|+\frac{2}{{\rm Vol}(M,g(0))^{2/n}}
\right]\\
&&- \ \ln4\pi-n
-\frac{n^{2}(1+T)}{2e^{2}(1+2T)C_{s}(M,g(0))}
\end{eqnarray*}
\begin{equation*}
\geq \ \ -(1+2T)\left[\frac{n}{2}+|S_{g(0),\min}|
+\frac{2}{{\rm Vol}(M,g(0))^{n/2}}\right]-\ln4\pi-n-\frac{n^{2}}{2 e^{2}C_{s}(M,g(0))}.
\end{equation*}
Consequently,
\begin{equation}
\mu(\tilde{g}(\tilde{t}),\tilde{u}(\tilde{t}),1)
\geq-C(1+2T)\label{2.29}
\end{equation}
for some uniform constant $C$ depending only on $g(0)$ and $u(0)$. Because
$|\widetilde{{\rm Ric}}_{\tilde{g}(\tilde{t})}|_{\tilde{g
}(\tilde{t})}$ $=|{\rm Ric}_{g(t)}|_{g(t)}/(1+T)\leq K/(1+T)$ on $\tilde{B}_{\tilde{g}(
\tilde{t})}(p,1)=B_{g(t)}(p,\sqrt{1+T})$, we have
\begin{equation*}
{\rm Vol}_{\frac{1}{1+T}g(t)}\left(B_{\frac{1}{1+T}g(t)}(p,1)
\right)\geq\kappa
\end{equation*}
where $\kappa=C(n)\exp[-C(n)\exp(C(n)\sqrt{K/(1+T)})]e^{-C(1+2T)}$. Thus
\begin{eqnarray}
{\rm Vol}_{g(t)}
\left(B_{g(t)}(p,\sqrt{1+T})\right)
&\geq&C(1+T)^{n/2}\exp\left[-C\left(1+2T+e^{C\sqrt{K/(1+T)}}\right)\right]\nonumber\\
&\geq&C(1+T)^{n/2}\exp\left[-C\left(1+2T+e^{C\sqrt{K}}\right)\right].
\label{2.30}
\end{eqnarray}

We now can prove the estimate {\color{blue}{(\ref{2.28})}}. Suppose otherwise that
\begin{equation*}
|\Delta_{g(t)}
u(t)|>\frac{C(1+K)}{(1+T)^{n/2}}\exp\left\{C\left[(1+K)T
+1+2T+e^{C\sqrt{K}}\right]\right\}
\end{equation*}
over some geodesic ball $B_{g(t)}(p,\sqrt{1+T})$ and for some time $t
\in[0,T]$. On the other hand, from {\color{red}{Proposition \ref{p2.2}}} and {\color{blue}{(\ref{2.30})}},
we get
\begin{equation*}
C(1+K)e^{C(1+K)T} \ \ \geq \ \ \int_{{\rm Vol}_{g(t)}(B_{g(t)}(p,\sqrt{1+T}))}
|\Delta_{g(t)}u(t)|^{2}dV_{g(t)}
\end{equation*}
\begin{equation*}
\geq \ \ \frac{2C(1+K)}{(1+T)^{n/2}}\exp\left\{2C\left[(1+K)T
+1+2T+e^{C\sqrt{K}}\right]\right\}\cdot{\rm Vol}_{g(t)}
\left(B_{g(t)}(p,\sqrt{1+T})\right)
\end{equation*}
\begin{equation*}
\geq \ \ 2C(1+K) e^{C(1+K)T}\exp\left[C\left(1+2T+e^{C\sqrt{K}}\right)
\right] \ \ \geq \ \ 2C(1+K)e^{C(1+K)T}.
\end{equation*}
This contradiction shows that we must have {\color{blue}{(\ref{2.28})}}.
\end{proof}

\subsection{Local curvature estimates}\label{subsection2.2}

In this subsection we assume that
\begin{equation*}
|{\rm Ric}|\leq K, \ \ \ |\nabla u|\leq L, \ \ \
|\nabla^{2}u|\leq P
\end{equation*}
over an open subset $\Omega$ in $M$ and $\phi$ is a Lipschitz
function with support in $\Omega$.
\\

From {\color{red}{Lemma \ref{lA.1}}}, we can deduce that
\begin{eqnarray}
\Box|{\rm Ric}|^{2}&=&-2|\nabla{\rm Ric}|^{2}
+4R_{pijq}R^{pq}R^{ij}-8R_{pijq}R^{ij}\nabla^{p}u
\nabla^{q}u\nonumber\\
&&+ \ 8\Delta uR^{ij}\nabla_{i}\nabla_{j}u
-8R^{ij}\nabla_{i}\nabla_{k}u\nabla^{k}\nabla_{j}u
-8R_{ij}R_{k}{}^{j}\nabla^{i}u\nabla u.\label{2.31}
\end{eqnarray}
In particular
\begin{eqnarray}
|\nabla{\rm Ric}|^{2}&\leq&-\frac{1}{2}\Box|{\rm Ric}|^{2}
+CK^{2}|{\rm Rm}|+CKL^{2}|{\rm Rm}|\nonumber\\
&&+ \ CK|\nabla^{2}u||\Delta u|+CK|\nabla^{2}u|^{2}
+CK^{2}L^{2}\label{2.32}\\
&\leq&-\frac{1}{2}\Box|{\rm Ric}|^{2}+CK(L^{2}+K)|{\rm Rm}|
+CKP^{2}+CK^{2}L^{2},\nonumber
\end{eqnarray}
by the fact at $|\Delta u|\leq \sqrt{n}|\nabla^{2}u|$. Similarly,
from {\color{blue}{(\ref{A.6})}}, we have
\begin{equation}
|\nabla{\rm Rm}|^{2}\leq-\frac{1}{2}\Box|{\rm Rm}|^{2}
+C|{\rm Rm}|^{3}+C|{\rm Rm}|P^{2}+CL^{2}|{\rm Rm}|^{2}.\label{2.33}
\end{equation}
Moreover, we can prove that, see {\color{red}{Lemma \ref{lA.2}}},
\begin{eqnarray}
\partial_{t}|{\rm Rm}|^{2}
&=&\nabla^{2}{\rm Ric}\ast{\rm Rm}
+{\rm Ric}\ast{\rm Rm}\ast{\rm Rm}\nonumber\\
&&+ \ {\rm Rm}\ast\nabla^{2}u\ast\nabla^{2}u
+{\rm Rm}\ast{\rm Rm}\ast\nabla u\ast\nabla u.\label{2.34}
\end{eqnarray}
As in \cite{KMW2016}, we consider the quantity
\begin{equation*}
\frac{d}{dt}\int_{M}|{\rm Rm}|^{p}\phi^{2p}dV_{t}
\end{equation*}
which can be rewritten as, using {\color{blue}{(\ref{2.34})}},
$$
\frac{d}{dt}\int_{M}|{\rm Rm}|^{p}\phi^{2p}dV_{t} \ \ = \ \ \int_{M}\left(\partial_{t}|{\rm Rm}|^{p}\right)\phi^{2p}dV
+\int_{M}|{\rm Rm}|^{p}\phi^{2p}\left(-R+2|\nabla u|^{2}\right)
dV_{t}
$$
$$
= \ \ \frac{p}{2}\int_{M}|{\rm Rm}|^{p-2}
\bigg[\nabla^{2}{\rm Ric}\ast{\rm Rm}+{\rm Ric}\ast{\rm Rm}
\ast{\rm Rm}+{\rm Rm}\ast\nabla^{2}u\ast\nabla^{2}u
$$
$$
+ \ {\rm Rm}
\ast{\rm Rm}\ast\nabla u\ast\nabla u\bigg]\phi^{2p}dV_{t}-\int_{M}R|{\rm Rm}|^{p}\phi^{2p}dV_{t}+2\int_{M}|{\rm Rm}|^{p}
|\nabla u|^{2}\phi^{2p}dV_{t}
$$
$$
\leq \ \ C\int_{M}|{\rm Rm}|^{p-2}\left(\nabla^{2}{\rm Ric}\ast{\rm Rm}
\right)\phi^{2p}dV_{t}+CK\int_{M}|{\rm Rm}|^{p}
\phi^{2p}dV_{t}
$$
$$
+ \ C P^{2}\int_{M}|{\rm Rm}|^{p-1}\phi^{2p}dV+CL^{2}\int_{M}
|{\rm Rm}|^{p}\phi^{2p}dV_{t}
$$
$$
+ \ C\int_{M}|{\rm Rm}|^{p-2}
\left({\rm Rm}\ast\nabla u\ast\nabla^{3}u\right)\phi^{2p}dV_{t}.
$$
From (2.5), (2.6), and (2.7) in \cite{KMW2016}, we know that
$$
C\int_{M}|{\rm Rm}|^{p-2}\left(\nabla^{2}{\rm Ric}\ast{\rm Rm}\right)
\phi^{2p}dV_{t} \ \ \leq\ \ \frac{1}{K}\int_{M}|\nabla{\rm Ric}|^{2}|{\rm Rm}|^{p-1}
\phi^{2p}dV_{t}
$$
$$
+ \ \ CK\int_{M}|\nabla{\rm Rm}|^{2}|{\rm Rm}|^{p-3}\phi^{2p}dV_{t}
+CK\int_{M}|{\rm Rm}|^{p-1}|\nabla\phi|^{2}\phi^{2p-2}dV_{t}.
$$
Combining all terms yields
\begin{equation*}
\frac{d}{dt}\int_{M}|{\rm Rm}|^{p}\phi^{2p}dV_{t}\leq\frac{1}{K}\int_{M}|\nabla{\rm Ric}|^{2}|{\rm Rm}|^{p-1}\phi^{2p}dV_{t}
\end{equation*}
\begin{equation}
+ \ CK\int_{M}|\nabla{\rm Rm}|^{2}|{\rm Rm}|^{p-3}\phi^{2p}dV_{t}
+ CK\int_{M}|{\rm Rm}|^{p-1}|\nabla\phi|^{2}\phi^{2p-2}dV_{t}\label{2.35}
\end{equation}
\begin{equation*}
+ \ C(K+L^{2})\int_{M}|{\rm Rm}|^{p}\phi^{2p}dV+CP^{2}\int_{M}
|{\rm Rm}|^{p-1}\phi^{2p}dV_{t}
\end{equation*}
In {\color{blue}{(\ref{2.35})}} the first two terms are ``bad terms'', since these contain
derivatives of curvature. As in \cite{KMW2016} we set
\begin{equation*}
B_{1}:=\frac{1}{K}\int_{M}|\nabla{\rm Ric}|^{2}|{\rm Rm}|^{p-1}
\phi^{2p}dV_{t}, \ \ \ B_{2}:=\int_{M}|\nabla{\rm Rm}|^{2}|{\rm Rm}|^{p-3}
\phi^{2p}dV_{t}.
\end{equation*}
We also introduce
\begin{eqnarray*}
A_{1}&:=&\int_{M}|{\rm Rm}|^{p}|\phi^{2p}dV_{t}, \ \ \
A_{2} \ \ := \ \ \int_{M}|{\rm Rm}|^{p-1}\phi^{2p}dV_{t},\\
A_{3}&:=&\int_{M}|{\rm Rm}|^{p-1}|\nabla\phi|^{2}\phi^{2p-1}dV_{t}, \ \ \
A_{4} \ \ := \ \ \int_{M}|{\rm Rm}|^{p-1}|\nabla\phi|^{2}\phi^{2p-2}dV_{t}.
\end{eqnarray*}
Then the estimate {\color{blue}{(\ref{2.35})}} can be rewritten as
\begin{equation}
\frac{d}{dt}\int_{M}|{\rm Rm}|^{p}\phi^{2p}dV_{t}
\leq B_{1}+CK B_{2}
+CKA_{4}+C(K+L^{2})A_{1}+CP^{2}A_{2}.\label{2.36}
\end{equation}
Using {\color{blue}{(\ref{2.32})}} yields
$$
B_{1} \ \ \leq \ \ \int_{M}
\left[\frac{1}{2K}\left(\Delta-\partial_{t}\right)
|{\rm Ric}|^{2}+C(L^{2}+K)|{\rm Rm}|+C(P^{2}+KL^{2})\right]
|{\rm Rm}|^{p-1}\phi^{2p}dV_{t}
$$
$$
= \ \frac{1}{2K}\int_{M}\left[\left(\Delta-\partial_{t}\right)|{\rm Ric}|^{2}
\right]|{\rm Rm}|^{p-1}\phi^{2p}dV_{t}+C(L^{2}+K)A_{1}
+C(P^{2}+KL^{2})A_{2}
$$
$$
= \ \ \frac{1}{2K}\int_{M}\left(\Delta|{\rm Ric}|^{2}\right)|{\rm Rm}|^{p-1}
\phi^{2p}dV_{t}+C(L^{2}+K)A_{1}+C(P^{2}+KL^{2})A_{2}
$$
$$
- \ \frac{1}{2K}\int_{M}\bigg[\partial_{t}\left(|{\rm Ric}|^{2}|{\rm Rm}|^{p-1}
\phi^{2p}dV_{t}\right)-|{\rm Ric}|^{2}\left(\partial_{t}|{\rm Rm}|^{p-1}
\right)\phi^{2p}dV_{t}
$$
$$
- \ |{\rm Ric}|^{2}|{\rm Rm}|^{p-1}\phi^{2p}\left(-R+2|\nabla u|^{2}
\right)dV_{t}\bigg]
$$
$$
= \ \ -\frac{1}{2K}\left[\int_{M}\left\langle\nabla|{\rm Ric}|^{2},\nabla|{\rm Rm}|^{p-1}
\right\rangle\phi^{2p}dV_{t}+\int_{M}\left\langle\nabla|{\rm Ric}|^{2},
\nabla\phi^{2p}\right\rangle|{\rm Rm}|^{p-1}dV_{t}\right]
$$
$$
- \ \frac{1}{2K}\frac{d}{dt}\int_{M}|{\rm Ric}|^{2}|{\rm Rm}|^{p-1}
\phi^{2p}dV_{t}+C(L^{2}+K)A_{1}+C(P^{2}+KL^{2})A_{2}
$$
$$
+ \ \frac{1}{2K}\int_{M}|{\rm Ric}|^{2}
\left(\partial_{t}|{\rm Rm}|^{p-1}\right)\phi^{2p}dV_{t}+CKL^{2}A_{2}+CKA_{1}
$$
$$
\leq \ \ -\frac{1}{2K}\left[\int_{M}\left\langle\nabla|{\rm Ric}|^{2},\nabla|{\rm Rm}|^{p-1}
\right\rangle\phi^{2p}dV_{t}+\int_{M}\left\langle\nabla|{\rm Ric}|^{2},
\nabla\phi^{2p}\right\rangle|{\rm Rm}|^{p-1}dV_{t}\right]
$$
$$
- \ \frac{1}{2K}\frac{d}{dt}\int_{M}|{\rm Ric}|^{2}|{\rm Rm}|^{p-1}
\phi^{2p}dV_{t}+\frac{1}{2K}\int_{M}|{\rm Ric}|^{2}
\left(\partial_{t}|{\rm Rm}|^{p-1}\right)\phi^{2p}dV_{t}
$$
$$
+ \ C(L^{2}+K)A_{1}+C(P^{2}+KL^{2})A_{2}.
$$
From (2.10) and (2.11) in \cite{KMW2016}, one has
\begin{eqnarray}
-\frac{1}{2K}\int_{M}\left\langle\nabla|{\rm Ric}|^{2},
\nabla|{\rm Rm}|^{p-1}\right\rangle\phi^{2p}dV_{t}
&\leq&\frac{1}{10}B_{1}+CK B_{2},\label{2.37}\\
-\frac{1}{2K}\int_{M}\left\langle\nabla|{\rm Ric}|^{2},\nabla
\phi^{2p}\right\rangle|{\rm Rm}|^{p-1}dV_{t}
&\leq&\frac{1}{10}B_{1}+CKA_{4}.\label{2.38}
\end{eqnarray}
According to {\color{blue}{(\ref{2.34})}}, we have
\begin{equation*}
\frac{1}{2K}\int_{M}|{\rm Ric}|^{2}\left(\partial_{t}|{\rm Rm}|^{p-1}
\right)\phi^{2p}dV_{t} \ \ = \ \ \frac{p-1}{4K}\int_{M}|{\rm Ric}|^{2}
\left(|{\rm Rm}|^{p-3}\partial_{t}|{\rm Rm}|^{2}\right)
\phi^{2p}dV_{t}
\end{equation*}
\begin{eqnarray*}
&=&\frac{C}{K}\int_{M}|{\rm Ric}|^{2}|{\rm Rm}|^{p-3}
\phi^{2p}\bigg[\nabla^{2}{\rm Ric}\ast{\rm Rm}+{\rm Ric}\ast{\rm Rm}
\ast{\rm Rm}\\
&&+ \ {\rm Rm}\ast\nabla^{2}u\ast\nabla^{2}u+{\rm Rm}
\ast{\rm Rm}\ast\nabla u\ast\nabla u\bigg]dV_{t}\\
&\leq&\frac{C}{K}\int_{M}|{\rm Ric}|^{2}|{\rm Rm}|^{p-3}
\phi^{2p}\left(\nabla^{2}{\rm Ric}\ast{\rm Rm}\right)dV_{t}+CKA_{1}+C(P^{2}+KL^{2})
A_{2}
\end{eqnarray*}
From the proof of (\ref{2.13}) -- (\ref{2.15}) in \cite{KMW2016}, we can deduce
that
\begin{equation*}
\frac{C}{K}\int_{M}|{\rm Ric}|^{2}|{\rm Rm}|^{p-3}
\phi^{2p}\left(\nabla^{2}{\rm Ric}\ast{\rm Rm}\right)dV
\leq\frac{1}{5}B_{1}+CKB_{2}+CKA_{4}.
\end{equation*}
In summary, we arrive at
\begin{eqnarray}
\frac{1}{2K}\int_{M}|{\rm Ric}|^{2}\left(\partial_{t}|{\rm Rm}|^{p-1}
\right)\phi^{2p}dV_{t}&\leq&\frac{1}{5}B_{1}+CKB_{2}
+CKA_{1}\nonumber\\
&&+ \ C(P^{2}+KL^{2})A_{2}+CKA_{4}.\label{2.39}
\end{eqnarray}
Plugging {\color{blue}{(\ref{2.37})}}, {\color{blue}{(\ref{2.38})}}, and {\color{blue}{(\ref{2.39})}} into the inequality for
$B_{1}$, we get
\begin{eqnarray}
B_{1}&\leq&CKB_{2}+C(K+L^{2})A_{1}+CKA_{4}\nonumber\\
&&+ \ C(P^{2}+KL^{2})A_{2}
-\frac{1}{2K}\frac{d}{dt}\int_{M}|{\rm Ric}|^{2}|{\rm Rm}|^{p-1}\phi^{2p}
dV_{t}.\label{2.40}
\end{eqnarray}
To deal with the term $B_{2}$, we use the evolution equation {\color{blue}{(\ref{2.33})}} and
then obtain
\begin{eqnarray*}
B_{2}&\leq&\int_{M}\left[\frac{1}{2}\left(\Delta-\partial_{t}\right)
|{\rm Rm}|^{2}+C|{\rm Rm}|^{3}+C(L^{2}+P^{2})|{\rm Rm}|^{2}
\right]|{\rm Rm}|^{p-3}\phi^{2p}dV_{t}\\
&=&\frac{1}{2}\int_{M}\left(\Delta|{\rm Rm}|^{2}\right)
|{\rm Rm}|^{p-3}\phi^{2p}dV_{t}+CA_{1}+C(L^{2}+P^{2})A_{2}\\
&&- \ \frac{1}{2}\int_{M}\left(\partial_{t}|{\rm Rm}|^{2}\right)
|{\rm Rm}|^{p-3}\phi^{2p}dV_{t}\\
&\leq&C\int_{M}|\nabla{\rm Rm}||\nabla\phi||{\rm Rm}|^{p-2}
\phi^{2p-1}dV_{t}+CA_{1}+C(L^{2}+P^{2})A_{2}\\
&&- \ \frac{1}{2}\int_{M}\left(\partial_{t}|{\rm Rm}|^{2}\right)
|{\rm Rm}|^{p-3}\phi^{2p}dV_{t}\\
&\leq&\frac{1}{2}B_{2}+CA_{4}+CA_{1}+C(L^{2}+P^{2})A_{2}
-\frac{1}{2}\int_{M}\left(\partial_{t}|{\rm Rm}|^{2}\right)
|{\rm Rm}|^{p-3}\phi^{2p}dV_{t}.
\end{eqnarray*}
As the proof of (2.18) -- (2.19) in \cite{KMW2016}, we have
\begin{equation*}
-\frac{1}{2}\int_{M}\left(\partial_{t}|{\rm Rm}|^{2}\right)
|{\rm Rm}|^{p-3}\phi^{2p}dV_{t} \ \ = \ \ -\frac{1}{2}
\int_{M}\bigg[\partial_{t}\left(|{\rm Rm}|^{2}|{\rm Rm}|^{p-3}\phi^{2p}dV_{t}
\right)
\end{equation*}
\begin{equation*}
-|{\rm Rm}|^{2}\left(\partial_{t}|{\rm Rm}|^{p-3}\right)
\phi^{2p}dV_{t}-|{\rm Rm}|^{p-1}
\phi^{2p}\partial_{t}dV\bigg]
\end{equation*}
\begin{equation*}
= \ \ -\frac{1}{2}\frac{d}{dt}\int_{M}|{\rm Rm}|^{p-1}\phi^{2p}dV_{t}
+\frac{p-3}{4}\int_{M}|{\rm Rm}|^{p-3}\left(\partial_{t}|{\rm Rm}|^{2}
\right)\phi^{2p}dV_{t}
\end{equation*}
\begin{equation*}
-\frac{1}{2}\int_{M}R|{\rm Rm}|^{p-1}\phi^{2p}dV_{t}
+\int_{M}|{\rm Rm}|^{p-1}|\nabla u|^{2}\phi^{2p}dV_{t}
\end{equation*}
and therefore
\begin{equation*}
-\frac{1}{2}\int_{M}\left(\partial_{t}|{\rm Rm}|^{2}\right)
|{\rm Rm}|^{p-3}\phi^{2p}dV_{t}
\leq-\frac{1}{p-1}\frac{d}{dt}\int_{M}|{\rm Rm}|^{p-1}\phi^{2p}dV_{t}
+CA_{1}+CL^{2}A_{2}.
\end{equation*}
In summary,
\begin{equation}
B_{2}\leq-\frac{1}{p-1}\frac{d}{dt}\int_{M}|{\rm Rm}|^{p-1}
\phi^{2p}dV_{t}+CA_{1}+CA_{4}+C(L^{2}+P^{2})A_{2}.\label{2.41}
\end{equation}
From {\color{blue}{(\ref{2.36})}}, {\color{blue}{(\ref{2.40})}}, and {\color{blue}{(\ref{2.41})}}, we finally obtain
\begin{equation*}
\frac{d}{dt}\left[A_{1}+CKA_{2}
+\frac{1}{2K}\int_{M}|{\rm Ric}|^{2}|{\rm Rm}|^{p-1}\phi^{2p}dV_{t}\right] \ \
\leq \ \ C(K+L^{2})A_{1}
\end{equation*}
\begin{equation*}
+ \ CKA_{4}
+C(KL^{2}+KP^{2}+P^{2}+KL)A_{2}.
\end{equation*}

\begin{theorem}\label{t2.7} Let $(g(t),u(t))_{t\in[0,T]}$ be a solution to the Ricci-haronic
flow. Suppose there exist constants $\rho, K, L, P>0$ and $x_{0}\in M$ such that $
B_{g(0)}(x_{0},\rho/\sqrt{K})$ is compactly contained on $M$ and
\begin{equation*}
|{\rm Ric}_{g(t)}|_{g(t)}
\leq K, \ \ \ |\nabla_{g(t)}u(t)|_{g(t)}
\leq L, \ \ \ |\nabla^{2}_{g(t)}u(t)|_{g(t)}
\leq P
\end{equation*}
on $B_{g(0)}(x_{0},\rho/\sqrt{K})\times[0,T]$. For any $p\geq3$, there is a
constant $C$, depending only on $n$ and $p$, such that
\begin{equation*}
\int_{B_{g(0)}(x_{0},\rho/2\sqrt{K})}|{\rm Rm}_{g(t)}|^{p}_{g(t)}
dV_{g(t)} \ \ \leq \ \ C\Lambda_{1} e^{C\Lambda_{2} T}\int_{B_{g(0)}(x_{0},\rho/\sqrt{K})}
|{\rm Rm}_{g(0)}|^{p}_{g(0)}dV_{g(0)}
\end{equation*}
\begin{equation*}
+ \ C K^{p}\left(1+\rho^{-2p}\right)
e^{C\Lambda_{2} T}{\rm Vol}_{g(t)}
\left(B_{g(0)}\left(x_{0},\frac{\rho}{\sqrt{K}}\right)
\right).
\end{equation*}
Here $\Lambda_{1}:=1+K$ and $\Lambda_{2}:=K+L+L^{2}+P^{2}(1+K^{-1})$.
\end{theorem}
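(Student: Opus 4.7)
The plan is to combine the differential inequality established at the end of the excerpt with a carefully chosen cutoff function and absorb the subcritical terms through Young's inequality. First, I would pick a time-independent Lipschitz function $\phi \colon M \to [0,1]$ supported in $B_{g(0)}(x_0, \rho/\sqrt{K})$, equal to $1$ on $B_{g(0)}(x_0, \rho/(2\sqrt{K}))$, and with $|\nabla_{g(0)}\phi|^{2}_{g(0)} \leq CK/\rho^2$. The Ricci bound $|{\rm Ric}_{g(t)}|_{g(t)} \leq K$ on the support of $\phi$ keeps the metrics $g(t)$ and $g(0)$ uniformly comparable up to a factor $e^{CKT}$, so that $|\nabla_{g(t)}\phi|^{2}_{g(t)}$ is also controlled by a constant times $K/\rho^2$ on the support of $\phi$, the multiplicative distortion being absorbed into the final $e^{C\Lambda_2 T}$ factor.

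Next, writing $V_t := {\rm Vol}_{g(t)}(B_{g(0)}(x_0, \rho/\sqrt{K}))$, I would absorb the two sub-critical quantities into $A_1$. Applying Young's inequality with conjugate exponents $p/(p-1)$ and $p$ to the product $(|{\rm Rm}|^{p-1}\phi^{2(p-1)}) \cdot \phi^2$ yields $A_2 \leq A_1 + V_t$; applying the same inequality to $(|{\rm Rm}|^{p-1}\phi^{2(p-1)}) \cdot (K/\rho^2)$ and using $|\nabla\phi|^2 \leq CK/\rho^2$ yields $CKA_4 \leq A_1 + CK^p \rho^{-2p}V_t$. Feeding these into the inequality derived immediately before the statement of the theorem produces
$$\frac{d}{dt}F(t) \leq C\Lambda_2 F(t) + CK^p(1+\rho^{-2p})V_t,$$
where $F(t) := A_1(t) + CKA_2(t) + \frac{1}{2K}\int_M |{\rm Ric}|^2 |{\rm Rm}|^{p-1}\phi^{2p}dV_t$ and $\Lambda_2 = K+L+L^2+P^2(1+K^{-1})$.

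The third step is Grönwall. Since $\partial_t dV_t = (-R+2|\nabla u|^2)dV_t$ and $R \gtrsim -1$ with $|\nabla u| \leq L$, the volume satisfies $V_t \leq e^{C\Lambda_2 T}V_0$, so Grönwall yields $F(t) \leq e^{C\Lambda_2 T}F(0) + CK^p(1+\rho^{-2p})e^{C\Lambda_2 T}V_t$. The initial value $F(0)$ is bounded, via the Ricci bound on $g(0)$ and one more application of Young's inequality, by $C(1+K)\int_{B_{g(0)}(x_0,\rho/\sqrt{K})}|{\rm Rm}_{g(0)}|^p dV_{g(0)} + CK^p V_0$. Since $\phi \equiv 1$ on $B_{g(0)}(x_0, \rho/(2\sqrt{K}))$, the quantity $A_1(t)$ dominates the left-hand side of the theorem, and the stated estimate follows with $\Lambda_1 = 1+K$.

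The main obstacle I anticipate is the careful bookkeeping of powers of $K$ and $\rho$ throughout the absorption arguments, so as to reach exactly the coefficient $K^p(1+\rho^{-2p})$ in front of the volume term rather than a weaker polynomial dependence. One must also verify that the metric-distortion factor $e^{CKT}$ arising from comparing $g(t)$ with $g(0)$ on the support of $\phi$ contributes only to the exponential $e^{C\Lambda_2 T}$ (which is fine since $\Lambda_2 \geq K$), and does not corrupt the polynomial prefactors $\Lambda_1$ and $K^p(1+\rho^{-2p})$ in the final bound. A secondary point is that the mixed term $\frac{1}{2K}\int |{\rm Ric}|^2 |{\rm Rm}|^{p-1}\phi^{2p}dV_t$ appearing in $F$ must be estimated at $t=0$ using $|{\rm Ric}_{g(0)}|\leq K$ and Young's inequality in such a way that it contributes only to the initial $\int |{\rm Rm}_{g(0)}|^p$ and $K^p V_0$ terms and not to a new term of unmatched scaling.
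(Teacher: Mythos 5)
Your proposal is correct and follows essentially the same route as the paper: both define $F(t)=U(t)$, both absorb the $A_2$ and $A_4$ terms via Young's inequality into $A_1$ plus a $K^p\rho^{-2p}$-weighted volume term, and both apply a Gr\"onwall argument followed by a Young-inequality bound on $U(0)$. The only cosmetic differences are that the paper uses the explicit hat-shaped cutoff $\phi=\bigl((\rho/\sqrt K - d_{g(0)})/(\rho/\sqrt K)\bigr)_+$ (so $\phi\ge 1/2$ rather than $\phi\equiv 1$ on the half ball) and routes the volume comparison through ${\rm Vol}_{g(\tau)}$ instead of ${\rm Vol}_{g(0)}$, neither of which affects the argument.
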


\begin{proof} Choose $\Omega:=B_{g(0)}(x_{0},\rho/\sqrt{K})$ and
\begin{equation*}
\phi:=\left(\frac{\rho/\sqrt{K}-d_{g(0)}(x_{0},\cdot)}{\rho/\sqrt{K}}\right)_{+}.
\end{equation*}
Then $e^{-2Kt}g(0)\leq g(t)\leq e^{2Kt}g(0)$ and $|\nabla_{g(t)}
\phi|_{g(t)}\leq e^{KT}|\nabla_{g(0)}\phi|_{g(0)}\leq\sqrt{K} e^{KT}/\rho$
for any $t\in[0,T]$. Let
\begin{equation*}
U:=\int_{M}|{\rm Rm}|^{p}\phi^{2p}dV_{t}
+CK\int_{M}|{\rm Rm}|^{p-1}\phi^{2p}dV_{t}+\frac{1}{2K}\int_{M}|{\rm Ric}|^{2}|{\rm Rm}|^{p-1}\phi^{2p}dV_{t}.
\end{equation*}
Then
\begin{equation*}
U' \ \ \leq \ \ C(K+L^{2})U+CKA_{4}+C(KL^{2}+KP^{2}+P^{2}+KL)\frac{U}{K}.
\end{equation*}
For $A_{4}$, we can estimate it as follows:
\begin{eqnarray*}
A_{4}&=&\int_{M}|{\rm Rm}|^{p-1}|\nabla\phi|^{2}
\phi^{2p-2}dV_{t} \ \ \leq \ \ \int_{B_{g(0)}(x_{0},\rho/\sqrt{K})}
|{\rm Rm}|^{p-1}\phi^{2p-2} K\rho^{-2}e^{2KT}dV_{t}\\
&\leq&\int_{B_{g(0)}(x_{0},\rho/\sqrt{K})}
\left[\frac{(|{\rm Rm}|^{p-1}\phi^{2p-2})^{\frac{p}{p-1}}}{\frac{p}{p-1}}
+\frac{(K\rho^{-2}e^{2KT})^{p}}{p}\right]dV_{t}\\
&\leq&A_{1}+K^{p}e^{2KpT}{p\rho^{-2p}}{\rm Vol}_{g(t)}
\left(B_{g(0)}\left(x_{0},\frac{\rho}{\sqrt{K}}\right)\right)\\
&\leq& U+C K^{p}\rho^{-2p}e^{2KpT}{\rm Vol}_{g(t)}
\left(B_{g(0)}\left(x_{0},\frac{\rho}{\sqrt{K}}\right)\right).
\end{eqnarray*}
Hence
\begin{equation*}
U' \ \ \leq \ \ C\left[K+L^{2}+L+P^{2}\left(1+\frac{1}{K}
\right)\right]U
\end{equation*}
\begin{equation*}
+ \ CK^{p+1}\rho^{-2p}e^{2KpT}{\rm Vol}_{g(t)}
\left(B_{g(0)}\left(x_{0},\frac{\rho}{\sqrt{K}}\right)\right).
\end{equation*}
Since, for each $\tau\in[0,T]$,
\begin{equation*}
{\rm Vol}_{g(t)}\left(B_{g(0)}\left(x_{0},
\frac{\rho}{\sqrt{K}}\right)\right)
\leq e^{CKT}{\rm Vol}_{g(\tau)}
\left(B_{g(0)}\left(x_{0},\frac{\rho}{\sqrt{K}}\right)\right),
\end{equation*}
as argued in (2.27) of \cite{KMW2016}, we deduce that
\begin{equation}
U(\tau)\leq e^{C\Lambda_{2} T}
\left[U(0)+C\rho^{-2p}K^{p}{\rm Vol}_{g(t)}
\left(B_{g(0)}\left(x_{0},\frac{\rho}{\sqrt{K}}\right)\right)\right],
\label{2.42}
\end{equation}
According to the Young inequality
\begin{eqnarray*}
\int_{M}|{\rm Rm}_{g(0)}|^{p-1}\phi^{2p}dV_{g(0)}&=&\int_{M}\left(|{\rm Rm}|^{p-1}\phi^{2p-2}\right)\phi^{2}dV_{g(0)}\\
&\leq&\frac{p-1}{p}\int_{M}|{\rm Rm}_{g(0)}|^{p}\phi^{2p}dV_{g(t)}
+\frac{1}{p}\int_{M}\phi^{2p}dV_{g(0)},
\end{eqnarray*}
we obtain
\begin{eqnarray*}
U(0)&\leq&C(1+K)\int_{M}|{\rm Rm}_{g(0)}|^{p}\phi^{2p}dV_{g(0)}+CK{\rm Vol}_{g(0)}\left(B_{g(0)}\left(x_{0},\frac{\rho}{\sqrt{K}}
\right)\right)\\
&\leq&C(1+K)\int_{M}|{\rm Rm}_{g(0)}|^{2p}_{g(0)}\phi^{2p}dV_{g(0)}\\
&&+ \ CKe^{CKT}{\rm Vol}_{g(\tau)}\left(B_{g(0)}
\left(x_{0},\frac{\rho}{\sqrt{K}}\right)\right)\\
&\leq&C(1+K)\int_{B_{g(0)}(x_{0},\rho/\sqrt{K})}
|{\rm Rm}_{g(0)}|^{2p}_{g(0)}dV_{g(0)}\\
&&+ \ CKe^{CKT}{\rm Vol}_{g(\tau)}
\left(B_{g(0)}\left(x_{0},\frac{\rho}{\sqrt{K}}\right)\right),
\end{eqnarray*}
and $\phi\geq1/2$ on $B_{g(0)}(x_{0},\rho/2\sqrt{K})$, we complete the proof.
\end{proof}

The same method can be applied to the regular Ricci flow (see {\bf Section} \ref{section3}), since all computations only involve the evolution equations for the metrics, which take the same forms in the Ricci-harmonic flow.

\section{Results for a generalized Ricci flow}\label{section3}

In this section we extend the main estimates in \cite{LY2} to a generalized Ricci flow introduced in \cite{LY1}, where I proved that for any complete $n$-manifold $M$, the following two conditions are equivalent:
\begin{itemize}

\item[(i)] there exists a Ricci-flat Riemannian metric on $M$;

\item[(ii)] there exists real numbers $\alpha,\beta$, a smooth function $u$ on $M$, and a Riemannian metric $g$ on $M$ such that
    \begin{equation}
    0=-R_{ij}+\alpha\nabla_{i}\nabla_{j}u, \ \ \ 0=\Delta_{g}
    u+\beta|\nabla_{g}u|^{2}_{g}.\label{3.1}
    \end{equation}

\end{itemize}
The main ingredient in the proof is Chen's result \cite{Chen2009} which says that any complete noncompact steady gradient Ricci soliton has nonnegative scalar curvature.

Observe that the first equation in {\color{blue}{(\ref{3.1})}} is actual the vanishing of the $\infty$-Bakry-\'Emery Ricci tensor. Indeed, the $N$-Bakry-\'Emery Ricci tensor is defined by
\begin{equation}
{\rm Ric}_{g,N,f}:={\rm Ric}_{g}+\nabla^{2}f-\frac{df\otimes df}{N-n}\label{3.2}
\end{equation}
for $N$ finite, and
\begin{equation}
{\rm Ric}_{g,\infty,f}:={\rm Ric}_{g}+\nabla^{2}f\label{3.3}
\end{equation}
for $N$ infinite. Thus the first equation in {\color{blue}{(\ref{3.1})}} is equivalent to ${\rm Ric}_{g,\infty,-\alpha u}=0$.
\\

Motivated by the above equivalence conditions I introduced the following generalized Ricci flow \cite{LY1}:
\begin{eqnarray}
\partial_{t}g(t)&=&-2\!\ {\rm Ric}_{g(t)}
+2\alpha_{1}\nabla_{g(t)} u(t)\otimes \nabla_{g(t)}u(t)+2\alpha_{2}\nabla^{2}_{g(t)}u(t),\label{3.4}\\
\partial_{t}u(t)&=&\Delta_{g(t)}u(t)+\beta_{1}|\nabla_{g(t)}
u(t)|^{2}_{g(t)}
+\beta_{2}u(t).\label{3.5}
\end{eqnarray}
Here $\alpha_{1},\alpha_{2},\beta_{1},\beta_{2}$ are given constants. This system is called {\it $(\alpha_{1},\alpha_{2},\beta_{1},\beta_{2})$-Ricci flow}. In particular, when $(\alpha_{1},\alpha_{2},\beta_{1},\beta_{2})
=(2,0,0,0)$, we get the Ricci-harmonic flow {\color{blue}{(\ref{1.1})}}. In view of {\color{blue}{(\ref{3.2})}}, we see that {\color{blue}{(\ref{3.4})}} can be written as
\begin{equation*}
\partial_{t}g(t)=-2\!\ {\rm Ric}_{g(t),N,-\alpha_{2}u(t)},
\end{equation*}
where $N=n+\alpha^{2}_{2}/\alpha_{1}$ if $\alpha_{1}\neq0$, and $N=\infty$ if $\alpha_{1}=0$.
\\

According to Proposition 2.12 in \cite{LY1}, we know that an $(\alpha_{1},\alpha_{2},\beta_{1},\beta_{2})$-Ricci flow is equivalent to the $(\alpha_{1},0,\beta_{1}-\alpha_{2},\beta_{2})$-flow. By this reduction, in the following we main study the {\it $(\alpha_{1},0,\beta_{1},\beta_{2})$-Ricci flow}:
\begin{eqnarray}
\partial_{t}g(t)&=&-2\!\ {\rm Ric}_{g(t)}
+2\alpha_{1}\nabla_{g(t)}u(t)\otimes\nabla_{g(t)}
u(t),\label{3.6}\\
\partial_{t}u(t)&=&\Delta_{g(t)}u(t)+\beta_{1}|\nabla_{g(t)}
u(t)|^{2}_{g(t)}
+\beta_{2}u(t).\label{3.7}
\end{eqnarray}
Here $\alpha_{1},\beta_{1},\beta_{2}$ are given constants. Recall the notion $\Box_{g(t)}=\partial_{t}-\Delta_{g(t)}$ and introduce as in \cite{LY2},
\begin{eqnarray}
{\rm Sic}_{g(t)}&:=&{\rm Ric}_{g(t)}
-\alpha_{1}\nabla_{g(t)}u(t)\otimes\nabla_{g(t)}
u(t),\label{3.8}\\
S_{g(t)}&:=&{\rm tr}_{g(t)}{\rm Sic}_{g(t)} \ \ = \ \
R_{g(t)}-\alpha_{1}|\nabla_{g(t)}u(t)|^{2}_{g(t)}
.\label{3.9}
\end{eqnarray}

Another interesting flow involving $(g(t),u(t))$ is the so-called {\it super-Ricci flow} introduced by X. D. Li and S. Z. Li \cite{LL2014}
and in terms of our notions {\color{blue}{(\ref{3.2})}} and {\color{blue}{(\ref{3.3})}} can be written as, according to whether or not $N$ is infinity
\begin{equation*}
\frac{1}{2}\partial_{t}g(t)+{\rm Ric}_{g(t),N,u(t)}\geq K g(t)
\end{equation*}
which is called a {\it $(K,N)$-super Ricci flow}, where $N\in\mathbb{R}$, and, respectively,
\begin{equation*}
\frac{1}{2}\partial_{t}g(t)+{\rm Ric}_{g(t),\infty,u(t)}
\geq Kg(t)
\end{equation*}
which is called a {\it $K$-super Perelman Ricci flow}. Under the super-Ricci flow, the authors studied Harnack inequalities \cite{LL2014, LL20171, LL20173}, $W$-entropy formulas \cite{LL2014, LL2015, LL20172, LL20173, LL20174}, $(K,N)$-Ricci solitons \cite{LL20172}, etc. For example, they proved that if $(g(t), u(t))_{t\in[0,T]}$ satisfies
\begin{equation*}
\frac{1}{2}\partial_{t}g(t)+{\rm Ric}_{g(t),N, u(t)}
=0, \ \ \ \partial_{t}u(t)=\frac{1}{2}{\rm tr}_{g(t)}
\left(\partial_{t}g(t)\right),
\end{equation*}
then the $W$-entropy $\mathcal{W}_{N}(f(t)):=\frac{d}{dt}\left[t\!\ \mathcal{H}_{N}(f(t))\right]$ with
\begin{equation*}
\mathcal{H}_{N}(f(t)):=-\int_{M}f(t)
\ln f(t)\!\ dV_{g(t)}-\frac{N}{2}
\left[1+\ln(4\pi t)\right]
\end{equation*}
is constant along the flow, where $f(t)$ is the fundamental solution to the heat equation $\partial_{t}f(t)=\Delta_{g(t)}f(t)-\langle\nabla_{g(t)}u(t),\nabla_{g(t)}
f(t)\rangle_{g(t)}$. However, we can {\it not} apply this result to our flow {\color{blue}{(\ref{3.4})}} or
\begin{equation*}
\partial_{t}g(t)=-2{\rm Ric}_{g(t),N,-\alpha_{2}u(t)},
\end{equation*}
since the second equation {\color{blue}{(\ref{3.5})}} may not satisfy the constraint equation $\partial_{t}u(t)
=\frac{1}{2}{\rm tr}_{g(t)}\partial_{t}g(t)$.
\\

In \cite{LY2} we also introduced a ``Riemann curvature" type for RHF
\begin{equation}
S_{ijk\ell}:=R_{ijk\ell}-\frac{\alpha_{1}}{2}
\left(g_{j\ell}\nabla_{i}u\nabla_{k}u
+g_{k\ell}\nabla_{i}u\nabla_{j}u\right)\label{3.10}
\end{equation}
so that $S_{ij}=g^{k\ell}S_{ik\ell j}=g^{k\ell}S_{kij\ell}=R_{ij}
-\alpha_{1}\nabla_{i}u\nabla_{j}u$. A related construction of ``Riemann curvature" type for {\color{blue}{(\ref{3.2})}} is given in \cite{WY2016}. A detailed
discussion of these two notions of ``Riemann curvature'' will be given in {\bf Section} \ref{section5}.
\\

According to {\color{red}{Lemma \ref{lC.1}}}, we have

\begin{lemma}\label{l3.1} Under the flow {\color{blue}{(\ref{3.6})}} -- {\color{blue}{(\ref{3.7})}},
\begin{eqnarray}
\Box S&=&2|{\rm Sic}|^{2}
+2\alpha_{1}|\Delta u|^{2}-2\alpha_{1}\beta_{2}|\nabla u|^{2}
-4\alpha_{1}\beta_{1}\nabla^{i}u\nabla^{j}u\nabla_{i}\nabla_{j}u\nonumber\\
&=&2|{\rm Sic}|^{2}+2\alpha_{1}|\Delta u|^{2}
-2\alpha_{1}\beta_{2}|\nabla u|^{2}-4\alpha_{1}\beta_{1}
\langle\nabla^{2}u,\nabla u\otimes\nabla u\rangle,\label{3.11}\\
\Box S_{ij}&=&2S_{kij\ell}S^{k\ell}-2S_{ik}S^{k}{}_{j}+2\alpha_{1}\Delta u
\nabla_{i}\nabla_{j}u\nonumber\\
&&- \ 2\alpha_{1}\beta_{2}\nabla_{i}u\nabla_{j}u
-2\alpha_{1}\beta_{1}\nabla^{k}u(\nabla_{i}u\nabla_{j}\nabla_{k}u
+\nabla_{j}u\nabla_{i}\nabla_{k}u).\label{3.12}
\end{eqnarray}

\end{lemma}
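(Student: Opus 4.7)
The plan is to invoke Lemma C.1 from the appendix, which records the evolution of the Ricci and scalar curvatures under a general modified Ricci flow, and to combine it with a direct evolution of the quantities $|\nabla u|^2$ and $\nabla_i u \, \nabla_j u$ under the coupled system \eqref{3.6}--\eqref{3.7}. Since $S = R - \alpha_1|\nabla u|^2$ and $S_{ij} = R_{ij} - \alpha_1 \nabla_i u\, \nabla_j u$, I would compute $\Box$ on each summand separately and then repackage.

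For $\Box S$, I would first apply the variational formula $\partial_t R = -\Delta(\mathrm{tr}\,h) + \nabla^i\nabla^j h_{ij} - \langle h,\mathrm{Ric}\rangle$ with $h_{ij} = -2R_{ij} + 2\alpha_1 \nabla_i u\, \nabla_j u$. The twice-contracted Bianchi identity $\nabla^i R_{ij} = \frac{1}{2}\nabla_j R$ handles the Ricci part, and the divergence of $\nabla_i u\, \nabla_j u$ is expanded via Bochner's formula $\Delta\nabla_i u = \nabla_i \Delta u + R_i^{\,k}\nabla_k u$, producing cross terms $R^{ij}\nabla_i u\, \nabla_j u$, $(\Delta u)^2$, and $|\nabla^2 u|^2$. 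Next, for $\Box|\nabla u|^2$, I would use $\partial_t g^{ij} = 2R^{ij} - 2\alpha_1\nabla^i u\,\nabla^j u$ together with $\partial_t u = \Delta u + \beta_1|\nabla u|^2 + \beta_2 u$, and subtract the Bochner identity $\Delta|\nabla u|^2 = 2|\nabla^2 u|^2 + 2\langle\nabla u,\nabla\Delta u\rangle + 2\,\mathrm{Ric}(\nabla u,\nabla u)$. The $\nabla \Delta u$ cross terms cancel cleanly, and the remaining contributions combine with the $R$-evolution so that the $|\mathrm{Ric}|^2$ term and $\alpha_1$ corrections regroup into $2|\mathrm{Sic}|^2 + 2\alpha_1|\Delta u|^2$, while $\beta_1$ and $\beta_2$ contribute only the explicit terms $-4\alpha_1\beta_1\langle\nabla^2 u,\nabla u\otimes\nabla u\rangle$ and $-2\alpha_1\beta_2|\nabla u|^2$ on the right-hand side of \eqref{3.11}.

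For $\Box S_{ij}$, I would again treat $R_{ij}$ and $\nabla_i u\,\nabla_j u$ separately. For $R_{ij}$ the standard Lichnerowicz-type formula (the Ricci-flow case of Lemma C.1) yields $\Box R_{ij} = 2R_{kij\ell}R^{k\ell} - 2R_{ik}R^k_{\,j}$, plus corrections coming from the extra $\alpha_1\nabla u\otimes\nabla u$ term in the flow metric. For $\nabla_i u\,\nabla_j u$, I would differentiate in time using the $u$-equation, giving terms with $\Delta u\cdot\nabla^2 u$, $\beta_1 \langle\nabla u,\nabla|\nabla u|^2\rangle$ and $\beta_2\nabla_i u\,\nabla_j u$; then expand $\Delta(\nabla_i u\,\nabla_j u)$ via the product rule and Bochner on $\nabla^2 u$. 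The remaining algebra is to recognize that the combined quadratic tensors in $R_{kij\ell}$ and $\alpha_1\nabla u\otimes\nabla u$ reassemble into $S_{kij\ell}S^{k\ell} - S_{ik}S^k_{\,j}$ by virtue of the definition \eqref{3.10}, leaving the stated coupled terms $2\alpha_1\Delta u\,\nabla_i\nabla_j u$, $-2\alpha_1\beta_2\nabla_i u\,\nabla_j u$, and $-2\alpha_1\beta_1\nabla^k u\,(\nabla_i u\,\nabla_j\nabla_k u + \nabla_j u\,\nabla_i\nabla_k u)$.

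The main technical obstacle is bookkeeping: one must carefully track all cross terms of the form $R^{ij}\nabla_i u\,\nabla_j u$, $\alpha_1^2|\nabla u|^4$, and $\alpha_1\langle\mathrm{Ric},\nabla^2 u\rangle$ arising from both the variation of the connection and the Bochner-type commutators, and verify that they repackage exactly into $|\mathrm{Sic}|^2$ in \eqref{3.11} and into $S_{kij\ell}S^{k\ell} - S_{ik}S^k_{\,j}$ in \eqref{3.12}. The simplification is clean only because $\alpha_2 = 0$ (so no $\nabla^2 u$ appears in the metric evolution), which removes several would-be terms; the $\beta_1$ and $\beta_2$ contributions are essentially pass-through, affecting only the derivatives of $u$ in $\partial_t\nabla u$ and leaving the curvature-algebra identities above intact.
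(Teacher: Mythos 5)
Your proposal is correct and takes essentially the same route as the paper, which simply reads off $\Box R$, $\Box|\nabla u|^2$, $\Box R_{ij}$, $\Box(\nabla_i u\,\nabla_j u)$ from Lemma~\ref{lC.1}, forms $\Box S = \Box R - \alpha_1\Box|\nabla u|^2$ and $\Box S_{ij} = \Box R_{ij} - \alpha_1\Box(\nabla_i u\,\nabla_j u)$ by linearity, cancels the $|\nabla^2 u|^2$ (resp.\ $\nabla_i\nabla^k u\,\nabla_j\nabla_k u$) terms, and regroups the remaining quadratic expressions in ${\rm Ric}$ and $\nabla u\otimes\nabla u$ into $2|{\rm Sic}|^2$ (resp.\ $2S_{kij\ell}S^{k\ell}-2S_{ik}S^k{}_j$). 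The variational and Bochner computations you sketch are what underlie Lemma~\ref{lC.1} itself (deferred to \cite{LY1}), so unfolding that level of detail is consistent with, rather than different from, the paper's argument.
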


\subsection{Long time
existence}\label{subsection3.1}

Given an initial data $(g_{0},u_{0})$, define $c_{0}:=|\nabla_{g_{0}}u_{0}|^{2}_{g_{0}}$. We always assume that $c_{0}$ is a positive number. According to Corollary 2.10 and Definition 2.11 in \cite{LY1}, we say the $(\alpha_{1},0,\beta_{1},\beta_{2})$-Ricci flow is {\it regular}, if $\alpha_{1},\beta_{1},\beta_{2}$ satisfy one of the following conditions:

\begin{itemize}

\item[(i)] $\beta_{2}\leq0$ and $\alpha_{1}\geq\beta^{2}_{1}$;

\item[(ii)] $\beta_{2}>0$ and $c^{-1}_{0}\beta_{2}+\beta^{2}_{1}\geq\alpha_{1}>\beta^{2}_{1}$.

\end{itemize}
Then Corollary 2.10 in \cite{LY1} tells us that
\begin{equation}
|\nabla u|^{2}\lesssim1\label{3.13}
\end{equation}
along the $(\alpha_{1},0,\beta_{1},\beta_{2})$-Ricci flow equations
{\color{blue}{(\ref{3.6})}} -- {\color{blue}{(\ref{3.7})}}, where $\lesssim$ depends only on $\alpha_{1},\beta_{1},\beta_{2}$ and $c_{0}$.

\begin{theorem}\label{t3.2} Let $(g(t),u(t))_{t\in[0,T)}$ be a solution to the regular $(\alpha_{1},0,\beta_{1},\beta_{2})$-Ricci flow on a closed $n$-dimensional manifold $M$ with $T\leq\infty$ and the initial data $(g_{0},u_{0})$. Assume that $S_{g(t)}+C\geq C_{0}>0$ along the flow for some uniform constants $C, C_{0}>0$. Then
\begin{equation}
\frac{|{\rm Sin}_{g(t)}|_{g(t)}}{S_{g(t)}+C}
\leq C_{1}+C_{2}\max_{M\times[0,t]}
\sqrt{\frac{|W_{g(s)}|_{g(s)}+|\nabla^{2}_{g(s)}u(s)|^{2}_{g(s)}}{S_{g(s)}
+C}}
\end{equation}
where ${\rm Sin}_{g(t)}:={\rm Sic}_{g(t)}
-\frac{S_{g(t)}}{n}g(t)$ is the trace-free part of ${\rm Sic}_{g(t)}$ and $W_{g(t)}$ is the Weyl tensor field of $g(t)$.
\end{theorem}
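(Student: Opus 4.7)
The plan is to apply the Hamilton--Cao pinching strategy, used for the Ricci flow in \cite{Cao2011} and extended to the Ricci--harmonic flow in \cite{LY2}. First I would define
$$
f := \frac{|{\rm Sic}+\tfrac{C}{n}g|^{2}}{(S+C)^{2}}
$$
as in (\ref{1.12}) and note that the orthogonal decomposition ${\rm Sic}+\tfrac{C}{n}g = {\rm Sin}+\tfrac{S+C}{n}g$ yields $f = |{\rm Sin}|^{2}/(S+C)^{2} + 1/n$; hence a pointwise upper bound on $f$ is exactly the conclusion (\ref{1.11}).

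The main task is to establish the evolution inequality (\ref{1.13}). Write $p:=S+C>0$ (legitimate by hypothesis) and $h:=|{\rm Sic}+\tfrac{C}{n}g|^{2}$. By the quotient rule, $\Box(h/p^{2})$ produces the advection term $2\langle\nabla f,\nabla\ln p\rangle$ (the standard drift that vanishes at an interior maximum) plus $\Box h/p^{2}-(2f/p)\Box p$ plus a gradient residue that, combined with the Bochner piece $-2|\nabla({\rm Sic}+\tfrac{C}{n}g)|^{2}/p^{2}$ contributed by $\Box h$, is non-positive by Kato's inequality and can be discarded. Lemma \ref{l3.1} supplies both $\Box S$, whose good term $2|{\rm Sic}|^{2}$ enters as $-(2f/p)\cdot 2|{\rm Sic}|^{2}$ and is responsible for the $-f^{2}$ coefficient in the bracket after regrouping, and $\Box{\rm Sic}$, which contributes the cubic curvature expression $2S_{kij\ell}S^{k\ell}{\rm Sic}^{ij}-2{\rm Sic}_{ik}{\rm Sic}^{k}{}_{j}{\rm Sic}^{ij}$ together with lower-order $\nabla u$ and $\nabla^{2}u$ residues.

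The main obstacle is the algebraic handling of the cubic curvature term. I would invoke the Weyl decomposition of $R_{ijk\ell}$ together with (\ref{3.10}) and ${\rm Ric}={\rm Sic}+\alpha_{1}\,du\otimes du$, splitting the cubic into: (i) a Weyl contribution bounded pointwise by $C|W|\,|{\rm Sic}|^{2}$, which after Cauchy--Schwarz delivers the $|W|^{2}/p$ piece of the bracket; (ii) the standard Hamilton--Cao trace-free cubic in ${\rm Sin}$, which via the $\tfrac{1}{n-2}$ factor in the Weyl decomposition produces the $\tfrac{2}{n-2}f^{1/2}$ summand after normalisation by $(S+C)^{2}$ and the trivial estimate $|{\rm Sin}|^{3}\leq p\,|{\rm Sin}|^{2}\,f^{1/2}$; and (iii) $\nabla u\otimes\nabla u$-residues absorbed into the constant $C_{*}$ by the regularity bound $|\nabla u|^{2}\lesssim 1$ of (\ref{3.13}). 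The Hessian-cross residues $\alpha_{1}\Delta u\,\nabla^{2}u$ and $\alpha_{1}\beta_{i}\nabla u$-quadratics from (\ref{3.11})--(\ref{3.12}) are dispatched by Cauchy--Schwarz $|{\rm Sic}|\,|\nabla^{2}u|\leq\varepsilon|{\rm Sic}|^{2}+\varepsilon^{-1}|\nabla^{2}u|^{2}$, with $\varepsilon$ chosen small enough to absorb the first piece into the $-f^{2}p$ term and to let the second contribute the $|\nabla^{2}u|^{2}/p$ piece in the final bracket.

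Finally, having secured (\ref{1.13}), I would apply the parabolic maximum principle on $M\times[0,s]$ for arbitrary $s\in[0,T)$: at a space-time maximum $(x_{0},s_{0})$ of $f$ the drift $2\langle\nabla f,\nabla\ln p\rangle$ vanishes and $\Box f(x_{0},s_{0})\geq 0$, forcing the bracket to be non-negative there. Solving the resulting quadratic inequality in $f^{1/2}(x_{0},s_{0})$, then using $|{\rm Sin}|/(S+C)\leq\sqrt{f}$, yields (\ref{1.11}) after taking the supremum over $[0,t]$.
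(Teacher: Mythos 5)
Your proposal follows essentially the same Hamilton--Cao pinching strategy as the paper's proof of Theorem~\ref{t3.2}: define the normalised quantity $f$, derive the evolution inequality via Lemma~\ref{l3.1} and the Weyl decomposition of ${\rm Rm}$, absorb the $\nabla u$ and $\nabla^{2}u$ residues using regularity~(\ref{3.13}) and Cauchy--Schwarz, and conclude by the parabolic maximum principle. Two small imprecisions that do not affect the structure: the non-positivity of the residual gradient piece is not Kato's inequality but simply $|Z'|^{2}\geq0$ for $Z'_{ijk}:=(S+C)\nabla_{i}{\rm Sic}'_{jk}-{\rm Sic}'_{jk}\nabla_{i}(S+C)$, and your ``trivial estimate'' $|{\rm Sin}|^{3}\leq p\,|{\rm Sin}|^{2}f^{1/2}$ is actually an identity --- the genuine step there is the matrix trace bound $|{\rm tr}({\rm Sin}^{3})|\leq|{\rm Sin}|^{3}$.
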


Here the assumption $S_{g(t)}+C>0$ is necessary in the theorem, since, due to the undermined sign of $\alpha_{1}, \beta_{1}, \beta_{2}$, we can not in general deduce any bounds for $S_{g(t)}$ from the evolution equation {\color{blue}{(\ref{3.11})}}. In the simplest case, $\alpha_{1}\geq0$ and $\beta_{1}
=\beta_{2}=0$ (i.e., Ricci-harmonic flow), we have a lower bound from {\color{blue}{(\ref{3.11})}}.

\begin{proof} As in \cite{LY2}, consider the quantity
\begin{equation}
f:=\frac{|{\rm Sin}_{g(t)}|^{2}_{g(t)}}{(S_{g(t)}+C)^{\gamma}}
=\frac{|{\rm Sic}_{g(t)}+\frac{C}{n}g(t)|^{2}_{g(t)}}{(S_{g(t)}
+C)^{\gamma}}-\frac{1}{n}(S_{g(t)}+C)^{2-\gamma}, \ \ \
\gamma>0\label{3.15}
\end{equation}
and set
\begin{equation}
{\rm Sic}'_{g(t)}:={\rm Sic}_{g(t)}+\frac{C}{n}g(t), \ \ \
S'_{g(t)}:=S_{g(t)}+C.\label{3.16}
\end{equation}
From the identity (3.21) in \cite{CLN2006}, we have
\begin{eqnarray*}
\Box\frac{|{\rm Sic}'|^{2}}{(S')^{\gamma}}
&=&\frac{1}{(S')^{\gamma}}\Box|{\rm Sic}'|^{2}
-\gamma\frac{|{\rm Sic}'|^{2}}{(S')^{\gamma+1}}
\Box S-\gamma(\gamma+1)\frac{|{\rm Sic}'|^{2}}{(S')^{\gamma+2}}
|\nabla S'|^{2}\\
&&+ \ \frac{2\gamma}{(S')^{\gamma+1}}\left\langle\nabla|{\rm Sic}'|^{2},
\nabla S'\right\rangle.
\end{eqnarray*}
Using {\color{blue}{(\ref{3.12})}}, we get
\begin{eqnarray}
\Box |{\rm Sic}|^{2}&=&-2|\nabla{\rm Sic}|^{2}
+4{\rm Sm}({\rm Sic},{\rm Sic})+2\left\langle{\rm Sic},
2\alpha_{1}\Delta u\nabla^{2}u-2\alpha_{1}\beta_{2}\nabla u
\otimes \nabla u\right\rangle\nonumber\\
&&- \ 4\alpha_{1}\beta_{1}\left\langle{\rm Sic},\nabla u
\otimes\nabla|\nabla u|^{2}\right\rangle\label{3.17}
\end{eqnarray}
where ${\rm Sm}({\rm Sic},{\rm Sic})=S_{kij\ell}S^{ij}S^{k\ell}$. As in \cite{LY2}, we can prove
\begin{eqnarray*}
\Box|{\rm Sic}'|^{2}&=&\Box|{\rm Sic}|^{2}+\frac{2C}{n}\Box S\\
&=&-2|\nabla{\rm Sic}'|^{2}
+4{\rm Sm}({\rm Sic},{\rm Sic})
+\frac{4C}{n}|{\rm Sic}|^{2}\\
&&+ \ 4\alpha_{1}\left\langle{\rm Sic}',\Delta u\nabla^{2}u
-\beta_{2}\nabla u\otimes\nabla u\right\rangle
-4\alpha_{1}\beta_{1}\left\langle{\rm Sic}',\nabla u\otimes\nabla|\nabla u|^{2}\right\rangle\\
&=&-2|\nabla{\rm Sic}'|^{2}
+4{\rm Sm}({\rm Sic}',{\rm Sic}')-\frac{4C}{n}|{\rm Sic}'|^{2}
+\frac{4C^{2}}{n^{2}}S'\\
&&+ \ 4\alpha_{1}\left\langle{\rm Sic}',\Delta u\nabla^{2}u
-\beta_{2}\nabla u\otimes\nabla u\right\rangle
-4\alpha_{1}\beta_{1}\left\langle{\rm Sic}',\nabla u\otimes\nabla|\nabla u|^{2}\right\rangle
\end{eqnarray*}
and
\begin{eqnarray*}
\Box\frac{|{\rm Sic}'|^{2}}{(S')^{\gamma}}
&=&-\frac{2}{(S')^{\gamma}}
|\nabla{\rm Sic}'|^{2}
-\frac{2\gamma|{\rm Sic}'|^{4}}{(S')^{\gamma+1}}
+\frac{4}{(S')^{\gamma}}{\rm Sm}({\rm Sic}',{\rm Sic}')\\
&&- \ \gamma(\gamma+1)\frac{|{\rm Sic}'|^{2}|\nabla S'|^{2}}{(S')^{\gamma+2}}+\frac{2\gamma}{(S')^{\gamma+1}}
\langle\nabla|{\rm Sic}'|^{2},\nabla S'\rangle+\frac{4C^{2}}{n^{2}}\frac{S'}{(S')^{\gamma}}\\
&&- \ \frac{2C}{n}\frac{2(1-\gamma)S'+\gamma C}{(S')^{\gamma+1}}
|{\rm Sic}'|^{2}+\frac{4\alpha_{1}}{(S')^{\gamma}}\langle{\rm Sic}',\Xi\rangle
-\frac{2\alpha_{1}\gamma|{\rm Sic}'|^{2}}{(S')^{\gamma+1}}{\rm tr}\Xi
\end{eqnarray*}
where ${\rm tr}\Xi$ is the trace of $\Xi$ with respect to $g(t)$, and
\begin{equation}
\Xi:=\Delta u\nabla^{2}u
-\beta_{2}\nabla u\otimes\nabla u
-\beta_{1}\nabla u\otimes\nabla|\nabla u|^{2}.\label{3.18}
\end{equation}
From the identities
\begin{eqnarray*}
\left\langle\nabla\frac{|{\rm Sic}'|^{2}}{(S')^{\gamma}},
\nabla S'\right\rangle&=&\frac{1}{(S')^{\gamma}}
\langle\nabla|{\rm Sic}'|^{2},\nabla S'\rangle
-\frac{\gamma}{(S')^{\gamma+1}}
|\nabla S'|^{2}|{\rm Sic}'|^{2},\\
|S'\nabla{\rm Sic}'|^{2}&=&|Z'|^{2}-|{\rm Sic}'|^{2}|\nabla S'|^{2}
+S'\langle\nabla |{\rm Sic}'|^{2},\nabla S'\rangle,
\end{eqnarray*}
where $Z'$ is a $3$-tensor with components $Z'_{ijk}=S'\nabla_{i}S'_{jk}
-S'_{jk}\nabla_{i}S'$, we have
\begin{eqnarray*}
\Box\frac{|{\rm Sic}'|^{2}}{(S')^{\gamma}}
&=&\frac{2(\gamma-1)}{S'}\left\langle
\nabla\frac{|{\rm Sic}'|^{2}}{(S')^{\gamma}},
\nabla S'\right\rangle
-\frac{2}{(S')^{\gamma+2}}|Z'|^{2}-\frac{2\gamma|{\rm Sic}'|^{4}}{(S')^{(\gamma+1)}}\\
&&- \ \frac{(2-\gamma)(\gamma-1)}{(S')^{(\gamma+1)}}
|{\rm Sic}'|^{2}|\nabla S'|^{2}
+\frac{4}{(S')^{\gamma}}{\rm Sm}({\rm Sic}',{\rm Sic}')+\frac{4C^{2}}{n^{2}}\frac{S'}{(S')^{\gamma}}\\
&&- \ \frac{2C}{n}\frac{2(1-\gamma)S'+\gamma C}{(S')^{\gamma+1}}
|{\rm Sic}'|^{2}+\frac{4\alpha_{1}}{(S')^{\gamma}}\langle{\rm Sic}',\Xi\rangle
-\frac{2\alpha_{1}\gamma|{\rm Sic}'|^{2}}{(S')^{\gamma+1}}{\rm tr}\Xi.
\end{eqnarray*}
The identity
\begin{equation*}
\Box(S')^{2-\gamma}=(2-\gamma)(S')^{1-\gamma}\Box S'
-(2-\gamma)(1-\gamma)(S')^{-\gamma}|\nabla S'|^{2}
\end{equation*}
implies
\begin{eqnarray}
\Box f&=&2(\gamma-1)\langle \nabla f,\nabla\ln S'\rangle
-\frac{2}{(S')^{\gamma+2}}|Z'|^{2}-(2-\gamma)(\gamma-1)
|\nabla\ln S'|^{2}f\nonumber\\
&&+ \ \mathscr{D}_{1}+\mathscr{D}_{2}+\mathscr{D}_{3}.\label{3.19}
\end{eqnarray}
where
\begin{eqnarray*}
\mathscr{D}_{1}&:=&-\frac{2(2-\gamma)}{n}(S')^{1-\gamma}
|{\rm Sic}'|^{2}
+\frac{4}{(S')^{\gamma}}{\rm Sm}({\rm Sic}',{\rm Sic}')
-\frac{2\gamma|{\rm Sic}'|^{4}}{(S')^{1+\gamma}},\nonumber\\
&=&\frac{2}{(S')^{\gamma+1}}
\left[(2-\gamma)|{\rm Sic}'|^{2}|{\rm Sin}|^{2}
-2\left(|{\rm Sic}'|^{4}-S'{\rm Sm}({\rm Sic}',{\rm Sic}')\right)
\right],\\
&=&\frac{2}{(S')^{\gamma+1}}
\big[-\gamma(S')^{2\gamma}f^{2}
+\left(\frac{2n-4}{n(n-1)}-\frac{\gamma}{n}\right)(S')^{\gamma+2}
f-\frac{4(S')^{4}}{n-2}\frac{{\rm Sin}^{3}}{(S')^{3}}\\
&&+ \ 2(S')^{3}W\left(\frac{{\rm Sin}}{S'},\frac{\rm Sin}{S'}
\right)+\frac{2\alpha_{1}}{n-2}\left\langle (S')^{2}{\rm Sic}'
-\frac{n}{2}S'{\rm Sic}'{}^{2},\nabla u\otimes\nabla u\right\rangle\\
&&- \ \frac{2}{n-1}\left(\frac{C}{n}+\frac{\alpha|\nabla u|^{2}}{n-1}
\right)\left(\frac{n-1}{n}(S')^{3}-(S')^{\gamma+1}f\right)\bigg],\\
\mathscr{D}_{2}&:=&\frac{4C}{n}
\left[\frac{CS'}{n(S')^{2}}
-\frac{(1-\gamma)S'+\frac{1}{2}\gamma C}{(S')^{\gamma+1}}|{\rm Sic}'|^{2}
-\frac{2-\gamma}{2n}\frac{C-2S'}{(S')^{\gamma-1}}\right],\nonumber\\
&=&\frac{4C}{n^{2}}\left[\frac{C}{S'}
+\frac{C}{(S')^{\gamma-1}}+\frac{1}{(S')^{\gamma-2}}
+nf\left(\gamma-1-\frac{\gamma C}{2S'}\right)\right],\\
\mathscr{D}_{3}&:=&\frac{4\alpha_{1}}{(S')^{\gamma}}
\langle{\rm Sic}',\Xi\rangle
-\frac{2\alpha_{1}{\rm tr}\Xi}{(S')^{\gamma+1}}
\left(\gamma|{\rm Sic}'|^{2}+\frac{2-\gamma}{n}|S'|^{2}\right)
\end{eqnarray*}
and ${\rm Sin}^{3}={\rm Sin}_{ij}{\rm Sin}^{j}{}_{k}{\rm Sin}^{ki}$ and
$({\rm Sic}'{}^{2})_{ij}=S'_{ik}S'_{j}{}^{k}$. Some detailed computations can be found in \cite{LY2}.

In particular, for $\gamma=2$, we have from {\color{blue}{(\ref{3.19})}} that
\begin{equation}
\Box f=2\langle\nabla f,\nabla\ln S'\rangle
-2\left|\nabla\left(\frac{{\rm Sin}}{S'}\right)\right|^{2}
+\mathscr{D}_{1}+\mathscr{D}_{2}+\mathscr{D}_{3},\label{3.20}
\end{equation}
where
\begin{eqnarray*}
\mathscr{D}_{1}&=&4S'\bigg[-f^{2}-\frac{f}{n(n-1)}
-\frac{2}{n-2}\frac{{\rm Sin}^{3}}{(S')^{3}}
+\frac{1}{S'}W\left(\frac{{\rm Sin}}{S'},
\frac{{\rm Sin}}{S'}\right)\\
&&- \ \frac{1}{S'}
\left(\frac{C}{n}+\frac{\alpha|\nabla u|^{2}}{n-2}\right)
\left(\frac{1}{n}-\frac{f}{n-1}\right)
+\frac{1}{S'}\frac{\alpha}{n-2}
\left\langle\frac{{\rm Sin}^{2}}{(S')^{2}},\nabla u\otimes \nabla u
\right\rangle\\
&&+ \ \frac{1}{S'}\frac{\alpha|\nabla u|^{2}}{2n(n-2)}
\bigg],\\
\mathscr{D}_{2}&=&\frac{4C}{n^{2}}
\left[\frac{C}{S'}+\frac{C}{(S')^{3}}
+nf\left(1-\frac{C}{S'}\right)\right],\\
\mathscr{D}_{3}&=&\frac{4\alpha_{1}}{S'}
\left[\left\langle\frac{{\rm Sic}'}{S'},\Xi\right\rangle
-f{\rm tr}\Xi \right].
\end{eqnarray*}
Since the flow is regular, we have a uniform upper bound for $|\nabla u|$, together with $S'\geq C_{0}>0$, and then
\begin{eqnarray*}
\mathscr{D}_{1}&\leq&4S'
\left[-f^{2}-\frac{f}{n(n-1)}
+\frac{2}{n-2}f^{3/2}+\tilde{C}\frac{|W|}{S'}f
+\tilde{C}+\tilde{C}f\right],\\
\mathscr{D}_{2}&\leq&4S'\left(\tilde{C}+\tilde{C}f\right),\\
\mathscr{D}_{3}&\leq&4\tilde{C}S'\left(f+f^{1/2}\right)\frac{|\nabla^{2}u|^{2}}{S'}
\end{eqnarray*}
for some uniform constant $\tilde{C}$ depending only on on $n, C_{0}, C,
\alpha_{1},\beta_{1},\beta_{2}, g_{0}$, and $u_{0}$. Without loss of generality, we may assume that $f\geq1$. In this case we have
\begin{equation*}
\Box f\leq 2\langle\nabla f,\nabla\ln S'\rangle
+4S'f\left(-f+\frac{2}{n-2}f^{1/2}+\tilde{C}
+\tilde{C}\frac{|W|+|\nabla^{2}u|^{2}}{S'}\right).
\end{equation*}
Now the maximum principle yields the desired estimate.
\end{proof}

As immediate consequence, we have the following

\begin{corollary}\label{c3.3} Let $(g(t),u(t))_{t\in[0,T)}$ be a solution to the regular $(\alpha_{1},0,\beta_{1},\beta_{2})$-Ricci flow on a closed $n$-dimensional manifold $M$ with $T\leq0$ and the initial data $(g_{0},u_{0})$. Then only one of the followings cases occurs:

\begin{itemize}

\item[(a)] $T=\infty$;

\item[(b)] $T<\infty$ and $|{\rm Ric}_{g(t)}|_{g(t)}\lesssim1$;

\item[(c)] $T<\infty$ and ${\rm Ric}_{g(t)}|_{g(t)}\to\infty$ as $t\to T$. In this case, there are only two subcases:

    \begin{itemize}

    \item[(c1)] $|R_{g(t)}|_{g(t)}\to\infty$,

    \item[(c2)] $|R_{g(t)}|_{g(t)}\lesssim 1$ and there exist some uniform constants $C_{1}, C_{2}>0$ such that $S_{g(t)}+C_{1}\geq C_{2}>0$ and
        \begin{equation*}
        \frac{|W_{g(t)}|_{g(t)}+|\nabla^{2}_{g(t)}u(t)|^{2}_{g(t)}}{S_{g(t)}
        +C_{1}}\to\infty
        \end{equation*}
        as $t\to T$.

    \end{itemize}

\end{itemize}
\end{corollary}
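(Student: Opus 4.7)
The plan is to dismember the statement by a sequence of dichotomies and then invoke Theorem \ref{t3.2} in the most delicate subcase. If $T=\infty$ we are in case (a) and there is nothing to prove. Assume therefore $T<\infty$. The first dichotomy is whether $\sup_{M}|{\rm Ric}_{g(t)}|_{g(t)}$ stays bounded as $t\nearrow T$. If it does, we are in case (b). If it does not, the standard parabolic continuation principle for the generalized flow \eqref{3.6}--\eqref{3.7} (as developed in \cite{LY1}) upgrades the mere unboundedness to the full blow-up $|{\rm Ric}_{g(t)}|_{g(t)}\to\infty$ as $t\to T$, putting us in case (c).

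Within case (c) I perform a second dichotomy, this time on the scalar curvature. If $\sup_{M}|R_{g(t)}|\to\infty$, we are in (c1). Otherwise $|R_{g(t)}|\lesssim 1$. Here the regularity hypothesis enters crucially: by \eqref{3.13} one has $|\nabla_{g(t)}u(t)|^{2}_{g(t)}\lesssim 1$, so
\[
S_{g(t)}=R_{g(t)}-\alpha_{1}|\nabla_{g(t)}u(t)|^{2}_{g(t)}
\]
is uniformly bounded (both above and below) along the flow. Pick uniform constants $C_{1},C_{2}>0$ large enough so that $S_{g(t)}+C_{1}\geq C_{2}>0$; the lower bound on $S_{g(t)}$ makes this choice admissible.

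It remains to verify the stated blow-up of the quantity $(|W_{g(t)}|+|\nabla^{2}_{g(t)}u(t)|^{2})/(S_{g(t)}+C_{1})$. Decompose ${\rm Ric}={\rm Sic}+\alpha_{1}\,du\otimes du$; since $|\nabla u|$ is bounded and $|{\rm Ric}|\to\infty$, one obtains $|{\rm Sic}|\to\infty$. The orthogonal splitting ${\rm Sic}={\rm Sin}+(S/n)g$ gives $|{\rm Sic}|^{2}=|{\rm Sin}|^{2}+S^{2}/n$, and the boundedness of $S$ now forces $|{\rm Sin}|\to\infty$. Because $S+C_{1}$ is uniformly bounded above, $|{\rm Sin}|/(S+C_{1})\to\infty$ as $t\to T$. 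On the other hand, Theorem \ref{t3.2} applies in this setting and yields
\[
\frac{|{\rm Sin}_{g(t)}|_{g(t)}}{S_{g(t)}+C_{1}}\leq C'_{1}+C'_{2}\max_{M\times[0,t]}\sqrt{\frac{|W_{g(s)}|_{g(s)}+|\nabla^{2}_{g(s)}u(s)|^{2}_{g(s)}}{S_{g(s)}+C_{1}}}\,,
\]
so the right-hand side must itself blow up, placing the flow in (c2). The cases (a), (b), (c1), (c2) are mutually exclusive by construction, which completes the trichotomy.

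The main obstacle is the continuation step at the first dichotomy, namely justifying that unboundedness of ${\rm Ric}$ at the final time upgrades to the pointwise-in-$t$ blow-up $|{\rm Ric}_{g(t)}|\to\infty$: for the pure Ricci or Ricci-harmonic flow this is classical, but for the $(\alpha_{1},0,\beta_{1},\beta_{2})$-Ricci flow one must rely on the short-time existence theory and higher-order estimates of \cite{LY1} to conclude that a uniform Ricci bound would permit extension past $T$. Once this is in hand, the rest of the argument is a transparent bookkeeping of the dichotomy combined with a direct application of Theorem \ref{t3.2}.
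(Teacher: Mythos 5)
Your proposal is correct and follows the route the paper intends: the paper gives no explicit proof, dismissing Corollary~\ref{c3.3} as an ``immediate consequence'' of Theorem~\ref{t3.2}, and your dichotomy-then-apply-Theorem~\ref{t3.2} argument is precisely the intended reasoning. You are also right to flag the continuation step as the only non-formal ingredient: upgrading ``Ricci unbounded on $[0,T)$'' to ``$|{\rm Ric}_{g(t)}|_{g(t)}\to\infty$ as $t\to T$'' is what makes the three cases exhaustive, and this rests on the short-time existence and extension theory for the $(\alpha_1,0,\beta_1,\beta_2)$-Ricci flow from \cite{LY1}; the paper takes this for granted (and indeed phrases case (b) precisely because, for general $\alpha_1,\beta_1,\beta_2$, the Sesum-type blow-up alternative is not asserted). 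The remaining details you supply --- $|\nabla u|$ bounded by regularity, hence $S$ bounded once $|R|$ is bounded; the orthogonal splitting $|{\rm Sic}|^2=|{\rm Sin}|^2+S^2/n$ forcing $|{\rm Sin}|\to\infty$; and the application of the pinching estimate from Theorem~\ref{t3.2} --- are all correct and are exactly how the paper's implicit argument goes.
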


This is a general property of the long time existence for a regular Ricci
flow, generalizing results in \cite{Cao2011, LY2}. Since the signs of $\alpha_{1},\beta_{1},\beta_{2}$ are not determined, we can not discard the case (b) which is true for Ricci-harmonic flow and Ricci flow.

\subsection{Bounded scalar
curvature}\label{subsection3.2}

We assume that $(g(t),u(t))_{t\in[0,T)}$ is a solution to the regular $(\alpha_{1},0,\beta_{1},\beta_{2})$-Ricci flow on a closed $4$-dimensional manifold $M$ with $T\leq\infty$ and the initial data $(g_{0},u_{0})$, and also assume that $S_{g(t)}+C\geq C_{0}>0$ along the flow for some uniform constants $C, C_{0}>0$. According to {\color{blue}{(\ref{3.13})}} one has
\begin{equation}
|\nabla_{g(t)} u(t)|^{2}_{g(t)}\leq A_{1}\label{3.21}
\end{equation}
along the flow.
\\

In the proof of {\color{red}{Theorem \ref{t3.2}}} we actually proved the following identity
\begin{eqnarray}
\Box\frac{|{\rm Sic}|^{2}}{S+C}
&=&-2\frac{|Z|^{2}}{(S+C)^{3}}
-2\frac{|{\rm Sic}|^{4}}{(S+C)^{2}}
+4\frac{{\rm Sm}({\rm Sic},{\rm Sic})}{S+C}\nonumber\\
&&- \ \frac{2\alpha_{1}}{(S+C)^{2}}
\left[{\rm tr}\Xi|{\rm Sic}|^{2}-2(S+C)\langle{\rm Sic},
\Xi\rangle\right],\label{3.22}
\end{eqnarray}
where $Z$ is the $3$-tensor with components
\begin{equation}
Z_{ijk}=S'\nabla_{i}S_{jk}-S_{jk}\nabla_{i}S'=(S+C)\nabla_{i}S_{jk}
-S_{jk}\nabla_{i}S,\label{3.23}
\end{equation}
and $\Xi$ is given in {\color{blue}{(\ref{3.18})}}. Observe that the last term in {\color{blue}{(\ref{3.21})}} can be written as
$$
{\rm tr}\Xi|{\rm Sic}|^{2}-2(S+C)\langle{\rm Sic},
\Xi\rangle
$$
$$
= \ \ \left[(S+C)\left|\Delta u\frac{{\rm Sic}}{\sqrt{S+C}}
-\sqrt{S+C}\nabla^{2}u\right|^{2}
-(S+C)^{2}|\nabla^{2}u|^{2}\right]
$$
$$
- \ \beta_{2}\left[(S+C)\left(|\nabla u|\frac{{\rm Sic}}{\sqrt{S+C}}
-\frac{\sqrt{S+C}}{|\nabla u|}\nabla u\otimes\nabla u\right)^{2}
-(S+C)^{2}|\nabla u|^{2}\right]
$$
$$
- \ 2\beta_{1}(S+C)
\left[\langle\nabla^{2}u,\nabla u\otimes\nabla u\rangle
\frac{|{\rm Sic}|^{2}}{S+C}
-2\left\langle{\rm Sic},\frac{1}{2}\nabla u\otimes\nabla|\nabla u|^{2}
\right\rangle\right].
$$

To further analysis, we need to know an estimate for $|\nabla^{2}u|^{2}$. Recall that
\begin{equation}
\Box|\nabla u|^{2}=2\beta_{2}|\nabla u|^{2}
-2|\nabla^{2}u|^{2}-2\alpha_{1}|\nabla u|^{4}
+4\beta_{1}\langle\nabla u\otimes\nabla u,\nabla^{2}u\rangle.\label{3.24}
\end{equation}
Given any $\epsilon>0$, we have from {\color{blue}{(\ref{3.24})}} that
\begin{eqnarray*}
\Box|\nabla u|^{2}&\leq&2\beta_{2}|\nabla u|^{2}
-2|\nabla^{2}u|^{2}-2\alpha_{1}|\nabla u|^{4}
+4|\beta_{1}|\left(\epsilon|\nabla^{2}u|^{2}
+\frac{1}{4\epsilon}|\nabla u|^{4}\right)\\
&=&2\beta_{2}|\nabla u|^{2}-2\left(1-2\epsilon|\beta_{1}|\right)|\nabla^{2}u|^{2}
-2\left(\alpha_{1}-\frac{|\beta_{1}|}{2\epsilon}\right)|\nabla u|^{4}.
\end{eqnarray*}
From the evolution equation $\partial_{t}dV_{t}=-S\!\ dV_{t}$, we arrive at
\begin{eqnarray*}
\frac{d}{dt}\int_{M}|\nabla u|^{2}dV_{t}
&=&\int_{M}\partial_{t}|\nabla u|^{2}dV_{t}+\int_{M}|\nabla u|^{2}\partial_{t}dV_{t}\\
&=&\int_{M}\Box|\nabla u|^{2}dV_{t}-\int_{M}S|\nabla u|^{2}dV_{t}\\
&\leq&2\beta_{2}\int_{M}|\nabla u|^{2}dV_{t}
-2\left(1-2\epsilon|\beta_{1}|\right)\int_{M}|\nabla^{2}u|^{2}dV_{t}\\
&&- \ \int_{M}S|\nabla u|^{2}dV_{t}-2\left(\alpha_{1}-\frac{|\beta_{1}|}{2\epsilon}\right)
\int_{M}|\nabla u|^{4}dV_{t}
\end{eqnarray*}
and then
\begin{eqnarray}
\frac{d}{dt}\int_{M}|\nabla u|^{2}dV_{t}&\leq&
-2\left(1-2\epsilon|\beta_{1}|\right)\int_{M}|\nabla^{2}u|^{2}dV_{t}\nonumber\\
&&+ \ (2|\beta_{2}|+C)\int_{M}|\nabla u|^{2}
-2\left(\alpha_{1}-\frac{|\beta_{1}|}{2\epsilon}\right)
\int_{M}|\nabla u|^{4}dV_{t},\label{3.25}
\end{eqnarray}
because of $S+C\geq C_{0}>0$.

\begin{itemize}

\item[(1)] $\beta_{1}=0$. In this case, the inequality becomes
\begin{equation*}
\frac{d}{dt}\int_{M}|\nabla u|^{2}dV_{t}\leq-2\int_{M}
|\nabla^{2}u|^{2}dV_{t}+(2|\beta_{2}|+C)\int_{M}|\nabla u|^{2}
dV_{t}-2\alpha_{1}\int_{M}|\nabla u|^{4}dV_{t}.
\end{equation*}
When $\alpha_{1}\geq0$, we furthermore have
\begin{equation*}
\frac{d}{dt}\int_{M}|\nabla u|^{2}dV_{t}
\leq-2\int_{M}|\nabla^{2}u|^{2}dV_{t}
+(2|\beta_{2}|+C)\int_{M}|\nabla u|^{2}dV_{t}
\end{equation*}
or in this form
\begin{equation*}
\frac{d}{dt}\left[e^{-(2|\beta_{2}|+C)t}
\int_{M}|\nabla u|^{2}dV_{t}\right]
\leq-2e^{-(2|\beta_{2}|+C)t}\int_{M}|\nabla^{2}u|^{2}dV_{t}.
\end{equation*}
Integrating over the interval $[0,t]$ and using {\color{blue}{(\ref{3.21})}}, we obtain
\begin{equation}
2\int^{t}_{0}\int_{M}|\nabla^{2}u|^{2}dV_{t}dt+\int_{M}
|\nabla u|^{2}dV_{t}\leq e^{(2|\beta_{2}|+C)t}
A_{1}{\rm Vol}_{0}\label{3.26}
\end{equation}
where ${\rm Vol}_{0}$ is the volume of the initial metric $g_{0}$. When $\alpha_{1}<0$, we similar have
\begin{equation*}
\frac{d}{dt}\int_{M}|\nabla u|^{2}dV_{t}\leq-2\int_{M}|\nabla^{2}u|^{2}dV_{t}
+(2|\beta_{2}|+2|\alpha_{1}|A_{1}+C)\int_{M}|\nabla u|^{2}dV_{t}.
\end{equation*}
Replacing $|\beta_{2}|$ by $|\beta_{2}|+|\alpha_{1}|A_{1}$ in {\color{blue}{(\ref{3.26})}}, the case that $\alpha_{1}$ is negative implies
\begin{equation}
2\int^{t}_{0}\int_{M}|\nabla^{2}u|^{2}dV_{t}dt+\int_{M}
|\nabla u|^{2}dV_{t}\leq e^{(2|\beta_{2}|+2|\alpha_{1}|A_{1}+C)t}
A_{1}{\rm Vol}_{0}.\label{3.27}
\end{equation}

\item[(2)] $\beta_{1}\neq0$. In this case we choose $\epsilon=1/4|\beta_{1}|$ in {\color{blue}{(\ref{3.25})}} and obtain
\begin{eqnarray*}
\frac{d}{dt}\int_{M}|\nabla u|^{2}dV_{t}&\leq&-\int_{M}|\nabla^{2}u|^{2}dV_{t}
+(2|\beta_{2}|+C)\int_{M}|\nabla u|^{2}dV_{t}\\
&&- \ 2(\alpha_{1}-2\beta^{2}_{1})\int_{M}|\nabla u|^{4}dV_{t}.
\end{eqnarray*}
A similar argument used to obtain equations {\color{blue}{(\ref{3.26})}} and {\color{blue}{(\ref{3.27})}} we get
\begin{equation}
\int^{t}_{0}\int_{M}|\nabla^{2}u|^{2}dV_{t}dt
+\int_{M}|\nabla u|^{2}dV_{t}\leq e^{(2|\beta_{2}|+2|\alpha_{1}-2\beta^{2}_{1}|A_{1}+C)t}
A_{1}{\rm Vol}_{0}.\label{3.28}
\end{equation}

\end{itemize}
Finally, from {\color{blue}{(\ref{3.27})}} and {\color{blue}{(\ref{3.28})}}, we have
\begin{equation}
\int^{t}_{0}A_{2}(t)\!\ dt\leq e^{(2|\beta_{2}|+2|\alpha_{1}-2\beta^{2}_{1}|A_{1}+C)t}
A_{1}{\rm Vol}_{0}\label{3.29}
\end{equation}
where
\begin{equation}
A_{2}(t):=\int_{M}|\nabla^{2}u|^{2}dV_{t}.\label{3.30}
\end{equation}
Introduce
\begin{equation}
\Lambda:=\frac{1}{(S+C)^{2}}
\left[{\rm tr}\!\ \Xi|{\rm Sic}|^{2}
-2(S+C)\langle{\rm Sic},\Xi\rangle\right], \ \ \ f:=\frac{|{\rm Sic}|^{2}}{S+C}\label{3.31}
\end{equation}
and rewrite {\color{blue}{(\ref{3.22})}} as
\begin{equation}
\Box f=-2\frac{|Z|^{2}}{(S+C)^{3}}
-2f^{2}+4\frac{{\rm Sm}({\rm Sic},{\rm Sic})}{S+C}
-2\alpha_{1}\Lambda.\label{3.32}
\end{equation}
To determine a lower bound for $\alpha_{1}\Lambda$ we consider the following cases.

\begin{itemize}

\item[(i)] $\alpha_{1}\geq0$. In this case we shall also find a lower bound for $\Lambda$.

    \begin{itemize}

    \item[(ia)] When $\beta_{2}\leq0$, we have
    \begin{eqnarray*}
    \Lambda&\geq&-|\nabla^{2}u|^{2}-|\beta_{2}||\nabla u|^{2}
    -2|\beta_{1}|\frac{|\nabla^{2}u||\nabla u|^{2}}{S+C}f-2|\beta_{1}|\left(f+
    \frac{|\nabla u|^{4}|\nabla^{2}u|^{2}}{S+C}\right)\\
    &\geq&-|\nabla^{2}u|^{2}-|\beta_{2}||\nabla u|^{2}
    -2|\beta_{1}|\frac{|\nabla^{2}u||\nabla u|^{2}}{C_{0}}
    f-2|\beta_{1}|\left(f+\frac{|\nabla^{2}u|^{2}|\nabla u|^{4}}{C_{0}}
    \right).
    \end{eqnarray*}

    \item[(ib)] When $\beta_{2}>0$, we get the same estimate in (ia), where, in the case, the term $-|\beta_{2}||\nabla u|^{2}$ is now replaced by
        \begin{equation*}
        -\frac{\beta_{2}}{S+C}
        \left(|\nabla u|\frac{{\rm Sic}}{\sqrt{S+C}}
        -\frac{\sqrt{S+C}}{|\nabla u|}\nabla u\otimes\nabla u\right)^{2}
        \end{equation*}
        which is bounded below by
        $$
        -\frac{2\beta_{2}}{S+C}
        \left(f|\nabla u|^{2}+\frac{S+C}{|\nabla u|^{2}}|\nabla u|^{4}\right)\geq-2\beta_{2}|\nabla u|^{2}
        \left(1+\frac{f}{C_{0}}\right).
        $$

    \end{itemize}
    From (ia) -- (ib), we obtain for any $\beta_{2}$
    \begin{eqnarray}
    \Lambda&\geq&-|\nabla^{2}u|^{2}-2|\beta_{2}||\nabla u|^{2}
    \left(1+\frac{f}{C_{0}}\right)\nonumber\\
    &&- \ 2|\beta_{1}|\frac{|\nabla^{2}u||\nabla u|^{2}}{C_{0}}f
    -2|\beta_{1}|\left(f+\frac{|\nabla^{2}u|^{2}|\nabla u|^{4}}{C_{0}}
    \right).\label{3.33}
    \end{eqnarray}

\item[(ii)] $\alpha_{1}<0$. In this case we shall find an upper bound for $\Lambda$.

    \begin{itemize}

    \item[(iia)] When $\beta_{2}>0$, we have
    \begin{eqnarray*}
    \Lambda&\leq&\frac{1}{(S+C)^{2}}
    \left[(S+C)\left|\Delta u\frac{{\rm Sic}}{\sqrt{S+C}}
    -\sqrt{S+C}\nabla^{2}u\right|^{2}+\beta_{2}(S+C)^{2}
    |\nabla u|^{2}\right]\\
    &&+ \ 2|\beta_{1}|\frac{|\nabla^{2}u||\nabla u|^{2}}{S+C}
    f+2|\beta_{1}|\left(f+\frac{|\nabla u|^{4}|\nabla^{2}u|^{2}}{S+C}\right)\\
    &=&\frac{1}{S+C}\left|\Delta u\frac{{\rm Sic}}{\sqrt{S+C}}
    -\sqrt{S+C}\nabla^{2}u\right|^{2}+\beta_{2}|\nabla u|^{2}\\
    &&+ \ 2|\beta_{1}|\frac{|\nabla^{2}u||\nabla u|^{2}}{S+C}
    f+2|\beta_{1}|\left(f+\frac{|\nabla u|^{4}|\nabla^{2}u|^{2}}{S+C}\right)\\
    &\leq&\frac{2}{S+C}\left[(\Delta u)^{2}f+(S+C)|\nabla^{2}u|^{2}\right]+\beta_{2}|\nabla u|^{2}\\
    &&+ \ 2|\beta_{1}|\frac{|\nabla^{2}u||\nabla u|^{2}}{S+C}
    f+2|\beta_{1}|\left(f+\frac{|\nabla u|^{4}|\nabla^{2}u|^{2}}{S+C}\right)\\
    &\leq&2|\nabla^{2}u|^{2}\left(1+\frac{4f}{C_{0}}\right)
    +\beta_{2}|\nabla u|^{2}\\
    &&+ \ 2|\beta_{1}|\frac{|\nabla^{2}u||\nabla u|^{2}}{S+C}
    f+2|\beta_{1}|\left(f+\frac{|\nabla u|^{4}|\nabla^{2}u|^{2}}{S+C}\right)\\
    \end{eqnarray*}

    \item[(iib)] When $\beta_{2}\leq0$, we also have
    \begin{eqnarray*}
    \Lambda&\leq&2|\nabla^{2}u|^{2}
    \left(1+\frac{4f}{C_{0}}\right)-2\beta_{2}|\nabla u|^{2}
    \left(1+\frac{f}{C_{0}}\right)\\
    &&+ \ 2|\beta_{1}|\frac{|\nabla^{2}u||\nabla u|^{2}}{S+C}
    f+2|\beta_{1}|\left(f+\frac{|\nabla u|^{4}|\nabla^{2}u|^{2}}{S+C}\right).
    \end{eqnarray*}

    \end{itemize}
From (iia) -- (iib), we obtain for any $\beta_{2}$
    \begin{eqnarray}
    \Lambda&\leq&2|\nabla^{2}u|^{2}
    \left(1+\frac{4f}{C_{0}}\right)+2|\beta_{2}||\nabla u|^{2}
    \left(1+\frac{f}{C_{0}}\right)\nonumber\\
    &&+ \ 2|\beta_{1}|\frac{|\nabla^{2}u||\nabla u|^{2}}{C_{0}}
    f+2|\beta_{1}|\left(f+\frac{|\nabla u|^{4}|\nabla^{2}u|^{2}}{C_{0}}\right).\label{3.34}
    \end{eqnarray}
\end{itemize}

\begin{lemma}\label{l3.4} If $\alpha_{1}\geq0$, one has
\begin{eqnarray}
\frac{d}{dt}\int_{M}f\!\ dV_{t}&\leq&\int_{M}
\left[-2f^{2}+4\frac{{\rm Sm}({\rm Sic},{\rm Sic})}{S+C}
-fS\right]dV_{t}\nonumber\\
&&+ \ \int_{M}4\alpha_{1}\left[|\beta_{1}|+\frac{A_{1}(|\beta_{2}|
+|\beta_{1}||\nabla^{2}u|)}{C_{0}}\right]f\!\ dV_{t}\label{3.35}\\
&&+ \ 2\alpha_{1}\left(1+\frac{2A^{2}_{1}|\beta_{1}|}{C_{0}}\right)
A_{2}+4\alpha_{1}A_{1}|\beta_{2}|{\rm Vol}_{t}.\nonumber
\end{eqnarray}
If $\alpha_{1}<0$, one has
\begin{eqnarray}
\frac{d}{dt}\int_{M}f\!\ dV_{t}&\leq&\int_{M}\left[-2f^{2}
+4\frac{{\rm Sm}({\rm Sic},{\rm Sic})}{S+C}-fS\right]dV_{t}\nonumber\\
&&- \ \int_{M}4\alpha_{1}\left[|\beta_{1}|
+\frac{A_{1}(|\beta_{2}|+|\beta_{1}||\nabla^{2}u|)+4|\nabla^{2}u|^{2}}{C_{0}}
\right]f\!\ dV_{t}\label{3.36}\\
&&- \ 2\alpha_{1}\left(2+\frac{2A^{2}_{1}|\beta_{1}|}{C_{0}}
\right)A_{2}-4\alpha_{1}A_{1}|\beta_{2}|{\rm Vol}_{t}.\nonumber
\end{eqnarray}
Here ${\rm Vol}_{t}$ denotes the volume of $g(t)$.
\end{lemma}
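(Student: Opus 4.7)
The strategy is to differentiate $\int_M f\,dV_t$ in time, convert the result into an integral of $\Box f$ plus a volume correction, substitute the identity {\color{blue}{(\ref{3.32})}}, discard the manifestly nonpositive term $-2|Z|^2/(S+C)^3$, and then replace the $\alpha_1\Lambda$ term with the appropriate inequality {\color{blue}{(\ref{3.33})}} or {\color{blue}{(\ref{3.34})}} according to the sign of $\alpha_1$.

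First I would compute, using $\partial_t dV_t=-S\,dV_t$ and the definition $\Box=\partial_t-\Delta$,
\begin{equation*}
\frac{d}{dt}\int_{M}f\,dV_{t}
=\int_{M}\partial_{t}f\,dV_{t}-\int_{M}fS\,dV_{t}
=\int_{M}\Box f\,dV_{t}-\int_{M}fS\,dV_{t},
\end{equation*}
where the $\int_M\Delta f\,dV_t$ term vanishes by the divergence theorem on the closed manifold $M$. Substituting {\color{blue}{(\ref{3.32})}} and dropping $-2|Z|^2/(S+C)^3\le 0$ then gives
\begin{equation*}
\frac{d}{dt}\int_{M}f\,dV_{t}
\le\int_{M}\left[-2f^{2}+4\frac{{\rm Sm}({\rm Sic},{\rm Sic})}{S+C}-fS\right]dV_{t}
-2\alpha_{1}\int_{M}\Lambda\,dV_{t}.
\end{equation*}

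Next I treat the last term by cases. If $\alpha_{1}\ge 0$ then $-2\alpha_{1}\Lambda$ is bounded above by $-2\alpha_{1}$ times the right-hand side of {\color{blue}{(\ref{3.33})}}, so the inequality $|\nabla u|^{2}\le A_{1}$ yields, after expanding the product,
\begin{equation*}
-2\alpha_{1}\Lambda
\le 2\alpha_{1}|\nabla^{2}u|^{2}
+4\alpha_{1}|\beta_{2}|A_{1}\!\left(1+\tfrac{f}{C_{0}}\right)
+\tfrac{4\alpha_{1}|\beta_{1}|A_{1}}{C_{0}}|\nabla^{2}u|\,f
+4\alpha_{1}|\beta_{1}|\!\left(f+\tfrac{A_{1}^{2}|\nabla^{2}u|^{2}}{C_{0}}\right).
\end{equation*}
Integrating over $M$, grouping terms by whether they are multiplied by $f$, by $|\nabla^{2}u|^{2}$, or by neither, and using the definitions ${\rm Vol}_{t}=\int_{M}dV_{t}$ and $A_{2}(t)=\int_{M}|\nabla^{2}u|^{2}dV_{t}$, produces precisely the three bracketed contributions on the right of {\color{blue}{(\ref{3.35})}}. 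The case $\alpha_{1}<0$ is handled analogously: since $-2\alpha_{1}>0$, one may bound $-2\alpha_{1}\Lambda$ from above by $-2\alpha_{1}$ times the right-hand side of {\color{blue}{(\ref{3.34})}}, and the same regrouping using $|\nabla u|^{2}\le A_{1}$ and $S+C\ge C_{0}$ gives {\color{blue}{(\ref{3.36})}}, with the extra coefficient $4|\nabla^{2}u|^{2}/C_{0}$ in the $f$-bracket arising from the $2|\nabla^{2}u|^{2}(1+4f/C_{0})$ term in {\color{blue}{(\ref{3.34})}}.

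The only real bookkeeping subtlety—and the part that is most error-prone rather than deep—is keeping track of where each occurrence of $|\nabla u|^{2}$, $|\nabla u|^{4}$ or $(S+C)^{-1}$ should be absorbed using $|\nabla u|^{2}\le A_{1}$ and $S+C\ge C_{0}$, so that the cross terms $|\nabla^{2}u|\cdot f$ and $|\nabla^{2}u|^{2}\cdot f$ end up in the $f$-integral while the pure $|\nabla^{2}u|^{2}$ terms end up in the $A_{2}$ coefficient and the pure $|\nabla u|^{2}$ terms end up in the ${\rm Vol}_{t}$ coefficient. No further analytic input beyond {\color{blue}{(\ref{3.32})}}, {\color{blue}{(\ref{3.33})}}--{\color{blue}{(\ref{3.34})}}, and the regularity bound {\color{blue}{(\ref{3.21})}} is required, so after this grouping the statement of Lemma \ref{l3.4} follows immediately.
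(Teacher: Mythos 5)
Your proposal is correct and follows exactly the route the paper intends: the paper simply says the lemma ``follows immediately from (3.32) -- (3.34),'' and you have supplied precisely the intended bookkeeping (differentiate $\int_M f\,dV_t$ using $\partial_t dV_t=-S\,dV_t$ and $\int_M\Delta f\,dV_t=0$, drop $-2|Z|^2/(S+C)^3\le 0$, and then bound $-2\alpha_1\Lambda$ from above by $-2\alpha_1$ times the lower bound (3.33) when $\alpha_1\ge 0$ or the upper bound (3.34) when $\alpha_1<0$, absorbing $|\nabla u|^2\le A_1$ and $S+C\ge C_0$ to recover the stated coefficients of $f$, $A_2$, and ${\rm Vol}_t$). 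No gaps.
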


This follows immediately from {\color{blue}{(\ref{3.32})}} -- {\color{blue}{(\ref{3.34})}}. According to {\color{blue}{(\ref{3.35})}} and {\color{blue}{(\ref{3.36})}} we have
\begin{eqnarray}
\frac{d}{dt}\int_{M}f\!\ dV_{t}&\leq&\int_{M}\left[-2f^{2}
+4\frac{{\rm Sm}({\rm Sic},{\rm Sic})}{S+C}-fS\right]dV_{t}+\int_{M}4|\alpha_{1}|\bigg[|\beta_{1}|+\nonumber\\
&& \ \frac{A_{1}(|\beta_{2}|+|\beta_{1}||\nabla^{2}u|)+4(1-{\rm sgn}(\alpha_{1},0))|\nabla^{2}u|^{2}}{C_{0}}
\bigg]f\!\ dV_{t}\label{3.37}\\
&&+ \ 4|\alpha_{1}|\left(1+\frac{A^{2}_{1}|\beta_{1}|}{C_{0}}
\right)A_{2}+4|\alpha_{1}|A_{1}|\beta_{2}|{\rm Vol}_{t},\nonumber
\end{eqnarray}
where ${\rm sgn}(\alpha_{1},0)=1$ if $\alpha_{1}\geq0$, and otherwise $0$. For $\alpha_{1}\geq0$, the above estimates shall imply integrals bounds for $|{\rm Sic}|, |{\rm Sm}|$ as in \cite{LY2}. On the other hand, the case $\alpha_{1}<0$ will prevent us to obtain the previous-type of estimates, since we have no control on $|\nabla^{2}u|^{2}$ even in the integral sense. Hence in the
case that $\alpha_{1}$ is negative, we will impose another
condition\footnote{For the Ricci-harmonic flow or the $(\alpha,0,0,0)$-Ricci
flow, the estimate (\ref{3.38}) is always true provided that the curvature
condition (\ref{1.4}) holds.}:
\begin{equation}
|\nabla^{2}_{g(t)}u(t)|^{2}_{g(t)}
\leq \widetilde{A}_{1}\label{3.38}
\end{equation}
along the flow for some uniform constant $\widetilde{A}_{1}$. Note that the condition {\color{blue}{(\ref{3.38})}} is stronger than {\color{blue}{(\ref{3.29})}}. A weakened condition is
\begin{equation}
||\nabla^{2}_{g(t)}u(t)||_{L^{4}(M,g(t))}\leq\widehat{A}_{1}\label{3.39}
\end{equation}
along the flow for some uniform constant $\widehat{A}_{1}$.
\\

In the the case of dimension $4$, we have the following Gauss-Bonnet-Chern formula
\begin{equation}
32\pi^{2}\chi(M)=\int_{M}\left[|{\rm Rm}|^{2}-4|{\rm Ric}|^{2}
+R^{2}\right]dV_{g}\label{3.40}
\end{equation}
for any Riemannian metric $g$ on $M$, where $\chi(M)$ is the Euler characteristic number of $M$. Applying the formula {\color{blue}{(\ref{3.40})}} to $g(t)$ and noting that (see Lemma 3.1 in \cite{LY2})
\begin{eqnarray*}
|{\rm Rm}|^{2}-4|{\rm Ric}|^{2}+R^{2}
&=&|{\rm Sm}|^{2}-4|{\rm Sic}|^{2}+S^{2}\\
&&- \ \frac{13}{2}\alpha^{2}_{1}|\nabla u|^{4}
-9\alpha_{1}{\rm Sic}(\nabla u,\nabla u)
+2\alpha_{1}S|\nabla u|^{2}
\end{eqnarray*}
we have (see (3.12) in \cite{LY2})
\begin{eqnarray}
\int_{M}\left[|{\rm Sm}|^{2}-4|{\rm Sic}|^{2}
+S^{2}\right]dV_{t} \ \ = \ \ 32\pi^{2}\chi(M)
+ \frac{13}{2}\alpha^{2}_{1}\int_{M}|\nabla u|^{4}dV_{t}\nonumber\\
+ \ 9\alpha_{1}\int_{M}{\rm Sic}(\nabla u,\nabla u)\!\ dV_{t}
-2\alpha_{1}\int_{M}S|\nabla u|^{2}dV_{t}.\label{3.41}
\end{eqnarray}
Moreover we also have (see (3.15) in \cite{LY2})
\begin{eqnarray}
\int_{M}\left[-2f^{2}+4\frac{{\rm Sm}({\rm Sic},{\rm Sic})}{S+C}
-fS\right]dV_{t} \ \ \leq \ \
\int_{M}\left(-f^{2}+36C f+574 S^{2}\right)dV_{t}\nonumber\\
+ \ 8\left[32\pi^{2}\chi(M)
+13\alpha^{2}_{1}A^{2}_{1}{\rm Vol}_{0} e^{(2|\beta_{2}|
+2|\alpha_{1}-2\beta^{2}_{1}|A_{1}+C)t}\right]\label{3.42}
\end{eqnarray}
where we used {\color{blue}{(\ref{3.28})}} to control the integral
\begin{equation*}
\int_{M}|\nabla u|^{4}dV_{t}.
\end{equation*}
Plugging {\color{blue}{(\ref{3.42})}} into {\color{blue}{(\ref{3.37})}} and using {\color{blue}{(\ref{2.5})}} yields
\begin{eqnarray}
\frac{d}{dt}\int_{M}f\!\ dV_{t}&\leq&\int_{M}\left(-\frac{1}{2}f^{2}+C_{1}f
+C_{2}S^{2}\right)dV_{t}\nonumber\\
&&+ \ C_{3}A_{2}+C_{4}\left[1-{\rm sgn}(\alpha_{1},0)\right]
\int_{M}|\nabla^{2}u|^{4}dV_{t}+C_{5}e^{C_{6}t}+C_{7}.\label{3.43}
\end{eqnarray}
where
\begin{eqnarray}
C_{1}&=&36C+4|\alpha_{1}||\beta_{1}|+\frac{4|\alpha_{1}|A_{1}|\beta_{2}|}{
C_{0}} \ \ = \ \ C_{1}(C,C_{0},\alpha_{1},\beta_{1},\beta_{2},A_{1}),\nonumber\\
C_{2}&=&574,\nonumber\\
C_{3}&=&\frac{16|\alpha_{1}|^{2}A^{2}_{1}|\beta_{1}|^{2}}{C^{2}_{0}}
+4|\alpha_{1}|\left(1+\frac{A^{2}_{1}|\beta_{1}|}{C_{0}}\right) \ \ = \ \ C_{3}(C_{0},\alpha_{1},\beta_{1},A_{1}),\nonumber\\
C_{4}&=&\frac{256|\alpha_{1}|^{2}}{C^{2}_{0}} \ \ = \ \ C_{4}(C_{0},\alpha_{1}),\label{3.44}\\
C_{5}&=&(104\alpha^{2}_{1}A^{2}_{1}+4|\alpha_{1}|A_{1}
|\beta_{2}|){\rm Vol}_{0} \ \ = \ \ C_{5}(\alpha_{1},\beta_{2},A_{1},{\rm Vol}_{0}),\nonumber\\
C_{6}&=&2|\beta_{2}|+2|\alpha_{1}
-2\beta^{2}_{1}||A_{1}|+C \ \ = \ \ C_{6}(C,\alpha_{1},\beta_{1},\beta_{2},A_{1}),\nonumber\\
C_{7}&=&256\pi^{2}\chi(M).\nonumber
\end{eqnarray}
Note that $C_{1}, C_{6}$ are linear functions of $A_{1}$ and $C_{3}, C_{5}$ are quadratic functions of $A_{1}$. Furthermore, $C_{2}, C_{7}$ are constants depending only on the topological quantities of $M$. Finally, $C_{4}$ depends only on $\alpha_{1}$, and the term containing $C_{4}$ in {\color{blue}{(\ref{3.43})}} vanishes provided that $\alpha_{1}\geq0$.

\begin{theorem}\label{t3.5} Let $(g(t),u(t))_{t\in[0,T)}$ be a solution to the regular $(\alpha_{1},0,\beta_{1},\beta_{2})$-Ricci flow on a closed $4$-manifold $M$ with $T\leq\infty$ and the initial data $(g_{0},u_{0})$. Assume that $S_{g(t)}+C\geq C_{0}>0$ along the flow for some uniform constants $C, C_{0}>0$. Then
\begin{eqnarray}
&&\int_{M}\frac{|{\rm Sic}_{g(s)}|^{2}_{g(s)}}{S_{g(s)}+C}dV_{g(s)}
+\int^{s}_{0}\int_{M}\frac{|{\rm Sic}_{g(t)}|^{4}_{g(t)}}{(S_{g(t)}
+C)^{2}}dV_{g(t)}dt\nonumber\\
&\leq&C'(1+s) e^{C's}+C'e^{C's}\int^{s}_{0}\int_{M}S^{2}dV_{t}dt\label{3.45}\\
&&+ \ C'[1-{\rm sgn}(\alpha_{1},0)]e^{C's}\int^{s}_{0}\int_{M}|\nabla^{2}u|^{4}dV_{t}dt,
\nonumber
\end{eqnarray}

\begin{eqnarray}
\int_{M}|{\rm Sic}_{g(s)}|_{g(s)}dV_{g(s)}&\leq&C'(1+s) e^{C's}+C'e^{C's}\int^{s}_{0}\int_{M}S^{2}dV_{t}dt
\nonumber\\
&&+ \ C'[1-{\rm sgn}(\alpha_{1},0)]e^{C's}\int^{s}_{0}\int_{M}|\nabla^{2}u|^{4}dV_{t}dt,
\label{3.46}
\end{eqnarray}

\begin{eqnarray}
\int^{s}_{0}\int_{M}|{\rm Sic}_{g(t)}|^{2}_{g(t)}
dV_{g(t)}dt&\leq&C'(1+s) e^{C's}+C'e^{C's}\int^{s}_{0}\int_{M}S^{2}dV_{t}dt
\nonumber\\
&&+ \ C'[1-{\rm sgn}(\alpha_{1},0)]e^{C's}\int^{s}_{0}\int_{M}|\nabla^{2}u|^{4}dV_{t}dt,
\label{3.47}
\end{eqnarray}

\begin{eqnarray}
\int^{s}_{0}\int_{M}|{\rm Sm}_{g(t)}|^{2}_{g(t)}dV_{g(t)}dt&\leq&C'(1+s) e^{C's}+C'e^{C's}\int^{s}_{0}\int_{M}S^{2}dV_{t}dt
\nonumber\\
&&+ \ C'[1-{\rm sgn}(\alpha_{1},0)]e^{C's}\int^{s}_{0}\int_{M}|\nabla^{2}u|^{4}dV_{t}dt,
\label{3.48}
\end{eqnarray}
for all $s\in[0,T)$, where $C'=C'(g_{0},u_{0},\alpha_{1},
\beta_{1},\beta_{2},C,C_{0},A_{1},\chi(M))$ is a uniform constant. Here $|\nabla_{g(t)}u(t)|^{2}_{g(t)}\leq A_{1}$ holds along the flow (by the regularity) for some uniform constant $A_{1}>0$ (which depends only on $g_{0},u_{0}$ and $\alpha_{0},\beta_{1},\beta_{2}$).
\end{theorem}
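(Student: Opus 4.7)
The plan is to integrate the differential inequality \eqref{3.43}, which is the main estimate already set up just before the theorem, and then convert bounds on $f=|{\rm Sic}|^2/(S+C)$ into the four stated curvature integrals by elementary Young/Cauchy--Schwarz manipulations together with the Gauss--Bonnet--Chern formula \eqref{3.41}.

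\textbf{Step 1 (integrating the master inequality).} Apply Young's inequality $C_1 f\le \frac{1}{4}f^2+C_1^2$ to absorb the linear term in \eqref{3.43} into the quadratic term. The resulting differential inequality takes the form
\begin{equation*}
\frac{d}{dt}\int_{M} f\, dV_{t}+\frac{1}{4}\int_{M} f^{2}\, dV_{t}
\le C_{2}\int_{M} S^{2}\, dV_{t}+C_{3}A_{2}(t)+C_{4}[1-{\rm sgn}(\alpha_{1},0)]\int_{M}|\nabla^{2}u|^{4}dV_{t}+\Phi(t),
\end{equation*}
where $\Phi(t):=C_1^2{\rm Vol}_t+C_5 e^{C_6 t}+C_7$. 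Integrating from $0$ to $s$, using the volume growth ${\rm Vol}_{t}\le e^{Ct}{\rm Vol}_{0}$ coming from \eqref{2.5} and the $L^{1}$-in-time bound $\int_{0}^{s}A_{2}(t)\,dt\le e^{C's}A_{1}{\rm Vol}_{0}$ from \eqref{3.29}, I would obtain exactly the estimate \eqref{3.45} (the initial term $\int_{M}f\,dV_{0}$ only depends on the initial data and is absorbed into $C'(1+s)e^{C's}$).

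\textbf{Step 2 (from $f$ to $|{\rm Sic}|$).} For \eqref{3.47}, write $|{\rm Sic}|^{2}=f\cdot(S+C)$ and apply Young's inequality $f(S+C)\le \tfrac{1}{2}f^{2}+\tfrac{1}{2}(S+C)^{2}$ followed by $(S+C)^{2}\le 2S^{2}+2C^{2}$. Space-time integration yields
\begin{equation*}
\int_{0}^{s}\!\!\int_{M}|{\rm Sic}|^{2}dV_{t}dt \le \frac{1}{2}\int_{0}^{s}\!\!\int_{M}f^{2}dV_{t}dt +\int_{0}^{s}\!\!\int_{M} S^{2}dV_{t}dt + C^{2}\int_{0}^{s}{\rm Vol}_{t}\,dt,
\end{equation*}
and feeding in \eqref{3.45} together with the volume bound produces \eqref{3.47}. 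For \eqref{3.46}, apply Cauchy--Schwarz in space twice:
\begin{equation*}
\int_{M}|{\rm Sic}|\,dV_{s} \le \left(\int_{M} f\,dV_{s}\right)^{\!1/2}\!\!\left(\int_{M}(S+C)dV_{s}\right)^{\!1/2} \le \left(\int_{M} f\,dV_{s}\right)^{\!1/2}\!\!\bigl({\rm Vol}_{s}\bigr)^{1/4}\!\!\left(\int_{M}(S+C)^{2}dV_{s}\right)^{\!1/4},
\end{equation*}
then estimate $\int(S+C)^{2}dV_{s}\le 2\int S^{2}dV_{s}+2C^{2}{\rm Vol}_{s}$, control $\int_M f\, dV_s$ by \eqref{3.45}, and use $\sqrt{x+y}\le\sqrt{x}+\sqrt{y}$ with Young's inequality to collapse the product into the form of the right-hand side of \eqref{3.46}.

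\textbf{Step 3 (extending to $|{\rm Sm}|$ via Gauss--Bonnet--Chern).} The identity \eqref{3.41} rewrites
\begin{equation*}
\int_{M}|{\rm Sm}|^{2}dV_{t} = 4\!\int_{M}|{\rm Sic}|^{2}dV_{t} -\!\int_{M}\!S^{2}dV_{t} +32\pi^{2}\chi(M) + \frac{13}{2}\alpha_{1}^{2}\!\int_{M}\!|\nabla u|^{4}dV_{t} + 9\alpha_{1}\!\int_{M}\!{\rm Sic}(\nabla u,\nabla u)dV_{t} - 2\alpha_{1}\!\int_{M}\!S|\nabla u|^{2}dV_{t}.
\end{equation*}
The regularity bound $|\nabla u|^{2}\le A_{1}$ gives $\int|\nabla u|^{4}\le A_{1}^{2}{\rm Vol}_{t}$, $|{\rm Sic}(\nabla u,\nabla u)|\le A_{1}|{\rm Sic}|$ and $|S||\nabla u|^{2}\le A_{1}|S|\le \tfrac{1}{2}A_{1}(1+S^{2})$. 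Inserting Step~2 and the volume bound then yields \eqref{3.48}.

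The main obstacle in executing this plan is bookkeeping rather than analysis: the constant $C'$ must be shown to depend only on the listed quantities $g_{0},u_{0},\alpha_{1},\beta_{1},\beta_{2},C,C_{0},A_{1},\chi(M)$, which forces one to track how each of $C_{1},\ldots,C_{7}$ in \eqref{3.44} propagates through Steps~1--3, and to carry the sign-indicator $[1-{\rm sgn}(\alpha_{1},0)]$ faithfully from \eqref{3.43} through every subsequent application of Young's inequality, since in the case $\alpha_{1}<0$ no a priori control on $\int|\nabla^{2}u|^{4}$ is available and this term must appear explicitly on the right-hand side of each of \eqref{3.45}--\eqref{3.48}.
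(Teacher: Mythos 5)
Your Step 1 and Step 3 are sound, and they follow a route close to the paper's: the paper integrates \eqref{3.43} via the integrating factor $e^{-C_{1}t}$ rather than by first absorbing $C_{1}f$ into $f^{2}$ with Young, but either device works and yours is if anything slightly cleaner. The paper also handles \eqref{3.47} via the pointwise estimate $|{\rm Sic}|^{2}\le 8f^{2}+C^{2}/4$ rather than your $|{\rm Sic}|^{2}=f(S+C)\le \tfrac12 f^{2}+\tfrac12(S+C)^{2}$, but again both close since everything there is integrated in time, so $\int_{0}^{s}\!\int_{M}(S+C)^{2}$ reduces to $\int_{0}^{s}\!\int_{M}S^{2}$ plus a controlled volume term.

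The genuine gap is in your derivation of \eqref{3.46}. Your Cauchy--Schwarz chain produces the factor $\left(\int_{M}(S+C)^{2}dV_{g(s)}\right)^{1/4}$, and the subsequent estimate $\int_{M}(S+C)^{2}dV_{g(s)}\le 2\int_{M}S^{2}dV_{g(s)}+2C^{2}{\rm Vol}_{s}$ leaves you with $\int_{M}S^{2}dV_{g(s)}$ \emph{at the single time $s$}. The right-hand side of \eqref{3.46} contains only the space-time integral $\int_{0}^{s}\!\int_{M}S^{2}dV_{g(t)}dt$, and there is no a priori comparison between a snapshot $\int_{M}S^{2}dV_{g(s)}$ and its time integral over $[0,s]$ (the former can be large while the latter is small). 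This is not a bookkeeping issue: your proposal for \eqref{3.46} does not close. The paper sidesteps the problem entirely by using the pointwise inequality
\begin{equation*}
|{\rm Sic}|\leq 2\frac{|{\rm Sic}|^{2}}{S+C}+\frac{C}{2},
\end{equation*}
which exploits the dimension-4 trace bound $|S|\le 2|{\rm Sic}|$ (hence $S+C\le 2|{\rm Sic}|+C$) and gives $\int_{M}|{\rm Sic}|\,dV_{g(s)}\le 2\int_{M}f\,dV_{g(s)}+\tfrac{C}{2}{\rm Vol}_{s}$; the first term is controlled by \eqref{3.45} and no $S^{2}$ appears at the single time $s$. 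If you want to keep your Cauchy--Schwarz route, you would have to replace the second application of Cauchy--Schwarz by the trace estimate $\int_{M}(S+C)\le 2\int_{M}|{\rm Sic}|+C\,{\rm Vol}_{s}$ and then solve the resulting quadratic inequality for $\int_{M}|{\rm Sic}|$, which in effect reproduces the paper's pointwise bound in integrated form.
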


\begin{proof} Integrating {\color{blue}{(\ref{3.43})}} over $[0,s]$ we obtain
$$
e^{-C_{1}t}\int_{M}f\!\ dV_{t}+\frac{1}{2}e^{-C_{1}s}
\int^{s}_{0}\int_{M}f^{2}dV_{t}dt
$$
$$
\leq \ \ \int^{s}_{0}
\left[C_{3}A_{2}e^{-C_{1}t}+C_{7}e^{-C_{1}t}
+C_{5}e^{(C_{6}-C_{1})t}\right]dt+C_{2}\int^{s}_{0}e^{-C_{1}t}\int_{M}S^{2}dV_{t}dt
$$
$$
+ \ C_{4}\int^{s}_{0}
e^{-C_{1}t}[1-{\rm sgn}(\alpha_{1},0)]
\int_{M}|\nabla^{2}u|^{4}dV_{t}dt
+\int_{M}fdV_{t}\bigg|_{t=0}.
$$
Using {\color{blue}{(\ref{3.29})}}, the above inequality becomes
$$
\int_{M}f\!\ dV_{s}+\int^{s}_{0}\int_{M}f^{2}dV_{t}dt \ \ \leq \ \
2e^{C_{1}s}\left(\int_{M}f\!\ dV_{t}\bigg|_{t=0}\right)
$$
$$
+ \ 2C_{3}e^{(C_{1}+C_{6})s}
+\frac{2C_{7}}{C_{1}}e^{C_{1}s}+2C_{5}\cdot\left\{\begin{array}{cc}
s e^{C_{1}s}, & C_{1}=C_{6},\\
e^{(C_{1}+C_{6})s}/|C_{1}-C_{6}|, & C_{1}\neq C_{6},
\end{array}\right.
$$
$$
+ \ 2C_{2}e^{C_{1}s}
\int^{s}_{0}\int_{M}S^{2}dV_{t}dt
+2C_{5}[1-{\rm sgn}(\alpha_{1},0)]e^{C_{1}s}
\int^{s}_{0}\int_{M}e^{-C_{1}t}|\nabla^{2}u|^{4}dV_{t}dt.
$$
$$
\leq \ \ C_{8}(1+s)e^{(C_{1}+C_{6})s}
+2C_{2}e^{C_{1}s}\int^{s}_{0}\int_{M}S^{2}dV_{t}dt
$$
$$
+ \ 2C_{5}[1-{\rm sgn}(\alpha_{1},0)]e^{C_{1}s}\int^{s}_{0}
\int_{M}e^{-C_{1}t}|\nabla^{2}u|^{4}dV_{t}dt
$$
for some uniform constant $C_{8}$ which depends only on $g_{0}, u_{0}, \alpha_{1}, \beta_{1},\beta_{2}, C, C_{0}$, and $A_{1}, \chi(M)$.

The second and third estimates follows from (see (3.22) and below in \cite{LY2})
\begin{equation*}
|{\rm Sic}|\leq 2\frac{|{\rm Sic}|^{2}}{S+C}+\frac{C}{2}, \ \ \
|{\rm Sic}|^{2}\leq 8\frac{|{\rm Sic}|^{4}}{(S+C)^{2}}+\frac{C^{2}}{4},
\end{equation*}
and ${\rm Vol}_{t}\leq e^{Ct}{\rm Vol}_{0}$.

The last estimate follows from {\color{blue}{(\ref{3.41})}}, as the argument in \cite{LY2}.
\end{proof}

Corresponding to case $(c2)$ in {\color{red}{Corollary \ref{c3.3}}}, we obtain

\begin{corollary}\label{c3.6} Let $(g(t),u(t))_{t\in[0,T)}$ be a solution to the regular $(\alpha_{1},0,\beta_{1},\beta_{2})$-Ricci flow on a closed $4$-manifold $M$ with $T\leq\infty$ and the initial data $(g_{0},u_{0})$. Assume that $S_{g(t)}+C\geq C_{0}>0$ and $S^{2}_{g(t)}
\leq C_{1}<\infty$ along the flow for some uniform constants $C, C_{0},C_{1}>0$. Then
\begin{eqnarray}
\int^{s}_{0}\int_{M}|{\rm Sic}_{g(t)}|^{2}_{g(t)}
dV_{g(t)}dt&\leq&C'(1+s) e^{C's}\nonumber\\
&&+ \ C'[1-{\rm sgn}(\alpha_{1},0)]e^{C's}\int^{s}_{0}\int_{M}|\nabla^{2}u|^{4}dV_{t}dt,
\label{3.49}
\end{eqnarray}

\begin{eqnarray}
\int^{s}_{0}\int_{M}|{\rm Sm}_{g(t)}|^{2}_{g(t)}dV_{g(t)}dt&\leq&C'(1+s) e^{C's}\nonumber\\
&&+ \ C'[1-{\rm sgn}(\alpha_{1},0)]e^{C's}\int^{s}_{0}\int_{M}|\nabla^{2}u|^{4}dV_{t}dt,
\label{3.50}
\end{eqnarray}
for all $s\in[0,T)$, where $C'=C'(g_{0},u_{0},\alpha_{1},
\beta_{1},\beta_{2},C,C_{0},C_{1}, A_{1},\chi(M))$ is a uniform constant. Here $|\nabla_{g(t)}u(t)|^{2}_{g(t)}\leq A_{1}$ holds along the flow (by the regularity) for some uniform constant $A_{1}>0$ (which depends only on $g_{0},u_{0}$ and $\alpha_{0},\beta_{1},\beta_{2}$).
\end{corollary}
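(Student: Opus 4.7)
The plan is to obtain Corollary \ref{c3.6} as an immediate consequence of Theorem \ref{t3.5} by converting the additional hypothesis $S^{2}_{g(t)}\leq C_{1}$ into an $L^{1}_{[0,s]}L^{2}(M)$-control on $S_{g(t)}$, and then absorbing this into the existing leading term.

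First I would start from the bounds \eqref{3.47} and \eqref{3.48} of Theorem \ref{t3.5}, whose right-hand sides contain the extra space-time integral $C'e^{C's}\int_{0}^{s}\!\int_{M}S^{2}\,dV_{t}\,dt$. Under the additional assumption $S^{2}_{g(t)}\leq C_{1}$, this integral is dominated by
\begin{equation*}
\int_{0}^{s}\!\int_{M}S^{2}\,dV_{t}\,dt \;\leq\; C_{1}\int_{0}^{s}{\rm Vol}_{t}\,dt.
\end{equation*}
Next I would invoke the volume estimate \eqref{2.5}, ${\rm Vol}_{t}\leq e^{Ct}{\rm Vol}_{0}$, which was derived without any curvature condition from the evolution $\partial_{t}dV_{t}=-S\,dV_{t}$ together with \eqref{2.2} and the regularity bound \eqref{3.21}. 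This yields
\begin{equation*}
\int_{0}^{s}\!\int_{M}S^{2}\,dV_{t}\,dt \;\leq\; C_{1}{\rm Vol}_{0}\,\frac{e^{Cs}-1}{C} \;\leq\; C''(1+s)e^{C''s},
\end{equation*}
for some uniform constant $C''=C''(g_{0},C,C_{0},C_{1})$. Plugging this back into \eqref{3.47} absorbs the $S^{2}$-term into the first summand $C'(1+s)e^{C's}$ after enlarging $C'$, which gives exactly \eqref{3.49}.

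For the ${\rm Rm}$ estimate \eqref{3.50}, I would apply the same procedure to \eqref{3.48} (which gives the bound for $|{\rm Sm}|^{2}$), and then pass from ${\rm Sm}$ to ${\rm Rm}$ using the defining relation \eqref{3.10}: pointwise
\begin{equation*}
|{\rm Rm}_{g(t)}|^{2}_{g(t)} \;\leq\; 2|{\rm Sm}_{g(t)}|^{2}_{g(t)} + C_{n}\alpha_{1}^{2}|\nabla_{g(t)}u(t)|^{4}_{g(t)},
\end{equation*}
so that the regularity bound $|\nabla_{g(t)}u(t)|^{2}_{g(t)}\leq A_{1}$ lets me control the extra term by $C_{n}\alpha_{1}^{2}A_{1}^{2}\,{\rm Vol}_{t}\leq C'''(1+s)e^{C'''s}$, again absorbable into the leading term. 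The term carrying the factor $[1-{\rm sgn}(\alpha_{1},0)]\int|\nabla^{2}u|^{4}$ is simply inherited from Theorem \ref{t3.5} without modification, since the hypothesis $S^{2}\leq C_{1}$ says nothing about $\nabla^{2}u$ when $\alpha_{1}<0$.

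There is really no substantive obstacle: the work was done in Theorem \ref{t3.5}, and Corollary \ref{c3.6} is the specialization where the scalar-like quantity $S$ is $L^{\infty}$-bounded, exactly the scenario of case $(c2)$ of Corollary \ref{c3.3}. The only step requiring any care is bookkeeping the uniform constant so that its dependence is on $g_{0},u_{0},\alpha_{1},\beta_{1},\beta_{2},C,C_{0},C_{1},A_{1},\chi(M)$ as claimed; tracking through the proof of Theorem \ref{t3.5} this is automatic since $C_{1}$ enters only through the elementary estimate $\int S^{2}\leq C_{1}\int{\rm Vol}_{t}$.
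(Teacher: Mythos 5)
Your proof is correct and is exactly the intended route: the paper states Corollary~\ref{c3.6} immediately after Theorem~\ref{t3.5} with no separate argument, precisely because the only change is feeding $S^2\leq C_1$ and ${\rm Vol}_t\leq e^{Ct}{\rm Vol}_0$ into \eqref{3.47} and \eqref{3.48} to absorb the $S^2$-integral into the $(1+s)e^{C's}$ term. One small remark: \eqref{3.50} as stated in the body of the paper is a bound on $\int\int|{\rm Sm}|^2$, not on $\int\int|{\rm Rm}|^2$, so it follows directly from \eqref{3.48} without your final conversion step; that extra step is needed only to recover the ${\rm Rm}$ form given in the introduction as Corollary~\ref{c1.4}, equation \eqref{1.17}, and your pointwise comparison $|{\rm Rm}|^2\lesssim|{\rm Sm}|^2+\alpha_1^2|\nabla u|^4$ together with $|\nabla u|^2\leq A_1$ does handle that correctly.
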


To get the $L^{1}_{[0,T)}L^{p}(M)$-estimate of ${\rm Sic}_{g(t)}$, introduce the basic assumption {\bf BA} for a solution $(g(t),u(t))_{t\in[0,T)}$ to the regular $(\alpha_{1},0,\beta_{1},\beta_{2})$-Ricci flow:

\begin{itemize}

\item[(a)] $M$ is a closed $4$-manifold;

\item[(b)] $T<\infty$;

\item[(c)] $-1\leq S_{g(t)}\leq 1$ along the flow;

\item[(d)] $|\nabla_{g(t)}u(t)|^{2}_{g(t)}\leq A_{1}$ along the flow.

\end{itemize}
The last condition is obtained from the regularity of the flow and the third condition implies $S_{g(t)}+C\geq C_{0}>0$, where $C=2$ and $C_{0}=1$.

Under {\bf BA}, given $\epsilon>0$, one has (see the proof of Theorem 3.4 in \cite{LY2})
$$
\int_{M}\left[-2f^{2}+4\frac{{\rm Sm}({\rm Sic},{\rm Sic})}{S+2}
-fS\right]dV_{t}
$$
$$
\leq \ \ \int_{M}\left[-\left(2-\frac{4}{\epsilon^{2}}\right)
f^{2}+(12\epsilon^{2}+1)f+\epsilon^{2}
\left(|{\rm Sm}|^{2}-4|{\rm Sic}|^{2}
+S^{2}\right)\right]dV_{t}
$$
where $f=|{\rm Sic}|^{2}/(S+2)$. In this case, $C_{0}=1$, so that {\color{blue}{(\ref{3.37})}} becomes
$$
\frac{d}{dt}\int_{M}f\!\ dV_{t} \ \ \leq \ \
\int_{M}\left[-2f^{2}+4\frac{{\rm Sm}({\rm Sic},{\rm Sic})}{S+2}
-fS\right]dV_{t}
$$
$$
+ \ \int_{M}4|\alpha_{1}|
\left[|\beta_{1}|
+A_{1}(|\beta_{2}|+|\beta_{1}||\nabla^{2}u|)
+4(1-{\rm sgn}(\alpha_{1},0))|\nabla^{2}u|^{2}\right]f\!\ dV_{t}
$$
$$
+ \ 4|\alpha_{1}|(1+A^{2}_{1}|\beta_{1}|)
+4|\alpha_{1}|A_{1}|\beta_{2}|e^{2t}{\rm Vol}_{0}.
$$
$$
\leq \ \ \int_{M}\left[-\left(2-\frac{12}{\epsilon^{2}}
\right)f^{2}+\left(12\epsilon^{2}
+1+4|\alpha_{1}\beta_{1}|+4|\alpha_{1}\beta_{2}|A_{1}\right)f\right]dV_{t}
$$
$$
+ \ \epsilon^{2}\int_{M}\left(|{\rm Sm}|^{2}-4|{\rm Sic}|^{2}
+S^{2}\right)dV_{t}+4|\alpha_{1}|(1+A^{2}_{1}|\beta_{1}|)A_{2}
+4|\alpha_{1}|A_{1}|\beta_{2}|e^{2t}{\rm Vol}_{0}
$$
$$
+ \ \epsilon^{2}
\int_{M}\left[(\alpha_{1}\beta_{1})^{2}A^{2}_{1}
|\nabla^{2}u|^{2}+256|\alpha_{1}|^{2}(1-{\rm sgn}(\alpha_{1},0))|\nabla^{2}u|^{4}\right]dV_{t}.
$$
On the other hand, the identity {\color{blue}{(\ref{3.41})}} implies (see (3.13) in \cite{LY2})
\begin{equation*}
\int_{M}\left[|{\rm Sm}|^{2}
-4|{\rm Sic}|^{2}+S^{2}\right]dV_{t}
\leq32\pi^{5}\chi(M)
+13\alpha^{2}_{1}\int_{M}|\nabla u|^{4}dV_{t}
+\frac{243}{26}\int_{M}f\!\ dV_{t}.
\end{equation*}
Therefore, taking $\epsilon^{2}=12$ and using {\color{blue}{(\ref{3.28})}},
\begin{eqnarray}
\frac{d}{dt}\int_{M}f\!\ dV_{t}&\leq&\int_{M}
\left(-f^{2}+\widetilde{C}_{1}f\right)dV_{t}+
\widetilde{C}_{2}A_{2}+\widetilde{C}_{3}+\widetilde{C}_{4} e^{\widetilde{C}_{5}t}\nonumber\\
&&+ \ \widetilde{C}_{6}[1-{\rm sgn}(\alpha_{1},0)]\int_{M}|\nabla^{2}u|^{4}dV_{t},\label{3.51}
\end{eqnarray}
where
\begin{eqnarray*}
\widetilde{C}_{1}&=&145+4|\alpha_{1}\beta_{1}|+4|\alpha_{1}\beta_{2}|A_{1}
+\frac{1458}{13} \ \ = \ \ \widetilde{C}_{1}(\alpha_{1},\beta_{1},\beta_{2},A_{1}),\\
\widetilde{C}_{2}&=&4|\alpha_{1}|(1+A^{2}_{1}|\beta_{1}|)
+12(\alpha_{1}\beta_{1})^{2}A^{2}_{1} \ \ = \ \ \widetilde{C}_{2}(\alpha_{1},\beta_{1},A_{1}),\\
\widetilde{C}_{3}&=&385\pi^{5}\chi(M),\\
\widetilde{C}_{4}&=&\left(156\alpha^{2}_{1}A_{1}
+4|\alpha_{1}|A_{1}|\beta_{2}|\right){\rm Vol}_{0} \ \ = \ \
\widetilde{C}_{4}(\alpha_{1},\beta_{2},A_{1},{\rm Vol}_{0}),\\
\widetilde{C}_{5}&=&2|\beta_{2}|+2|\alpha_{1}-2\beta^{2}_{1}|A_{1}+2 \ \ = \ \ \widetilde{C}_{5}(\alpha_{1},\beta_{1},\beta_{2},A_{1}),\\
\widetilde{C}_{6}&=&3072|\alpha_{1}|^{2} \ \ = \ \ \widetilde{C}_{6}(\alpha_{1}).
\end{eqnarray*}

\begin{theorem}\label{t3.7} Suppose that $(g(t),u(t))_{t\in[0,T)}$ satisfies {\bf BA}. Then
\begin{eqnarray}
\int_{M}|{\rm Sic}_{g(s)}|^{2}_{g(s)}dV_{g(s)}
&\leq&\widetilde{C}(1+s)e^{\widetilde{C}s}\nonumber\\
&&+ \ \widetilde{C}[1-{\rm sgn}(\alpha_{1},0)]e^{\widetilde{C}s}
\int^{s}_{0}\int_{M}|\nabla^{2}_{g(t)}u(t)|^{4}dV_{g(t)}dt,\label{3.52}\\
\int_{M}|{\rm Sm}_{g(s)}|^{2}_{g(s)}dV_{g(s)}
&\leq&\widetilde{C}(1+s)e^{\widetilde{C}s}\nonumber\\
&&+ \ \widetilde{C}[1-{\rm sgn}(\alpha_{1},0)]e^{\widetilde{C}s}
\int^{s}_{0}\int_{M}|\nabla^{2}_{g(t)}u(t)|^{4}dV_{g(t)}dt,\label{3.53}\\
\int^{s}_{0}\int_{M}|{\rm Sic}_{g(t)}|^{4}_{g(t)}
dV_{g(t)}dt&\leq&\widetilde{C}(1+s)e^{\widetilde{C}s}\nonumber\\
&&+ \ \widetilde{C}[1-{\rm sgn}(\alpha_{1},0)]e^{\widetilde{C}s}
\int^{s}_{0}\int_{M}|\nabla^{2}_{g(t)}u(t)|^{4}dV_{g(t)}dt,\label{3.54}\\
\int^{T}_{s}\int_{M}|{\rm Sic}_{g(t)}|^{p}_{g(t)}
dV_{g(t)}dt&\leq&\left[(T-s)e^{T}{\rm Vol}_{0}\right]^{\frac{4-p}{4}}
e^{\widetilde{C}T}\bigg[\widetilde{C}(1+T)\nonumber\\
&&+ \ \widetilde{C}[1-{\rm sgn}(\alpha_{1},0)]
\int^{T}_{0}\int_{M}|\nabla^{2}_{g(t)}u(t)|^{4}dV_{g(t)}dt\bigg]^{\frac{p}{4}}
\end{eqnarray}
for any $s\in[0,T)$ and $0<p<4$. Here $\widetilde{C}$ is a uniform constant which depends only on $g_{0},u_{0}, \alpha_{1},\beta_{1},\beta_{2}, A_{1}, \chi(M)$.
\end{theorem}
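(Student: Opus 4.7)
The plan is to integrate the differential inequality (\ref{3.51}) derived just above the theorem, apply a Gr\"onwall-type argument in integral form, and then invoke the Gauss--Bonnet--Chern identity (\ref{3.41}) together with H\"older's inequality. Set $f := |{\rm Sic}|^{2}/(S+2)$, $F(t) := \int_{M} f\, dV_{t}$, and $G(t) := \int_{M} f^{2}\, dV_{t}$. Under \textbf{BA} one has $1 \leq S+2 \leq 3$, so $\frac{1}{3}|{\rm Sic}|^{2} \leq f \leq |{\rm Sic}|^{2}$ and $\frac{1}{9}|{\rm Sic}|^{4} \leq f^{2}$; hence a bound on $F(s)$ and on $\int_{0}^{s} G(t)\, dt$ immediately translates into the bounds claimed in (\ref{3.52}) and (\ref{3.54}) respectively.

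Multiplying (\ref{3.51}) by $e^{-\widetilde{C}_{1} t}$, integrating from $0$ to $s$, and using the uniform estimate (\ref{3.29}) to control $\int_{0}^{s} A_{2}(t)\, dt$ by a constant times $e^{\widetilde{C}_{5} s}$, I would obtain
\begin{equation*}
F(s) + \int_{0}^{s} G(t)\, dt \leq e^{\widetilde{C}_{1} s}\left[F(0) + C(1+s)e^{Cs} + \widetilde{C}_{6}[1-{\rm sgn}(\alpha_{1},0)]\int_{0}^{s}\int_{M}|\nabla^{2}u|^{4}\, dV_{t}\, dt\right]
\end{equation*}
for a uniform constant $C=C(g_{0}, u_{0},\alpha_{1},\beta_{1},\beta_{2}, A_{1}, \chi(M))$. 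Combined with the pointwise comparisons above, this yields simultaneously (\ref{3.52}) and (\ref{3.54}).

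For (\ref{3.53}), I apply the Gauss--Bonnet--Chern identity (\ref{3.41}). Under \textbf{BA}, $|S|\leq 1$, $|\nabla u|^{2} \leq A_{1}$, and ${\rm Vol}_{t} \leq e^{t}{\rm Vol}_{0}$ by (\ref{2.5}); so each term on the right-hand side of (\ref{3.41}) is controlled either by a topological constant, by the volume, or, via Cauchy--Schwarz, by a constant multiple of $\int_{M}|{\rm Sic}|^{2}\, dV_{t}$. This reduces (\ref{3.53}) to (\ref{3.52}). Finally, for the mixed space--time $L^{p}$-bound with $0<p<4$, H\"older's inequality gives
\begin{equation*}
\int_{s}^{T}\int_{M}|{\rm Sic}|^{p}\, dV_{t}\, dt \leq \Bigl(\int_{s}^{T}{\rm Vol}_{t}\, dt\Bigr)^{(4-p)/4}\Bigl(\int_{s}^{T}\int_{M}|{\rm Sic}|^{4}\, dV_{t}\, dt\Bigr)^{p/4},
\end{equation*}
and combining $\int_{s}^{T}{\rm Vol}_{t}\, dt \leq (T-s)e^{T}{\rm Vol}_{0}$ with the $L^{4}$-bound (\ref{3.54}) applied at time $T$ delivers the final estimate.

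The main obstacle is that when $\alpha_{1}<0$, the term $|\nabla^{2}u|^{4}$ appearing in (\ref{3.51}) cannot be absorbed into the dissipative $-f^{2}$ nor dominated by any a priori controlled quantity in \textbf{BA}; it must be carried through as an additional space--time integral on the right-hand side. This is exactly why the factor $[1-{\rm sgn}(\alpha_{1},0)]$ appears in the theorem, vanishing in the nonnegative case to give a genuinely a priori bound.
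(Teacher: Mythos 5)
Your proposal follows the paper's proof step for step: multiplying \eqref{3.51} by $e^{-\widetilde{C}_{1}t}$, integrating, controlling $\int_{0}^{s}A_{2}\,dt$ via \eqref{3.29}, using $1\leq S+2\leq3$ under \textbf{BA} to translate $f$-bounds into $|{\rm Sic}|$-bounds, and then deriving \eqref{3.53} from the Gauss--Bonnet--Chern identity \eqref{3.41} and the final $L^{p}$-estimate from H\"older together with ${\rm Vol}_{t}\leq e^{t}{\rm Vol}_{0}$. No gaps; this is essentially the argument the paper gives.
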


\begin{proof} From {\color{blue}{(\ref{3.51})}} and {\color{blue}{(\ref{3.29})}}, one has
$$
e^{-\widetilde{C}_{1}s}\int_{M}f\!\ dV_{s}
+e^{-\widetilde{C}_{1}s}\int^{s}_{0}\int_{M}
f^{2}dV_{t}dt \ \ \leq \ \ \int_{M}f\!\ dV_{t}\bigg|_{t=0}
$$
$$
+ \ \widetilde{C}_{2}e^{\widetilde{C}_{5}s}+\frac{\widetilde{C}_{3}}{\widetilde{C}_{1}}
\left(1-e^{-\widetilde{C}_{1}s}\right)
+\widetilde{C}_{4}\cdot\left\{\begin{array}{cc}
s, & \widetilde{C}_{1}=\widetilde{C}_{5},\\
e^{(\widetilde{C}_{1}+\widetilde{C}_{5})s}/|\widetilde{C}_{1}
+\widetilde{C}_{5}|, & \widetilde{C}_{1}\neq\widetilde{C}_{5},
\end{array}\right.
$$
$$
+ \ \widetilde{C}_{6}[1-{\rm sgn}(\alpha_{1},0)]
\int^{s}_{0}\int_{M}e^{-\widetilde{C}_{1}t}
|\nabla^{2}u|^{4}dV_{t}dt
$$
and hence
$$
\int_{M}f\!\ dV_{s}+\int^{s}_{0}\int_{M}
f^{2}dV_{t}dt \ \ \leq \ \
\widetilde{C}_{7}(1+s)e^{(\widetilde{C}_{1}+\widetilde{C}_{5})s}
$$
$$
+ \ \widetilde{C}_{6}[1-{\rm sgn}(\alpha_{1},0)]e^{\widetilde{C}_{1}s}
\int^{s}_{0}\int_{M}e^{-\widetilde{C}_{1}t}|\nabla^{2}u|^{4}dV_{t}dt.
$$
for some uniform constant $\widetilde{C}_{7}$ which depends only on $g_{0},u_{0}, \alpha_{1},\beta_{1},\beta_{2}, A_{1}, \chi(M)$.

Because $|S\leq 1$, we have $\frac{1}{3}|{\rm Sic}|^{2}\leq f\leq |{\rm Sic}|^{2}$. The above estimate immediately implies {\color{blue}{(\ref{3.52})}} and {\color{blue}{(\ref{3.54})}}. The second estimate follows from {\color{blue}{(\ref{3.41})}} as the argument in \cite{LY2} (see the proof of Theorem 3.5). The last estimate follows from ${\rm Vol}_{t}\leq e^{t}{\rm Vol}_{0}$ and the H\"older inequality.
\end{proof}

According to our definition of ${\rm Sic}, {\rm Sm}$ in {\color{blue}{(\ref{3.10})}}, we see that the boundedness of ${\rm Sic}, {\rm Sm}$ is equivalent to the boundedness of ${\rm Sic}, {\rm Rm}$
for any regular Ricci flow.

\begin{corollary}\label{c3.8} Suppose that $(g(t),u(t))_{t\in[0,T)}$ satisfies {\bf BA}. Then
\begin{eqnarray}
\int_{M}|{\rm Ric}_{g(s)}|^{2}_{g(s)}dV_{g(s)}
&\leq&\widetilde{C}(1+s)e^{\widetilde{C}s}\nonumber\\
&&+ \ \widetilde{C}[1-{\rm sgn}(\alpha_{1},0)]e^{\widetilde{C}s}
\int^{s}_{0}\int_{M}|\nabla^{2}_{g(t)}u(t)|^{4}dV_{g(t)}dt,\label{3.56}\\
\int_{M}|{\rm Rm}_{g(s)}|^{2}_{g(s)}dV_{g(s)}
&\leq&\widetilde{C}(1+s)e^{\widetilde{C}s}\nonumber\\
&&+ \ \widetilde{C}[1-{\rm sgn}(\alpha_{1},0)]e^{\widetilde{C}s}
\int^{s}_{0}\int_{M}|\nabla^{2}_{g(t)}u(t)|^{4}dV_{g(t)}dt,\label{3.57}
\end{eqnarray}
for any $s\in[0,T)$. Here $\widetilde{C}$ is a uniform constant which depends only on $g_{0},u_{0}, \alpha_{1},\beta_{1},\beta_{2}$, $A_{1}, \chi(M)$.
\end{corollary}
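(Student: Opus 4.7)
The plan is to reduce \textbf{Corollary 3.8} directly to \textbf{Theorem 3.7} by exploiting the algebraic relation between the standard curvatures $({\rm Ric}, {\rm Rm})$ and their ``Sic/Sm'' counterparts defined in (\ref{3.8}) and (\ref{3.10}). From those definitions one has the pointwise identities
\begin{equation*}
{\rm Ric}_{g(t)} = {\rm Sic}_{g(t)} + \alpha_{1}\!\ \nabla_{g(t)}u(t)\otimes\nabla_{g(t)}u(t),
\end{equation*}
and an analogous expression for ${\rm Rm}_{g(t)}$ in terms of ${\rm Sm}_{g(t)}$ and $\nabla_{g(t)}u(t)\otimes\nabla_{g(t)}u(t)$. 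Squaring and using the elementary inequality $|A+B|^{2}\leq 2|A|^{2}+2|B|^{2}$, together with assumption (d) of \textbf{BA} which gives $|\nabla_{g(t)}u(t)|^{2}_{g(t)}\leq A_{1}$, yields
\begin{equation*}
|{\rm Ric}_{g(t)}|^{2}_{g(t)}\leq 2|{\rm Sic}_{g(t)}|^{2}_{g(t)}+2\alpha_{1}^{2}A_{1}^{2},
\qquad
|{\rm Rm}_{g(t)}|^{2}_{g(t)}\leq 2|{\rm Sm}_{g(t)}|^{2}_{g(t)}+C\alpha_{1}^{2}A_{1}^{2},
\end{equation*}
for some uniform $C=C(n)$.

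Next I would integrate these inequalities over $M$ with respect to $dV_{g(s)}$ and control the constant-order term by the volume bound ${\rm Vol}_{g(s)}\leq e^{Cs}{\rm Vol}_{0}$ coming from (\ref{2.5}) (which holds here since condition (c) of \textbf{BA} implies $|S_{g(t)}|\leq 1$, so $\partial_{t}dV_{t}=-S_{g(t)}dV_{t}$ is controlled). This gives
\begin{equation*}
\int_{M}|{\rm Ric}_{g(s)}|^{2}_{g(s)}dV_{g(s)}
\leq 2\int_{M}|{\rm Sic}_{g(s)}|^{2}_{g(s)}dV_{g(s)}+2\alpha_{1}^{2}A_{1}^{2}\!\ e^{Cs}{\rm Vol}_{0},
\end{equation*}
and similarly for $|{\rm Rm}|$ in terms of $|{\rm Sm}|$.

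Finally I would invoke (\ref{3.52}) and (\ref{3.53}) from \textbf{Theorem 3.7} to bound the $L^{2}$-norms of ${\rm Sic}_{g(s)}$ and ${\rm Sm}_{g(s)}$ by $\widetilde{C}(1+s)e^{\widetilde{C}s}+\widetilde{C}[1-{\rm sgn}(\alpha_{1},0)]e^{\widetilde{C}s}\int_{0}^{s}\int_{M}|\nabla^{2}u|^{4}dV_{t}dt$. Absorbing the harmless extra term $2\alpha_{1}^{2}A_{1}^{2}e^{Cs}{\rm Vol}_{0}$ into the leading $\widetilde{C}(1+s)e^{\widetilde{C}s}$ (by enlarging $\widetilde{C}$ so that it now also depends on $\alpha_{1}, A_{1}, {\rm Vol}_{0}$, which are all admissible dependencies) produces exactly (\ref{3.56}) and (\ref{3.57}).

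There is no genuine obstacle here: the corollary is essentially a reformulation of \textbf{Theorem 3.7} in terms of the usual curvatures, and the only bookkeeping required is (i) the pointwise comparison between ${\rm Ric}/{\rm Rm}$ and ${\rm Sic}/{\rm Sm}$, which costs a controlled multiple of $|\nabla u|^{4}$, and (ii) the volume bound needed to integrate this pointwise cost against $dV_{g(s)}$. Both are immediate under \textbf{BA}, so the argument reduces to a few lines.
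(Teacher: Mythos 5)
Your argument is correct and is exactly the (implicit) reduction the paper intends: the remark preceding Corollary~\ref{c3.8} simply notes that the definitions~(\ref{3.8}) and~(\ref{3.10}) make bounds on ${\rm Sic},{\rm Sm}$ equivalent to bounds on ${\rm Ric},{\rm Rm}$, and you have spelled out the pointwise comparison costing a multiple of $|\nabla u|^{4}\leq A_{1}^{2}$ together with the volume bound ${\rm Vol}_{t}\leq e^{t}{\rm Vol}_{0}$ (which under {\bf BA}(c) follows from $\partial_{t}dV_{t}=-S\,dV_{t}$, the same bound used in the proof of Theorem~\ref{t3.7}) and Theorem~\ref{t3.7} itself. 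The dependency of $\widetilde{C}$ on ${\rm Vol}_{0}$ is already subsumed in its dependence on $g_{0}$, so everything is consistent with the stated constant.
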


\section{Bounded $L^{2}$-curvature conjecture for the Einstein scalar field equations}
\label{section4}

From {\color{red}{Theorem \ref{t2.7}}}, we can get an upper bound for the $L^{2}$-norm of ${\rm Rm}_{g(t)}$. Motivated by this estimate, we in this section impose a conjecture for the Einstein scalar field equations, which is analogous to the corresponding conjecture for the Einstein vacuum equations proved by Klainerman, Rodnianski, and Szeftel \cite{KRS2015, S1, S2, S3, S4, S5}.

\subsection{Initial value problem}\label{subsection4.1}

In this section we recall some basic results for Einstein scalar field equations from \cite{R2009}. Consider Einstein's equation
\begin{equation}
\boldsymbol{R}_{\alpha\beta}-\frac{1}{2}\boldsymbol{R}{\bf g}_{\alpha\beta}=\boldsymbol{T}_{\alpha\beta}\label{4.1}
\end{equation}
where $\boldsymbol{R}_{\alpha\beta}$ and $\boldsymbol{R}$denote, respectively, the Ricci curvature tensor and scalar curvature of a four dimensional Lorentzian space-time $({\bf M},
{\bf g})$. If the energy-momentum tensor $\boldsymbol{T}_{\alpha\beta}$ is chosen as
\begin{equation}
\boldsymbol{T}_{\alpha\beta}
=2\partial_{\alpha}u\partial_{\beta}u
-\frac{1}{2}|{\bf D}u|^{2}{\bf g},\label{4.2}
\end{equation}
where ${\bf D}$ is the Levi-Civita connection of ${\bf g}$ and $u$ is a smooth function on ${\bf M}$. In this case, the Einstein equation {\color{blue}{(\ref{4.1})}} can be written as
\begin{equation*}
\boldsymbol{R}_{\alpha\beta}-2\partial_{\alpha}u
\partial_{\beta}u=0.
\end{equation*}
As discussed in \cite{R2009}, we should impose a matter equation
\begin{equation*}
\boldsymbol{\Delta}u=0
\end{equation*}
for $u$, where $\boldsymbol{\Delta}:={\bf D}^{\alpha}{\bf D}_{\alpha}$. Hence we should consider a system of PDEs
\begin{equation}
\boldsymbol{R}_{\alpha\beta}
-2\partial_{\alpha}u\partial_{\beta}u=0, \ \ \
\boldsymbol{\Delta}u=0,\label{4.3}
\end{equation}
which is called the Einstein scalar field equation or the Einstein-Klein-Gordon
equation.
\\

An {\it initial data set} $(\Sigma,g,k,u_{0},u_{1})$ for {\color{blue}{(\ref{4.3})}} consists of a three dimensional manifold $\Sigma$, a Riemannian metric $g$, a symmetric $2$-tensor $k$, together with two functions $u_{0}$ and $u_{1}$ on $\Sigma$, all assumed to be smooth, verifying the constraint equations,
\begin{eqnarray}
\nabla^{j}k_{ij}-\nabla_{i}{\rm tr}\!\ k&=&u_{1}\nabla_{i}u_{0},\label{4.4}\\
R-|k|^{2}+({\rm tr}\!\ k)^{2}&=&u^{2}_{1}+|\nabla u_{0}|^{2},\label{4.5}
\end{eqnarray}
where $\nabla$ is the Levi-Civita connection of $g$.
\\

Given an initial data set $(\Sigma,g,k,u_{0},u_{1})$, the {\it Cauchy problem}
consists in finding a four-dimensional Lorentzian manifold $({\bf M},
{\bf g})$ and a smooth function $u$ on ${\bf M}$ satisfying {\color{blue}{(\ref{4.3})}}, and also an embedding $\iota: \Sigma\to{\bf M}$ such that
\begin{equation}
\iota^{\ast}{\bf g}=g, \ \ \ \iota^{\ast}u=u_{0}, \ \ \
\iota^{\ast}K=k,\ \ \ \iota^{\ast}(\boldsymbol{N}u)=u_{1},\label{4.6}
\end{equation}
where $\boldsymbol{N}$ is the future-directed unit normal to $\iota(\Sigma)$ and $K$ is the second fundamental form of $\iota(\Sigma)$.

The local existence and uniqueness result for globally hyperbolic developments can be found in \cite{R2009}, Theorem 14.2. For stability and
instability for Einstein's scalar field equation, we refer to
\cite{D2003, DW2018, LM2016, LM2017, V2018, Wang2016, Wang, WJH2018}.

\subsection{Bounded $L^{2}$-curvature conjecture for Einstein's equations}
\label{subsection4.2}

For Einstein's equations (i.e., $u=0$ in {\color{blue}{(\ref{4.3})}}, and the initial data is denoted by $(\Sigma,g,k)$), Klainerman \cite{K1999} proposed the following conjecture:
\begin{quote}
{\it The Einstein vacuum equations admits local Cauchy developments for initial data sets $(\Sigma,g,k)$ with locally finite $L^{2}$-curvature and locally finite $L^{2}$-norm of the first covariant derivatives of $k$.}
\end{quote}

This conjecture was recently solved by Klainerman, Rodnianski and
Szeftel \cite{KRS2015}. To give a precise result, we assume that the space-time $({\bf M},{\bf g})$ to be foliated by the level surfaces $\Sigma_{t}$ $=\mathfrak{t}^{-1}(t)$ of a time function $\mathfrak{t}$. Let $\boldsymbol{T}$ denote the unit normal to $\Sigma_{t}$, and let $k$ the second fundamental form of $\Sigma_{t}$, i.e., $k_{ij}:=-{\bf g}({\bf D}_{i}\boldsymbol{T},e_{j})$, where $(e_{i})_{1\leq i\leq 3}$ denote an arbitrary frame on $\Sigma_{t}$. We also assume that the $\Sigma_{t}$-foliation is maximal, i.e., we have
\begin{equation*}
{\rm tr}_{g}k=0
\end{equation*}
where $g=g(t)$ is the induced metric on $\Sigma_{t}$.

\begin{theorem}\label{t4.1}{\bf (Klainerman-Rodnianski-Szeftel, 2015)} Let $({\bf M},{\bf g})$ an asymptotically flat solution to the Einstein vacuum equations together with a maximal foliation by space-like hyper-surfaces $\Sigma_{t}$ defined as level hyper-surfaces of a time function $\mathfrak{t}$. Assume that the initial slice $(\Sigma,g,k)$ is such that the Ricci curvature ${\rm Ric}\in L^{2}(\Sigma)$, $\nabla k\in L^{2}(\Sigma)$, and $\Sigma$ has a strictly positive volume radius on scales $\leq 1$, i.e.,
\begin{equation}
r_{{\rm vol}}(\Sigma,1):=\inf_{p\in\Sigma}
\inf_{r\in(0,1]}\frac{{\rm vol}_{g}(B_{g}(p,r))}{r^{3}}>0.\label{4.7}
\end{equation}
Then there exists a time
\begin{equation*}
T:=T\left(||{\rm Ric}||_{L^{2}(\Sigma)},||\nabla k||_{L^{2}(
\Sigma)}, r_{{\rm vol}}(\Sigma,1)\right)>0
\end{equation*}
and a constant
\begin{equation*}
C:=C\left(||{\rm Ric}||_{L^{2}(\Sigma)},||\nabla k||_{L^{2}(
\Sigma)},r_{{\rm vol}}(\Sigma,1)\right)>0
\end{equation*}
such that the following control
\begin{equation*}
||{\bf Rm}||_{L^{\infty}_{[0,T]}L^{2}(\Sigma_{t})}
\leq C, \ \ \ ||\nabla k||_{L^{\infty}_{[0,T]}L^{2}(\Sigma_{t})}
\leq C, \ \ \ \inf_{t\in[0,T]}r_{{\rm vol}}(\Sigma_{t},1)\geq\frac{1}{C},
\end{equation*}
holds on $t\in[0,T]$.
\end{theorem}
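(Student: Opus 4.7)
The plan is to follow the broad architecture of the Klainerman--Rodnianski--Szeftel program and reduce the large-data, low-regularity existence problem to a bootstrap based on \emph{causal geometry} plus \emph{Strichartz/bilinear estimates on rough backgrounds}. First I would exploit the maximal foliation $\mathrm{tr}_g k=0$ together with constant-mean-curvature time to rewrite the Einstein vacuum equations as an elliptic-hyperbolic system: the lapse $n$ and the shift $\beta$ satisfy elliptic equations (the lapse equation $\Delta n=n|k|^2$ and a Hodge-type system for $\beta$), while the induced metric $g(t)$ and the second fundamental form $k(t)$ satisfy an evolution system of the form $\partial_t g=-2nk+\mathcal{L}_\beta g$, $\partial_t k=\cdots$. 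In this gauge the Ricci components of ${\bf g}$ become bilinear expressions in $k$ and $\nabla k$ modulo elliptic error terms controlled by $\|{\rm Ric}\|_{L^2(\Sigma)}+\|\nabla k\|_{L^2(\Sigma)}$, together with the volume radius hypothesis \eqref{4.7} which gives the needed Sobolev embeddings uniformly.

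Next I would set up a continuity/bootstrap argument: assume on $[0,T]$ the bounds $\|{\bf Rm}\|_{L^\infty_t L^2(\Sigma_t)}\leq 2C_0$, $\|\nabla k\|_{L^\infty_t L^2(\Sigma_t)}\leq 2C_0$, and a lower bound $\inf_t r_{\mathrm{vol}}(\Sigma_t,1)\geq (2C_0)^{-1}$, and try to improve these constants for small $T$. The crucial input is a \emph{Strichartz estimate} of the schematic form
\begin{equation*}
\|\phi\|_{L^2_t L^\infty_x([0,T]\times\Sigma_t)}\lesssim \|\phi\|_{H^{2+\epsilon}(\Sigma_0)}+\|\Box_{\bf g}\phi\|_{L^1_t L^2_x}
\end{equation*}
valid for solutions $\phi$ of the covariant wave equation on the rough metric ${\bf g}$, but at a regularity level \emph{below} what the classical Kapitanskii/Smith--Tataru theory requires. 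Applying this estimate to the components of ${\bf Rm}$, which satisfy a Bianchi-type wave equation $\Box_{\bf g}{\bf Rm}={\bf Rm}\ast{\bf Rm}$, closes the bootstrap for the curvature once one has reabsorbed the nonlinear term using the $L^\infty_t L^2_x$ bound.

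The heart of the proof, and the place where I expect the main obstacle, is the construction of an accurate \emph{parametrix} for $\Box_{\bf g}$ when ${\bf g}$ has only $L^\infty_t L^2_x$ curvature. Classical Fourier-integral parametrices require Lipschitz (or better) regularity for the Hamiltonian flow defining null geodesics; at $L^2$-curvature this flow is barely defined. The strategy is to build a parametrix using a foliation by null hypersurfaces $\{u=\mathrm{const}\}$ generated by an eikonal function solving $\mathbf{g}^{\alpha\beta}\partial_\alpha u\partial_\beta u=0$, and to prove sharp estimates on the geometry of these null cones: the null second fundamental form, the torsion, and the conjugate radius. These estimates in turn require trace and bilinear inequalities on $2$-surfaces of low regularity, which is the technical core of the companion papers \cite{S1,S2,S3,S4,S5}.

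Once the parametrix is built, I would combine it with a $TT^*$ argument and angular decomposition to obtain the required Strichartz estimate with a loss of only $\epsilon$ derivatives (compared to flat Minkowski), then insert it into the bootstrap. The constraint equations \eqref{4.4}--\eqref{4.5} (with $u_1=u_0=0$) propagate along the flow, so one only needs to check that the control on $r_{\mathrm{vol}}(\Sigma_t,1)$ persists, which follows from the volume comparison and the curvature control via a Cheeger--Gromov type argument. The time $T$ and the constant $C$ then depend only on $\|{\rm Ric}\|_{L^2(\Sigma)}$, $\|\nabla k\|_{L^2(\Sigma)}$, and $r_{\mathrm{vol}}(\Sigma,1)$, as claimed.
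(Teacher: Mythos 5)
The paper does not prove Theorem~\ref{t4.1}; it is cited verbatim from Klainerman--Rodnianski--Szeftel \cite{KRS2015, S1, S2, S3, S4, S5}, so there is no internal argument to compare your sketch against. Read as a high-level summary of the cited proof, your outline gets the broad architecture right: maximal foliation with the elliptic-hyperbolic splitting of the Einstein equations, a continuity/bootstrap scheme at the $L^{\infty}_{t}L^{2}_{x}$ level, and a parametrix for $\Box_{\bf g}$ built from an eikonal function whose null cone geometry must be controlled at the same rough regularity (this last being the content of the companion papers \cite{S1}--\cite{S5}).

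There is, however, a genuine gap in the step where you close the bootstrap. Your Strichartz estimate
$\|\phi\|_{L^{2}_{t}L^{\infty}_{x}}\lesssim\|\phi\|_{H^{2+\epsilon}(\Sigma_{0})}+\|\Box_{\bf g}\phi\|_{L^{1}_{t}L^{2}_{x}}$
requires $H^{2+\epsilon}$ control of the data, but the bootstrap hypothesis only gives ${\bf Rm}\in L^{\infty}_{t}L^{2}_{x}$, two full derivatives short. Applying that inequality \emph{directly to the components of ${\bf Rm}$}, as you propose, is therefore not even well-posed: the right-hand side is infinite in the regime the theorem addresses. Klainerman--Rodnianski--Szeftel do not close the bootstrap by a straight Strichartz estimate on curvature. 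Instead they exploit the \emph{null structure} of the Bianchi nonlinearity ${\bf Rm}\ast{\bf Rm}$: the estimate that is actually proved and used is a Strichartz/flux-type inequality for \emph{scalar} wave solutions with a carefully tracked fractional gain, fed into a hierarchy of sharp trace estimates on null hypersurfaces, bilinear estimates for null-structured quadratic combinations, and $L^{4}(\Sigma_{t})$ estimates for specific curvature components. The fact that the quadratic term lands in a favorable null form is what allows the argument to close at $L^{2}$-curvature regularity; treating ${\bf Rm}\ast{\bf Rm}$ as a generic bilinear error, as your sketch implicitly does, would not.
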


\subsection{Bounded $L^{2}$-curvature conjecture for the
Einstein scalar field equations}\label{subsection4.3}

Motivated by {\color{red}{Theorem \ref{t2.7}}} and {\color{red}{Theorem \ref{t4.1}}}, we propose the following

\begin{conjecture}\label{c4.2}The Einstein scalar field equations admit local Cauchy developments for initial data sets $(\Sigma, g, k, u_{0},u_{1})$ with locally finite $L^{2}$-curvature, locally finite $L^{2}$-norm of the first covariant derivatives of $k$, locally finite $L^{2}$-norm of the covariant derivatives (up to second order) of $u_{0}$, and locally finite $L^{2}$-norm of the covariant derivatives (up to first order) of $u_{1}$.
\end{conjecture}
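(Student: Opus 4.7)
The plan is to extend the approach of Klainerman--Rodnianski--Szeftel \cite{KRS2015} by coupling their geometric reduction of the Einstein vacuum equations with a hyperbolic treatment of the scalar field $u$. First, I would impose a maximal foliation $\Sigma_{t}$ of the unknown space-time $({\bf M},{\bf g})$ as in Theorem \ref{t4.1}, so that ${\rm tr}_{g}k=0$, and reduce the Einstein scalar field system (\ref{4.3}) to a coupled system for $(g,k,u,\boldsymbol{N}u)$ on $\Sigma_{t}$. The constraint equations (\ref{4.4})--(\ref{4.5}) now carry $u_{0}$ and $u_{1}$ on the right-hand side, the evolution equations for $\partial_{t}g$ and $\partial_{t}k$ acquire source terms proportional to $\partial u\otimes\partial u$, and $u$ itself satisfies a linear wave equation on the evolving background. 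This mirrors the structure of the Ricci-harmonic flow relative to the Ricci flow, which is precisely why Theorem \ref{t2.7} serves as motivation: an extra wave-type quantity must be propagated alongside the geometry.

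Second, I would target the analogue of the triple bound of \cite{KRS2015}, namely $\|{\bf Rm}\|_{L^{\infty}_{t}L^{2}_{x}}\leq C$, $\|\nabla k\|_{L^{\infty}_{t}L^{2}_{x}}\leq C$, and $r_{{\rm vol}}(\Sigma_{t},1)\geq 1/C$, supplemented by
\begin{equation*}
\|\nabla^{2}u\|_{L^{\infty}_{t}L^{2}_{x}}+\|\nabla(\boldsymbol{N}u)\|_{L^{\infty}_{t}L^{2}_{x}}\leq C.
\end{equation*}
The regularity thresholds on $(u_{0},u_{1})$ imposed in Conjecture \ref{c4.2} are exactly those which keep the new source terms compatible with the $L^{2}$-curvature norm: the derivatives of $u_{0}$ up to second order feed into the reduced Ricci tensor, while the first derivatives of $u_{1}$ match the regularity of $\nabla k$ in the momentum constraint. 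Following the strategy of \cite{S1, S2, S3, S4, S5}, I would work with geodesic foliations of null cones, derive bilinear and trilinear estimates for the null decomposition of ${\bf Rm}$, and absorb the new cross terms such as $\partial u\cdot{\bf Rm}$ via a Strichartz estimate for $\boldsymbol{\Delta}u=0$ on a rough background.

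The main obstacle will be closing the bootstrap in the presence of the quadratic source $\partial u\otimes\partial u$ in the reduced Ricci tensor. The parametrix construction of \cite{S1, S2, S3, S4, S5} for the geometric wave operator exploits the sharp null structure of the vacuum equations, and coupling to $u$ introduces inhomogeneous contributions of borderline regularity. I would therefore first treat $u$ as a solution of a wave equation on a fixed Lorentzian background whose curvature is only in $L^{2}$, and establish a Strichartz-type estimate of the form $\|\partial u\|_{L^{2}_{t}L^{\infty}_{x}}\lesssim\|\partial u(0)\|_{H^{1}(\Sigma)}$ in that auxiliary setting. Only then would I couple this back to the geometry through a continuity argument, iterating the improvement of the bootstrap assumptions on the curvature and on $\nabla^{2}u$ simultaneously. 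An additional nontrivial step is to rule out degeneration of the volume radius $r_{{\rm vol}}(\Sigma_{t},1)$, which for the scalar-field system should follow from a Cheeger--Gromov type compactness argument enhanced with the extra $L^{2}$ control on the Hessian of $u$ that this scheme produces.
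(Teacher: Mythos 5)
The statement you are addressing is Conjecture~\ref{c4.2}, and the paper offers no proof of it: it is posed as an open problem, motivated only informally by the local $L^{p}$-curvature estimate in Theorem~\ref{t2.7} and by the resolution of the vacuum case in Theorem~\ref{t4.1} due to Klainerman, Rodnianski, and Szeftel. There is therefore nothing in the paper against which to match your argument; the author deliberately stops at the statement of the conjecture and the formulation of Question~\ref{q4.3}.

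Your write-up is a research program rather than a proof, and you acknowledge this yourself by flagging the bootstrap as ``the main obstacle.'' The gaps you name are genuine and are precisely the ones that make this a conjecture. In particular, the step ``first treat $u$ as a solution of a wave equation on a fixed Lorentzian background whose curvature is only in $L^{2}$, and establish a Strichartz-type estimate of the form $\|\partial u\|_{L^{2}_{t}L^{\infty}_{x}}\lesssim\|\partial u(0)\|_{H^{1}(\Sigma)}$'' is not an available lemma: the parametrix of \cite{S1,S2,S3,S4,S5} is built specifically for the homogeneous geometric wave operator in the vacuum setting, and its extension to the inhomogeneous problem on an $L^{2}$-curvature background is itself an open analytic problem, not a black box to be invoked. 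Likewise, the claim that the quadratic source $\partial u\otimes\partial u$ in the reduced Ricci tensor and the cross term $\partial u\cdot{\bf Rm}$ can be ``absorbed'' presupposes a null structure for the coupled system that has not been exhibited; the vacuum null structure exploited in \cite{KRS2015} does not automatically persist once matter is added, and verifying it is a nontrivial algebraic task on top of the analytic one. Finally, the assertion that the volume-radius lower bound follows from ``a Cheeger--Gromov type compactness argument enhanced with the extra $L^{2}$ control on the Hessian of $u$'' is a hope, not an argument: in \cite{KRS2015} the control of $r_{\rm vol}$ is entangled with the full bootstrap and is one of its hardest parts, so one cannot decouple it from the rest of the scheme. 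Your plan is a reasonable and well-informed roadmap, consistent with how one would expect this conjecture to eventually be attacked, but as written it does not constitute a proof, and the paper does not claim one exists.
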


An interesting question related to {\color{red}{Theorem \ref{t4.1}}} is

\begin{question}\label{q4.3} Can we extend {\color{red}{Theorem \ref{t4.1}}} to the Einstein scalar
field equations?
\end{question}

\section{${\rm Sm}$ and Wylie-Yeroshkin Riemann curvature}\label{section5}

In this section we compare our curvature ${\rm Sm}$ with $\alpha_{1}=2$ {\color{blue}{(\ref{3.10})}} with a notion of curvature introduced recently by Wylie and Yeroshkin \cite{WY2016}. For other notions of sectional curvatures,
see \cite{KW2017, KWY2017, Wylie2015}.

Let $(M,g)$ be a Riemannian manifold of dimension $n$ with a smooth function $u$. Wylie and Yeroshkin introduced the following weighted connection
\begin{equation}
\nabla^{u}_{X}Y:=\nabla_{X}Y-(Yu)X-(Xu)Y.\label{5.1}
\end{equation}
By Proposition 3.3 in \cite{WY2016}, we have
\begin{equation}
R^{u}_{ijk\ell}=R_{ijk\ell}
+\nabla_{j}\nabla_{k}u\!\ g_{i\ell}
-\nabla_{i}\nabla_{k}u\!\ g_{j\ell}
+\nabla_{j}u\nabla_{k}u\!\ g_{i\ell}
-\nabla_{i}u\nabla_{k}u\!\ g_{j\ell},\label{5.2}
\end{equation}
where $R^{u}_{ijk\ell}:=\langle{\rm Rm}^{\alpha}(\partial_{i},
\partial_{j})\partial_{k},\partial_{\ell}\rangle$ and ${\rm Rm}^{\alpha}$ is the induced Riemann curvature tensor associated to the connection $\nabla^{u}$. The Ricci curvature associated to $\nabla^{u}$ is defined by
\begin{equation}
R^{u}_{jk}:=g^{i\ell}R^{u}_{ijk\ell}
=R_{ik}+(n-1)\nabla_{j}\nabla_{k}u+(n-1)\nabla_{j}u\nabla_{k}u.\label{5.3}
\end{equation}
Here the last formula also follows from Proposition 3.3 in \cite{WY2016}.

Recall from {\color{blue}{(\ref{3.10})}} that (with $\alpha_{1}=2$)
\begin{equation}
S_{ijk\ell}=R_{ijk\ell}-\nabla_{i}u\nabla_{k}u\!\ g_{j\ell}-\nabla_{i}u\nabla_{j}u\!\ g_{k\ell}.\label{5.4}
\end{equation}
From now on, we are given a smooth function $u$ on $M$ and write
\begin{eqnarray}
R^{{\bf L}}_{ijk\ell}&:=&S_{ijk\ell}, \ \ \
R^{{\bf WY}}_{ijk\ell} \ \ := \ \ R^{u}_{ijk\ell},\nonumber\\
R^{{\bf L}}_{jk}&:=&g^{i\ell}R^{{\bf L}}_{ijk\ell}, \ \ \
R^{{\bf WY}}_{jk} \ \ := \ \ R^{u}_{jk} \ \ = \ \ g^{i\ell}R^{{\bf WY}}_{ijk\ell},\label{5.5}\\
R^{{\bf L}}&:=&g^{jk}R^{{\bf L}}_{jk}, \ \ \
R^{{\bf WY}} \ \ := \ \ g^{jk}R^{{\bf WY}}_{jk}.\nonumber
\end{eqnarray}
From {\color{blue}{(\ref{5.3})}} and {\color{blue}{(\ref{5.4})}}, we have
\begin{equation*}
{\rm Ric}^{{\bf L}}={\rm Ric}-2du\otimes du, \ \ \
{\rm Ric}^{{\bf WY}}={\rm Ric}+(n-1)du\otimes du
+(n-1)\nabla^{2}u.
\end{equation*}

\begin{remark}\label{r5.1} We note that ${\rm Ric}^{{\bf L}}$ and ${\rm Ric}^{{\bf WY}}$ are
actually the Ricci curvatures in the sense of Bakey-\'Emery \cite{BE1985}. We here use our notions to keep the paper smoothly.
\end{remark}

There is another type of Ricci curvature given by
\begin{equation}
\widehat{R}^{{\bf WY}}_{jk}:=g^{i\ell}R^{{\bf WY}}_{ji\ell k}
=R_{jk}+\left(\Delta u+|\nabla u|^{2}\right) g_{jk}
-\nabla_{j}\nabla_{k}u-\nabla_{j}u\nabla_{k}u.\label{5.6}
\end{equation}

\begin{lemma}\label{l5.2}{\bf (Basic identities for $R^{{\bf L}}_{ijk\ell}$ and $R^{{\bf WY}}_{ijk\ell}$)}  We have
\begin{equation}
R^{{\bf WY}}_{ijk\ell}-R^{{\bf L}}_{ijk\ell}
=\nabla_{i}u\nabla_{j}u\!\ g_{k\ell}
+\nabla_{k}u\nabla_{j}u\!\ g_{i\ell}
+\nabla_{j}\nabla_{k}u\!\ g_{i\ell}-\nabla_{i}\nabla_{k}u\!\ g_{j\ell},
\label{5.7}
\end{equation}
\begin{equation}
R^{{\bf WY}}_{ijk\ell}-R^{{\bf WY}}_{jik\ell}
=2\left(\nabla_{j}u\nabla_{k}u\!\ g_{i\ell}
-\nabla_{i}\nabla_{k}u\!\ g_{j\ell}+\nabla_{j}u\nabla_{k}u\!\ g_{i\ell}
-\nabla_{i}u\nabla_{k}u\!\ g_{j\ell}\right),\label{5.8}
\end{equation}
\begin{eqnarray}
R^{{\bf WY}}_{ijk\ell}-R^{{\bf WY}}_{ij\ell k}
&=&\nabla_{i}\nabla_{\ell}u\!\ g_{jk}+\nabla_{j}\nabla_{k}u\!\ g_{i\ell}
-\nabla_{i}\nabla_{k}u\!\ g_{j\ell}-\nabla_{j}\nabla_{\ell}u\!\ g_{ik}\nonumber\\
&&+ \ \nabla_{i}u\nabla_{\ell}u\!\ g_{jk}
+\nabla_{j}u\nabla_{k}u\!\ g_{i\ell}-\nabla_{i}u\nabla_{k}u\!\ g_{j\ell}
-\nabla_{j}u\nabla_{\ell}u\!\ g_{ik},\label{5.9}
\end{eqnarray}
\begin{equation}
R^{{\bf WY}}_{ijk\ell}-R^{{\bf WY}}_{k\ell ij}
=\nabla_{j}\nabla_{k}u\!\ g_{i\ell}-\nabla_{\ell}\nabla_{i}u\!\ g_{jk}
+\nabla_{j}u\nabla_{k}u\!\ g_{i\ell}-\nabla_{\ell}u\nabla_{i}u\!\ g_{jk},\label{5.10}
\end{equation}
\begin{equation}
R^{{\bf L}}_{ijk\ell}-R^{{\bf L}}_{jik\ell}
=\nabla_{j}u\nabla_{k}u\!\ g_{i\ell}
-\nabla_{i}u\nabla_{k}u\!\ g_{j\ell},\label{5.11}
\end{equation}
\begin{equation}
R^{{\bf L}}_{ijk\ell}-R^{{\bf L}}_{ij\ell k}
=\nabla_{i}u\nabla_{\ell}u\!\ g_{jk}-\nabla_{i}u\nabla_{k}u\!\ g_{j\ell},\label{5.12}
\end{equation}
\begin{equation}
R^{{\bf L}}_{ijk\ell}-R^{{\bf L}}_{k\ell ij}
=\nabla_{k}u\nabla_{\ell}u\!\ g_{ij}
-\nabla_{i}u\nabla_{j}u\!\ g_{k\ell},\label{5.13}
\end{equation}
\begin{equation}
\frac{1}{2}\left(R^{{\bf WY}}_{ijk\ell}
-R^{{\bf WY}}_{jik\ell}\right)
=\left(R^{{\bf L}}_{ijk\ell}-R^{{\bf L}}_{jik\ell}\right)
+\nabla_{j}\nabla_{k}u\!\ g_{i\ell}
-\nabla_{i}\nabla_{k}u\!\ g_{j\ell},\label{5.14}
\end{equation}
\begin{equation}
\widehat{{\rm Ric}}{}^{{\bf WY}}
-{\rm Ric}^{{\bf WY}}=\left(\Delta u+|\nabla u|^{2}\right)g
-n\left(\nabla^{2}u+du\otimes du\right).\label{5.15}
\end{equation}
\end{lemma}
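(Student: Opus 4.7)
The plan is to derive each of identities \eqref{5.7}--\eqref{5.15} by direct substitution from the two definitions
\begin{equation*}
R^{{\bf WY}}_{ijk\ell}=R_{ijk\ell}+\nabla_{j}\nabla_{k}u\,g_{i\ell}-\nabla_{i}\nabla_{k}u\,g_{j\ell}+\nabla_{j}u\nabla_{k}u\,g_{i\ell}-\nabla_{i}u\nabla_{k}u\,g_{j\ell},
\end{equation*}
\begin{equation*}
R^{{\bf L}}_{ijk\ell}=R_{ijk\ell}-\nabla_{i}u\nabla_{k}u\,g_{j\ell}-\nabla_{i}u\nabla_{j}u\,g_{k\ell},
\end{equation*}
using only three structural facts: (i) the standard symmetries $R_{ijk\ell}=-R_{jik\ell}=-R_{ij\ell k}=R_{k\ell ij}$ of the Riemann tensor, (ii) the first Bianchi identity, and (iii) the commutation $\nabla_{i}\nabla_{j}u=\nabla_{j}\nabla_{i}u$ valid because $u$ is a scalar.

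First I would establish \eqref{5.7} by subtracting the two definitions term-by-term; every $R_{ijk\ell}$ cancels and the remaining background terms regroup into the stated right-hand side. Next, identities \eqref{5.8}--\eqref{5.10} are obtained by applying, respectively, the swaps $i\leftrightarrow j$, $k\leftrightarrow\ell$, and $(ij)\leftrightarrow(k\ell)$ to the definition of $R^{{\bf WY}}_{ijk\ell}$ and subtracting; the Riemann pieces combine via the symmetries of $R_{ijk\ell}$, while the Hessian and $du\otimes du$ pieces are collected by direct inspection (one must be careful that in \eqref{5.8}--\eqref{5.10} the Riemann contributions either cancel or are absorbed into the listed terms, so tracking signs is the only real bookkeeping issue). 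Identities \eqref{5.11}--\eqref{5.13} are entirely analogous, but simpler, and come from the same three permutations applied to $R^{{\bf L}}_{ijk\ell}$. Identity \eqref{5.14} is then just a linear combination of \eqref{5.8} and \eqref{5.11}.

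For the trace identity \eqref{5.15}, I would first derive the stated formula for $\widehat{R}^{{\bf WY}}_{jk}=g^{i\ell}R^{{\bf WY}}_{ji\ell k}$. Substituting the definition of $R^{{\bf WY}}_{ji\ell k}$ and contracting with $g^{i\ell}$, one uses $g^{i\ell}R_{ji\ell k}=R_{jk}$ (by antisymmetry plus the convention $R_{jk}=g^{i\ell}R_{ijk\ell}$), $g^{i\ell}g_{jk}\nabla_{i}\nabla_{\ell}u=(\Delta u)g_{jk}$, $g^{i\ell}g_{ik}\nabla_{j}\nabla_{\ell}u=\nabla_{j}\nabla_{k}u$, and the analogous identities for the $du\otimes du$ terms, to obtain
\begin{equation*}
\widehat{R}^{{\bf WY}}_{jk}=R_{jk}+(\Delta u+|\nabla u|^{2})g_{jk}-\nabla_{j}\nabla_{k}u-\nabla_{j}u\nabla_{k}u.
\end{equation*}
Subtracting \eqref{5.3} term-by-term then yields \eqref{5.15}.

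The proof is essentially mechanical and contains no real obstacle; the only delicate aspect is keeping index positions and signs straight under the various permutations. In particular, the Riemann contributions in each identity are either killed by a symmetry or reduced (via the first Bianchi identity in a few intermediate steps) to an expression homogeneous in the auxiliary tensors $g$, $\nabla u\otimes\nabla u$, and $\nabla^{2}u$, which matches the stated right-hand sides after collecting like terms.
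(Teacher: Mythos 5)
Your overall methodology (direct substitution from \eqref{5.2} and \eqref{5.4} and collection of like terms) is the only sensible approach, and it does produce \eqref{5.7}, \eqref{5.10}, \eqref{5.13}, and \eqref{5.15}: in \eqref{5.7} the two $R_{ijk\ell}$'s simply cancel; in \eqref{5.10} and \eqref{5.13} the Riemann parts cancel by the pair-exchange symmetry $R_{ijk\ell}=R_{k\ell ij}$; and \eqref{5.15} follows exactly as you describe.

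However, your parenthetical claim that ``the Riemann contributions either cancel or are absorbed into the listed terms,'' and your final sentence that they can be ``reduced (via the first Bianchi identity) to an expression homogeneous in $g$, $\nabla u\otimes\nabla u$, $\nabla^{2}u$,'' is not correct, and this is exactly where a naive run of your plan breaks down. The tensor $R^{{\bf WY}}_{ijk\ell}$ defined by \eqref{5.2} is antisymmetric in $(i,j)$ (check: the non-Riemann part is manifestly $(i,j)$-antisymmetric, and $R_{ijk\ell}$ is too). Hence $R^{{\bf WY}}_{ijk\ell}-R^{{\bf WY}}_{jik\ell}=2R^{{\bf WY}}_{ijk\ell}=2R_{ijk\ell}+2(\nabla_{j}\nabla_{k}u\,g_{i\ell}-\nabla_{i}\nabla_{k}u\,g_{j\ell}+\nabla_{j}u\nabla_{k}u\,g_{i\ell}-\nabla_{i}u\nabla_{k}u\,g_{j\ell})$, which carries an irreducible $2R_{ijk\ell}$ that does not appear on the right side of \eqref{5.8}. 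The same phenomenon occurs in \eqref{5.9} (antisymmetry in $(k,\ell)$ gives $R_{ijk\ell}-R_{ij\ell k}=2R_{ijk\ell}$), in \eqref{5.11} and \eqref{5.12} (antisymmetry of $R_{ijk\ell}$ in $(i,j)$, resp.\ $(k,\ell)$, again contributes $2R_{ijk\ell}$ that is missing from the stated right-hand sides), and therefore also in \eqref{5.14}, which is a linear combination of \eqref{5.8} and \eqref{5.11}. The first Bianchi identity cannot rescue this: it is a linear relation among cyclic permutations $R_{ijk\ell}+R_{jki\ell}+R_{kij\ell}=0$ and in no way expresses $R_{ijk\ell}$ in terms of $g$, $du$, $\nabla^{2}u$. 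If you carry out the substitutions carefully you will find this discrepancy; as printed, \eqref{5.8}, \eqref{5.9}, \eqref{5.11}, \eqref{5.12}, \eqref{5.14} appear to omit the surviving $2R_{ijk\ell}$ (or $R_{ijk\ell}$ in \eqref{5.14}) terms, and your plan needs to either keep those Riemann terms explicitly or identify the correction before declaring the computation ``mechanical with no real obstacle.''
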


\subsection{Integral inequalities for scalar and Ricci curvatures}
\label{subsection5.1}

We now have four different types of Ricci curvatures, ${\rm Ric}$, ${\rm Ric}^{{\bf L}}$, ${\rm Ric}^{{\bf WY}}$,
and $\widehat{{\rm Ric}}^{{\bf WY}}$, and three different
types of scalar curvatures, $R$, $R^{{\bf L}}$, and $R^{{\bf WY}}$. In order to compare those quantities, we introduce a notation $\mathcal{P}\leq_{{\bf I},\mu}\mathcal{Q}$, which is an integral inequality with respect to the measure $\mu$.

\begin{definition}\label{d5.3} Given two scalar quantities $\mathcal{P}, \mathcal{Q}$
on $(M,g)$, and a measure $\mu$, we write $\mathcal{P}\leq_{{\bf I},\mu}
\mathcal{Q}$ if the following inequality
\begin{equation}
\int_{M}\mathcal{P}\!\ d\mu\leq\int_{\mathcal{M}}\mathcal{Q}\!\ d\mu
\label{5.16}
\end{equation}
holds. When $d\mu$ is the volume form $dV$, we simply write {\color{blue}{(\ref{5.16})}} as $\mathcal{P}\leq_{{\bf I}}
\mathcal{Q}$. When $d\mu$ is the measured volume form $e^{f}dV$, we write {\color{blue}{(\ref{5.16})}} as $
\mathcal{P}_{\leq{\bf I},f}\mathcal{Q}$. Similarly, we can define
$\mathcal{P}_{{\bf I},\mu}\mathcal{Q}$.
\end{definition}

\begin{proposition}\label{p5.4} For any measure $\mu$ on $M$ and smooth function $u$ on $M$, we have
\begin{equation}
R^{{\bf L}}\leq_{{\bf I},\mu} R, \ \ \ R\leq_{{\bf I}}
R^{{\bf WY}}, \ \ \ R=_{{\bf I},u}R^{{\bf WY}}.
\label{5.17}
\end{equation}
\end{proposition}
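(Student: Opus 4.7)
My plan is to reduce each of the three claims to a direct computation of $R^{\bf L}$ and $R^{\bf WY}$ as scalar functions on $M$ (using the definitions in (5.5)), followed by either a pointwise comparison or an integration-by-parts argument. The key observation is that, after tracing the tensorial identities (5.4) and (5.3) twice with $g^{-1}$, the differences $R - R^{\bf L}$ and $R^{\bf WY} - R$ become very explicit: contracting $R^{\bf L}_{ijk\ell}$ with $g^{i\ell}$ and then $g^{jk}$ gives $R^{\bf L} = R - 2|\nabla u|^2$, while the formula (5.3) already yields $R^{\bf WY} = R + (n-1)\Delta u + (n-1)|\nabla u|^2$ upon tracing with $g^{jk}$.

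With these two identities in hand, the first inequality $R^{\bf L}\leq_{\bf I,\mu} R$ becomes a pointwise statement, since $R - R^{\bf L} = 2|\nabla u|^2 \geq 0$; integrating against any (positive) measure $\mu$ preserves the inequality. For the second, $R\leq_{\bf I}R^{\bf WY}$, I would write
\begin{equation*}
\int_M (R^{\bf WY}-R)\,dV = (n-1)\int_M \Delta u\,dV + (n-1)\int_M |\nabla u|^2\,dV,
\end{equation*}
and use the divergence theorem on the closed manifold $M$ (which is the standing hypothesis for Section 5) to kill the first integral, leaving the manifestly nonnegative term $(n-1)\int_M |\nabla u|^2\,dV$.

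For the equality $R =_{\bf I,u} R^{\bf WY}$, the main trick is to recognize $\Delta u + |\nabla u|^2$ as $e^{-u}\Delta(e^u)$. Therefore
\begin{equation*}
\int_M (R^{\bf WY}-R)\,e^u\,dV = (n-1)\int_M \bigl(\Delta u + |\nabla u|^2\bigr)e^u\,dV = (n-1)\int_M \Delta(e^u)\,dV,
\end{equation*}
which vanishes by the divergence theorem on the closed manifold. I do not anticipate any serious obstacle; the only point requiring a little care is ensuring that the trace conventions in (5.5) are applied consistently so that $R^{\bf L}=R-2|\nabla u|^2$ drops out cleanly from (5.4), and recording the standing compactness hypothesis so that the boundary terms in the divergence theorem are legitimately zero.
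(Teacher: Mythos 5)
Your proof is correct and is essentially the same argument as the paper's: you trace (5.4) and (5.3) to get $R^{\bf L}=R-2|\nabla u|^2$ and $R^{\bf WY}=R+(n-1)(\Delta u+|\nabla u|^2)$, observe the first difference is pointwise nonnegative, kill $\int_M\Delta u\,dV$ by the divergence theorem for the second inequality, and use $\Delta u+|\nabla u|^2=e^{-u}\Delta(e^u)$ for the weighted equality. The paper's one-line proof cites exactly these two trace identities and the vanishing of $\int_M(\Delta u+|\nabla u|^2)e^u\,dV$; your write-up just makes those steps explicit.
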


\begin{proof} It follows from the definitions
\begin{equation*}
R^{{\bf L}}=R-2|\nabla u|^{2}, \ \ \ R^{{\bf WY}}=R+(n-1)(|\nabla u|^{2}
+\Delta u)
\end{equation*}
and the fact that integral of $(\Delta u+|\nabla u|^{2})$ with respect to $e^{u}dV$ is zero.
\end{proof}

This proposition shows that $R^{{\bf L}}\leq_{{\bf I}}
R\leq_{{\bf I}} R^{{\bf WY}}$ and $R^{{\bf L}}\leq_{{\bf I},u} R=_{{\bf I},u}R^{{\bf WY}}$. Thus, in the sense of integrals, $R_{{\bf L}}$ is
weaker and $R^{{\bf WY}}$ is stronger than $R$, respectively.
\\

Next we consider the similar question on Ricci curvatures.

\begin{definition}\label{d5.5} Let $(M,g)$ be a closed Riemannian manifold with a smooth function $u$, and $\mu$ be a given measure on $M$. Given two Ricci curvatures ${\rm Ric}^{\clubsuit}, {\rm Ric}^{\diamondsuit}\in\mathfrak{Ric}_{4}
:=\{{\rm Ric}, {\rm Ric}^{{\bf L}}, {\rm Ric}^{{\bf WY}},
\widehat{{\rm Ric}}^{{\bf WY}}\}$, we say
\begin{equation}
{\rm Ric}^{\clubsuit}\leq_{{\bf I},\mu}{\rm Ric}^{\diamondsuit}
\label{5.18}
\end{equation}
if ${\rm Ric}^{\clubsuit}(X,X)\leq_{{\bf I},\mu}{\rm Ric}^{\diamondsuit}
(X,X)$ in the sense of {\color{red}{Definition \ref{d5.3}}} for all vector fields $X
\in\mathfrak{X}(M)$. Similarly we can define ${\rm Ric}^{\clubsuit}\leq_{{\bf I}}{\rm Ric}^{\diamondsuit}$ and ${\rm Ric}^{\clubsuit}\leq_{{\bf I},f}{\rm Ric}^{\diamondsuit}$.

We say
\begin{equation}
{\rm Ric}^{\clubsuit}\leq_{{\bf IK},\mu}{\rm Ric}^{\diamondsuit}\label{5.19}
\end{equation}
if ${\rm Ric}^{\clubsuit}(X,X)\leq_{{\bf I},\mu}{\rm Ric}^{\diamondsuit}
(X,X)$ in the sense of {\color{red}{Definition \ref{d5.3}}} for all {\it Killing} vector fields $X
\in\mathfrak{X}_{{\bf K}}(M)$, where $\mathfrak{X}_{{\bf K}}(M)$ is the space of all Killing vector
fields on $M$. Similarly we can define ${\rm Ric}^{\clubsuit}\leq_{{\bf IK}}{\rm Ric}^{\diamondsuit}$ and ${\rm Ric}^{\clubsuit}\leq_{{\bf IK},f}{\rm Ric}^{\diamondsuit}$.

Consider the subset $\mathfrak{X}_{{\bf KC}}(M)$ of $\mathfrak{X}_{{\bf K}}
(M)$, which consists of Killing vector fields on $M$ with constant norm. we say
\begin{equation}
{\rm Ric}^{\clubsuit}\leq_{{\bf IKC},\mu}{\rm Ric}^{\diamondsuit}
\label{5.20}
\end{equation}
if ${\rm Ric}^{\clubsuit}(X,X)\leq_{{\bf I},\mu}{\rm Ric}^{\diamondsuit}
(X,X)$ in the sense of {\color{red}{Definition \ref{d5.3}}} for all $X
\in\mathfrak{X}_{{\bf KC}}(M)$. Similarly we can define ${\rm Ric}^{\clubsuit}\leq_{{\bf IKC}}{\rm Ric}^{\diamondsuit}$ and ${\rm Ric}^{\clubsuit}\leq_{{\bf IKC},f}{\rm Ric}^{\diamondsuit}$.
\end{definition}

\begin{theorem}\label{t5.6} Let $(M,g)$ be a closed Riemannian manifold with a smooth function $u$ and $\mu$ be a given measure on $M$. Then we have
\begin{itemize}

\item[(i)] ${\rm Ric}^{{\bf L}}\leq_{{\bf I}, \mu}{\rm Ric}$.

\item[(ii)] ${\rm Ric}\leq_{{\bf IKC}}{\rm Ric}^{{\bf
WY}}$.

\item[(iii)] ${\rm Ric}\leq_{{\bf IKC}}\widehat{{\rm Ric}}{}^{{\bf WY}}$.

\end{itemize}

\end{theorem}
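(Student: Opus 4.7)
The plan is to reduce each of the three inequalities to a pointwise identity together with, in cases (ii) and (iii), a vanishing integral that exploits the Killing condition together with the constancy of $|X|^2$.

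For (i), the approach is purely algebraic. From the definitions one has $\mathrm{Ric}^{{\bf L}} = \mathrm{Ric} - 2\, du\otimes du$, so for any vector field $X$ on $M$,
\begin{equation*}
\mathrm{Ric}^{{\bf L}}(X,X) - \mathrm{Ric}(X,X) \ = \ -2(Xu)^{2} \ \le \ 0
\end{equation*}
pointwise on $M$. Integrating this against an arbitrary measure $\mu$ yields (i) immediately, with no assumption on $X$ at all.

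For (ii) and (iii), the crucial tool is the following identity: \emph{if $X\in\mathfrak{X}_{{\bf KC}}(M)$, then $\nabla_{X}X=0$.} I would prove it in one line: the Killing equation gives $\nabla_{i}X_{j}=-\nabla_{j}X_{i}$, so $X^{i}\nabla_{i}X^{k} = -X^{i}\nabla^{k}X_{i} = -\tfrac{1}{2}\nabla^{k}|X|^{2} = 0$ since $|X|^{2}$ is constant. Combined with $\operatorname{div} X=0$ (valid for every Killing field), this gives, after one integration by parts,
\begin{equation*}
\int_{M}\nabla^{2}u(X,X)\,dV \ = \ \int_{M} X^{i}X^{j}\nabla_{i}\nabla_{j}u\,dV \ = \ -\int_{M}\bigl(\operatorname{div}X\bigr)(Xu)\,dV-\int_{M}\langle\nabla_{X}X,\nabla u\rangle\,dV \ = \ 0.
\end{equation*}
With this in hand, (ii) follows from $\mathrm{Ric}^{{\bf WY}}(X,X)-\mathrm{Ric}(X,X) = (n-1)(Xu)^{2}+(n-1)\nabla^{2}u(X,X)$: the first term is nonnegative pointwise and the second integrates to zero.

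For (iii), I would use the identity \eqref{5.15} to write
\begin{equation*}
\widehat{\mathrm{Ric}}{}^{{\bf WY}}(X,X)-\mathrm{Ric}(X,X) \ = \ \bigl(\Delta u+|\nabla u|^{2}\bigr)|X|^{2}-\nabla^{2}u(X,X)-(Xu)^{2}.
\end{equation*}
Because $M$ is closed, $\int_{M}\Delta u\,dV=0$; because $|X|^{2}$ is a constant, it pulls out of the integral; and the vanishing of $\int_{M}\nabla^{2}u(X,X)\,dV$ established above kills another term. What remains is
\begin{equation*}
\int_{M}\bigl[\widehat{\mathrm{Ric}}{}^{{\bf WY}}(X,X)-\mathrm{Ric}(X,X)\bigr]dV \ = \ \int_{M}\Bigl(|X|^{2}|\nabla u|^{2}-(Xu)^{2}\Bigr)dV,
\end{equation*}
which is nonnegative pointwise by Cauchy--Schwarz, so the inequality follows. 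The only even mildly delicate step is the constant-norm Killing identity $\nabla_{X}X=0$; the rest is integration by parts and Cauchy--Schwarz, so I do not expect any genuine obstacle.
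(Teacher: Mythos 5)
Your proof is correct, and it takes a genuinely simpler route than the paper at the crucial step. Part (i) is the same observation in both arguments. For (ii) and (iii), both you and the paper reduce everything to showing that $\int_{M}\nabla^{2}u(X,X)\,dV=0$ whenever $X\in\mathfrak{X}_{{\bf KC}}(M)$; the difference is how that vanishing is established. The paper proves a general integral identity (Lemma \ref{l5.7}), derived via Yano's formula and the characterization of Killing fields $\Delta X+\nabla\operatorname{div}X+\mathrm{Ric}(X)=0$, which gives $\int_{M}X^{i}X^{j}\nabla_{i}\nabla_{j}u\,dV=-\tfrac{1}{2}\int_{M}u\,\Delta|X|^{2}\,dV$ for any Killing field, and then specializes to the constant-norm case where $\Delta|X|^{2}=0$. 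You instead observe that constant-norm Killing fields satisfy $\nabla_{X}X=0$ (integral curves are geodesics) and that every Killing field is divergence-free, after which a single integration by parts kills the integral directly. Your route is shorter and avoids Yano's machinery; the paper's Lemma \ref{l5.7} is more general (valid for arbitrary Killing fields, even arbitrary vector fields) and is reused later, e.g.\ in the weighted computation preceding Theorem \ref{t5.8}, so the extra generality pays off in the larger development. Your treatment of (iii) also differs cosmetically: you eliminate $\int_{M}\Delta u\,|X|^{2}\,dV$ by pulling out the constant $|X|^{2}$ and using $\int_{M}\Delta u\,dV=0$, while the paper folds it into the $\tfrac{3}{2}\int_{M}u\,\Delta|X|^{2}\,dV$ term; both are correct, and both end with the same Cauchy--Schwarz step.
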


\begin{proof} From {\color{blue}{(\ref{5.5})}}, we have ${\rm Ric}^{{\bf L}}(X,X)
={\rm Ric}(X,X)-2\langle X,\nabla u\rangle^{2}$ and then the first
part follows. To prove the last result, we compute
\begin{equation*}
\int_{M}\left[{\rm Ric}^{{\bf WY}}(X,X)
-{\rm Ric}(X,X)\right]dV
=(n-1)\int_{M}\left[\langle X,\nabla u\rangle^{2}
+X^{i}X^{j}\nabla_{i}\nabla_{j}u\right]dV.
\end{equation*}
Then we suffice to verify that the last integral is nonnegative for
any Killing vector field $X$. According to {\color{blue}{(\ref{5.23})}}, we can prove (ii) and (iii) immediately. Indeed, for (iii),
\begin{eqnarray*}
\int_{M}\left[\widehat{{\rm Ric}}{}^{{\bf WY}}
(X,X)-{\rm Ric}(X,X)\right]dV
&=&\frac{3}{2}\int_{M}u\Delta|X|^{2}dV\\
&&+ \ \int_{M}[|X|^{2}|\nabla u|^{2}-\langle X,\nabla u\rangle^{2}]dV\\
&\geq&\frac{3}{2}\int_{M}u\Delta|X|^{2}dV
\end{eqnarray*}
for any Killing vector field $X$.
\end{proof}

To prove {\color{red}{Lemma \ref{l5.7}}}, we first recall Yano's formula (see
\cite{Yano1, Yano2, YB1953} and, \cite{LL2015} for related topics)
\begin{equation}
\int_{M}|\mathscr{L}_{X}g|^{2}dV
=\int_{M}[|\nabla X|^{2}+|{\rm div}(X)|^{2}-{\rm Ric}(X,X)]dV, \ \ \ X
\in\mathfrak{X}(M),\label{5.21}
\end{equation}
gives a necessary and sufficient condition to $X$ being Killing
or not. Namely,
\begin{equation}
X\in\mathfrak{X}_{{\bf K}}(M)\Longleftrightarrow\Delta X+\nabla{\rm div}X
+{\rm Ric}(X)=0={\rm div} X.\label{5.22}
\end{equation}

\begin{lemma}\label{l5.7} For any vector field $X$ and any smooth function $u$, we have
\begin{eqnarray}
\int_{M}X^{i}X^{j}\nabla_{i}\nabla_{j}u\!\ dV&=&\int_{M}
u\langle X,\Delta X+\nabla{\rm div}X+{\rm Ric}(X)\rangle dV\nonumber\\
&&+ \ \int_{M}\frac{1}{2}u[|\mathscr{L}_{X}g|^{2}-\Delta|X|^{2}]dV
-\int_{M}\langle X,\nabla u\rangle{\rm div}X\!\ dV.
\label{5.23}
\end{eqnarray}
In particular, when $X\in\mathfrak{X}_{{\bf K}}(M)$, we get
\begin{equation}
\int_{M}X^{i}X^{j}\nabla_{i}\nabla_{j}u\!\ dV
=-\frac{1}{2}\int_{M}u\Delta|X|^{2}dV=
-\frac{1}{2}\int_{M}|X|^{2}\Delta u\!\ dV.\label{5.24}
\end{equation}

\end{lemma}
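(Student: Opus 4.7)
The plan is to rewrite $I := \int_M X^i X^j \nabla_i \nabla_j u \, dV$ by two successive integrations by parts, commute covariant derivatives once to pick up a Ricci term, and finally recognize the remaining $\nabla X$ quadratic expression in terms of $\mathscr{L}_X g$, $\Delta |X|^2$, and $\Delta X$.

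First I would move $\nabla_i$ off $\nabla_j u$ to obtain
\begin{equation*}
I = -\int_M ({\rm div}\, X) \langle X,\nabla u\rangle\, dV - \int_M X^i (\nabla_i X^j)(\nabla_j u)\, dV,
\end{equation*}
which already produces the last term in \eqref{5.23}. Integrating by parts again in the remaining term (moving $\nabla_j$ off $u$) gives
\begin{equation*}
-\int_M X^i (\nabla_i X^j)(\nabla_j u)\, dV = \int_M u\bigl[(\nabla_j X^i)(\nabla_i X^j) + X^i \nabla_j \nabla_i X^j\bigr]\, dV.
\end{equation*}

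Next I would use the Ricci identity $[\nabla_j,\nabla_i]X^j = R_{ik}X^k = {\rm Ric}(X)_i$, so that
\begin{equation*}
\nabla_j \nabla_i X^j = \nabla_i ({\rm div}\, X) + {\rm Ric}(X)_i,
\end{equation*}
and therefore $X^i \nabla_j \nabla_i X^j = \langle X, \nabla{\rm div}\,X\rangle + {\rm Ric}(X,X)$. The central algebraic step is then to show the pointwise identity
\begin{equation*}
(\nabla_j X^i)(\nabla_i X^j) = \langle X,\Delta X\rangle + \tfrac{1}{2}\bigl[|\mathscr{L}_X g|^2 - \Delta |X|^2\bigr].
\end{equation*}
This I would verify by combining two elementary facts: the rough-Laplacian Bochner identity $\tfrac{1}{2}\Delta |X|^2 = |\nabla X|^2 + \langle X, \Delta X\rangle$, and the expansion of $|\mathscr{L}_X g|^2 = |\nabla_i X_j + \nabla_j X_i|^2 = 2|\nabla X|^2 + 2(\nabla_j X^i)(\nabla_i X^j)$; subtracting the first from half the second eliminates $|\nabla X|^2$ and yields the identity.

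Assembling these pieces produces \eqref{5.23}. For the Killing case, I would invoke Yano's characterization \eqref{5.22}: if $X \in \mathfrak{X}_{\bf K}(M)$ then both $\Delta X + \nabla{\rm div}\,X + {\rm Ric}(X) = 0$ and ${\rm div}\,X = 0$, killing the first and third integrals in \eqref{5.23}. Since $\mathscr{L}_X g = 0$, the $|\mathscr{L}_X g|^2$ contribution also vanishes, leaving $I = -\tfrac{1}{2}\int_M u \Delta |X|^2\, dV$, and a final integration by parts rewrites this as $-\tfrac{1}{2}\int_M |X|^2 \Delta u\, dV$. The only step requiring genuine care is the sign bookkeeping in the double integration by parts and in the commutator $[\nabla_j,\nabla_i]X^j$; once those conventions are fixed the rest is algebra.
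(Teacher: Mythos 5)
Your proof is correct, and it actually streamlines the paper's argument. Both routes open the same way: two successive integrations by parts to produce $-\int_M\langle X,\nabla u\rangle\,{\rm div}\,X\,dV$ plus $\int_M u\bigl[(\nabla_j X^i)(\nabla_i X^j)+X^i\nabla_j\nabla_i X^j\bigr]\,dV$, followed by the Ricci identity $\nabla_j\nabla_i X^j=\nabla_i{\rm div}\,X+{\rm Ric}(X)_i$. The paper then performs a third integration by parts on $\int_M uX^i\nabla_i{\rm div}\,X\,dV$ to introduce $-|{\rm div}\,X|^2$, and recovers the stated form only after invoking three auxiliary integral identities (for $\int_M u|{\rm div}\,X|^2\,dV$, $\int_M u|\nabla X|^2\,dV$, and $\tfrac12|\mathscr{L}_Xg|^2=|\nabla X|^2+\nabla_jX^i\nabla_iX^j$), one of which essentially undoes the extra IBP. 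You instead keep the $\langle X,\nabla{\rm div}\,X\rangle$ term as is and dispose of $(\nabla_j X^i)(\nabla_i X^j)$ via the pointwise identity $(\nabla_j X^i)(\nabla_i X^j)=\langle X,\Delta X\rangle+\tfrac12\bigl[|\mathscr{L}_Xg|^2-\Delta|X|^2\bigr]$, obtained by combining the Bochner formula $\tfrac12\Delta|X|^2=|\nabla X|^2+\langle X,\Delta X\rangle$ with the $\mathscr{L}_Xg$ expansion. This eliminates the round-trip IBP and the bookkeeping of the $|{\rm div}\,X|^2$ and $|\nabla X|^2$ integrals; what the paper derives as integral identities you recognize at the pointwise level, which is both shorter and less error-prone. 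The Killing-case reduction (using Yano's characterization to kill the first and third integrals and $\mathscr{L}_Xg=0$ to kill the Lie-derivative term, then one more integration by parts to pass $\Delta$ onto $u$) matches the paper.
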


\begin{proof} We start from the computation:
\begin{eqnarray*}
\int_{M}X^{i}X^{j}\nabla_{i}\nabla_{j}u\!\ dV&=&
-\int_{M}\nabla_{j}u\nabla_{i}(X^{i}X^{j})\!\ dV\\
&=&-\int_{M}\nabla_{j}u\left(X^{j}{\rm div}X+X^{i}\nabla_{i}X^{j}
\right)dV\\
&=&-\int_{M}\langle X,\nabla u\rangle{\rm div}X\!\ dV
+\int_{M}u\nabla_{j}(X^{i}\nabla_{i}X^{j})dV.
\end{eqnarray*}
The last integral, denoted by $I_{u}$, as a function of $u$, is equal to
\begin{eqnarray*}
I_{u}&=&\int_{M}u(\nabla_{j}X^{i}\nabla_{i}X^{j})dV
+\int_{M}uX^{i}\nabla_{j}\nabla_{i}X^{j}dV\\
&=&\int_{M}u(\nabla_{j}X^{i}\nabla_{i}X^{j})dV
+\int_{M}uX^{i}\left(\nabla_{i}{\rm div}X+R_{ij}X^{j}\right)dV\\
&=&\int_{M}u\left[\nabla_{j}X^{i}\nabla_{i}X^{j}
+{\rm Ric}(X,X)\right]dV-\int_{M}{\rm div}X\left(\langle X,\nabla u\rangle
+u{\rm div}X\right)dV\\
&=&\int_{M}u\left[\nabla_{j}X^{i}
\nabla_{i}X^{j}
+{\rm Ric}(X,X)-|{\rm div}(X)|^{2}\right]dV-\int_{M}\langle X,\nabla u
\rangle{\rm div}X\!\ dV.
\end{eqnarray*}
Using the following identities:
\begin{eqnarray*}
\frac{1}{2}|\mathscr{L}_{X}g|^{2}&=&|\nabla X|^{2}
+\nabla_{j}X^{i}\nabla_{i}X^{j},\\
\int_{M}u|{\rm div}(X)|^{2}dV&=&\int_{M}(u{\rm div}X)\nabla_{i}X^{i}dV\\
&=&-\int_{M}X^{i}\left(\nabla_{i}u{\rm div}X+u\nabla_{i}{\rm div}X\right)\!\ dV\\
&=&-\int_{M}\langle X,\nabla u\rangle{\rm div}X\!\ dV
-\int_{M}u\langle X,\nabla{\rm div}X\rangle dV,\\
\int_{M}u|\nabla X|^{2}dV&=&\int_{M}u\nabla^{i}X_{j}\nabla_{i}X^{j}dV \ \ = \ \ -\int_{M}X^{j}\left(\nabla_{i}u\nabla^{i}X_{j}
+u\Delta X_{j}\right)dV\\
&=&-\int_{M}u\langle X,\Delta X\rangle dV-\frac{1}{2}
\int_{M}\nabla_{i}u\nabla^{i}|X|^{2}dV,
\end{eqnarray*}
we obtain
\begin{equation}
I_{u}=\int_{M}u\left[\langle X,\Delta X+\nabla{\rm div}X
+{\rm Ric}(X)\rangle dV+\frac{1}{2}|\mathscr{L}_{X}g|^{2}
-\frac{1}{2}\Delta|X|^{2}\right]dV.\label{5.25}
\end{equation}
Then {\color{blue}{(\ref{5.23})}} follows from {\color{blue}{(\ref{5.25})}}.
\end{proof}

A consequence of {\color{red}{Theorem \ref{t5.6}}} indicates
\begin{equation}
{\rm Ric}^{{\bf L}}\leq_{{\bf IKC}}{\rm Ric}\leq_{{\bf IKC}}{\rm Ric}^{{\bf WY}} \ \ \ \text{and} \ \ \
{\rm Ric}^{{\bf L}}\leq_{{\bf IKC}}{\rm Ric}\leq_{{\bf IKC}}\widehat{{\rm Ric}}{}^{{\bf WY}}.
\label{5.26}
\end{equation}

To prove an analogous result in {\color{red}{Theorem \ref{t5.6}}} between ${\rm Ric}$ and ${\rm Ric}^{{\bf WY}},
\widehat{{\rm Ric}}{}^{{\bf WY}}$ along constant Killing vector
fields, for some measure $\mu$ other than the volume form, we first do the following computation. For the measure $e^{f}dV$, where $f\equiv f(u)$
is a smooth function depends only on $u$, we have
\begin{equation}
\int_{M}\left[{\rm Ric}^{{\bf WY}}(X,X)
-{\rm Ric}(X,X)\right]e^{f}dV
=(n-1)\int_{M}\left[\langle X,\nabla u\rangle^{2}
+X^{i}X^{j}\nabla_{i}\nabla_{j}u\right]e^{f}dV.\label{5.27}
\end{equation}
As above, the last integral is equal to
\begin{eqnarray}
\int_{M}X^{i}X^{j}\nabla_{i}\nabla_{j}u e^{f}dV&=&
-\int_{M}\nabla_{j}u\nabla_{i}(e^{f}X^{i}X^{j})dV\nonumber\\
&=&-\int_{M}\nabla_{j}u\left(f'\nabla_{i}uX^{i}X^{j}
+X^{j}{\rm div}X+X^{i}\nabla_{i}X^{j}\right)e^{f}dV\nonumber\\
&=&-\int_{M}\langle X,\nabla u\rangle {\rm div}X\!\ e^{f}dV
-\int_{M}f'\langle\nabla u,X\rangle^{2}e^{f}dV\nonumber\\
&&+ \ \int_{M}u e^{f}\nabla_{j}(X^{i}\nabla_{i}X^{j})dV
+\int_{M}u f'e^{f}\nabla_{j}uX^{i}\nabla_{i}X^{j}\label{5.28}\\
&=&-\int_{M}\langle X,\nabla u\rangle{\rm div}X e^{f}dV
-\int_{M}f'\langle\nabla u,X\rangle^{2}e^{f}dV\nonumber\\
&&+ \ I_{ue^{f}}-I_{u^{2}f' e^{f}}
-\int_{M}u\nabla_{j}\left(u f'e^{f}\right)X^{i}\nabla_{i}X^{j}dV\nonumber
\end{eqnarray}
where we used the notion $I_{u}$ that is explicitly
given in {\color{blue}{(\ref{5.25})}}. Next we require
\begin{equation*}
u f'e^{f}=1\Longrightarrow\text{we can take} \ f(u)=\ln\ln u.
\end{equation*}
However, in the above function $f(u)$, we should assume that $u$ is strictly positive. It motivates us to consider the modified function associated with $u$.
\\

Given a smooth function $u$ on the closed Riemannian manifold $(M,g)$,
set
\begin{equation}
\widetilde{u}:=u-u_{\min}+c_{0}\geq c_{0}\geq\frac{1}{e}, \ \ \
\widetilde{f}:=\ln\ln\widetilde{u},\label{5.29}
\end{equation}
where $u_{\min}:=\min_{M}u$ and $c_{0}\geq e^{-1}$ is the unique constant such that $c_{0}
\ln c_{0}=1$. Replacing $(u,f)$ by $(\widetilde{u},
\widetilde{f})$ in {\color{blue}{(\ref{5.27})}} (since $\nabla\widetilde{u}
=\nabla u$), we immediately obtain
\begin{equation*}
\int_{M}
\left[{\rm Ric}^{{\bf WY}}(X,X)-{\rm Ric}(X,X)\right]e^{\widetilde{f}}dV
=(n-1)\int_{M}\left[\langle X,\nabla\widetilde{u}\rangle^{2}
+X^{i}X^{j}\nabla_{i}\nabla_{j}\widetilde{u}\right]e^{\widetilde{f}}
dV
\end{equation*}
where
\begin{eqnarray*}
\int_{M}X^{i}X^{j}\nabla_{i}\nabla_{j}\widetilde{u} e^{\widetilde{f}}dV
&=&-\int_{M}\langle X,\nabla\widetilde{u}\rangle{\rm div}X e^{f}dV
-\int_{M}\widetilde{f}'\langle\nabla\widetilde{u},X\rangle^{2}e^{\widetilde{f}}
dV\\
&&+ \ I_{\widetilde{u}e^{\widetilde{f}}}-I_{\widetilde{u}^{2}\widetilde{f}'
e^{\widetilde{f}}}-\int_{M}\widetilde{u}\nabla_{j}
\left(\widetilde{u}\widetilde{f}'e^{\widetilde{f}}\right)X^{i}\nabla_{i}
X^{j}dV\\
&=&-\int_{M}\langle X,\nabla\widetilde{u}\rangle{\rm div}X e^{f}dV
-\int_{M}\frac{1}{\widetilde{u}\ln\widetilde{u}}
\langle X,\nabla\widetilde{u}\rangle^{2}e^{\widetilde{f}}dV\\
&&+ \ I_{\widetilde{u}e^{\widetilde{f}}}
-I_{\widetilde{u}^{2}\widetilde{f}'e^{\widetilde{f}}}.
\end{eqnarray*}
When $X\in\mathfrak{X}_{{\bf KC}}(M)$, we can simplify the above integral into
\begin{equation*}
\int_{M}X^{i}X^{j}\nabla_{i}\nabla_{j}
\widetilde{u}e^{\widetilde{f}}dV=
-\int_{M}\frac{1}{\widetilde{u}\ln\widetilde{u}}
\langle X,\nabla\widetilde{u}\rangle^{2}e^{\widetilde{f}}dV
\end{equation*}
and hence
\begin{equation*}
\int_{M}\left[{\rm Ric}^{{\bf WY}}(X,X)
-{\rm Ric}(X,X)\right]e^{\widetilde{f}}dV
=(n-1)\int_{M}\left(1-\frac{1}{\widetilde{u}\ln\widetilde{u}}
\right)\langle X,\nabla\widetilde{u}\rangle^{2}e^{\widetilde{f}}dV.
\end{equation*}
Since the function $t\ln t$, $t\geq e^{-1}$, is increasing, it follows
that $\widetilde{u}\ln\widetilde{u}\geq c_{0}\ln c_{0}=1$. Therefore,
we obtain the first part of the following theorem.

\begin{theorem}\label{t5.8} Let $(M,g)$ be a closed Riemannian manifold with a smooth function $u$ and $\mu$ be a given measure on $M$. Then we have
\begin{itemize}

\item[(i)] ${\rm Ric}\leq_{{\bf IKC},\widetilde{f}}{\rm Ric}^{{\bf
WY}}$, and

\item[(ii)] ${\rm Ric}\leq_{{\bf IKC},\widetilde{f}}\widehat{{\rm Ric}}{}^{{\bf WY}}$.

\end{itemize}
where $\widetilde{f}:=u-u_{\min}+c_{0}$ and $c_{0}\geq1/e$.
\end{theorem}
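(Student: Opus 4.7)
The approach is to repeat the unweighted computation from Theorem~\ref{t5.6}(ii)--(iii), but with the volume form $dV$ replaced by the measured volume $e^{\widetilde{f}}\,dV$ for a carefully chosen weight $\widetilde{f}=\widetilde{f}(u)$ depending only on $u$. I read the theorem's notation in light of the displayed computation just preceding it, which uses $\widetilde{u}:=u-u_{\min}+c_{0}$ and $\widetilde{f}:=\ln\ln\widetilde{u}$ (so $e^{\widetilde{f}}\,dV=\ln\widetilde{u}\,dV$); the statement ``$\widetilde{f}:=u-u_{\min}+c_{0}$'' in the theorem appears to conflate $\widetilde{u}$ with $\widetilde{f}$. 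With this convention, part (i) is essentially complete in the paragraph just before the theorem, and part (ii) requires one additional integration-by-parts step.

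For part (i), start from the pointwise identity $(n-1)[\langle X,\nabla u\rangle^{2}+X^{i}X^{j}\nabla_{i}\nabla_{j}u]={\rm Ric}^{{\bf WY}}(X,X)-{\rm Ric}(X,X)$ coming from \eqref{5.5}. Integrating the Hessian term by parts against $e^{\widetilde{f}}\,dV$ produces three remainder terms involving $\widetilde{f}'(u)\langle X,\nabla u\rangle^{2}$, $\langle X,\nabla u\rangle\,{\rm div}(X)$, and $X^{i}\nabla_{i}X^{j}\nabla_{j}u$. For $X\in\mathfrak{X}_{{\bf KC}}(M)$ the latter two vanish: ${\rm div}(X)=0$ by \eqref{5.22}, and $X^{i}\nabla_{i}X^{j}=-X^{i}\nabla^{j}X_{i}=-\tfrac12\nabla^{j}|X|^{2}=0$ by the Killing equation combined with the constancy of $|X|^{2}$. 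What remains is $(n-1)\int_{M}(1-\widetilde{f}'(u))\langle X,\nabla u\rangle^{2}e^{\widetilde{f}}\,dV$. The choice $\widetilde{f}=\ln\ln\widetilde{u}$ yields $\widetilde{f}'(u)=1/(\widetilde{u}\ln\widetilde{u})$, and taking $c_{0}$ large enough so that $c_{0}\ln c_{0}\geq 1$ makes $1-\widetilde{f}'\geq 0$, establishing (i).

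For part (ii), apply \eqref{5.15} to write $\widehat{{\rm Ric}}^{{\bf WY}}(X,X)-{\rm Ric}^{{\bf WY}}(X,X)=(\Delta u+|\nabla u|^{2})|X|^{2}-nX^{i}X^{j}\nabla_{i}\nabla_{j}u-n\langle X,\nabla u\rangle^{2}$. For constant-norm $X$ the factor $|X|^{2}$ pulls out of integrals, and $\int_{M}(\Delta u)e^{\widetilde{f}}\,dV=-\int_{M}\widetilde{f}'|\nabla u|^{2}e^{\widetilde{f}}\,dV$ by a single integration by parts. The Hessian integral is reduced exactly as in part (i). Summing these pieces with the part (i) contribution, the $\langle X,\nabla u\rangle^{2}$ coefficient collapses from $(n-1)-n=-1$, and the total becomes $\int_{M}(1-\widetilde{f}')\bigl[|X|^{2}|\nabla u|^{2}-\langle X,\nabla u\rangle^{2}\bigr]e^{\widetilde{f}}\,dV$, which is nonnegative by Cauchy--Schwarz together with $1-\widetilde{f}'\geq 0$.

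The principal obstacle is calibrating the weight so that the Hessian integration by parts yields an expression with a definite sign. This forces the differential condition $\widetilde{u}\widetilde{f}'(u)e^{\widetilde{f}(u)}\equiv 1$, whose natural solution $e^{\widetilde{f}}=\ln\widetilde{u}$ in turn dictates the shift $u\mapsto\widetilde{u}=u-u_{\min}+c_{0}$, both for well-definedness of $\widetilde{f}$ (one needs $\widetilde{u}>1$) and for the sign condition ($c_{0}\ln c_{0}\geq 1$). The constant-norm Killing hypothesis is equally indispensable: weakening either constancy or Killing reintroduces uncontrolled terms $\langle X,\nabla u\rangle\,{\rm div}(X)$ or $X^{i}\nabla_{i}X^{j}\nabla_{j}u$ in the Hessian integration by parts, with no apparent way to bound them from below without further curvature hypotheses.
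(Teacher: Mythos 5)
Your proof is correct and follows the paper's strategy: integrate the Hessian term by parts against $e^{\widetilde f}\,dV$, use $\operatorname{div}(X)=0$ and $\nabla_X X=0$ for $X\in\mathfrak{X}_{\bf KC}(M)$ to eliminate the unwanted terms, and calibrate the weight so that $1-\widetilde f'\geq 0$. You carry out the integration by parts directly rather than routing through Lemma~\ref{l5.7}'s formula for $I_u$ (the paper's machinery), which is a mild but genuine simplification since the constant-norm Killing structure kills the extra terms immediately. Your explicit computation for part~(ii) arriving at $\int_M(1-\widetilde f')\bigl[|X|^2|\nabla u|^2-\langle X,\nabla u\rangle^2\bigr]e^{\widetilde f}\,dV\geq 0$ is the correct fleshed-out version of what the paper summarizes as ``precisely the same as Theorem~\ref{t5.6}(iii)''; and you are right to read $\widetilde f=\ln\ln\widetilde u$ with $\widetilde u=u-u_{\min}+c_0$ and $c_0\ln c_0\geq 1$ from the displayed paragraph preceding the theorem, since the theorem statement's own ``$\widetilde f:=u-u_{\min}+c_0$'' and ``$c_0\geq 1/e$'' conflate $\widetilde u$ with $\widetilde f$ and state a condition on $c_0$ that is too weak for the sign argument.
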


\begin{proof} The proof of the second part is precisely the same as that
of (iii) in {\color{red}{Theorem \ref{t5.6}}}.
\end{proof}

\subsection{Killing vector fields with constant length}
\label{subsection5.2}

In {\color{red}{Theorem \ref{t5.6}}} and {\color{red}{Theorem \ref{t5.8}}}, we proved integral
inequalities along Killing vector fields with constant length. In this
subsection we review some existence results on such vector fields.

Eisenhart \cite{E1926} proved that a unit vector field $X$ on a (connected) complete Riemannian manifold $(M,g)$ is the unit Killing vector field if and only if the angles between $X$ and tangent vectors to each geodesic in $(M,g)$ are constant along this geodesic. As earlier, Bianchi \cite{B1918} proved that A Killing vector field $X$ on a complete Riemannian manifold $(M,g)$ has constant length if and only if every integral curve of $X$ is a geodesic in
$(M,g)$. For a proof, we refer to \cite{BN2006}.

There are two necessary conditions for the existence of Killing vector fields of constant length on a given Riemannian manifold $(M,g)$, one is $\chi(M)=0$ by Hopf's theorem while another one, by Bott's theorem \cite{Bott1967}, is that all the Pontrjagin numbers of the oriented cover of $M$ are zero

The existence of Killing vector fields with constant length on a complete Riemannian manifold $(M,g)$ is connected
with {\it Clifford-Wolf translations} or {\it Clifford
translations}, which is an isometry $s^{\bf CW}$ on $(M,g)$ such that $d(x,s^{\bf CW}(x))\equiv$ constant for all $x\in M$.

\begin{itemize}

\item If a one-parameter isometry group generated by a Killing vector field $X$ consists of Clifford-Wolf translations, then $X$ had constant length.

\item If $X$ is a Killing vector field of constant length on a compact Riemannian manifold $(M,g)$, then the isometries $\gamma(t)$, generated by $X$, are Clifford-Wolf translations for sufficiently small $|t|$.

\end{itemize}
The first fact is obvious by definition. The compactness condition in the second fact can be generalized to the condition that $(M,g)$ has the injectivity radius, bounded from below by some positive constant. A proof can be found in \cite{BN2006}.

\begin{proposition}\label{p5.9} On each of $28$ homotopical seven-dimensional spheres $M$, there exist a Riemannian metric $g$ and a nonzero vector field $X$, such that

\begin{itemize}

\item ${\rm Ric}^{{\bf L}}(X,X)\leq_{{\bf I}}{\rm Ric}(X,X)\leq_{\bf I}{\rm Ric}^{{\bf WY}}(X,X)$ and ${\rm Ric}^{{\bf L}}(X,X)\leq_{{\bf I}}{\rm Ric}(X,X)
    \leq_{{\bf I}}\widehat{{\rm Ric}}{}^{{\bf WY}}(X,X)$ hold.

\item for any smooth function $u$ on $M$, ${\rm Ric}^{{\bf L}}(X,X)\leq_{{\bf I},\widetilde{f}}{\rm Ric}(X,X)\leq_{{\bf I},\widetilde{f}}{\rm Ric}^{{\bf WY}}(X,X)$ and ${\rm Ric}^{{\bf L}}(X,X)\leq{\rm Ric}(X,X)
    \leq_{{\bf I},\widetilde{f}}\widehat{{\rm Ric}}{}^{{\bf WY}}(X,X)$ hold, where $\tilde{f}:=u-u_{\min}+c_{0}$ with $c_{0}\geq1/e$.

\end{itemize}

\end{proposition}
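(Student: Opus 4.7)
The plan is to reduce the proposition to two independent pieces: (i) the geometric existence, on each of the $28$ homotopy $7$-spheres, of a Riemannian metric $g$ together with a nonzero Killing vector field $X$ of constant $g$-length, and (ii) a purely tensorial verification of the integral inequalities that follows automatically once (i) is in place. Step (ii) is essentially free. For the first bullet, parts (i), (ii), (iii) of Theorem \ref{t5.6} applied to any $X \in \mathfrak{X}_{{\bf KC}}(M)$ deliver $\mathrm{Ric}^{{\bf L}}(X,X) \leq_{{\bf I}} \mathrm{Ric}(X,X) \leq_{{\bf I}} \mathrm{Ric}^{{\bf WY}}(X,X)$ and the analogous chain ending in $\widehat{\mathrm{Ric}}{}^{{\bf WY}}(X,X)$. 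For the second bullet, Theorem \ref{t5.6}(i) is already stated for an arbitrary measure $\mu$, so it supplies the lower inequality for the weighted order $\leq_{{\bf I},\widetilde{f}}$ by specializing to $d\mu = e^{\widetilde{f}} dV$; the two upper inequalities are then precisely the content of Theorem \ref{t5.8}(i)--(ii), applied with $\widetilde{f} = u - u_{\min} + c_{0}$ for the given smooth function $u$.

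For Step (i), I would exploit Milnor's realization of the homotopy $7$-spheres as total spaces of $S^{3}$-bundles over $S^{4}$ together with the Gromoll--Meyer and Grove--Ziller constructions of cohomogeneity-one nonnegatively curved metrics on these bundles. Each such total space carries an isometric action of a maximal torus $S^{1} \subset S^{3}$ coming from the fiberwise action of the structure group, and on the diffeomorphism types covered by the Gromoll--Meyer biquotient $\mathrm{Sp}(2)//\mathrm{Sp}(1)$ (and its relatives realizing every class in $\Theta_{7} \cong \mathbb{Z}/28$) this circle subaction can be arranged to be free. Given a free isometric $S^{1}$-action, the standard Kaluza--Klein construction $g = \pi^{*} g_{B} + \omega \otimes \omega$ (with $\pi: M \to M/S^{1}$ the principal bundle projection, $g_{B}$ any metric on the base, and $\omega$ any connection $1$-form) produces an $S^{1}$-invariant Riemannian metric for which the fundamental vector field $X$ is simultaneously Killing and of constant unit length. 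This is precisely the Berestovskii--Nikonorov observation invoked in Remark \ref{r5.11}, and by Bianchi's theorem it is equivalent to the geometric statement that every integral curve of $X$ is a geodesic of common length.

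The main obstacle is the differential-topological step of verifying that an appropriate free isometric $S^{1}$-action (or, more weakly, an isometric $S^{1}$-action all of whose orbits are closed geodesics of the same length) can in fact be produced on every one of the $28$ diffeomorphism classes in $\Theta_{7}$, rather than just on the Gromoll--Meyer sphere. I would approach this by combining the Grove--Ziller classification of cohomogeneity-one metrics on $S^{3}$-bundles over $S^{4}$ with the classification of smooth $S^{1}$-actions on homotopy $7$-spheres to locate, in each class, a circle subgroup of the isometry group whose stabilizers are trivial; the Eisenhart--Bianchi criterion then converts this into the required constant-length Killing field without any further analytic work. Once Step (i) is established, no new integral identities beyond those already in Lemma \ref{l5.7} are needed, and the proposition is an immediate corollary of Theorems \ref{t5.6} and \ref{t5.8}.
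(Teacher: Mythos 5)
Your high-level decomposition is the same as the paper's: once you have a metric $g$ and a nonzero Killing vector field $X$ of constant $g$-length on $M$, the six integral inequalities follow immediately from Theorem~\ref{t5.6} (applied in the $\leq_{\mathbf{IKC}}$ sense, and in (i) with the measure $d\mu=e^{\widetilde{f}}dV$) and Theorem~\ref{t5.8}. That part of the argument is correct and coincides with what the paper does.

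The gap is in Step (i). The paper obtains the pair $(g,X)$ by invoking Corollary~11 of Berestovskii--Nikonorov \cite{BN2006} together with the Montgomery--Yang classification \cite{MY1973} of differentiable pseudo-free circle actions on homotopy $7$-spheres, and that is the end of the existence step. Your proposed reconstruction goes through Milnor's realization of exotic $7$-spheres as $S^3$-bundles over $S^4$ (Gromoll--Meyer/Grove--Ziller cohomogeneity-one metrics plus a Kaluza--Klein construction), but this cannot cover all $28$ diffeomorphism classes: only $16$ of the $28$ elements of $\Theta_7\cong\mathbb{Z}/28$ are represented by total spaces of $S^3$-bundles over $S^4$, as one sees from the Eells--Kuiper $\mu$-invariant of $\Sigma_{m,1-m}$. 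Your parenthetical remark about the Gromoll--Meyer biquotient ``and its relatives realizing every class in $\Theta_7$'' is not accurate in that setting, and your own flagged ``main obstacle'' is precisely this uncovered remainder. You gesture at ``the classification of smooth $S^1$-actions on homotopy $7$-spheres'', which is in fact the Montgomery--Yang result, but without invoking it cleanly the proposed route does not close. A subsidiary issue: the Montgomery--Yang actions are pseudo-free, not free, so the naive principal-bundle (Kaluza--Klein) construction of a constant-length fundamental vector field does not apply directly; this is exactly why the paper leans on the Berestovskii--Nikonorov corollary rather than constructing the invariant metric by hand, whereas the free case is what drives Remark~\ref{r5.11}, not Proposition~\ref{p5.9}. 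Replacing your Step (i) with the citations to \cite{BN2006} (Corollary~11) and \cite{MY1973} would make the proof match the paper and close the gap.
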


\begin{proof} According to \cite{BN2006} (see Corollary 11) and \cite{MY1973} we can take $X$ to be a Killing vector field of constant
length with respect to $g$. Then the results follow from {\color{red}{Theorem \ref{t5.6}}} and {\color{red}{Theorem \ref{t5.8}}}.
\end{proof}

We say that a Riemannian metric $g$ on $M$ is of {\it cohomogeneity 1} if some compact Lie group $G$ acts smoothly and isometrically on $M$ and the space of orbits $M/G$ with respect to this action is one-dimensional.

\begin{proposition}\label{p5.10} Let $n\geq2$ and $\epsilon>0$. On the sphere $\mathbb{S}^{2n-1}$, there are a (real-analytic) Riemannian
metric $g_{\epsilon}$, of cohomogeneity $1$, with the property that all section curvatures of $g_{\epsilon}$ differ from $1$ at most by $\epsilon$, and a (real-analytic) nonzero vector field $X_{\epsilon}$, such that

\begin{itemize}

\item ${\rm Ric}^{{\bf L}}_{g_{\epsilon}}(X_{\epsilon},X_{\epsilon})\leq_{{\bf I}}{\rm Ric}_{g_{\epsilon}}(X_{\epsilon},X_{\epsilon})\leq_{\bf I}{\rm Ric}^{{\bf WY}}_{g_{\epsilon}}(X_{\epsilon},X_{\epsilon})$ and ${\rm Ric}^{{\bf L}}_{g_{\epsilon}}(X_{\epsilon},X_{\epsilon})\leq_{{\bf I}}{\rm Ric}_{g_{\epsilon}}(X_{\epsilon},X_{\epsilon})
    \leq_{{\bf I}}\widehat{{\rm Ric}}{}^{{\bf WY}}_{g_{\epsilon}}
    (X_{\epsilon},X_{\epsilon})$ hold.

\item for any smooth function $u$ on $M$, ${\rm Ric}^{{\bf L}}_{g_{\epsilon}}(X_{\epsilon},X_{\epsilon})\leq_{{\bf I},\widetilde{f}}{\rm Ric}_{g_{\epsilon}}(X_{\epsilon},X_{\epsilon})\leq_{{\bf I},\widetilde{f}}{\rm Ric}^{{\bf WY}}_{g_{\epsilon}}$ $(X_{\epsilon},X_{\epsilon})$ and ${\rm Ric}^{{\bf L}}_{g_{\epsilon}}(X_{\epsilon},X_{\epsilon})\leq{\rm Ric}_{g_{\epsilon}}(X_{\epsilon},X_{\epsilon})
    \leq_{{\bf I},\widetilde{f}}\widehat{{\rm Ric}}{}^{{\bf WY}}_{g_{\epsilon}}(X_{\epsilon},X_{\epsilon})$ hold, where $\tilde{f}:=u-u_{\min}+c_{0}$ with $c_{0}\geq1/e$.

\end{itemize}

\end{proposition}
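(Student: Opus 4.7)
The plan is to construct the pair $(g_\epsilon, X_\epsilon)$ via the Tuschmann construction mentioned in Remark \ref{r5.11} and then deduce the integral inequalities as a direct application of Theorem \ref{t5.6} and Theorem \ref{t5.8}. First, I would invoke Tuschmann's theorem \cite{Tuschmann1997}, which produces on $\mathbb{S}^{2n-1}$ (for $n\geq 2$) a real-analytic Riemannian metric $g_\epsilon$ of cohomogeneity one, whose sectional curvatures satisfy $|K_{g_\epsilon}-1|\leq \epsilon$, together with a smooth isometric ${\bf S}^1$-action. The cohomogeneity-one hypothesis is automatic from the construction, and the $\epsilon$-pinching of sectional curvatures is the main quantitative output of Tuschmann's result.

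Next, following the observation of Berestovskii-Nikonorov \cite{BN2006} recalled in Remark \ref{r5.11}, on the odd-dimensional sphere $\mathbb{S}^{2n-1}$ the ${\bf S}^1$-action can be arranged to be free (one verifies this by checking that the isotropy subgroups of the Tuschmann action are trivial for odd-dimensional spheres, using that $\chi(\mathbb{S}^{2n-1})=0$ and that the action is built from the Hopf action in a way compatible with the cohomogeneity-one structure). Given freeness, the infinitesimal generator $X_\epsilon$ of this ${\bf S}^1$-action is a nowhere-vanishing real-analytic Killing vector field on $(\mathbb{S}^{2n-1},g_\epsilon)$, and, after rescaling the parameter on ${\bf S}^1$ if necessary, one obtains $|X_\epsilon|_{g_\epsilon}\equiv 1$, i.e., $X_\epsilon \in \mathfrak{X}_{{\bf KC}}(\mathbb{S}^{2n-1})$.

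Once the pair $(g_\epsilon, X_\epsilon)$ is in hand, both bullet points follow immediately. The first integral inequality ${\rm Ric}^{{\bf L}}_{g_\epsilon}(X_\epsilon,X_\epsilon)\leq_{{\bf I}}{\rm Ric}_{g_\epsilon}(X_\epsilon,X_\epsilon)$ is a special case of part (i) of Theorem \ref{t5.6} (valid for any measure, any vector field, any smooth $u$). The inequalities ${\rm Ric}_{g_\epsilon}(X_\epsilon,X_\epsilon)\leq_{{\bf I}}{\rm Ric}^{{\bf WY}}_{g_\epsilon}(X_\epsilon,X_\epsilon)$ and ${\rm Ric}_{g_\epsilon}(X_\epsilon,X_\epsilon)\leq_{{\bf I}}\widehat{{\rm Ric}}{}^{{\bf WY}}_{g_\epsilon}(X_\epsilon,X_\epsilon)$ follow from parts (ii) and (iii) of Theorem \ref{t5.6}, whose hypothesis is precisely $X_\epsilon\in \mathfrak{X}_{{\bf KC}}$. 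The second bullet point, for the weighted measure $e^{\tilde f}dV$ with $\tilde f = u-u_{\min}+c_0$, $c_0\geq 1/e$, follows in the same manner from Theorem \ref{t5.8}(i)--(ii), again because $X_\epsilon$ has constant length with respect to $g_\epsilon$.

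The main obstacle, and essentially the only nontrivial step, is the freeness of the ${\bf S}^1$-action in the Tuschmann construction, which is needed to promote the Killing vector field to one of constant (unit) length; this is where the odd-dimensional hypothesis $\mathbb{S}^{2n-1}$ enters in an essential way, and is the content of the Berestovskii-Nikonorov observation. Everything else reduces to citing Theorems \ref{t5.6} and \ref{t5.8} along the constant-length Killing field $X_\epsilon$.
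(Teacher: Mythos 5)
Your final step --- once a pair $(g_\epsilon, X_\epsilon)$ with $X_\epsilon \in \mathfrak{X}_{\bf KC}(\mathbb{S}^{2n-1})$ is in hand, invoke Theorems \ref{t5.6} and \ref{t5.8} --- is correct and matches the paper. The problem is the construction of the pair, where you take a genuinely different route from the paper and that route has a gap.

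The paper's own proof is a one-line citation to Theorem 21 of Berestovskii--Nikonorov \cite{BN2006}, which directly supplies a real-analytic cohomogeneity-one metric $g_\epsilon$ on $\mathbb{S}^{2n-1}$ with sectional curvatures $\epsilon$-close to $1$ together with a Killing field of constant length; this is an explicit existence result for odd spheres, not a consequence of the Tuschmann collapsing theorem. You instead try to build the pair from the Tuschmann construction recalled in Remark \ref{r5.11}, and there are two independent reasons this does not work. First, the version of Tuschmann's theorem stated in Remark \ref{r5.11} requires ${\rm vol}(M,g) < v(n,D)$ (a collapsed metric in $\mathfrak{S}_{n,D}$), and its conclusion is an $\mathbb{S}^1$-invariant metric $g_\epsilon$ that is $C^1$-close to that collapsed $g$ --- not a metric with $|{\rm Sec} - 1| \leq \epsilon$. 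A metric on $\mathbb{S}^{2n-1}$ whose sectional curvatures are $\epsilon$-close to $1$ has volume close to that of the round sphere and is nowhere near the collapsed regime, so the hypothesis of Tuschmann's theorem is not satisfied in the situation you describe, and its conclusion is not what Proposition \ref{p5.10} needs. Second, even granting an isometric locally free $\mathbb{S}^1$-action, your argument for freeness (``$\chi(\mathbb{S}^{2n-1}) = 0$'' plus an unspecified compatibility with the Hopf action) does not establish that the isotropy groups are trivial: a locally free circle action has finite, not necessarily trivial, isotropy, and $\chi = 0$ is a necessary but far from sufficient condition for a free action. The paper is explicit about this --- Remark \ref{r5.11} phrases the Berestovskii--Nikonorov observation as a conditional statement, ``\emph{if} the $\mathbb{S}^1$-action in (i) is free,'' precisely because that freeness is not known to hold in general. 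Your proof asserts the freeness rather than proving it, which is exactly the gap the paper avoids by citing BN2006's Theorem 21 instead.
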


\begin{proof} According to \cite{BN2006} (see Theorem 21), we can take $X$ to be a Killing
vector field $X$ of unit length with respect to $g$. Then the results follow from {\color{red}{Theorem \ref{t5.6}}} and {\color{red}{Theorem \ref{t5.8}}}.
\end{proof}

\begin{remark}\label{r5.11} Fix $n\geq2$ and $D>0$. Let $\mathfrak{S}_{n,D}$ denote the class of simply-connected $n$-dimensional Riemannian manifolds $(M,g)$ with the sectional curvature $|{\rm Sec}_{g}|_{g}\leq 1$ and with ${\rm diam}(M,g)\leq D$. The class $\mathfrak{S}_{n,\pi/\sqrt{\delta}}$ contains a subsclass $\mathfrak{PS}_{n,\delta}$, which consists of all simply-connected $n$-dimensional Riemannian manifold $(M,g)$ with the sectional curvature $0<\delta<{\rm Sec}_{g}\leq 1$.

Tuschmann \cite{Tuschmann1997} proved that there is a positive number $v:=v(n,D)$ with the following property: if $(M,g)\in\mathfrak{S}_{n,D}$ satisfies ${\rm vol}(M,g)<v$, then

\begin{itemize}

\item[(i)] there is a smooth locally free action of the $\mathbb{S}^{1}$-action on $M$, and

\item[(ii)] for every $\epsilon>0$ there exists a $\mathbb{S}^{1}$-invariant metric $g_{\epsilon}$ on $M$ such that
    \begin{equation*}
    e^{-\epsilon}g\leq g_{\epsilon}<e^{\epsilon}g, \ \ \ |\nabla_{g}
    -\nabla_{g_{\epsilon}}|_{g}<\epsilon, \ \ \ |\nabla^{i}_{g_{\epsilon}}
    {\rm Rm}_{g_{\epsilon}}|_{g}<C(n,i,\epsilon).
    \end{equation*}

\end{itemize}

If the $\mathbb{S}^{1}$-action in (i) is free, Berestovskii and Nikonorov \cite{BN2006} observed that we can find a Killing vector field $X_{\epsilon}$ of unit length with respect to $g_{\epsilon}$. Consequently, in this case,

\begin{itemize}

\item ${\rm Ric}^{{\bf L}}_{g_{\epsilon}}(X_{\epsilon},X_{\epsilon})\leq_{{\bf I}}{\rm Ric}_{g_{\epsilon}}(X_{\epsilon},X_{\epsilon})\leq_{\bf I}{\rm Ric}^{{\bf WY}}_{g_{\epsilon}}(X_{\epsilon},X_{\epsilon})$ and ${\rm Ric}^{{\bf L}}_{g_{\epsilon}}(X_{\epsilon},X_{\epsilon})\leq_{{\bf I}}{\rm Ric}_{g_{\epsilon}}(X_{\epsilon},X_{\epsilon})
    \leq_{{\bf I}}\widehat{{\rm Ric}}{}^{{\bf WY}}_{g_{\epsilon}}
    (X_{\epsilon},X_{\epsilon})$ hold.

\item for any smooth function $u$ on $M$, ${\rm Ric}^{{\bf L}}_{g_{\epsilon}}(X_{\epsilon},X_{\epsilon})\leq_{{\bf I},\widetilde{f}}{\rm Ric}_{g_{\epsilon}}(X_{\epsilon},X_{\epsilon})\leq_{{\bf I},\widetilde{f}}{\rm Ric}^{{\bf WY}}_{g_{\epsilon}}$ $(X_{\epsilon},X_{\epsilon})$ and ${\rm Ric}^{{\bf L}}_{g_{\epsilon}}(X_{\epsilon},X_{\epsilon})\leq{\rm Ric}_{g_{\epsilon}}(X_{\epsilon},X_{\epsilon})
    \leq_{{\bf I},\widetilde{f}}\widehat{{\rm Ric}}{}^{{\bf WY}}_{g_{\epsilon}}(X_{\epsilon},X_{\epsilon})$ hold, where $\tilde{f}:=u-u_{\min}+c_{0}$ with $c_{0}\geq1/e$.

\end{itemize}

\end{remark}

\subsection{Remark on ${\rm Rm}^{{\bf L}}$ and ${\rm Rm}^{{\bf WY}}$}
\label{subsection5.3}

The nonnegativity of $R^{{\bf L}}_{ijk\ell}$ was used in \cite{Wu2018}
to prove the compactness for gradient shrinking Ricci harmonic solitons.
\\

There is no useful relation between ${\rm Rm}^{{\bf L}}$ and ${\rm Rm}^{{\bf WY}}$. More precisely, we can find a Riemannian manifold $(M,g)$ so that ${\rm Rm}^{{\bf L}}(X,Y,Y,X)< {\rm Rm}^{{\bf WY}}(X,Y,Y,X)$
for some triple $(X, Y, u)$ of smooth vector fields $X, Y$ and smooth function $u$, and ${\rm Rm}^{{\bf L}}(X,Y,Y,X)> {\rm Rm}^{
{\bf WY}}(X,Y,Y,X)$ for another such triple $(X', Y', u')$.

\begin{example}\label{e5.12} Consider the Euclidean space $(\mathbb{R}^{n},
g_{\mathbb{R}^{n}})$ with the flat Riemannian metric $g_{\mathbb{R}^{n}}$. Consider a smooth function $u(x)=\varphi(r)$ with $r=|x|^{2}$,
where $\varphi(r)$ is a smooth function of variable $r$. Let $T=x^{i}\partial/\partial x^{i}$ denote the position vector field
on $\mathbb{R}^{n}$. Then
\begin{equation*}
\nabla_{i}u=2T_{i}\varphi', \ \ \ \nabla_{i}\nabla_{j}u
=4T_{i}T_{j}\varphi''+2\delta_{ij}\varphi'.
\end{equation*}
According to (\ref{5.5}), we have
\begin{eqnarray*}
R^{{\bf L}}_{ijk\ell}&=&-4\varphi'{}^{2}T_{i}T_{k}
\delta_{j\ell}-4\varphi'{}^{2}T_{i}T_{j}\delta_{k\ell},\\
R^{{\bf WY}}_{ijk\ell}&=&\left(4\varphi''T_{j}T_{k}
+2\varphi'\delta_{jk}\right)\delta_{i\ell}
-\left(4\varphi''T_{i}T_{k}+2\varphi'\delta_{ik}\right)\delta_{j\ell}\\
&&+ \ 4\varphi'{}^{2}\delta_{i\ell}T_{j}T_{k}
-4\varphi'{}^{2}\delta_{j\ell}T_{i}T_{k},
\end{eqnarray*}
so that
\begin{eqnarray*}
{\rm Rm}^{{\bf L}}(X,Y,Y,X)&=&-8\varphi'{}^{2}\langle X,T\rangle
\langle Y,T\rangle\langle X,Y\rangle,\\
{\rm Rm}^{{\bf WY}}(X,Y,Y,X)&=&
4(\varphi''+\varphi'{}^{2})\left(|X|^{2}
\langle Y,T\rangle^{2}-\langle X,T\rangle\langle Y,T\rangle
\langle X,Y\rangle\right)\\
&&+ \ 2\varphi'(|X|^{2}|Y|^{2}-\langle X,Y\rangle^{2})
\end{eqnarray*}
For any vector fields $X, Y$. Choosing $X=T$ and $Y$ with the property
that $\langle Y,T\rangle=0$ yields
\begin{equation*}
{\rm Rm}^{{\bf L}}(T,Y,Y,T)=0, \ \ \ {\rm Rm}^{{\bf WY}}(T,Y,Y,T)=2\varphi'|T|^{2}
|Y|^{2}.
\end{equation*}
When $\varphi(r)=r$, we have ${\rm Rm}^{{\bf L}}(T,Y,Y,T)
\leq{\rm Rm}^{{\bf WY}}(T,Y,Y,T)$. On the other hand, for $\varphi(r)=
-r$, we have ${\rm Rm}^{{\bf L}}(T,Y,Y,T)
\geq {\rm Rm}^{{\bf WY}}(T,Y,Y,T)$.
\end{example}

There is also no useful pointwise relation between ${\rm Rm}$ and ${\rm Rm}^{{\bf L}}$. By definition,
\begin{equation*}
{\rm Rm}^{{\bf L}}(X,Y,Y,X)={\rm Rm}(X,Y,Y,X)
-2\langle X,\nabla u\rangle\langle Y, \nabla u\rangle\langle X,Y\rangle
\end{equation*}
for any vector fields $X, Y\in\mathfrak{X}(M)$.

\begin{remark}\label{r5.13} (1) For $n\geq2$, take $X, Y\in\mathfrak{X}(M)$ so that $\langle X, Y\rangle=0$ at a point. Then, at this point, we get ${\rm Rm}^{{\bf L}}(X,Y,Y,X)={\rm Rm}(X,Y,Y,X)$.

(2) When $X=\nabla u$, we have
\begin{equation*}
{\rm Rm}^{{\bf L}}(\nabla u,Y,Y,\nabla u)=
{\rm Rm}(\nabla u,Y,Y,\nabla u)-2|\nabla u|^{2}
\langle Y,\nabla u\rangle^{2}\leq{\rm Rm}(\nabla u,Y,Y,\nabla u).
\end{equation*}

(3) For $n\geq3$, take $X, Y\in\mathfrak{X}(M)$ so that $\langle X,\nabla u\rangle, \langle Y,\nabla u\rangle, \langle X,Y\rangle<0$ at a point. Then, at this point, ${\rm Rm}^{{\bf L}}(X,Y,Y,X)>{\rm Rm}(X,Y,Y,X)$.

(4) In general, for any vector fields $X, Y\in\mathfrak{X}(M)$, we have
\begin{equation*}
{\rm Rm}^{{\bf L}}(X,X,X,X)=-2\langle X,\nabla u\rangle^{2}
|X|^{2}\leq0={\rm Rm}(X,X,X,X)
\end{equation*}
and the equality holds at a point if and only if $\langle X,\nabla u\rangle=0$ or $X
=0$ at this point.
\end{remark}

To give a relation between ${\bf Rm}^{{\bf L}}(X,Y,Y,X)$ and ${\bf Rm}(X,Y,Y,X)$ in the sense of ${\bf IKC}$, we let
\begin{equation}
J(X,Y):=\int_{M}\langle X,\nabla u\rangle\langle Y,\nabla u\rangle
\langle X,Y\rangle dV, \ \ \ X, Y\in\mathfrak{X}(M).\label{5.30}
\end{equation}
It is clear that $J(X,Y)=J(Y,X)$. Compute
$$
\int_{M}\langle X,\nabla u\rangle\langle Y,\nabla u\rangle
\langle X,Y\rangle dV \ \ = \ \ \int_{X}\nabla_{i}u\left[X^{i}\langle Y,\nabla u\rangle
\langle X,Y\rangle\right]dV
$$
$$
= \ \
-\int_{M}u\left[{\rm div}(X)\langle Y,\nabla u\rangle\langle X,Y\rangle
+X^{i}\nabla_{i}(\langle Y,\nabla u\rangle\langle X,Y\rangle)\right]dV
$$
$$
= \ \ -\int_{M}u
\left[{\rm div}(X)\langle Y,\nabla u\rangle\langle X,Y\rangle
+X^{i}\nabla_{i}(Y^{j}\nabla_{j}uX^{k}Y_{k})\right]dV
$$
$$
= \ \
-\int_{M}u\bigg[{\rm div}(X)\langle Y,\nabla u\rangle\langle X,Y\rangle
+X^{i}\bigg(\nabla_{i}Y^{j}\nabla_{j}uX^{k}Y_{k}
$$
$$
+ \ Y^{j}\nabla_{i}\nabla_{j}uX^{k}Y_{k}
+Y^{j}\nabla_{j}u\nabla_{i}X^{k}Y_{k}
+Y^{j}\nabla_{j}uX^{k}\nabla_{i}Y_{k}\bigg)\bigg]dV
$$
$$
= \ \ -\int_{M}u{\rm div}(X)\langle Y,\nabla u\rangle\langle X,Y\rangle dV
-\int_{M}uX^{i}\nabla_{i}Y^{j}\nabla_{j}uX^{k}Y_{k}dV
$$
$$
- \ \int_{M}uX^{i}Y^{j}\nabla_{i}\nabla_{j}uX^{k}Y_{k}dV
-\int_{M}u X^{i}Y^{j}\nabla_{j}u\nabla_{i}X^{k}Y_{k}dV
-\int_{M}uX^{i}Y^{j}\nabla_{j}uX^{k}\nabla_{i}Y_{k}dV
$$
$$
=: \ \ -\int_{M}u{\rm div}(X)\langle Y,\nabla u\rangle\langle X,Y\rangle dV
-J_{1}-J_{2}-J_{3}-J_{4}.
$$
Using the definition of Lie derivative that $(\mathscr{L}_{Y}g)_{ik}
=\nabla_{i}Y_{k}+\nabla_{k}Y_{i}$, we have
$$
J_{4} \ \ = \ \ \int_{M}uX^{i}Y^{j}\nabla_{j}uX^{k}\nabla_{i}Y_{k}dV \ \
= \ \ -\int_{M}u\nabla_{j}(uX^{i}Y^{j}X^{k}\nabla_{i}Y_{k})dV
$$
$$
= \ \ -\int_{M}u\bigg[\nabla_{j}uX^{i}Y^{j}X^{k}\nabla_{i}Y_{k}
+u\nabla_{j}X^{i}Y^{j}X^{k}\nabla_{i}Y_{k}
$$
$$
+ \ u X^{i}{\rm div}(Y)X^{k}\nabla_{i}Y_{k}
+u X^{i}Y^{j}\nabla_{j}X^{k}\nabla_{i}Y_{k}
+u X^{i}Y^{j}X^{k}\nabla_{j}\nabla_{i}Y_{k}\bigg]dV
$$
$$
= \ \ -J_{4}-\int_{M}u^{2}\bigg[{\rm div}(Y)X^{i}X^{k}\nabla_{i}
Y_{k}+X^{i}Y^{j}X^{k}\nabla_{j}\nabla_{i}Y_{k}
$$
$$
+ \ X^{i}Y^{j}\nabla_{j}X^{k}\nabla_{i}Y_{k}
+X^{k}Y^{j}\nabla_{j}X^{i}\nabla_{i}Y_{k}\bigg]dV
$$
$$
= \ \ -J_{4}-\int_{M}u^{2}\left[{\rm div}(Y)
X^{i}X^{k}\nabla_{i}Y_{k}
+X^{i}Y^{j}X^{k}\nabla_{j}\nabla_{i}Y_{k}
+X^{i}Y^{j}\nabla_{j}X^{k}(\mathscr{L}_{Y}g)_{ik}\right]dV;
$$
hence
\begin{equation}
J_{4}=-\frac{1}{2}\int_{M}u^{2}\left[{\rm div}(Y)
X^{i}X^{k}\nabla_{i}Y_{k}
+X^{i}Y^{j}X^{k}\nabla_{j}\nabla_{i}Y_{k}
+X^{i}Y^{j}\nabla_{j}X^{k}(\mathscr{L}_{Y}g)_{ik}\right]dV.\label{5.31}
\end{equation}
In particular
\begin{equation}
J_{4}=-\frac{1}{2}\int_{M}u^{2}(X^{i}Y^{j}X^{k}\nabla_{j}
\nabla_{i}Y_{k})dV=-\frac{1}{2}\int_{M}u^{2}
(X^{i}X^{j}Y^{k}\nabla_{k}\nabla_{i}Y_{j})dV, \ \ \ Y
\in\mathfrak{X}_{{\bf K}}(M).\label{5.32}
\end{equation}
Similarly,
$$
J_{3} \ \ = \ \ \int_{M}uX^{i}Y^{j}\nabla_{j}uY^{k}\nabla_{i}X_{k}dV \ \
= \ \ -\int_{M}u\nabla_{j}(u X^{i}Y^{j}Y^{k}\nabla_{i}X_{k})dV
$$
$$
= \ \ -\int_{M}u\bigg[\nabla_{j}uX^{i}Y^{j}Y^{k}\nabla_{i}X_{k}
+u\nabla_{j}X^{i}Y^{j}Y^{k}\nabla_{i}X_{k}
$$
$$
+ \ uX^{i}{\rm div}(Y)Y^{k}\nabla_{i}X_{k}
+u X^{i}Y^{j}\nabla_{j}Y^{k}\nabla_{i}X_{k}
+uX^{i}Y^{j}Y^{k}\nabla_{j}\nabla_{i}X_{k}\bigg]dV
$$
$$
= \ \ -J_{3}-\int_{M}u^{2}\bigg[{\rm div}(Y)X^{i}Y^{k}\nabla_{i}X_{k}
+X^{i}Y^{j}Y^{k}\nabla_{j}\nabla_{i}X_{k}
$$
$$
+ \ \nabla_{j}X^{i}\nabla_{i}X_{k}Y^{j}Y^{k}
+\nabla_{j}Y^{k}\nabla_{i}X_{k}X^{i}Y^{j}\bigg]dV.
$$
Hence
\begin{eqnarray}
J_{3}&=&-\frac{1}{2}\int_{M}u^{2}\bigg[{\rm div}(Y)X^{i}Y^{k}\nabla_{i}
X_{k}+X^{i}Y^{j}Y^{k}\nabla_{j}\nabla_{i}X_{k}\nonumber\\
&&+ \ \nabla_{j}X^{i}\nabla_{i}X_{k}Y^{j}Y^{k}
+\langle\nabla_{X}X,\nabla_{Y}Y\rangle\bigg]dV.\label{5.33}
\end{eqnarray}
From {\color{blue}{(\ref{5.32})}} and {\color{blue}{(\ref{5.33})}}, we obtain
\begin{eqnarray}
J(X,Y)&=&-\int_{M}u\nabla_{j}u\nabla_{X}Y^{j}\langle X,Y\rangle dV
-\int_{M}uX^{i}Y^{j}\nabla_{i}\nabla_{j}u\langle X,Y\rangle dV\nonumber\\
&&+ \ \frac{1}{2}\int_{M}u^{2}
\left[X^{i}Y^{j}Y^{k}\nabla_{j}\nabla_{i}X_{k}
+\nabla_{Y}X^{i}\nabla_{i}X_{k}Y^{k}+\langle\nabla_{X}X,
\nabla_{Y}Y\rangle\right]dV\nonumber\\
&&+ \ \frac{1}{2}\int_{M}u^{2}\left(X^{i}X^{j}Y^{k}
\nabla_{k}\nabla_{i}Y_{j}\right)dV\nonumber\\
&=&-\int_{M}u\langle X,Y\rangle\left[\langle\nabla u,\nabla_{X}Y\rangle
+X^{i}Y^{j}\nabla_{i}\nabla_{j}u\right]dV\label{5.34}\\
&&+ \ \frac{1}{2}\int_{M}u^{2}\bigg[\langle\nabla_{X}X,\nabla_{Y}Y\rangle
+\langle\nabla_{\nabla_{Y}X}X,Y\rangle\nonumber\\
&&+ \ X^{i}Y^{k}(Y^{j}\nabla_{j}\nabla_{i}X_{k}
+X^{j}\nabla_{k}\nabla_{i}Y_{j})\bigg]dV.\nonumber
\end{eqnarray}
According to the following identities
\begin{eqnarray*}
X^{i}Y^{k}Y^{j}\nabla_{j}\nabla_{i}X_{k}&=&Y^{k}Y^{j}\nabla_{j}(X^{i}
\nabla_{i}X_{k})-Y^{k}Y^{j}\nabla_{j}X^{i}\nabla_{i}X_{k}\\
&=&Y^{k}Y^{j}\nabla_{j}\nabla_{X}X_{k}-\nabla_{Y}X^{i}\nabla_{i}X_{k}Y^{k}\\
&=&\langle Y,\nabla_{Y}\nabla_{X}X\rangle-\langle Y,\nabla_{\nabla_{Y}X}X\rangle,\\
X^{i}Y^{k}X^{j}\nabla_{k}\nabla_{i}Y_{j}&=&
Y^{k}X^{j}\nabla_{k}(X^{i}\nabla_{i}Y_{j})-Y^{k}X^{j}\nabla_{k}X^{i}\nabla_{i}Y_{j}\\
&=&Y^{k}X^{j}\nabla_{k}\nabla_{X}Y_{j}-\nabla_{Y}X^{i}\nabla_{i}Y_{j}
X^{j}\\
&=&\langle X,\nabla_{Y}\nabla_{X}Y\rangle
-\langle X,\nabla_{\nabla_{Y}X}Y\rangle,
\end{eqnarray*}
we find that
$$
\langle\nabla_{X}X,\nabla_{Y}Y\rangle
+\langle\nabla_{\nabla_{Y}X}X,Y\rangle+X^{i}Y^{k}(Y^{j}\nabla_{j}\nabla_{i}X_{k}
+X^{j}\nabla_{k}\nabla_{i}Y_{j})
$$
$$
= \ \ \langle\nabla_{X}X,\nabla_{Y}Y\rangle
+\langle Y,\nabla_{\nabla_{Y}X}X\rangle
+\langle X,\nabla_{Y}\nabla_{X}Y\rangle
$$
$$
+ \ \langle Y,\nabla_{Y}\nabla_{X}X\rangle
-\langle X,\nabla_{\nabla_{Y}X}X\rangle
-\langle Y\nabla_{\nabla_{Y}X}X\rangle
$$
$$
= \ \ \langle X,\nabla_{Y}\nabla_{X}Y
-\nabla_{\nabla_{Y}X}X\rangle
+\langle\nabla_{X}X,\nabla_{Y}Y\rangle
+\langle Y,\nabla_{Y}\nabla_{X}X\rangle
$$
$$
= \ \ {\rm Rm}(Y,X,Y,X)+\langle\nabla_{X}\nabla_{Y}Y
-\nabla_{\nabla_{X}Y}Y,X\rangle+Y\langle Y,\nabla_{X}X\rangle.
$$
Plugging it into {\color{blue}{(\ref{5.34})}} and using $\nabla_{X}X=0$ for any $X\in\mathfrak{X}_{{\bf KC}}(M)$ yields
\begin{eqnarray}
J(X,Y)&=&-\int_{M}u\langle X,Y\rangle[\langle\nabla u,
\nabla_{X}Y\rangle+X^{i}Y^{j}\nabla_{i}\nabla_{j}u]dV\nonumber\\
&&+ \ \frac{1}{2}\int_{M}u^{2}\left[-{\rm Rm}(X,Y,Y,X)
-\langle X,\nabla_{\nabla_{X}Y}Y\rangle\right]dV, \ \ \ X, Y\in\mathfrak{X}_{{\bf KC}}(M).\label{5.35}
\end{eqnarray}
Since $J$ is symmetric, we can rewrite {\color{blue}{(\ref{5.35})}} in a symmetric form.
Changing $X$ and $Y$ in {\color{blue}{(\ref{5.35})}} we have
\begin{eqnarray*}
J(X,Y)&=&-\int_{M}u\langle X,Y\rangle[\langle\nabla u,
\nabla_{Y}X\rangle+X^{i}Y^{j}\nabla_{i}\nabla_{j}u]dV\nonumber\\
&&+ \ \frac{1}{2}\int_{M}u^{2}\left[-{\rm Rm}(X,Y,Y,X)
-\langle Y,\nabla_{\nabla_{Y}X}YX\rangle\right]dV, \ \ \ X, Y\in\mathfrak{X}_{{\bf KC}}(M).
\end{eqnarray*}
Combining it with {\color{blue}{(\ref{5.35})}} we arrive at
\begin{eqnarray}
J(X,Y)&=&-\int_{M}u\langle X,Y\rangle\left[\frac{1}{2}\langle\nabla u,
\nabla_{X}Y+\nabla_{Y}X\rangle
+X^{i}Y^{j}\nabla_{i}\nabla_{j}u\right]dV\nonumber\\
&&+ \ \frac{1}{2}\int_{M}u^{2}\Lambda dV, \ \ \ X, Y\in\mathfrak{X}_{{\bf KC}}
(M)\label{5.36}
\end{eqnarray}
where
\begin{equation}
\Lambda:=-{\rm Rm}(X,Y,Y,X)-\frac{1}{2}
\langle X,\nabla_{\nabla_{X}Y}Y\rangle
-\frac{1}{2}\langle Y,\nabla_{\nabla_{Y}X}X\rangle.\label{5.37}
\end{equation}
The following obvious identities
$$
-{\rm Rm}(X,Y,Y,X)-\langle X,\nabla_{\nabla_{X}Y}Y\rangle \ = \
-{\rm Rm}(X,Y,Y,X)+\langle Y,\nabla_{\nabla_{X}Y}X\rangle
-\nabla_{X}Y\langle X,Y\rangle,
$$
$$
-{\rm Rm}(X,Y,Y,X)-\langle Y,\nabla_{\nabla_{Y}X}X\rangle \ = \
-{\rm Rm}(X,Y,Y,X)+\langle X,\nabla_{\nabla_{Y}X}Y\rangle
-\nabla_{Y}X\langle Y,X\rangle,
$$
imply
\begin{eqnarray}
\Lambda&=&-{\rm Rm}(X,Y,Y,X)-\frac{1}{4}\langle X,Y\rangle
(\nabla_{X}Y+\nabla_{Y}X)\nonumber\\
&&+ \ \frac{1}{4}\left[\langle Y,\nabla_{[X,Y]}X\rangle
+\langle X,\nabla_{[Y,X]}Y\rangle\right].\label{5.38}
\end{eqnarray}
In summary, we obtain
\begin{eqnarray}
\int_{M}{\bf Rm}^{{\bf L}}(X,Y,Y,X)dV&=&\int_{M}u\langle X,Y\rangle
\left[\langle \nabla u,\nabla_{X}Y+\nabla_{Y}X\rangle
+2X^{i}Y^{j}\nabla_{i}\nabla_{j}u\right]dV\nonumber\\
&&+ \ \int_{M}\frac{u^{2}}{2}\left[\langle X,\nabla_{\nabla_{X}Y}Y
\rangle+\langle Y,\nabla_{\nabla_{Y}X}X\rangle\right]dV\label{5.39}\\
&&+ \ \int_{M}(1+u^{2})
{\rm Rm}(X,Y,Y,X)dV, \ \ \ X, Y\in\mathfrak{X}_{{\bf KC}}(M).\nonumber
\end{eqnarray}
or
\begin{eqnarray}
\int_{M}{\bf Rm}^{{\bf L}}(X,Y,Y,X)dV&=&\int_{M}u\langle X,Y\rangle
\left[\langle \nabla u,\nabla_{X}Y+\nabla_{Y}X\rangle
+2X^{i}Y^{j}\nabla_{i}\nabla_{j}u\right]dV\nonumber\\
&&+ \ \int_{M}\frac{u^{2}}{4}\langle X,Y\rangle
(\nabla_{X}Y+\nabla_{Y}X)dV\nonumber\\
&&- \ \int_{M}\frac{u^{2}}{4}\left[\langle Y,\nabla_{[X,Y]}X\rangle
+\langle X,\nabla_{[Y,X]}Y\rangle\right]dV\label{5.40}\\
&&+ \ \int_{M}(1+u^{2})
{\rm Rm}(X,Y,Y,X)dV, \ \ \ X, Y\in\mathfrak{X}_{{\bf KC}}(M).\nonumber
\end{eqnarray}

\section{Uniqueness for the Ricci-harmonic flow}\label{section6}

In the section, we prove the forward and backward uniqueness of solutions for the Ricci-harmonic flow. Suppose that $(M,g_{0})$ is a complete Riemannian manifold of dimension $n$ and $u_{0}$ is a smooth function on $M$. Consider the Ricci-harmonic flow
\begin{equation}
\partial_{t}g(t)=-2\!\ {\rm Ric}_{g(t)}+4\!\ \nabla_{g(t)}u(t)
\otimes \nabla_{g(t)} u(t), \ \ \ \partial_{t}u(t)=\Delta_{g(t)}u(t),\label{6.1}
\end{equation}
on $M\times[0,T]$.

\subsection{Forward uniqueness}\label{subsection6.1}

We now use the idea in \cite{K2014} to prove the following

\begin{theorem}\label{t6.1}{\bf (Forward uniqueness of the Ricci-harmonic flow)} Suppose that $(g(t),$ $u(t))$ and $(\tilde{g}(t),\tilde{u}(t))$ are two smooth
complete solutions of {\color{blue}{(\ref{6.1})}} with
\begin{equation}
\sup_{M\times[0,T]}\left(|{\rm Rm}_{g(t)}|_{g(t)}+|{\rm Rm}_{\tilde{g}(t)}|_{\tilde{g}(t)}\right)\leq K,\label{6.2}
\end{equation}
for some uniform constant $K$. If $(g(0), u(0))=(\tilde{g}(0),
\tilde{u}(0))=(g_{0}, u_{0})$, then $g(t)\equiv\tilde{g}(t)$ for each $t\in[0,T]$.
\end{theorem}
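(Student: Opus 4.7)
The plan is to adapt Kotschwar's energy method \cite{K2014} by defining the modified energy functional
\begin{equation*}
\mathcal{E}(t):=\int_{M}\left[t^{-1}|h|^{2}+t^{-\beta}|A|^{2}+|T|^{2}+|v|^{2}+|w|^{2}\right]e^{-\eta}dV_{g(t)},
\end{equation*}
where $h:=g-\tilde g$, $A:=\Gamma_{g}-\Gamma_{\tilde g}$, $T:={\rm Rm}_{g}-{\rm Rm}_{\tilde g}$, $v:=u-\tilde u$, $w:=\nabla_{g} u-\nabla_{\tilde g}\tilde u$, all norms taken in $g(t)$, $\beta\in(0,1)$, and $\eta$ is a smooth cutoff (exhaustion) function chosen so that each term is integrable under the curvature bound \eqref{6.2}. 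First I would verify that $\mathcal{E}(0^{+})=0$: since the initial data agree, Taylor expansion at $t=0$ together with the evolution equations yields $|h|=O(t)$, $|A|=O(t)$, $|T|=O(1)$, $|v|,|w|=O(t)$, so the weights $t^{-1}$ and $t^{-\beta}$ (with $\beta<1$) produce a finite limit, and in fact $\mathcal{E}(t)\to 0$ as $t\to 0^{+}$.

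Next I would compute $\mathcal{E}'(t)$. The key observation, coming from Theorem~\ref{tB.2}, is that under the curvature condition \eqref{6.2} the bound on $|{\rm Rm}_{g(t)}|$ and $|{\rm Rm}_{\tilde g(t)}|$ automatically gives uniform bounds on $|\nabla_{g(t)}^{k}u(t)|$ and $|\nabla_{\tilde g(t)}^{k}\tilde u(t)|$ for $k=1,2,3$. Using the standard evolution equations for the metric, connection, curvature, and for $u$, $\nabla u$ under \eqref{6.1} (computed via Lemma~\ref{lA.1}), differentiation term-by-term yields inequalities of the schematic form
\begin{align*}
\partial_{t}|h|^{2}&\leq C(|h|^{2}+|T|^{2}+|v|^{2}+|w|^{2}),\\
\partial_{t}|A|^{2}&\leq C(|h|^{2}+|A|^{2}+|T|^{2}+|v|^{2}+|w|^{2})+\Delta|A|^{2}+\text{good terms},\\
\partial_{t}|T|^{2}&\leq\Delta|T|^{2}-|\nabla T|^{2}+C(|T|^{2}+|A|^{2}+|h|^{2}+|v|^{2}+|w|^{2}),\\
\partial_{t}|v|^{2}+\partial_{t}|w|^{2}&\leq\Delta(|v|^{2}+|w|^{2})+C(|v|^{2}+|w|^{2}+|h|^{2}+|A|^{2}+|T|^{2}),
\end{align*}
where the constants $C$ depend only on $K,n,T$. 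The $t^{-1}$ and $t^{-\beta}$ weights are chosen precisely so that the singular contributions $t^{-2}|h|^{2}$ and $\beta t^{-\beta-1}|A|^{2}$ arising from $\frac{d}{dt}(t^{-1})$ and $\frac{d}{dt}(t^{-\beta})$ are absorbed by the favorable terms $-t^{-1}|A|^{2}$ and $-t^{-\beta}|T|^{2}$ coming from the estimates $|h|\lesssim t|A|$ and $|A|\lesssim t^{1-\alpha}|T|$ (a consequence of Kotschwar's interpolation lemma applied on short time intervals).

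After integration by parts against the cutoff weight $e^{-\eta}$ and careful estimation of the cutoff-derivative terms using the bound \eqref{6.2} (which ensures that the metrics $g(t)$ and $\tilde g(t)$ remain uniformly equivalent on any compact set and that the distortions caused by $\nabla\eta$ are uniformly controlled), one arrives at
\begin{equation*}
\mathcal{E}'(t)\leq N\!\ \mathcal{E}(t)
\end{equation*}
on some sub-interval $[0,T_{0}]$ with $T_{0}\ll 1$ and $N=N(K,n,\beta)$. Combined with $\mathcal{E}(0)=0$, Gr\"onwall's inequality forces $\mathcal{E}(t)\equiv 0$ on $[0,T_{0}]$, hence $h\equiv A\equiv T\equiv v\equiv w\equiv 0$ there, i.e.\ $(g(t),u(t))=(\tilde g(t),\tilde u(t))$ on $[0,T_{0}]$. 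A standard open-closed argument on $[0,T]$ then extends the coincidence to all of $[0,T]$.

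The main obstacle will be two-fold: first, balancing the singular weights $t^{-1}$ and $t^{-\beta}$ against the derivative-difference terms so that no uncontrollable divergence appears as $t\to 0^{+}$; this demands the interpolation estimate between $|h|$, $|A|$, and $|T|$ valid near $t=0$, which forces the precise choice $\beta\in(0,1)$. Second, handling the extra scalar unknowns $v,w$ and the coupling through $\nabla u\otimes\nabla u$ in the metric evolution, which is not present in the pure Ricci flow setting of \cite{K2014}; here Theorem~\ref{tB.2} is crucial because it provides the a~priori $C^{2}$-bounds on $u$ and $\tilde u$ needed to estimate the cross terms quantitatively.
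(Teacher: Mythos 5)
Your proposal is correct and follows essentially the same route as the paper: Kotschwar's weighted energy functional $\mathcal{E}(t)$ with weights $t^{-1}, t^{-\beta}$, the cutoff $\eta$ from Lemma~\ref{l6.3}, BBS-type bounds on derivatives of $u,\tilde{u}$ supplied by Theorem~\ref{tB.2} to tame the extra coupling terms, and Gr\"onwall to force $\mathcal{E}\equiv 0$ on a short interval and then on all of $[0,T]$. The paper's Propositions~\ref{p6.2} and~\ref{p6.4} carry out the small-$t$ asymptotics and the termwise estimate $\mathcal{E}'\leq N\mathcal{E}$ that you sketch schematically, so there is no substantive difference in approach.
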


We now write
\begin{equation*}
g:=g(t), \ {\rm Rm}:={\rm Rm}_{g(t)}, \ \Gamma:=\Gamma_{g(t)}, \
u:=u(t), \ \nabla:=\nabla_{g(t)}, \ dV:=dV_{g(t)},
\end{equation*}
and
\begin{equation*}
\tilde{g}:=\tilde{g}(t), \ \widetilde{{\rm Rm}}:=
{\rm Rm}_{\tilde{g}(t)}, \ \widetilde{\Gamma}:=\Gamma_{\tilde{g}(t)}, \ \tilde{u}:=\tilde{u}(t), \
\widetilde{\nabla}:=\nabla_{\tilde{g}(t)}, \
d\widetilde{V}:=dV_{\tilde{g}(t)}.
\end{equation*}
We further fix a norm $|\cdot|:=|\cdot|_{g(t)}$. The $(p,q)$-tensor fields $T=(T^{i_{1}\cdots i_{p}}{}_{j_{1}
\cdots j_{q}})$ are smooth sections of the $(p,q)$-tensor bundle $\mathcal{T}^{p}_{q}(M)$ over $M$. For $(p,q)=(0,0)$, we write $\mathcal{T}^{0}_{0}(M)$ as $C^{\infty}(M)$. Introduce \begin{equation*}
(h,A,T,v,w,y)\in\mathcal{T}^{0}_{2}(M)\oplus\mathcal{T}^{1}_{2}(M)
\oplus\mathcal{T}^{1}_{3}(M)\oplus\mathcal{T}^{0}_{0}(M)\oplus\mathcal{T}^{0}_{1}
(M)\oplus\mathcal{T}^{0}_{2}(M)
\end{equation*}
according to the following definitions:
\begin{eqnarray}
h&\equiv& h(t) \ \ := \ \ g-\tilde{g}, \ \ \ h_{ij} \ \ := \ \ g_{ij}-\tilde{g}_{ij},\label{6.3}\\
A&\equiv& A(t) \ \ := \ \ \nabla-\widetilde{\nabla}, \ \ \ A^{k}_{ij} \ \ = \ \ \Gamma^{k}_{ij}-\widetilde{\Gamma}^{k}_{ij},\label{6.4}\\
T&\equiv& T(t) \ \ = \ \ {\rm Rm}-\widetilde{{\rm Rm}}, \ \ \ T^{\ell}_{ijk} \ \ = \ \
R^{\ell}_{ijk}-\widetilde{R}^{\ell}_{ijk},\label{6.5}\\
v&\equiv&v(t) \ \ := \ \  u-\tilde{u},\label{6.6}\\
w&\equiv& w(t) \ \ := \ \ \nabla u-\widetilde{\nabla}\tilde{u}, \ \ \
w_{i} \ \ = \ \ \nabla_{i}v,\label{6.7}\\
y&\equiv&y(t) \ \ := \ \ \nabla^{2}u-\widetilde{\nabla}{}^{2}\tilde{u}, \ \ \ y_{ij}
=\nabla_{i}\nabla_{j}u-\widetilde{\nabla}_{i}\widetilde{\nabla}_{j}
\tilde{u}.\label{6.8}
\end{eqnarray}
From the definitions, we have
\begin{eqnarray*}
y_{ij}&=&\partial^{2}_{ij}u-\partial^{2}_{ij}\tilde{u}
-\left(\Gamma^{k}_{ij}-\widetilde{\Gamma}^{k}_{ij}\right)\partial_{k}
\tilde{u}+\Gamma^{k}_{ij}\left(\partial_{k}\tilde{u}-
\partial_{k}u\right)\\
&=&\left[\left(\partial^{2}_{ij}u-\Gamma^{k}_{ij}\partial_{k}u\right)
-\left(\partial^{2}_{ij}\tilde{u}-\Gamma^{k}_{ij}\partial_{k}
\tilde{u}\right)\right]-A^{k}_{ij}\partial_{k}\tilde{u}\\
&=&\nabla_{i}w_{j}-A^{k}_{ij}\partial_{k}\tilde{u} \ \ = \ \
\frac{1}{2}\left(\nabla_{i}w_{j}+\nabla_{j}w_{i}\right)
-A^{k}_{ij}\partial_{k}\tilde{u}
\end{eqnarray*}
so that
\begin{equation}
y=\nabla w+A\ast\widetilde{\nabla}\tilde{u}.\label{6.9}
\end{equation}
Here $V\ast W$ means the linear combination of contractions of tensors $V$ and $W$ with respect to $g(t)$. From \cite{K2014}, we have
\begin{eqnarray}
\tilde{g}^{ij}-g^{ij}&=&\tilde{g}^{ia}g^{jb}
h_{ab}, \ \ \ g^{-1}-\tilde{g}^{-1} \ \ = \ \ \tilde{g}^{-1}\ast g\ast h,\label{6.10}\\
\nabla_{k}\tilde{g}^{ij}&=&
\tilde{g}^{\ell j}A^{i}_{k\ell}+\tilde{g}^{i\ell}A^{j}_{ka}, \ \ \nabla\tilde{g}^{-1} \ \ = \ \ \tilde{g}^{-1}\ast A.\label{6.11}
\end{eqnarray}
Further,
\begin{equation*}
\partial_{t}h_{ij}=-2T^{\ell}_{\ell ij}+4\left(\partial_{i}u
\partial_{j}u-\partial_{i}\tilde{u}\partial_{j}\tilde{u}\right)
\end{equation*}
so that
\begin{equation}
\partial_{t}h_{ij}=-2T^{\ell}_{\ell ij}
+w_{i}w_{j}+w_{i}\partial_{j}\tilde{u}+w_{j}\partial_{i}\tilde{u}
=-2T^{\ell}_{\ell ij}+w\ast w+w\ast\widetilde{\nabla}
\tilde{u}.\label{6.12}
\end{equation}
The evolution of $\partial_{t}A^{k}_{ij}$ can be derived similarly as that in \cite{K2014}. Indeed,
\begin{eqnarray*}
\partial_{t}A^{k}_{ij}
&=&\tilde{g}^{mk}\left(\widetilde{\nabla}_{i}\widetilde{R}_{jm}
+\widetilde{\nabla}_{j}\widetilde{R}_{im}-\widetilde{\nabla}_{m}
\widetilde{R}_{ij}\right)
-g^{mk}\left(\nabla_{i}R_{jm}+\nabla_{j}R_{im}-\nabla_{m}R_{ij}\right)\\
&&+ \ 4\left(g^{mk}\nabla_{m}u\nabla_{i}\nabla_{j}u
-\tilde{g}^{mk}\partial_{m}\tilde{u}\widetilde{\nabla}_{i}
\widetilde{\nabla}_{j}\tilde{u}\right)\\
&=&\tilde{g}^{-1}\ast h\ast\widetilde{\nabla}\widetilde{{\rm Rm}}
+A\ast\widetilde{{\rm Rm}}+1\ast\nabla T\\
&&+ \ 4\left[\left(g^{mk}-\tilde{g}^{mk}\right)
\widetilde{\nabla}_{m}\tilde{u}\widetilde{\nabla}_{i}
\widetilde{\nabla}_{j}\tilde{u}+g^{mk}
\left(\nabla_{m}u\nabla_{i}\nabla_{j}u
-\widetilde{\nabla}_{m}\tilde{u}\widetilde{\nabla}_{i}
\widetilde{\nabla}_{j}\tilde{u}\right)\right]
\end{eqnarray*}
where the third line comes from the computation in \cite{K2014}. In the last line, writing
\begin{equation*}
\nabla_{m}u\nabla_{i}\nabla_{j}u
-\widetilde{\nabla}_{m}\tilde{u}\widetilde{\nabla}_{i}
\widetilde{\nabla}_{j}\tilde{u}
=\left(\nabla_{m}u-\widetilde{\nabla}_{m}\tilde{u}\right)
\widetilde{\nabla}_{i}\widetilde{\nabla}_{j}\tilde{u}
+\nabla_{m}u\left(\nabla_{i}\nabla_{j}u
-\widetilde{\nabla}_{i}\widetilde{\nabla}_{j}\tilde{u}\right)
\end{equation*}
we can conclude
\begin{eqnarray}
\partial_{t}A&=&\tilde{g}^{-1}\ast h\ast\widetilde{\nabla}
\widetilde{{\rm Rm}}+A\ast\widetilde{{\rm Rm}}
+1\ast\nabla T\nonumber\\
&&+ \ \tilde{g}^{-1}\ast h\ast\widetilde{\nabla}u
\ast\widetilde{\nabla}^{2}\tilde{u}
+w\ast\widetilde{\nabla}^{2}\tilde{u}
+\nabla u\ast y.
\end{eqnarray}
The same argument gives
\begin{eqnarray*}
\partial_{t}T^{\ell}_{ijk}&=&\nabla_{a}
\left(g^{ab}\nabla_{b}R^{\ell}_{ijk}-\tilde{g}^{ab}
\widetilde{\nabla}_{b}\widetilde{R}^{\ell}_{ijk}\right)
+\tilde{g}^{-1}\ast A\ast\widetilde{\nabla}\widetilde{{\rm Rm}}+\tilde{g}^{-1}\ast h\ast\widetilde{{\rm Rm}}
\ast\widetilde{{\rm Rm}}\\
&&+ \ T\ast{\rm Rm}
+T\ast\widetilde{{\rm Rm}}+g^{\ell m}
\left(\nabla_{i}\nabla_{m}u\nabla_{k}\nabla_{j}u
-\nabla_{i}\nabla_{k}u\nabla_{j}\nabla_{m}u\right)\\
&&- \ \tilde{g}^{\ell m}\left(\widetilde{\nabla}_{i}\widetilde{\nabla}_{m}
\tilde{u}\widetilde{\nabla}_{k}\widetilde{\nabla}_{j}\tilde{u}
-\widetilde{\nabla}_{i}\widetilde{\nabla}_{k}\tilde{u}
\widetilde{\nabla}_{j}\widetilde{\nabla}_{m}\tilde{u}\right).
\end{eqnarray*}
Since
$$
g^{\ell m}\nabla_{i}\nabla_{m}u\nabla_{k}\nabla_{j}u
-\tilde{g}^{\ell m}\widetilde{\nabla}_{i}\widetilde{\nabla}_{m}
\tilde{u}\widetilde{\nabla}_{k}\widetilde{\nabla}_{j}\tilde{u}
$$
$$
= \ \ \left(g^{\ell m}-\tilde{g}^{\ell m}\right)
\widetilde{\nabla}_{i}\widetilde{\nabla}_{m}\tilde{u}
\widetilde{\nabla}_{k}\widetilde{\nabla}_{j}\tilde{u}
+g^{\ell m}\left(\nabla_{i}\nabla_{m}u\nabla_{k}\nabla_{j}u
-\widetilde{\nabla}_{i}\widetilde{\nabla}_{m}\tilde{u}
\widetilde{\nabla}_{k}\widetilde{\nabla}_{j}\tilde{u}\right)
$$
$$
= \ \ \tilde{g}^{-1}\ast h\ast\widetilde{\nabla}^{2}\tilde{u}
\ast\widetilde{\nabla}^{2}\tilde{u}
+g^{\ell m}
\bigg[\left(\nabla_{i}\nabla_{m}u-\widetilde{\nabla}_{i}
\widetilde{\nabla}_{m}\tilde{u}\right)\widetilde{\nabla}_{k}
\widetilde{\nabla}_{j}\tilde{u}
$$
$$
+ \ \nabla_{i}\nabla_{m}u
\left(\nabla_{k}\nabla_{j}u-\widetilde{\nabla}_{k}\widetilde{\nabla}_{j}
\tilde{u}\right)\bigg] \ \ = \ \ \tilde{g}^{-1}\ast h
\ast\widetilde{\nabla}^{2}\tilde{u}\ast\widetilde{\nabla}^{2}\tilde{u}
+y\ast\widetilde{\nabla}^{2}\tilde{u}
+y\ast\nabla^{2}u,
$$
it follows that
\begin{eqnarray}
\partial_{t}T^{\ell}_{ijk}&=&\nabla_{a}
\left(g^{ab}\nabla_{b}R^{\ell}_{ijk}-\tilde{g}^{ab}
\widetilde{\nabla}_{b}\widetilde{R}^{\ell}_{ijk}\right)
+\tilde{g}^{-1}\ast A\ast\widetilde{\nabla}\widetilde{{\rm Rm}}+\tilde{g}^{-1}\ast h\ast\widetilde{{\rm Rm}}
\ast\widetilde{{\rm Rm}}\nonumber\\
&&+ \ T\ast{\rm Rm}
+T\ast\widetilde{{\rm Rm}}+\tilde{g}^{-1}\ast h\ast\widetilde{\nabla}^{2}\tilde{u}\ast\widetilde{\nabla}^{2}\tilde{u}
+y\ast\nabla^{2}u+y\ast\widetilde{\nabla}^{2}\tilde{u}.\label{6.14}
\end{eqnarray}
Finally, we compute the evolution equation of $v$. Because
\begin{eqnarray*}
\partial_{t}v&=&\Delta u-\widetilde{\Delta}\tilde{u} \ \ = \ \
\Delta\left(u-\tilde{u}\right)+\left(g^{ij}\nabla_{i}\nabla_{j}
-\tilde{g}^{ij}\widetilde{\nabla}_{i}\widetilde{\nabla}_{j}\right)\tilde{u}\\
&=&\Delta v+\left(g^{ij}-\tilde{g}^{ij}\right)
\widetilde{\nabla}_{i}\widetilde{\nabla}_{j}\tilde{u}
+g^{ij}\left(\nabla_{i}\nabla_{j}-\widetilde{\nabla}_{i}\widetilde{\nabla}_{j}
\right)\tilde{u}\\
&=&\Delta v+\left(g^{ij}-\tilde{g}^{ij}\right)
\widetilde{\nabla}_{i}\widetilde{\nabla}_{j}\tilde{u}
+g^{ij}\left(\widetilde{\Gamma}^{k}_{ij}-\Gamma^{k}_{ij}\right)
\partial_{k}\tilde{u}
\end{eqnarray*}
we get
\begin{equation}
\partial_{t}v=\Delta v+\tilde{g}^{-1}\ast h
\ast\widetilde{\nabla}^{2}\tilde{u}+A\ast\widetilde{\nabla}
\tilde{u}.\label{6.15}
\end{equation}
From {\color{blue}{(\ref{6.12})}} -- {\color{blue}{(\ref{6.15})}}, we arrive at
\begin{eqnarray}
|\partial_{t}h|&\lesssim&|T|+|w|^{2}+|w||\widetilde{\nabla}
\tilde{u}|,\label{6.16}\\
|\partial_{t}A|&\lesssim&|\tilde{g}^{-1}|
|h||\widetilde{\nabla}\widetilde{{\rm Rm}}|+|A||\widetilde{{\rm Rm}}|+|\nabla T|\nonumber\\
&&+ \ |\tilde{g}^{-1}||h||\widetilde{\nabla}^{2}
\tilde{u}|+|w||\widetilde{\nabla}^{2}\tilde{u}|+|\nabla u||y|,\label{6.17}\\
|\partial_{t}T-\Delta T-{\rm div}\!\ S|&\lesssim&
|\tilde{g}^{-1}||A||\widetilde{\nabla}\widetilde{{\rm Rm}}|+|\tilde{g}^{-1}||h||\widetilde{{\rm Rm}}|^{2}
+|T|(|{\rm Rm}|+|\widetilde{{\rm Rm}}|)\nonumber\\
&&+ \ |\tilde{g}^{-1}||h||\widetilde{\nabla}^{2}\tilde{u}|^{2}+|y|(|\nabla^{2}u|+|\widetilde{\nabla}^{2}
\tilde{u}|),\label{6.18}\\
|\partial_{t}v-\Delta v|&\lesssim&|\tilde{g}^{-1}||h||\widetilde{\nabla}^{2}\tilde{u}|
+|A||\widetilde{\nabla}\tilde{u}|.\label{6.19}
\end{eqnarray}
Here the tensor $S=(S^{a\ell}_{ijk})$ is defined as
\begin{equation*}
S^{a\ell}_{ijk}:=g^{ab}\nabla_{b}\widetilde{R}^{\ell}_{ijk}
-\tilde{g}^{ab}\widetilde{\nabla}_{b}\widetilde{R}^{\ell}_{ijk}
=\tilde{g}^{-1}\ast h\ast\widetilde{\nabla}\widetilde{{\rm Rm}}
+A\ast\widetilde{{\rm Rm}}
\end{equation*}
and satisfies
\begin{equation}
|S|\lesssim|\tilde{g}^{-1}||h||\widetilde{\nabla}\widetilde{{\rm Rm}}|+|A||\widetilde{{\rm Rm}}|.\label{6.20}
\end{equation}

To further study, we need a version of Bernstein-Bando-Shi (BBS) estimate on higher derivatives of ${\rm Rm}$ and $u$. Under the
curvature condition {\color{blue}{(\ref{6.2})}}, according to
{\color{red}{Theorem \ref{tB.2}}}, we have
\begin{equation}
|\nabla u|\lesssim1, \ \ \ |\widetilde{\nabla}
\tilde{u}|\lesssim1,\label{6.21}
\end{equation}
and
\begin{equation}
|\nabla{\rm Rm}|
+|\nabla^{2}u|+|\widetilde{\nabla}\widetilde{{\rm Rm}}|
+|\widetilde{\nabla}{}^{2}\tilde{u}|\lesssim1.\label{6.22}
\end{equation}

\begin{proposition}\label{p6.2} Assume the curvature condition {\color{blue}{(\ref{6.2})}}. We first prove
\begin{equation}
|h(t)|\leq K_{0}\!\ t, \ \ \ |A(t)|\leq K_{0}t^{1/2}, \ \ \ |v(t)|\leq K_{0}t,
\label{6.23}
\end{equation}
on $M\times[0,T]$ for some uniform constant $K_{0}=K_{0}(n,K,T,g_{0},u_{0})$.
\end{proposition}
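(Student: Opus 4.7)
The plan is to integrate the evolution equations \eqref{6.16}--\eqref{6.19} starting from $t=0$. The hypothesis $(g(0),u(0))=(\tilde g(0),\tilde u(0))$ gives the vanishing initial data $h(0)=A(0)=v(0)=w(0)=y(0)=T(0)=0$, and the curvature hypothesis \eqref{6.2} together with the BBS-type bounds of Theorem \ref{tB.2} produces uniform bounds
$$|\nabla u|+|\widetilde\nabla\tilde u|+|\nabla^2 u|+|\widetilde\nabla{}^2\tilde u|+|\nabla{\rm Rm}|+|\widetilde\nabla\widetilde{{\rm Rm}}|\lesssim 1$$
on $M\times[0,T]$, with constants depending only on $n,K,T,g_0,u_0$. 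In particular $|T|\leq 2K$, and $|w|,|y|\lesssim 1$ pointwise, so the right-hand sides of \eqref{6.16}--\eqref{6.19} are already essentially controlled.

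First I would bound $|h|$: by \eqref{6.16},
$$|\partial_t h|\lesssim |T|+|w|^2+|w||\widetilde\nabla\tilde u|\leq C,$$
so integration from $0$ immediately gives $|h(t)|\leq Ct$. Next, for $|A|$, I would use that $\nabla=\widetilde\nabla+A$ applied to $\widetilde{{\rm Rm}}$ gives
$$\nabla T=\nabla{\rm Rm}-\widetilde\nabla\widetilde{{\rm Rm}}+A*\widetilde{{\rm Rm}},$$
so that $|\nabla T|\leq C+C|A|$. Substituting this and the above bounds into \eqref{6.17} yields $|\partial_t A|\leq C(1+|A|)$, and Gr\"onwall's lemma then gives $|A(t)|\leq C(e^{Ct}-1)\leq C't$ on $[0,T]$, which is stronger than the required $K_0t^{1/2}$. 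Finally, for $|v|$, I would write \eqref{6.19} as $\partial_t v-\Delta v=F$ with $|F|\lesssim |h|+|A|\leq Ct$ (using the previous two bounds and the BBS bounds on $|\widetilde\nabla\tilde u|,|\widetilde\nabla{}^2\tilde u|$). Since $u$ and $\tilde u$ are uniformly bounded on $M\times[0,T]$ by Theorem \ref{tA.3}, $v$ is a bounded solution on a complete Riemannian manifold with uniformly bounded Ricci curvature, so the parabolic maximum principle applies to $v\mp Ct$: comparing
$$(\partial_t-\Delta)(v\mp Ct)=F\mp C$$
with $0$ and using $v(0)=0$ yields $|v(t)|\leq Ct$.

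The only subtle technical point is the use of the maximum principle on a non-compact complete manifold, which here is legitimate because $v$ is bounded and the ambient Ricci curvature is bounded along $g(t)$. The main obstacle is essentially bookkeeping: one must be sure that each constant entering the bootstrap depends only on $n,K,T,g_0,u_0$, which in turn forces one to invoke Theorem \ref{tB.2} for $|\nabla^2 u|$ under the Ricci condition \eqref{6.2} (rather than under the stronger \eqref{1.4}); everything else is a routine integration of the evolution equations in the correct order $h\to A\to v$.
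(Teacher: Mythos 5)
Your treatment of $|h|$ matches the paper's: integrate $\partial_{t}h$ from the vanishing initial data and bound the right side of {\color{blue}{(\ref{6.16})}} by a uniform constant via {\color{blue}{(\ref{6.21})}}--{\color{blue}{(\ref{6.22})}}. For $|A|$, your Gr\"onwall step that carries the $|A|$-dependent terms in {\color{blue}{(\ref{6.17})}}, {\color{blue}{(\ref{6.20})}} (arising from $\nabla T=\nabla{\rm Rm}-\widetilde{\nabla}\widetilde{{\rm Rm}}-A\ast\widetilde{{\rm Rm}}$) is actually a touch more careful than the paper's one-line estimate, which effectively integrates $\partial_{t}\Gamma$ and $\partial_{t}\widetilde{\Gamma}$ separately using {\color{blue}{(\ref{A.3})}} and bounds each by the BBS estimates of {\color{red}{Theorem \ref{tB.2}}}; both routes yield $|A|\lesssim t\lesssim t^{1/2}$ with constants depending only on $n,K,T,g_{0},u_{0}$.

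For $|v|$, however, you take a substantially more roundabout path than the paper and introduce a hidden hypothesis. The paper simply observes that $\partial_{t}v=\Delta u-\widetilde{\Delta}\tilde{u}$, which under {\color{blue}{(\ref{6.2})}} is bounded pointwise by $|\nabla^{2}u|+|\widetilde{\nabla}{}^{2}\tilde{u}|\lesssim 1$ via {\color{red}{Theorem \ref{tB.2}}}(ii); integrating from $v(0)=0$ gives $|v(t)|\lesssim t$ immediately, with no heat-equation reformulation {\color{blue}{(\ref{6.19})}}, no maximum principle, and no a priori bound on $v$ itself. Your argument instead rewrites the equation as $\partial_{t}v-\Delta v=F$, which forces you to justify a parabolic maximum principle on a noncompact complete manifold. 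To do so you invoke {\color{red}{Theorem \ref{tA.3}}} to claim $u,\tilde{u}$ are bounded, but the hypotheses of {\color{red}{Theorem \ref{t6.1}}} do not assume the solutions are constructed via List's theorem, and {\color{red}{Theorem \ref{tB.2}}} gives bounds only on $|\nabla^{m+2}u|$, not on $|u|$. So as written your appeal to boundedness of $v$ is not free. Worse, the easiest way to show $v$ is bounded is precisely the direct integration of $\partial_{t}v$ above --- which already gives the stated estimate and makes the maximum principle superfluous. The approach can be made to work, but you should either assume $u_{0},\tilde{u}_{0}$ bounded or, better, just integrate $\partial_{t}v=\Delta u-\widetilde{\Delta}\tilde{u}$ directly.
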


\begin{proof} Since $g(0)=\tilde{g}(0)=g_{0}$, the inequality {\color{blue}{(\ref{6.23})}} is trivial for $t=0$. In the following we may without loss of generality assume that $t\in(0,T]$. Given a space-time point $(p,t)\in M\times(0,T]$. The first can be proved by using {\color{blue}{(\ref{6.21})}}
\begin{eqnarray*}
|h(p,t)|&\lesssim&|h(p,t)|_{g_{0}} \ \ \lesssim \ \
\int^{t}_{0}|\partial_{s}h(p,s)|_{g_{0}}ds \ \ \lesssim \ \
\int^{t}_{0}|\partial_{s}h(p,s)|\!\ ds\\
&\lesssim&\int^{t}_{0}
\left[|T|+|w|^{2}+|w||\widetilde{\nabla}\tilde{u}|\right](p,s)ds \ \
\lesssim \ \ \int^{t}_{0}ds \ \ \lesssim \ \ t.
\end{eqnarray*}
For second one, using
, one has, by {\color{blue}{(\ref{6.22})}},
\begin{eqnarray*}
|A(p,t)|&\lesssim&|A(p,t)|_{g_{0}} \ \ \lesssim \ \
\int^{t}_{0}|\partial_{s}A(p,s)|_{g_{0}}ds \ \ \lesssim \ \ \int^{t}_{0}|\partial_{s}A(p,s)|ds\\
&\lesssim&\int^{t}_{0}
\left[|\widetilde{\nabla}\widetilde{{\rm Rm}}|
+|\nabla{\rm Rm}|+|\nabla u||\nabla^{2}u|
+|\widetilde{\nabla}\tilde{u}||\widetilde{\nabla}^{2}\tilde{u}|
\right]ds \\
&\lesssim&\int^{t}_{0}ds \ \ \lesssim \ \ t \ \ \lesssim \ \ t^{1/2}.
\end{eqnarray*}
The last one follows from $|\partial_{t}v|\lesssim|\nabla^{2}u|+|\widetilde{\nabla}{}^{2}\tilde{u}|$.
\end{proof}

Notice that the above proposition gives an explicit bound for $\nabla^{2}u$ and hence for $\Delta u$, provided the condition {\color{blue}{(\ref{6.2})}} holds. However, {\color{red}{Theorem \ref{t1.1}}} or {\color{red}{Theorem \ref{t2.6}}} gives an explicit bound for $\Delta u$ under a weaker condition {\color{blue}{(\ref{1.2})}}.
\\

To prove the uniqueness, we need the following

\begin{lemma}\label{l6.3} Consider a smooth family $(g(t))_{t\in[0,T]}$ of complete metrics on $M$ with $g(t)\geq\gamma^{-1}g$ for some uniform constant $\gamma$, where $g:=g(0)$. Choose any given point $x_{0}\in M$ and set $r(x):=d_{g}(x,x_{0})$. Then the following statement is true: For any given constants $L_{1}, L_{2}>0$, there exist a constant $T':=T'(n,\gamma,L_{1},L_{2},T)>0$ and a function $\eta: M\times[0,T']\to\mathbb{R}$ smooth in $t$, Lipschitz on $M\times\{t\}$, and satisfying
\begin{equation}
\partial_{t}\eta\geq L_{1}|\eta|^{2}_{g(t)}, \ \ \
e^{-\eta}\leq e^{-L_{2}r^{2}}\label{6.24}
\end{equation}
on $M\times[0,\tau]$ for all $\tau\in(0,T']$.
\end{lemma}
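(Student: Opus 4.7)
The plan is to construct $\eta$ explicitly as a time-dependent multiple of the squared distance function, $\eta(x,t) := \alpha(t)\, r(x)^2$, where $\alpha(t)$ is a smooth positive scalar function chosen to handle the balance between $\partial_t\eta$ and $|\nabla\eta|^2_{g(t)}$. Since $r(\cdot) = d_{g}(\cdot,x_{0})$ is $1$-Lipschitz with respect to $g$ and satisfies $|\nabla r|_{g}^{2}=1$ almost everywhere, the hypothesis $g(t)\geq \gamma^{-1}g$ yields $g(t)^{-1}\leq \gamma\, g^{-1}$ pointwise, hence
\[
|\nabla\eta|^{2}_{g(t)} \;=\; 4\alpha(t)^{2} r^{2}\, |\nabla r|^{2}_{g(t)} \;\leq\; 4\gamma\, \alpha(t)^{2} r^{2}
\]
a.e.\ on $M$. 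Consequently the required inequality $\partial_{t}\eta \geq L_{1}|\nabla\eta|^{2}_{g(t)}$ reduces to the scalar ODE inequality $\alpha'(t)\geq 4L_{1}\gamma\,\alpha(t)^{2}$, while the pointwise bound $e^{-\eta}\leq e^{-L_{2}r^{2}}$ reduces to $\alpha(t)\geq L_{2}$.

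Next, I would select $\alpha$ by integrating the companion ODE $\alpha' = 4 L_{1}\gamma\,\alpha^{2}$ with initial datum $\alpha(0)=L_{2}$, giving
\[
\alpha(t) \;=\; \frac{L_{2}}{1-4 L_{1}\gamma L_{2}\, t},
\]
which is smooth and strictly increasing on $[0, 1/(4L_{1}\gamma L_{2}))$. Setting
\[
T' \;:=\; \min\!\left(T,\; \frac{1}{8L_{1}\gamma L_{2}}\right),
\]
we see that $\alpha \in C^{\infty}([0,T'])$, $\alpha(t)\geq L_{2}$, and the ODE $\alpha'=4L_{1}\gamma\alpha^{2}$ holds identically. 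By construction $\eta$ is smooth in $t$, and Lipschitz in $x$ on each time slice (since $r^{2}$ is locally Lipschitz with gradient $2r\nabla r$ defined a.e.). The two required inequalities then follow: the first holds almost everywhere in $x$ and smoothly in $t$, which is sufficient for the integration-by-parts arguments driving $\mathcal{E}(t)$ in {\color{red}{Theorem \ref{t1.13}}}; the second holds pointwise on $M\times[0,T']$.

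The only genuine subtlety is the regularity of $r$ at the cut locus of $x_{0}$ and at $x_{0}$ itself, where $r$ fails to be smooth. I would address this in the standard way: since $r$ is $1$-Lipschitz, Rademacher's theorem provides $|\nabla r|_{g}\leq 1$ almost everywhere, which is all that is needed when $\eta$ is used inside a weighted $L^{2}$ integral. If a fully smooth barrier is desired, one can instead work with a smooth approximation $r_{\varepsilon}$ of $r$ satisfying $|r_{\varepsilon}-r|\leq \varepsilon$ and $|\nabla r_{\varepsilon}|_{g}\leq 1+\varepsilon$ (obtained by convolution in normal coordinates and a partition of unity), absorb the factor $(1+\varepsilon)^{2}$ into $\gamma$, and let $\varepsilon\to 0$ at the end. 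Either route delivers the claimed function $\eta$ with the constant $T'=T'(n,\gamma,L_{1},L_{2},T)$ depending only on the stated data, completing the construction.
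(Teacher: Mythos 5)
Your proof is correct and reconstructs in full detail the Riccati-barrier construction that the paper merely cites from Kotschwar \cite{K2014}: take $\eta = \alpha(t)r^{2}$ with $\alpha$ solving $\alpha' = 4L_{1}\gamma\alpha^{2}$, $\alpha(0)=L_{2}$, and cut off the time interval before blowup. The $T'$ you produce ($\min(T,\tfrac{1}{8\gamma L_{1}L_{2}})$) differs from the one the paper quotes ($\min\{T,1/4(\gamma L_{1}L_{2})^{1/2}\}$), but since the lemma only requires \emph{some} positive $T'$ depending on $(n,\gamma,L_{1},L_{2},T)$, this is immaterial, and your constant is the one that actually drops out of the ODE.
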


\begin{proof} See \cite{K2014}. Actually, we can pick $T'=\min\{T,1/4(\gamma L_{1}L_{2})^{1/2}\}$.
\end{proof}

Under the condition {\color{blue}{(\ref{6.2})}}, {\color{red}{Lemma \ref{l6.3}}} implies that for any given (sufficiently small and independent of $T$) constant $B>0$ there exist a constant $T'=T'(n,K, B,$ $T)>0$ and such a function $\eta: M\times[0,T']\to\mathbb{R}$ satisfy
\begin{equation}
e^{-\eta}\leq e^{-B r^{2}_{0}/T}, \ \ \ \partial_{t}\eta
\geq B|\nabla\eta|^{2}\label{6.25}
\end{equation}
on $M\times[0,\tau]$ for all $\tau\in(0,T']$; hence $\partial_{t}\eta, |\nabla\eta|^{2}$ are $e^{-\eta}dV$-integrable for all $t\in(0,T']$. For detail, see \cite{K2014}.
\\

There are three claims about the integrability of $\mathcal{E}(t)$ defined below. The proof is similar to that in \cite{K2014}, except for some extra terms.

\begin{itemize}

\item[(a)] $t^{-1}|h|^{2}, t^{-\beta}|A|^{2}, |T|^{2}$, $|v|^{2}$, and $|w|^{2}$ are uniformly bounded. It has been proved in {\color{red}{Proposition \ref{p6.2}}}.

\item[(b)] $\partial_{t}|h|^{2}, \partial_{t}|A|^{2}, \partial_{t}|T|^{2}$, $\partial_{t}|v|^{2}$, and $\partial_{t}|w|^{2}$ are uniformly bounded on $M\times[0,T]$ and consequently are $e^{-\eta}dV$-integrable for all $t\in(0,T']$. Compute
    \begin{eqnarray*}
    \left|\partial_{t}|h|^{2}\right|&=&\left|\partial_{t}(g^{-1}\ast g^{-1}\ast h\ast h)\right|\\
    &=&
    \left|g^{-1}\ast g^{-1}\ast g^{-1}\ast\partial_{t}g\ast h\ast h+g^{-1}\ast g^{-1}\ast h\ast\partial_{t}h\right|\\
    &\lesssim&|{\rm Ric}||h|^{2}
    +|h||\partial_{t}h| \ \ \lesssim \ \ |h|^{2}+|h|\left(|T|+|w|^{2}
    +|w|\right) \ \ \lesssim \ \ 1,
    \end{eqnarray*}
    by {\color{blue}{(\ref{6.16})}} and {\color{blue}{(\ref{6.22})}}. For $\partial_{t}|A|^{2}$ one has
    \begin{eqnarray*}
    \left|\partial_{t}|A|^{2}\right|&=&\partial_{t}(g^{-1}\ast g^{-1}\ast g\ast A\ast A)\\
    &=&g^{-1}\ast g^{-1}\ast g^{-1}\ast g\ast\partial_{t}g
    \ast A\ast A+g^{-1}\ast g^{-1}\ast g\ast A\ast\partial_{t}A\\
    &\lesssim&|{\rm Ric}||A|^{2}+|A||\partial_{t}A|\\
    &\lesssim&|A|^{2}
    +|A|\left(|\widetilde{\nabla}\widetilde{{\rm Rm}}|
    +|\nabla{\rm Rm}|+|\nabla u||\nabla^{2}u|+|\widetilde{\nabla}\tilde{u}||\widetilde{\nabla}{}^{2}
    \tilde{u}|\right) \ \ \lesssim \ \ 1,
    \end{eqnarray*}
    by {\color{blue}{(\ref{6.21})}}, {\color{blue}{(\ref{6.22})}}, {\color{blue}{(\ref{6.23})}}, and the proof of {\color{red}{Proposition \ref{p6.2}}}. Similarly, we can deal with $\partial_{t}|S|^{2}$, $\partial_{t}|v|^{2}$, and $\partial_{t}|w|^{2}$. For example,
    $$
    \left|\partial_{t}|v|^{2}\right|\lesssim|v|
    \left[|\nabla^{2}u|+|\widetilde{\nabla}{}^{2}\tilde{u}|\right]
    \lesssim|v|\lesssim 1
    $$
    using again {\color{red}{Proposition \ref{p6.2}}}.

\end{itemize}

Fixed $\beta\in(0,1)$, introduce the quantity
\begin{equation}
\mathcal{E}(t):=\int_{M}\left(t^{-1}|h|^{2}+t^{-\beta}|A|^{2}+|T|^{2}
+|v|^{2}+|w|^{2}\right)e^{-\eta}dV.\label{6.26}
\end{equation}
Here the function $\eta$ is determined by {\color{blue}{(\ref{6.25})}}.

\begin{itemize}

\item[(c)] $\mathcal{E}(t)$ is differentiable on $[0,T']$ and $\lim_{t\to0}\mathcal{E}(t)=0$. This follows from {\color{red}{Proposition \ref{p6.2}}} and Lebesgue's dominated convergence theorem.

\end{itemize}
The above claims (a) -- (c) allow us frequently to take time derivatives inside the integrals.

\begin{proposition}\label{p6.4} Assume that the curvature condition {\color{blue}{(\ref{6.2})}} is satisfied. There exist uniform constants $N=N(n,K, T, g_{0},u_{0})>0$ and $T_{0}=T_{0}(n,K,T, g_{0},u_{0},\beta)\in(0,T]$ such that
\begin{equation}
\mathcal{E}'(t)\leq N\!\ \mathcal{E}(t)\label{6.27}
\end{equation}
for $t\in[0,T_{0}]$. Hence $\mathcal{E}(t)\equiv0$ for all $t\in[0,T_{0}]$.
\end{proposition}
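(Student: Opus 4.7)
The plan is to differentiate $\mathcal{E}(t)$ in time, insert the evolution equations \eqref{6.12}--\eqref{6.15} (together with the pointwise bounds \eqref{6.16}--\eqref{6.19}), and extract an inequality $\mathcal{E}'(t)\leq N\mathcal{E}(t)+R(t)$, where $R(t)$ consists of ``good'' negative terms that are produced either from differentiating the singular weights $t^{-1}, t^{-\beta}$, or from integrating by parts against the Laplacian pieces of the parabolic evolutions. The conclusion $\mathcal{E}\equiv0$ on $[0,T_{0}]$ then follows from Gr\"onwall's inequality and $\lim_{t\to0^{+}}\mathcal{E}(t)=0$ (item (c) before the statement). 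Integrability of every quantity under $e^{-\eta}dV$ is guaranteed by \eqref{6.25} and claims (a)--(b).

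First I would differentiate $\mathcal{E}(t)$ directly, producing four kinds of contributions: (i) the two boundary-type terms
\begin{equation*}
-\int_{M}t^{-2}|h|^{2}e^{-\eta}dV\quad\text{and}\quad -\beta\int_{M}t^{-\beta-1}|A|^{2}e^{-\eta}dV,
\end{equation*}
(ii) the time derivatives $\partial_{t}|h|^{2},\partial_{t}|A|^{2},\partial_{t}|T|^{2},\partial_{t}|v|^{2},\partial_{t}|w|^{2}$, which by \eqref{6.12}--\eqref{6.15} and the BBS bounds \eqref{6.21}--\eqref{6.22} are controlled schematically by $|h|^{2}+|A|^{2}+|T|^{2}+|v|^{2}+|w|^{2}+|y|^{2}$ plus the parabolic principal parts $2\langle T,\Delta T+\mathrm{div}\,S\rangle$, $2v\Delta v$, and $2\langle w,\Delta w\rangle$; (iii) the contribution $-\int(\,\cdot\,)\,\partial_{t}\eta\,e^{-\eta}dV$ from differentiating the weight; and (iv) the volume term $\partial_{t}dV=(-R+2|\nabla u|^{2})dV$, which by \eqref{6.2} and \eqref{6.21} is bounded and so harmlessly absorbed into $N\mathcal{E}$.

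Next I would integrate the parabolic pieces by parts against the weight $e^{-\eta}dV$ (exactly as in \cite{K2014}). For the $\langle T,\Delta T\rangle$ piece this produces a good term $-2\int|\nabla T|^{2}e^{-\eta}dV$ together with a cross term $2\int\langle T\otimes\nabla T,\nabla\eta\rangle e^{-\eta}dV$ that is controlled by Young's inequality: $2|T||\nabla T||\nabla\eta|\leq|\nabla T|^{2}+|T|^{2}|\nabla\eta|^{2}$. The bad contribution $|T|^{2}|\nabla\eta|^{2}$ is then swallowed by $-|T|^{2}\partial_{t}\eta$ from step (iii), using the key property $\partial_{t}\eta\geq B|\nabla\eta|^{2}$ from \eqref{6.25} with $B$ chosen sufficiently large. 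The same argument neutralizes the $\mathrm{div}\,S$ term, since $|S|\lesssim|h|+|A|$ by \eqref{6.20} and \eqref{6.22}, so that
\begin{equation*}
2|\langle\nabla T,S\rangle|+2|\langle T,S\rangle||\nabla\eta|\leq |\nabla T|^{2}+C(|h|^{2}+|A|^{2})+|T|^{2}|\nabla\eta|^{2}.
\end{equation*}
Analogous manipulations take care of $v\Delta v$ and $\langle w,\Delta w\rangle$, producing negative $|\nabla v|^{2}, |\nabla w|^{2}$ contributions (which can be discarded or partially absorbed, noting $w_{i}=\nabla_{i}v$), together with cross terms handled exactly as above. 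The remaining algebraic terms from \eqref{6.16}--\eqref{6.19} involving $|y|$ are controlled by $|y|\lesssim|\nabla w|+|A|$ via \eqref{6.9} and \eqref{6.22}, returning us to quantities already in $\mathcal{E}$.

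The main obstacle, and the reason $T_{0}$ may have to be chosen smaller than $T$, is the interplay between the weighted $A$-integrand and the gradient of $T$. From $\partial_{t}A$ we pick up a term $t^{-\beta}|A||\nabla T|$, which after Cauchy--Schwarz splits as
\begin{equation*}
t^{-\beta}|A||\nabla T|\leq \tfrac{1}{2}t^{1-\beta}|\nabla T|^{2}+\tfrac{1}{2}t^{-\beta-1}|A|^{2}.
\end{equation*}
The second piece is absorbed by the boundary-type negative term $-\beta t^{-\beta-1}|A|^{2}e^{-\eta}dV$ provided we lose a factor like $1/2$ into the constant, while the first piece is absorbed by the good $-2|\nabla T|^{2}e^{-\eta}dV$ coming from the $T$ equation precisely when $t^{1-\beta}$ is small; this forces $T_{0}\leq T_{0}(n,K,T,g_{0},u_{0},\beta)$ and uses $\beta\in(0,1)$ crucially. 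A symmetric estimate handles the cross terms $t^{-1}|h|\cdot(|T|+|w|^{2}+|w|)$ arising from $t^{-1}\partial_{t}|h|^{2}$, where the negative $-t^{-2}|h|^{2}$ does the absorption. After collecting, for all $t\in(0,T_{0}]$ we arrive at
\begin{equation*}
\mathcal{E}'(t)\leq N\mathcal{E}(t),
\end{equation*}
with $N$ depending only on $n,K,T,g_{0},u_{0}$. Since $\mathcal{E}(0)=0$ by claim (c), Gr\"onwall gives $\mathcal{E}(t)\equiv 0$ on $[0,T_{0}]$, as required.
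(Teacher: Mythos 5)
Your proposal follows the same Kotschwar-style energy argument as the paper: differentiate $\mathcal{E}$, integrate the parabolic principal parts by parts against $e^{-\eta}\,dV$, absorb the gradient cross terms via $\partial_t\eta\geq B|\nabla\eta|^2$, use $y=\nabla w + A\ast\widetilde{\nabla}\tilde{u}$ to trade $|y|^2$ for $|\nabla w|^2 + |A|^2$, and shrink $T_0$ so the singular weights $t^{-1},t^{-\beta}$ do the remaining absorption. The only substantive divergence is in how the term $t^{-\beta}|A||\nabla T|$ coming from $\partial_t A$ is split. The paper splits it as roughly $Ct^{-\beta}|A|^2 + |\nabla T|^2$; after integration this yields $Ct^{-\beta}\mathcal{I} + \mathcal{J}$, the $+\mathcal{J}$ cancels \emph{exactly} against the $-\mathcal{J}$ that the $T$-equation produces, and the leftover $Ct^{-\beta}\mathcal{I}$ is absorbed into $-\beta t^{-1}\mathcal{I}$ once $t^{1-\beta}\leq\beta/C$. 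You instead split $t^{-\beta}|A||\nabla T|\leq\frac12 t^{1-\beta}|\nabla T|^2+\frac12 t^{-\beta-1}|A|^2$, absorbing the first piece into the $|\nabla T|^2$ good term and the second into the boundary term $-\beta t^{-\beta-1}|A|^2$.

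There is a small but genuine issue with the constants in that last step. With the fixed $\frac12$-$\frac12$ Young split, the $\frac12 t^{-\beta-1}|A|^2$ contribution requires $\beta>\frac12$ to be swallowed by $-\beta t^{-\beta-1}|A|^2$, so as written your proof only covers $\beta\in(\tfrac12,1)$, not all $\beta\in(0,1)$. The fix is routine: use Young with a free parameter $\epsilon$, i.e. $t^{-\beta}|A||\nabla T|\leq\epsilon t^{1-\beta}|\nabla T|^2+\tfrac{1}{4\epsilon}t^{-\beta-1}|A|^2$, take $\epsilon>\frac{1}{4\beta}$ so the $|A|$-term is absorbed, and then choose $T_0$ so that $\epsilon\,t^{1-\beta}$ is below the surviving coefficient of $-|\nabla T|^2$. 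Relatedly, you call the surviving good term $-2|\nabla T|^2$, but after paying for the cross term $2|T||\nabla T||\nabla\eta|\leq|\nabla T|^2+|T|^2|\nabla\eta|^2$ the net coefficient is $-1$, not $-2$; this too only affects the value of $T_0$. Finally, when you say the $|y|$ terms ``return us to quantities already in $\mathcal{E}$'': more precisely $|\nabla w|^2$ is not part of $\mathcal{E}$ but is controlled by the negative $-\int|\nabla w|^2 e^{-\eta}dV$ that the $w$-equation donates, which is exactly how the paper closes the loop. With these constants tightened, your argument is correct and recovers the paper's conclusion by essentially the same mechanism.
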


\begin{proof} As in \cite{K2014}, for any $t\in(0,T]$ and $\alpha\in(0,1)$, define
\begin{equation}
\mathcal{G}(t) \ \ := \ \ \int_{M}|T|^{2}e^{-\eta}dV, \ \ \
\mathcal{H}(t) \ \ := \ \ \int_{M}t^{-1}|h|^{2}e^{-\eta}dV,\label{6.28}
\end{equation}
\begin{equation}
\mathcal{I}(t) \ \ := \ \ \int_{M}t^{-\beta}|A|^{2}e^{-\eta}dV, \ \ \
\mathcal{J}(t) \ \ := \ \ \int_{M}|\nabla T|^{2}e^{-\eta}dV,\label{6.29}
\end{equation}
\begin{equation}
\mathcal{V}(t) \ := \ \int_{M}|v|^{2}e^{-\eta}dV, \
\mathcal{D}(t) \ := \ \int_{M}|y|^{2}e^{-\eta}dV, \
\mathcal{B}(t) \ := \ \int_{M}|w|^{2}e^{-\eta}dV.\label{6.30}
\end{equation}
Then
\begin{equation}
\mathcal{E}(t)=\mathcal{G}(t)+\mathcal{H}(t)+\mathcal{I}(t)+\mathcal{V}(t)
+\mathcal{B}(t).\label{6.31}
\end{equation}
We denote by $C$ any constant depending only on $n$, and by $N$ any constant depending on $n, K, T, g_{0}, u_{0},\beta$.

Since $g(t)$ are all uniformly equivalent to $g=g(0)$, we can replace the norm $|\cdot|$ in {\color{blue}{(\ref{6.26})}} by $|\cdot|_{g}$. Hence we may regard the norm $|\cdot|$ in {\color{blue}{(\ref{6.26})}} is independent of time.
\\

{\bf (1) Estimate for $\mathcal{G}'$.} Start with
\begin{eqnarray*}
\mathcal{G}'&=&\int_{M}\left[\partial_{t}|T|^{2}-
|T|^{2}\partial_{t}\eta+\left(-R+2|\nabla u|^{2}\right)|T|^{2}\right]e^{-\eta}dV\\
&\leq&N\mathcal{G}+\int_{M}\left[2\langle\partial_{t}T,T\rangle
-|T|^{2}\partial_{t}\eta\right]e^{-\eta}dV.
\end{eqnarray*}
Using {\color{blue}{(\ref{6.18})}}, {\color{blue}{(\ref{6.20})}}, {\color{blue}{(\ref{6.2})}}, {\color{blue}{(\ref{6.29})}},
and {\color{blue}{(\ref{6.31})}}, we have
\begin{eqnarray*}
\mathcal{G}'&\leq& N\mathcal{G}
+\int_{M}\bigg[2\langle\Delta T+{\rm div}\!\ S,T\rangle
+C|\tilde{g}^{-1}||\widetilde{\nabla}\widetilde{{\rm Rm}}||A||T|\\
&&+ \ C|\tilde{g}^{-1}||\widetilde{{\rm Rm}}|^{2}|h||T|+C\left(|{\rm Rm}|+|\widetilde{{\rm Rm}}|\right)|T|^{2}
-|T|^{2}\partial_{t}\eta\\
&&+ \ C|\tilde{g}^{-1}||\widetilde{\nabla}{}^{2}\tilde{u}|^{2}
|h||T|+\left(|\nabla^{2}u|+|\widetilde{\nabla}{}^{2}\tilde{u}|\right)
|y||T|\bigg]e^{-\eta}dV\\
&\leq& N\mathcal{G}+\int_{M}\bigg[\left(2\langle\Delta T+{\rm div}\!\ S,T\rangle-|T|^{2}\partial_{t}\eta\right)\\
&&+ \ N t^{-1/2}|A||T|+N|h||T|+N|T|^{2}+N|y||T|\bigg]e^{-\eta}dV
\end{eqnarray*}
The same argument in \cite{K2014} implies
\begin{equation*}
\int_{M}\left[2\langle\Delta T+{\rm div}\!\ S,T\rangle-|T|^{2}\partial_{t}\eta\right]e^{-\eta}dV\leq-\mathcal{J}
+N\mathcal{H}+N t^{\beta}\mathcal{I}
\end{equation*}
by choosing an appropriate constant $B$ in {\color{blue}{(\ref{6.25})}}. Therefore
\begin{equation}
\mathcal{G}'\leq N\mathcal{G}+N\mathcal{H}
+\left(N+t^{\beta-1}\right)\mathcal{I}-\mathcal{J}+
\frac{1}{4}\mathcal{D}.\label{6.32}
\end{equation}

{\bf (2) Estimate for $\mathcal{H}'$.} Since $\partial_{t}\eta\geq0$, we get
\begin{eqnarray*}
\mathcal{H}'&=&-t^{-1}\mathcal{H}+t^{-1}
\int_{M}\left[2\langle\partial_{t}h,h\rangle-|h|^{2}\partial_{t}\eta+|h|^{2}\left(-R+2|\nabla u|^{2}\right)\right]e^{-\eta}dV\\
&\leq&(N-t^{-1})\mathcal{H}+Ct^{-1}\int_{M}
|h|\left(|T|^{2}+|w|^{2}+|w||\widetilde{\nabla}\tilde{u}|\right)e^{-\eta}dV\\
&\leq&(N-t^{-1})\mathcal{H}+Ct^{-1}\int_{M}|h||T|e^{-\eta}dV
+Nt^{-1}\int_{M}|h|w|e^{-\eta}dV\\
&\leq&\left(N-\frac{1}{2}t^{-1}\right)\mathcal{H}
+C\mathcal{G}+N\int_{M}|w|^{2}e^{-\eta}dV.
\end{eqnarray*}
Thus
\begin{equation}
\mathcal{H}'\leq\left(N-\frac{1}{2}t^{-1}\right)\mathcal{H}+C\mathcal{G}
+N\mathcal{B}.\label{6.33}
\end{equation}

{\bf (3) Estimate for $\mathcal{I}'$.} As in the estimation of $\mathcal{H}'$, we have
\begin{eqnarray*}
\mathcal{I}'&\leq&(N-\beta t^{-1})\mathcal{I}
+Ct^{-\beta}\int_{M}|A|\bigg(|\tilde{g}^{-1}||\widetilde{\nabla}\widetilde{{\rm Rm}}||h|+|A||\widetilde{{\rm Rm}}|\\
&&+ \ |\nabla T|
+|\tilde{g}^{-1}||\widetilde{\nabla}{}^{2}\tilde{u}||h|+|w|
\widetilde{\nabla}{}^{2}\tilde{u}|+|\nabla u||y|\bigg)e^{-\eta}dV\\
&\leq&(N-\beta t^{-1})\mathcal{I}
+\int_{M}\bigg(C t^{-\beta}|\nabla T||A|+N t^{-\frac{1}{2}-\beta}|h||A|\\
&&+ \ N t^{-\beta}|h||A|+N t^{-\beta}|w||A|+N t^{-\beta}|A||y|\bigg)
e^{-\eta}dV\\
&\leq&(N-\beta t^{-1})\mathcal{I}+\left(C t^{-\beta}\mathcal{I}+\mathcal{J}\right)+\left(N\mathcal{H}
+C t^{-\beta}\mathcal{I}\right)\\
&&+ \ \left(N\mathcal{H}+t\mathcal{I}\right)
+\left(C t^{-\beta}\mathcal{I}+N\mathcal{B}\right)
+\left(N t^{-\beta}\mathcal{I}+\frac{1}{4}\mathcal{D}\right).
\end{eqnarray*}
Consequently
\begin{equation}
\mathcal{I}'\leq N\mathcal{H}+\left(N-\beta t^{-1}+N t^{-\beta}\right)
\mathcal{I}+\mathcal{J}+N\mathcal{B}+\frac{1}{4}\mathcal{D}.\label{6.34}
\end{equation}

{\bf (4) Estimate for $\mathcal{V}'$.} This estimate is similar to (1),
\begin{eqnarray*}
\mathcal{V}'&\leq&N\mathcal{V}+\int_{M}\left[2\langle\partial_{t}v,v\rangle
-|v|^{2}\partial_{t}\eta\right]e^{-\eta}dV\\
&\leq& N\mathcal{V}+\int_{M}
\left[2\langle\Delta v,v\rangle-|v|^{2}\partial_{t}\eta+C|v|\left(|\tilde{g}^{-1}||h|\widetilde{\nabla}{}^{2}
\tilde{u}|+|A||\widetilde{\nabla}\tilde{u}|\right)\right]e^{-\eta}dV\\
&\leq&N\mathcal{V}+\int_{M}\left[2\langle\Delta v,v\rangle
-|v|^{2}\partial_{t}\eta\right]e^{-\eta}dV+N\int_{M}
\left(|v||h|+|v||A|\right)e^{-\eta}dV\\
&\leq&N\mathcal{V}+t\mathcal{H}+t^{\beta}\mathcal{I}+\int_{M}\left[-2|\nabla v|^{2}+2|v||\nabla\eta||\nabla v|
-|v|^{2}\partial_{t}\eta\right]e^{-\eta}dV.
\end{eqnarray*}
Thus, by choosing an appropriate constant $B$ in {\color{blue}{(\ref{6.25})}},
\begin{equation}
\mathcal{V}'\leq N\mathcal{V}+t\mathcal{H}+t^{\beta}\mathcal{I}-\mathcal{B}\label{6.35}
\end{equation}

{\bf (5) Estimate for $\mathcal{B}'$.} Because the evolution equation
{\color{blue}{(\ref{A.4})}} is linear in $\nabla_{i}u$, we conclude that
\begin{equation}
\partial_{t}w=\Delta w+{\rm Rm}\ast w\label{6.36}
\end{equation}
and hence
\begin{eqnarray}
\mathcal{B}'&\leq&N\mathcal{B}+\int_{M}\left[2\langle\partial_{t}
w,w\rangle-|w|^{2}\partial_{t}\eta\right]e^{-\eta}dV\nonumber\\
&\leq&N\mathcal{B}+\int_{M}\left[2\langle\Delta w,w\rangle-|w|^{2}
\partial_{t}\eta\right]e^{-\eta}dV\label{6.37}\\
&\leq&N\mathcal{B}-\int_{M}|\nabla w|^{2}e^{-\eta}dV\nonumber
\end{eqnarray}
by choosing an appropriate constant $B$ in {\color{blue}{(\ref{6.25})}}. From {\color{blue}{(\ref{6.32})}} -- {\color{blue}{(\ref{6.37})}}, we arrive at
\begin{equation}
\mathcal{E}'\leq N\mathcal{E}-t^{-1}\left(\beta-t^{\beta}-N t^{1-\beta}\right)\mathcal{I}+\frac{1}{2}\mathcal{D}-\int_{M}|\nabla w|^{2}e^{-\eta}dV.\label{6.38}
\end{equation}
On the other hand, the equation {\color{blue}{(\ref{6.9})}} yields
\begin{eqnarray*}
\frac{1}{2}\mathcal{D}&=&\frac{1}{2}\int_{M}|y|^{2}e^{-\eta}dV \ \ \leq \ \ \int_{M}\left[|\nabla w|^{2}+N|A|^{2}|\widetilde{\nabla}\tilde{u}|^{2}\right]e^{-\eta}dV\\
&\leq&\int_{M}|\nabla w|^{2}e^{-\eta}dV+N t^{\beta}\mathcal{I}.
\end{eqnarray*}
Therefore, the inequality {\color{blue}{(\ref{6.38})}} can be rewritten as
\begin{equation}
\mathcal{E}'\leq N\mathcal{E}-t^{-1}\left(\beta-t^{\beta}-N t^{1-\beta}\right)\mathcal{I}.\label{6.39}
\end{equation}
Now we can choose appropriate constants $T_{0}$ and $N$ such that the term $\beta-t^{\beta}-N t^{1-\beta}$ is nonnegative for any $t\in[0,T_{0}]$. In this case, the inequality {\color{blue}{(\ref{6.43})}} gives us the desired estimate $\mathcal{E}'(t)\leq N\mathcal{E}(t)$ on $[0,T_{0}]$.
\end{proof}

{\it The proof of {\color{red}{Theorem \ref{t6.1}}}.} Now the proof immediately follows from the above {\color{red}{Proposition \ref{p6.4}}}.

\subsection{Backward uniqueness}\label{subsection6.2}

In this subsection we use the main idea in \cite{K2010} to prove the backward uniqueness of the Ricci-harmonic flow. Recall

\begin{theorem}\label{t6.5}{\bf (Kotschwar \cite{K2010})} Consider a
smooth family of complete Riemannian metrics $(g(t))_{t\in[0,T]}$ on a smooth
$n$-dimensional manifold $M$, satisfying the evolution equation
\begin{equation}
\partial_{t}g(t)=b(t),\label{6.40}
\end{equation}
and a symmetric, positive-definite family of $(2,0)$-tensor fields
$\Lambda(t)$, $t\in[0,T]$, with
\begin{equation}
\Box:=\partial_{t}-\Delta_{\Lambda(t),g(t)}, \ \ \
\Delta_{\Lambda,g(t)}:={\rm tr}_{\Lambda(t)}\nabla^{2}_{g(t)}.
\label{6.41}
\end{equation}
Let $\mathscr{X}, \mathscr{Y}$ be finite direct sums of the $(p,q)$-bundle $\mathcal{T}^{p}_{q}(M)$ over $M$, and ${\bf X}\in C^{\infty}(\mathscr{X}\times
[0,T])$, ${\bf Y}\in C^{\infty}(\mathscr{Y}\times [0,T])$. Suppose the following assumptions hold:

\begin{itemize}

\item[(1)] there exist positive constants $P, Q, \alpha_{1}, \alpha_{2}$ such that
\begin{eqnarray*}
|b(t)|^{2}_{g(t)}+|\nabla_{g(t)}b(t)|^{2}_{g(t)}&\leq& P,\\
|\partial_{t}\Lambda(t)|^{2}_{g(t)}
+|\nabla_{g(t)}\Lambda(t)|^{2}_{g(t)}&\leq& Q,\\
\alpha_{1}g^{-1}(t)&\leq&\Lambda(t) \ \ \leq \ \ \alpha_{2}
g^{-1}(t),
\end{eqnarray*}

\item[(2)] there exists a nonnegative constant $K$ such that ${\rm Ric}_{g(t)}\geq-K g(t)$,

\item[(3)] there exist positive constants $a, A$ and some point $x_{0}\in M$ such that
    \begin{equation}
    |{\bf X}(x,t)|^{2}_{g(t)}
    +|\nabla_{g(t)}{\bf X}(x,t)|^{2}_{g(t)}
    +|{\bf Y}(x,t)|^{2}_{g(t)}
    \leq A e^{a d_{g(t)}(x_{0},x)},\label{6.42}
    \end{equation}

\item[(4)] ${\bf X}$ and ${\bf Y}$ satisfy the inequality
\begin{eqnarray}
|\Box{\bf X}|^{2}_{g(t)}&\leq&C\left(|{\bf X}|^{2}_{g(t)}
+|\nabla_{g(t)}{\bf X}|^{2}_{g(t)}+|{\bf Y}|^{2}_{g(t)}
\right),\label{6.43}\\
|\partial_{t}{\bf Y}|^{2}_{g(t)}&\leq& C
\left(|{\bf X}|^{2}_{g(t)}
+|\nabla_{g(t)}{\bf Y}|^{2}_{g(t)}
+|{\bf Y}|^{2}_{g(t)}\right)\label{6.44}
\end{eqnarray}
for some positive constant $C$.
\end{itemize}
If ${\bf X}(T)={\bf Y}(T)=0$, then ${\bf X}
={\bf Y}\equiv0$ on $M\times[0,T]$.
\end{theorem}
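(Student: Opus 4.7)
The plan is to establish Kotschwar's backward uniqueness via a Carleman-estimate argument after time-reversal. Setting $\tau := T - t$, the hypotheses translate into a backward parabolic inequality $|\partial_\tau \mathbf{X} + \Delta_{\Lambda,g}\mathbf{X}|^2 \lesssim |\mathbf{X}|^2 + |\nabla\mathbf{X}|^2 + |\mathbf{Y}|^2$, an ODE-type inequality $|\partial_\tau\mathbf{Y}|^2 \lesssim |\mathbf{X}|^2 + |\nabla\mathbf{X}|^2 + |\mathbf{Y}|^2$, vanishing initial data $\mathbf{X}(0)=\mathbf{Y}(0)=0$, and the goal becomes to show $\mathbf{X}\equiv\mathbf{Y}\equiv 0$ for $\tau\in[0,T]$.

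First I would control the growth at infinity. Fix $x_0$ and let $r(x) = d_{g(0)}(x_0,x)$. Because of the exponential pointwise bound (3), $L^2$-type integrals are not a priori finite, so I introduce a Gaussian cutoff $\rho_{\tau,\sigma}(x) := \exp\!\bigl(-r(x)^2/(\sigma+\kappa\tau)\bigr)$ with $\sigma>0$ small and $\kappa$ chosen large compared to the growth rate $a$. Under (1)--(2), distance-comparison estimates guarantee that $|\mathbf{X}|^2\rho$, $|\nabla\mathbf{X}|^2\rho$, and $|\mathbf{Y}|^2\rho$ are integrable, and $\rho$ satisfies the crucial differential identity $\partial_\tau\rho + \tfrac{1}{4}|\nabla\rho|^2_g/\rho \le 0$ (modulo controllable corrections from the time-dependence of $g$).

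Next I set up the weighted energy
\[
\mathcal{F}(\tau) := \int_M \bigl(|\mathbf{X}|^2 + |\mathbf{Y}|^2\bigr)\rho_{\tau,\sigma}\, dV_{g(T-\tau)}, \qquad \mathcal{G}(\tau) := \int_M |\nabla\mathbf{X}|^2 \rho_{\tau,\sigma}\, dV_{g(T-\tau)}.
\]
Differentiating $\mathcal{F}$ and pairing $\partial_\tau\mathbf{X} + \Delta_\Lambda\mathbf{X}$ against $\mathbf{X}\rho$, integration by parts yields $-2\int\langle\nabla\mathbf{X},\Lambda\nabla\mathbf{X}\rangle\rho$ plus a boundary-type correction involving $\nabla\rho/\rho$; the $\partial_\tau\rho$ contribution combines with this correction and is nonpositive by Step~1. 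Using $\Lambda\ge\alpha_1 g^{-1}$, the inequalities (4), and Cauchy--Schwarz with a small constant to treat the ODE-type term for $\mathbf{Y}$, one arrives at
\[
\mathcal{F}'(\tau) + \alpha_1\mathcal{G}(\tau) \le K\,\mathcal{F}(\tau) + \tfrac12\mathcal{G}(\tau),
\]
so $\mathcal{F}'(\tau) \le K\,\mathcal{F}(\tau)$. Combined with $\mathcal{F}(0)=0$, Gronwall gives $\mathcal{F}\equiv 0$ on a short interval $[0,\tau_0]$. Iterating by resetting the vanishing time at each step extends this to all of $[0,T]$, and letting $\sigma\to 0$ yields $\mathbf{X}=\mathbf{Y}=0$ pointwise.

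The main obstacle is that $\mathbf{Y}$ is non-smoothing, yet its inequality couples it to $|\nabla\mathbf{X}|^2$, which is a first-order quantity at the same order as the parabolic principal part of the $\mathbf{X}$-equation. Absorbing this term requires the ``good'' coefficient $\alpha_1\mathcal{G}$ produced by integration by parts on $\mathbf{X}$ to strictly dominate the $\mathcal{G}$ contribution from (4); this is where the uniform ellipticity $\alpha_1 g^{-1} \le \Lambda$ enters essentially, and where the Carleman weight must be tuned (through $\kappa$ and $\sigma$) delicately against the data growth rate $a$ and the Ricci lower bound $K$, so that the harmful drift produced by $\nabla\rho$ does not spoil the absorption and no surviving $\mathcal{G}$-term remains on the right-hand side.
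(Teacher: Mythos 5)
Your proposal runs into a fundamental sign obstruction at the core of the energy argument. After the time-reversal $\tau = T-t$, the operator $\Box = \partial_t - \Delta_\Lambda$ becomes $-(\partial_\tau + \Delta_\Lambda)$, so $\mathbf{X}$ satisfies a \emph{backward} heat inequality $\partial_\tau \mathbf{X} = -\Delta_\Lambda \mathbf{X} + (\mathrm{error})$. When you differentiate $\mathcal{F}_{\mathbf{X}}(\tau) = \int |\mathbf{X}|^2\rho$, substitute the equation, and integrate by parts, the diffusion term contributes
\begin{equation*}
\int 2\langle \partial_\tau\mathbf{X},\mathbf{X}\rangle\rho = -2\int\langle\Delta_\Lambda\mathbf{X},\mathbf{X}\rangle\rho + (\ldots) = +2\int\langle\Lambda\nabla\mathbf{X},\nabla\mathbf{X}\rangle\rho + (\ldots),
\end{equation*}
with a \emph{plus} sign, so $\mathcal{F}'_{\mathbf{X}}(\tau) \geq 2\alpha_1\mathcal{G}(\tau) - (\mathrm{lower\ order})$: the energy \emph{grows}, and there is no hope of obtaining $\mathcal{F}' + \alpha_1\mathcal{G} \leq K\mathcal{F} + \tfrac12\mathcal{G}$ as you claim. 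Your stated intermediate formula ``integration by parts yields $-2\int\langle\nabla\mathbf{X},\Lambda\nabla\mathbf{X}\rangle\rho$'' refers to the pairing $\int\langle\partial_\tau\mathbf{X}+\Delta_\Lambda\mathbf{X},\mathbf{X}\rangle\rho$, but once you move the time-derivative onto $\mathcal{F}'$ and solve, that sign flips. Consequently Gronwall from $\mathcal{F}(0)=0$ gives nothing. This is precisely the well-known obstruction to proving backward uniqueness of parabolic equations by elementary energy methods: the backward heat semigroup is not contractive.

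The cure, and what Kotschwar actually does in \cite{K2010}, is a genuine Carleman estimate: one works with weights of the form $G_{\alpha,\delta}(x,t) = \exp\bigl(\alpha\,\psi(x,t)\bigr)$ where $\psi$ involves negative powers of $(T-t+\delta)$ (so that the weight blows up as $t\to T$ and $\delta\to0$), together with a large parameter $\alpha\to\infty$; the commutator of the weight with the backward heat operator then produces a positive quadratic form strong enough to dominate both the gradient-growth term and the coupling $|\partial_t\mathbf{Y}|^2 \lesssim |\nabla\mathbf{X}|^2 + \ldots$ from the ODE component, provided a matching weighted $L^2$ estimate is proven for $\mathbf{Y}$ as well. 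Your Gaussian factor $\rho_{\tau,\sigma}$ is a legitimate spatial cutoff for taming the exponential growth in hypothesis (3), but it has no singular time structure and no large parameter; it is a convergence device, not a Carleman weight, and it cannot reverse the unfavorable sign above. Also note that $\rho_{\tau,\sigma}\to0$ pointwise as $\sigma\to0$ near $\tau=0$, so your final limiting step ``$\sigma\to0$ yields $\mathbf{X}=\mathbf{Y}=0$'' does not extract pointwise vanishing from $\mathcal{F}\equiv0$ either. To repair the proof you would need to replace the energy/Gronwall scheme with the two-Carleman-inequality structure (one parabolic, one transport/ODE) of the cited reference.
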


\begin{theorem}\label{t6.6}{\bf (Backward uniqueness of the Ricci-harmonic flow)} Suppose that $(g(t), u(t))$ and $(\tilde{g}(t), \tilde{u}(t))$ are two smooth complete solutions of {\color{blue}{(\ref{6.1})}} satisfying {\color{blue}{(\ref{6.2})}} for some
uniform constant $K$. If $(g(T), u(T))=(\tilde{g}(T),
\tilde{u}(T))$, then $(g(t), u(t))\equiv(\tilde{g}(t), \tilde{u}(t))$
for each $t\in[0,T]$.
\end{theorem}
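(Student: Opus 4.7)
The plan is to reduce Theorem \ref{t6.6} to Theorem \ref{t6.5} of Kotschwar, applied to the coupled system satisfied by the pair $(\mathbf{X}, \mathbf{Y})$ of tensor differences introduced above the statement. First, I would check that the terminal data vanish: the assumption $g(T) = \tilde g(T)$ forces $\nabla_{g(T)} = \widetilde\nabla_{\tilde g(T)}$, which immediately gives $h(T) = A(T) = B(T) = 0$ and, since the Riemann tensor is a natural object of the connection, $T(T) = U(T) = 0$; together with $u(T) = \tilde u(T)$ this also yields $v(T) = w(T) = x(T) = y(T) = z(T) = 0$. Hence $\mathbf{X}(T) = \mathbf{Y}(T) = 0$.

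Next I would derive the evolution inequalities (\ref{1.44}). For the $\mathbf{Y}$-components the calculations are the same first-order time-differencing arguments already performed in (\ref{6.12})--(\ref{6.15}): $\partial_t h$ lies in the span of $T$ and $w$-terms, $\partial_t A$ picks up $\nabla T$ (hence an $|\nabla \mathbf{X}|$ term), and differentiating $A$ once more produces $\partial_t B$ involving $\nabla U$; the identities $\partial_t u = \Delta u$ and $\partial_t \tilde u = \widetilde\Delta \tilde u$ yield analogous schematic expressions for $\partial_t v, \partial_t w, \partial_t x$ in which every term can be bounded by $\mathbf{X}, \mathbf{Y}$, or $\nabla \mathbf{X}$ with coefficients uniformly controlled on $[0,T]$. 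For the $\mathbf{X}$-components I would start from Lemma \ref{lA.2}, writing $\Box_{g(t)} \mathrm{Rm} = \mathrm{Rm} \ast \mathrm{Rm} + \nabla^2 u \ast \nabla^2 u + \mathrm{Rm} \ast \nabla u \ast \nabla u$ and the matching equation for $\widetilde{\mathrm{Rm}}$ under $\widetilde\Box$, then absorbing the discrepancy $\Box - \widetilde\Box = (g^{ab} - \tilde g^{ab})\widetilde\nabla^2 + g^{ab}(\nabla^2 - \widetilde\nabla^2)$ into $h \ast \widetilde\nabla^2 \widetilde{\mathrm{Rm}} + A \ast \widetilde\nabla \widetilde{\mathrm{Rm}}$; subtracting produces an identity of the form $\Box_{g(t)} T = \mathrm{Rm} \ast T + \nabla^2 u \ast y + \mathbf{X} \ast (\text{known}) + \mathbf{Y} \ast (\text{known})$. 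The evolution equations for $U, y, z$ are obtained by taking further covariant derivatives of the equations for $T$ and $v$ (using $\Box y = \mathrm{Rm} \ast y + \ldots$ and its $\tilde{}$-analogue, plus the Ricci identity to commute $\nabla$ with $\Delta$).

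The main obstacle, as I see it, is organizing the $\Box \mathbf{X}$ estimate so that it contains \emph{no} $|\nabla \mathbf{X}|$ term on the right-hand side, only $|\mathbf{X}|^2 + |\mathbf{Y}|^2$. Commutator identities between $\nabla$ and $\widetilde\nabla$, and between $\nabla$ and $\Delta$, generically produce $\nabla T$ and $\nabla y$ contributions, and the only way to close the system is to have already enlarged $\mathbf{X}$ by $U, z$ (so that those "derivative of difference" tensors become lowest-order members of $\mathbf{X}$, not derivatives of $\mathbf{X}$) and $\mathbf{Y}$ by $B, x$ (so that each occurrence of a second derivative of $h$ or of $v$ becomes a difference-of-connection term plus an element of $\mathbf{Y}$). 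Verifying that this choice is indeed large enough to absorb every remainder term will require a careful bookkeeping of Ricci-commutation and connection-difference identities, but no new idea beyond those used in \cite{K2010}. Throughout, the coefficients appearing are uniformly bounded thanks to Theorem \ref{tB.2}: under the curvature condition (\ref{6.2}) applied to both flows, all spatial derivatives of $\mathrm{Rm}$ and $u$ up to the orders we need are uniformly bounded on $M \times [0,T]$.

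Finally, I would verify the hypotheses of Theorem \ref{t6.5}. Take $\Lambda(t) = g^{-1}(t)$, so $\alpha_1 = \alpha_2 = 1$ and $|\partial_t \Lambda|, |\nabla \Lambda|$ are controlled by $|\mathrm{Ric}|$, which is bounded by (\ref{6.2}); $b(t) = -2\mathrm{Ric} + 4 \nabla u \otimes \nabla u$ and $\nabla b$ are uniformly bounded by the same token together with Theorem \ref{tB.2}, giving hypothesis (1); hypothesis (2), $\mathrm{Ric} \geq -K g$, is immediate. The pointwise bound (\ref{6.42}) holds with $a = 0$ because, again by (\ref{6.2}) and Theorem \ref{tB.2}, each component of $\mathbf{X}, \nabla \mathbf{X}, \mathbf{Y}$ is \emph{uniformly} bounded on $M \times [0,T]$; hypothesis (4) is exactly (\ref{1.44}). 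Theorem \ref{t6.5} applied on any interval $[\delta, T] \subset (0, T]$ then yields $\mathbf{X} \equiv \mathbf{Y} \equiv 0$ on $[\delta, T]$, and letting $\delta \downarrow 0$ and invoking smoothness of both solutions up to $t = 0$ gives $g(t) \equiv \tilde g(t)$ and $u(t) \equiv \tilde u(t)$ throughout $[0,T]$, completing the proof.
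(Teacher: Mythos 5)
Your proposal is essentially correct and follows the same route as the paper: set up the difference tensors $h,A,B,T,U$ and $v,w,x,y,z$, group them into $\mathbf{X}=T\oplus U\oplus y\oplus z$ and $\mathbf{Y}=h\oplus A\oplus B\oplus v\oplus w\oplus x$, derive the schematic evolution inequalities, use the Bernstein--Bando--Shi bounds from (\ref{6.2}) and Theorem \ref{tB.2} to get the uniform boundedness needed for hypothesis (3) of Theorem \ref{t6.5} with $a=0$, and then invoke Kotschwar's backward-uniqueness theorem. Your explanation of why $\mathbf{X}$ must contain $U$ and $z$ (so that $\nabla T$ and $\nabla y$ reappear as lowest-order members rather than as derivatives of $\mathbf{X}$), and why $\mathbf{Y}$ must contain $B$ and $x$, correctly identifies the closure mechanism that the paper makes explicit in Lemma \ref{l6.9}. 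One small imprecision: with $\Lambda(t)=g^{-1}(t)$ one has $\nabla_{g(t)}\Lambda(t)\equiv 0$ by metric compatibility, so only $\partial_t\Lambda(t)$ needs to be controlled, and it is controlled by $|{\rm Ric}|$ together with $|\nabla u|^{2}$ via the flow equation, not by $|{\rm Ric}|$ alone; this does not affect the argument since both quantities are uniformly bounded under (\ref{6.2}).
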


Recall notions in {\color{blue}{(\ref{6.3})}} -- {\color{blue}{(\ref{6.8})}},
\begin{eqnarray*}
h&:=&g-\tilde{g}, \ \ \ h_{ij} \ \ = \ \ g_{ij}
-\tilde{g}_{ij}, \ \ \ A \ \ := \ \ \nabla-\widetilde{\nabla}, \ \ \ A^{k}_{ij} \ \ = \ \
\Gamma^{k}_{ij}-\widetilde{\Gamma}^{k}_{ij},\\
T&:=&{\rm Rm}-\widetilde{{\rm Rm}}, \ \ \ T^{\ell}_{ijk} \ \ = \ \
R^{\ell}_{ijk}-\widetilde{R}^{\ell}_{ijk}, \ \ \ v \ \ := \ \ u-\tilde{u}, \ \ \ w \ \ := \ \ \nabla u-\widetilde{\nabla}\tilde{u}, \\
w_{i}&:=&\nabla_{i}v, \ \ \ y \ \ := \ \ \nabla^{2}u-\widetilde{\nabla}^{2}\tilde{u}, \ \ \
y_{ij} \ \ = \ \ \nabla_{i}\nabla_{j}u
-\widetilde{\nabla}_{i}\widetilde{\nabla}_{j}\tilde{u}.
\end{eqnarray*}
Define new tensor fields
\begin{eqnarray}
B&:=&\nabla A,\label{6.45}\\
U&:=&\nabla{\rm Rm}-\widetilde{\nabla}\widetilde{{\rm Rm}},
\label{6.46}\\
x&:=&\nabla w,\label{6.47}\\
z&:=&\nabla^{3}u-\widetilde{\nabla}^{3}\tilde{u}.\label{6.48}
\end{eqnarray}
Consider direct sums of tensor fields
\begin{equation*}
{\bf X}:=(T\oplus U)\oplus(y\oplus z), \ \ \
{\bf Y}:=(h\oplus A\oplus B)\oplus(v\oplus w\oplus x).
\end{equation*}
Using {\color{red}{Lemma \ref{lA.2}}}, we obtain
\begin{eqnarray*}
\partial_{t}\Gamma&=&g^{-1}\ast\nabla{\rm Rm}
+g^{-1}\ast\nabla u\ast\nabla^{2}u,\\
\partial_{t}{\rm Rm}&=&
\Delta{\rm Rm}+g^{-1}\ast{\rm Rm}\ast{\rm Rm}
+g^{-1}\ast{\rm Rm}\ast\nabla u\ast\nabla u
+g^{-1}\ast\nabla^{2}u\ast\nabla^{2}u,\\
\partial_{t}\nabla{\rm Rm}&=&
\Delta\nabla{\rm Rm}
+g^{-1}\ast{\rm Rm}\ast\nabla{\rm Rm}+g^{-1}\ast\nabla{\rm Rm}
\ast\nabla u\ast\nabla u\\
&&+ \ g^{-1}\ast{\rm Rm}\ast\nabla u
\ast\nabla^{2}u+g^{-1}\ast\nabla^{2}u\ast\nabla^{3}u.
\end{eqnarray*}
We also recall
\begin{equation*}
\tilde{g}^{ij}-g^{ij}=\tilde{g}^{ia}
g^{jb}h_{ab} \ \ \ \text{or} \ \ \
\tilde{g}^{-1}-g^{-1}=\tilde{g}^{-1}\ast g^{-1}\ast h
\end{equation*}
and
\begin{equation}
\nabla_{c}h_{ab}=A^{p}_{ca}\tilde{g}_{pb}
+A^{p}_{cb}\tilde{g}_{ap} \ \ \ \text{or} \ \ \
\nabla h=A\ast\tilde{g}.\label{6.49}
\end{equation}
The last identity follows from
\begin{eqnarray*}
\nabla_{v}h_{ab}&=&\partial_{v}h_{ab}-\Gamma^{p}_{ca}h_{pb}
-\Gamma^{p}_{cb}h_{ap}\\
&=&\partial_{c}g_{ab}
-\partial_{c}\tilde{g}_{ab}
-\Gamma^{p}_{ca}g_{pb}+\Gamma^{p}_{ca}
\tilde{g}_{pb}-\Gamma^{p}_{cb}g_{ap}
+\Gamma^{p}_{cb}\tilde{g}_{ap}\\
&=&\nabla_{c}g_{ab}
-\partial_{c}\tilde{g}_{ab}
+\left(A^{p}_{ca}+\widetilde{\Gamma}^{p}_{ca}
\right)\tilde{g}_{pb}
+\left(A^{p}_{cb}+\widetilde{\Gamma}^{p}_{cb}\right)
\tilde{g}_{ap}\\
&=&\nabla_{c}h_{ab}-\widetilde{\nabla}_{c}\tilde{g}_{ab}
+A^{p}_{ca}\tilde{g}_{pb}+A^{p}_{cb}\tilde{g}_{ap}.
\end{eqnarray*}

\begin{lemma}\label{l6.7} One has
\begin{eqnarray}
\partial_{t}h_{ij}&=&-2T^{\ell}_{\ell ij}\nonumber\\
&&+ \ 4\left(w_{i}w_{j}
+w_{i}\widetilde{\nabla}_{j}\tilde{u}
+w_{j}\widetilde{\nabla}_{i}\tilde{u}\right),\label{6.50}\\
\partial_{t}A^{k}_{ij}&=&\bigg[-g^{mk}\left(U^{p}_{ipjm}
+U^{p}_{jpim}-U^{p}_{mpij}\right)\nonumber\\
&&+ \ g^{kb}\tilde{g}^{ma}
h_{ab}\left(\widetilde{\nabla}_{i}\widetilde{R}_{jm}
+\widetilde{\nabla}_{j}\widetilde{R}_{im}
-\widetilde{\nabla}_{m}\widetilde{R}_{ij}\right)\bigg]+
4g^{mk}\nabla_{m}u y_{ij}\label{6.51}\\
&&+ \ 4g^{mk}w_{m}\widetilde{\nabla}_{i}
\widetilde{\nabla}_{j}\tilde{u}
-4\tilde{g}^{ma}g^{kb}h_{ab}
\widetilde{\nabla}_{m}\tilde{u}\widetilde{\nabla}_{i}\widetilde{\nabla}_{j}
\tilde{u},\nonumber\\
\partial_{t}B&=&\bigg[\nabla U+h\ast A\ast\tilde{g}^{-1}\ast\widetilde{\nabla}
\widetilde{{\rm Rm}}+A\ast\tilde{g}^{-1}
\ast\widetilde{\nabla}\widetilde{{\rm Rm}}\nonumber\\
&&+ \ h\ast\tilde{g}^{-1}\ast\widetilde{\nabla}{}^{2}
\widetilde{{\rm Rm}}+A\ast U+A\ast\widetilde{\nabla}\widetilde{{\rm Rm}}
\bigg]+\tilde{g}^{-1}\ast h\ast\widetilde{\nabla}\tilde{u}
\ast\widetilde{\nabla}{}^{2}\tilde{u}\nonumber\\
&&+ \ \tilde{g}^{-1}\ast A\ast\widetilde{\nabla}\tilde{u}
\ast\widetilde{\nabla}{}^{2}\tilde{u}+\tilde{g}^{-1}\ast h\ast\widetilde{\nabla}{}^{2}
\tilde{u}\ast\widetilde{\nabla}{}^{2}\tilde{u}
+\nabla u\ast\nabla^{2}u\ast A\label{6.52}\\
&&+ \ \tilde{g}^{-1}\ast h\ast A\ast\widetilde{\nabla}
\tilde{u}\ast\widetilde{\nabla}{}^{2}
\tilde{u}+\tilde{g}^{-1}\ast h\ast\widetilde{\nabla}\tilde{u}
\ast\widetilde{\nabla}{}^{3}\tilde{u}\nonumber\\
&&+ \ \nabla^{2}u\ast y+\nabla u\ast\nabla y+x\ast
\widetilde{\nabla}{}^{2}\tilde{u}
+w\ast\widetilde{\nabla}{}^{3}\tilde{u}+A\ast w\ast\widetilde{\nabla}{}^{2}
\tilde{u},\nonumber\\
\Box T&=&\bigg[h\ast\tilde{g}^{-1}\ast\widetilde{\nabla}{}^{2}
\widetilde{{\rm Rm}}+A\ast\widetilde{\nabla}\widetilde{{\rm Rm}}
+T\ast\widetilde{{\rm Rm}}
+T\ast T+A\ast A\ast\widetilde{{\rm Rm}}\nonumber\\
&&+ \ B\ast\widetilde{{\rm Rm}}+ h\ast\tilde{g}^{-1}\ast\widetilde{{\rm Rm}}
\ast\widetilde{{\rm Rm}}\bigg]
+h\ast\tilde{g}^{-1}
\ast\widetilde{{\rm Rm}}\ast\widetilde{\nabla}\tilde{u}\ast\widetilde{\nabla}
\tilde{u}\nonumber\\
&&+ \ \widetilde{{\rm Rm}}
\ast w\ast w+\widetilde{{\rm Rm}}\ast w\ast
\widetilde{\nabla}\tilde{u}
+T\ast\widetilde{\nabla}\tilde{u}
\ast\widetilde{\nabla}\tilde{u}+T\ast w\ast\widetilde{\nabla}\tilde{u}
\label{6.53}\\
&&+ \ T\ast w\ast w+y\ast y+y\ast\widetilde{\nabla}{}^{2}
\tilde{u}+h\ast g^{-1}\ast\widetilde{\nabla}{}^{2}
\tilde{u}\ast\widetilde{\nabla}{}^{2}
\tilde{u},\nonumber\\
\Box U&=&\bigg[h\ast\tilde{g}^{-1}\ast\widetilde{\nabla}{}^{3}
\widetilde{{\rm Rm}}+A\ast\widetilde{\nabla}{}^{2}\widetilde{{\rm Rm}}
+A\ast A\ast\widetilde{\nabla}\widetilde{{\rm Rm}}
+B\ast\widetilde{\nabla}\widetilde{{\rm Rm}}\nonumber\\
&&+ \ h\ast\tilde{g}^{-1}\ast\widetilde{{\rm Rm}}\ast\widetilde{\nabla}
\widetilde{{\rm Rm}}+U\ast\widetilde{{\rm Rm}}
+T\ast\widetilde{\nabla}\widetilde{{\rm Rm}}
+T\ast U\bigg]\nonumber\\
&&+ \ h\ast\tilde{g}^{-1}\ast\widetilde{\nabla}\widetilde{{\rm Rm}}
\ast\widetilde{\nabla}\tilde{u}
\ast\widetilde{\nabla}\tilde{u}+U\ast w\ast w
+w\ast w\ast\widetilde{\nabla}\widetilde{{\rm Rm}}\nonumber\\
&&+ \ U\ast w\ast\widetilde{\nabla}\tilde{u}
+\widetilde{\nabla}\widetilde{{\rm Rm}}\ast
w\ast\widetilde{\nabla}\tilde{u}+U\ast\widetilde{\nabla}
\tilde{u}\ast\widetilde{\nabla}\tilde{u}\label{6.54}\\
&&+ \ h\ast\tilde{g}^{-1}\ast\widetilde{{\rm Rm}}\ast
\widetilde{\nabla}\tilde{u}
\ast\widetilde{\nabla}{}^{2}\tilde{u}\nonumber+T\ast\widetilde{\nabla}\tilde{u}\ast y
+\widetilde{{\rm Rm}}\ast\widetilde{\nabla}\tilde{u}
\ast y\\
&&+ \ T\ast w\ast\widetilde{\nabla}{}^{2}
\tilde{u}+\widetilde{{\rm Rm}}\ast w\ast\widetilde{\nabla}{}^{2}
\tilde{u}+T\ast\widetilde{\nabla}\tilde{u}
\ast\widetilde{\nabla}{}^{2}\tilde{u}+T\ast w\ast y
\nonumber\\
&&+ \ \widetilde{{\rm Rm}}\ast w\ast y
+h\ast\tilde{g}^{-1}\ast\widetilde{\nabla}{}^{2}
\tilde{u}\ast\widetilde{\nabla}{}^{3}\tilde{u}
+y\ast z+y\ast\widetilde{\nabla}{}^{3}\tilde{u}
+z\ast\widetilde{\nabla}{}^{2}\tilde{u}.\nonumber
\end{eqnarray}
\end{lemma}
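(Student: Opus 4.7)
The plan is to derive each of the five evolution equations in parallel by writing down the corresponding equation for $(g(t),u(t))$ and for $(\tilde g(t),\tilde u(t))$, subtracting, and then systematically re-expressing every resulting expression as a $\ast$-contraction in the difference tensors $h,A,B,T,U,v,w,x,y,z$. The bookkeeping is organised around three algebraic identities that have already been established or are immediate from the definitions: the metric-inverse identity $\tilde g^{-1}-g^{-1}=\tilde g^{-1}\ast g^{-1}\ast h$ together with $\nabla\tilde g^{-1}=\tilde g^{-1}\ast A$; the covariant-derivative identity $\nabla h=A\ast\tilde g$ derived from $\nabla_c g_{ab}=0$; and the formula $y=\nabla w+A\ast\widetilde\nabla\tilde u$ already obtained in \eqref{6.9}, together with its differentiated form $z=\nabla x+B\ast\widetilde\nabla\tilde u+A\ast\widetilde\nabla^{2}\tilde u$. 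These let us convert every ``$\widetilde\nabla$-derivative of $\tilde{u}$'' or ``$\widetilde\nabla$-derivative of $\widetilde{\rm Rm}$'' that appears after subtraction into a combination of $\nabla$-derivatives plus a $\ast$-contraction against $A$ or $B$, which is the source of the many $A\ast(\cdots)$ and $B\ast(\cdots)$ terms in \eqref{6.51}--\eqref{6.54}.

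The formulas \eqref{6.50} and \eqref{6.51} are direct: for $\partial_{t}h$ we use $\partial_{t}g=-2{\rm Ric}+4\nabla u\otimes\nabla u$ on each flow, recognise $R_{ij}-\widetilde R_{ij}=T^{\ell}{}_{\ell ij}$ up to index conventions, and expand $\nabla_{i}u\,\nabla_{j}u-\widetilde\nabla_{i}\tilde u\,\widetilde\nabla_{j}\tilde u=w_{i}w_{j}+w_{i}\widetilde\nabla_{j}\tilde u+w_{j}\widetilde\nabla_{i}\tilde u$. For $\partial_{t}A$ we start from the standard identity $\partial_{t}\Gamma^{k}_{ij}=-g^{mk}(\nabla_{i}R_{jm}+\nabla_{j}R_{im}-\nabla_{m}R_{ij})+4g^{mk}\nabla_{m}u\,\nabla_{i}\nabla_{j}u$ proved by contracting {\color{red}{Lemma \ref{lA.2}}}, subtract the tilded version, and split $\nabla R - \widetilde\nabla\widetilde R$ into a $U$-term plus a $(g^{-1}-\tilde g^{-1})\ast\widetilde\nabla\widetilde R$-term. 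The computation of $\partial_{t}B=\partial_{t}\nabla A=\nabla\partial_{t}A+[\partial_{t},\nabla]A$ in \eqref{6.52} is then carried out by differentiating the right-hand side of $\partial_{t}A$ and absorbing commutator terms using $[\partial_{t},\nabla]=\partial_{t}\Gamma\ast(\cdot)$ itself.

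The genuinely hard equations are \eqref{6.53} and \eqref{6.54}, namely $\Box T$ and $\Box U$, because they involve the difference of two Laplacians acting on curvature. The approach is the standard decomposition
\[
\Delta-\widetilde\Delta=(g^{ab}-\tilde g^{ab})\widetilde\nabla_{a}\widetilde\nabla_{b}+g^{ab}(\nabla_{a}\nabla_{b}-\widetilde\nabla_{a}\widetilde\nabla_{b}),
\]
where the first summand contributes $h\ast\tilde g^{-1}\ast\widetilde\nabla^{2}\widetilde{\rm Rm}$ (resp.\ $\widetilde\nabla^{3}\widetilde{\rm Rm}$) and the second summand, after writing $\nabla-\widetilde\nabla=A$, produces terms of the shape $A\ast\widetilde\nabla\widetilde{\rm Rm}$, $B\ast\widetilde{\rm Rm}$ and $A\ast A\ast\widetilde{\rm Rm}$. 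Combined with the time-derivative side, which is computed from the Ricci-harmonic evolution equations \eqref{A.5}--\eqref{A.6} of {\color{red}{Lemma \ref{lA.1}}} and their once-differentiated form in {\color{red}{Lemma \ref{lA.2}}}, and writing every remaining $\nabla u\otimes\nabla u$, $\nabla^{2}u$ or $\nabla^{3}u$ factor as either a tilded quantity plus $w$, $y$ or $z$, the formulas \eqref{6.53}--\eqref{6.54} drop out. I expect the main obstacle to be the careful reconciliation of indices and signs in the two long Laplacian-difference calculations, and in particular making sure that every $\widetilde\nabla$-derivative of $\tilde u$ or $\widetilde{\rm Rm}$ of order higher than what appears in the statement has been completely exchanged for the listed difference tensors, so that the $\Box T$ and $\Box U$ identities hold in the stated schematic form.
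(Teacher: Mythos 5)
Your proposal is correct and follows essentially the same route as the paper: direct subtraction for (6.50)--(6.51) using $\tilde g^{-1}-g^{-1}=\tilde g^{-1}\ast g^{-1}\ast h$ and the identity (6.9) for $y$, the Leibniz rule $\partial_t B=\nabla\partial_t A+\partial_t\Gamma\ast A$ for (6.52), and a Laplacian-difference decomposition for (6.53)--(6.54) obtained by converting $\widetilde\Delta$ to $\Delta$ via $\widetilde\nabla^2 W=\nabla^2 W+A\ast\widetilde\nabla W+B\ast W+A\ast A\ast W$. Your splitting of $\Delta-\widetilde\Delta$ into a metric-difference piece and a connection-difference piece is an equivalent reorganisation of the same identity the paper uses.
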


\begin{proof} For $h_{ij}$, one has
\begin{eqnarray*}
\partial_{t}h_{ij}&=&\partial_{t}g_{ij}-\partial_{t}\tilde{g}_{ij} \ \
= \ \ -2R_{ij}+4\nabla_{i}u\nabla_{j}u
+2\widetilde{R}_{ij}-4\widetilde{\nabla}_{i}\tilde{u}\widetilde{\nabla}_{j}
\tilde{u}\\
&=&-2T^{\ell}_{\ell ij}
+4\left(w_{i}+\widetilde{\nabla}_{i}\tilde{u}\right)
\left(w_{j}+\widetilde{\nabla}_{j}\tilde{u}\right)
-4\widetilde{\nabla}_{i}\tilde{u}\widetilde{\nabla}_{j}\tilde{u}
\end{eqnarray*}
which implies {\color{blue}{(\ref{6.50})}}.

In {\color{blue}{(\ref{6.51})}}, the terms enclosed in the bracket were derived in
\cite{K2010}. The remaining terms are
\begin{equation*}
4g^{mk}\nabla_{m}u\nabla_{i}\nabla_{j}u
-4\tilde{g}^{mk}\widetilde{\nabla}_{m}\tilde{u}
\widetilde{\nabla}_{i}\widetilde{\nabla}_{j}\tilde{u}
\end{equation*}
which, using $y_{ij}=\nabla_{i}\nabla_{j}u
-\widetilde{\nabla}_{i}\widetilde{\nabla}_{j}\tilde{u}$, $w_{m}
=\nabla_{m}y-\widetilde{\nabla}_{m}
\tilde{u}$, and $\tilde{g}^{mk}
-g^{mk}=\tilde{g}^{ma}g^{kb}h_{ab}$, is equal to
\begin{eqnarray*}
&&g^{mk}\nabla_{m}u\left(y_{ij}+\widetilde{\nabla}_{i}
\widetilde{\nabla}_{j}\tilde{u}\right)
-\tilde{g}^{mk}\widetilde{\nabla}_{m}\tilde{u}
\widetilde{\nabla}_{i}\widetilde{\nabla}_{j}\tilde{u}\\
&=&g^{mk}\nabla_{m}u y_{ij}
+\left(g^{mk}\nabla_{m}u-\tilde{g}^{mk}
\widetilde{\nabla}_{m}\tilde{u}\right)\widetilde{\nabla}_{i}
\widetilde{\nabla}_{j}\tilde{u}\\
&=&g^{mk}\nabla_{m}u y_{ij}
+\left[g^{mk}\left(\nabla_{m}u-\widetilde{\nabla}_{m}
\tilde{u}\right)-\left(\tilde{g}^{mk}
-g^{mk}\right)\widetilde{\nabla}_{m}\tilde{u}
\right]\widetilde{\nabla}_{i}\widetilde{\nabla}_{j}
\tilde{u}\\
&=&g^{mk}\nabla_{m}u y_{ij}
+g^{mk}w_{m}\widetilde{\nabla}_{i}\widetilde{\nabla}_{j}
\tilde{u}-\tilde{g}^{ma}g^{kb}h_{ab}
\widetilde{\nabla}_{m}\tilde{u}
\widetilde{\nabla}_{i}\widetilde{\nabla}_{j}
\tilde{u}.
\end{eqnarray*}
In particular, {\color{blue}{(\ref{6.51})}} implies
\begin{eqnarray*}
\partial_{t}A&=&\left(g^{-1}\ast U+g^{-1}\ast\tilde{g}^{-1}
\ast h\ast\widetilde{\nabla}\widetilde{{\rm Rm}}\right).
\end{eqnarray*}
According to the relation $\partial_{t}B=\partial_{t}\nabla A
=\nabla\partial_{t}A+\partial_{t}\Gamma\ast A$, we obtain {\color{blue}{(\ref{6.52})}} where the bracket follows from \cite{K2010} and the remaining terms are
$$
g^{-1}\ast\nabla u\ast\nabla^{2}u\ast A
+g^{-1}\ast\nabla^{2}u\ast y
+g^{-1}\ast\nabla u\ast\nabla y+g^{-1}
\ast\nabla w\ast\widetilde{\nabla}{}^{2}\tilde{u}
$$
$$
+ \ \tilde{g}^{-1}\ast w\ast\nabla\widetilde{\nabla}{}^{2}
\tilde{u}+g^{-1}\ast\nabla\tilde{g}^{-1}
\ast h\ast\widetilde{\nabla}\tilde{u}
\ast\widetilde{\nabla}{}^{2}\tilde{u}
+g^{-1}\ast\tilde{g}^{-1}\ast\nabla h
\ast\widetilde{\nabla}\tilde{u}
\ast\widetilde{\nabla}{}^{2}\tilde{u}
$$
$$
+ \ g^{-1}\ast\tilde{g}^{-1}\ast h
\ast\nabla\widetilde{\nabla}\tilde{u}
\ast\widetilde{\nabla}{}^{2}
\tilde{u}+g^{-1}\ast\tilde{g}^{-1}
\ast h\ast\widetilde{\nabla}\tilde{u}
\ast\nabla\widetilde{\nabla}{}^{2}
\tilde{u}.
$$
Applying the formula
\begin{equation}
\nabla W=\widetilde{\nabla}W+A\ast W
\end{equation}
for any tensor field $W$, $\nabla h=\tilde{g}\ast A$, and $\nabla\tilde{g}^{-1}
=\tilde{g}^{-1}\ast A$, which follows from
\begin{equation*}
\nabla_{k}\tilde{g}^{ij}
=\tilde{g}^{ia}\tilde{g}^{jb}
\nabla_{k}h_{ab}
=\tilde{g}^{ia}
\tilde{g}^{jb}
\left(A^{p}_{ka}\tilde{g}^{pb}
+A^{p}_{kb}\tilde{g}_{ap}\right)
=\tilde{g}^{ia}
A^{j}_{ka}
+\tilde{g}^{jb}A^{i}_{kb},
\end{equation*}
we complete the proof of {\color{blue}{(\ref{6.52})}}.

To prove the last two identities, recall from \cite{K2010} that
\begin{equation}
\widetilde{\nabla}{}^{2}W
=\nabla^{2}W+A\ast\widetilde{\nabla}W
+B\ast W+A\ast A\ast W
\end{equation}
for any tensor field $W$. In particular,
\begin{eqnarray*}
\widetilde{\Delta}\widetilde{{\rm Rm}}
&=&\tilde{g}^{-1}\ast h\ast\widetilde{\nabla}{}^{2}
\widetilde{{\rm Rm}}
+\Delta\widetilde{{\rm Rm}}
+A\ast\widetilde{\nabla}\widetilde{{\rm Rm}}
+B\ast\widetilde{{\rm Rm}}+A\ast A\ast\widetilde{{\rm Rm}},\\
\widetilde{\Delta}\widetilde{\nabla}
\widetilde{{\rm Rm}}&=&
\tilde{g}^{-1}\ast h\ast
\widetilde{\nabla}{}^{3}\widetilde{{\rm Rm}}
+\Delta\widetilde{\nabla}\widetilde{{\rm Rm}}
+A\ast\widetilde{\nabla}{}^{2}
\widetilde{{\rm Rm}}
+B\ast\widetilde{\nabla}\widetilde{{\rm Rm}}
+A\ast A\ast\widetilde{\nabla}\widetilde{{\rm Rm}}.
\end{eqnarray*}
For {\color{blue}{(\ref{6.53})}}, we have
$$
\Box T \ \ = \ \ \left(\partial_{t}-\Delta
\right)\left({\rm Rm}-\widetilde{{\rm Rm}}\right)
$$
$$
= \ \ g^{-1}\ast{\rm Rm}
\ast{\rm Rm}+g^{-1}\ast{\rm Rm}\ast\nabla u
\ast\nabla u+g^{-1}\ast\nabla^{2}u
\ast\nabla^{2}u-\partial_{t}\widetilde{{\rm Rm}}
+\Delta\widetilde{{\rm Rm}}
$$
$$
= \ \ g^{-1}
\ast{\rm Rm}\ast{\rm Rm}
+g^{-1}\ast{\rm Rm}\ast\nabla u\ast\nabla u
+g^{-1}\ast\nabla^{2}u\ast\nabla^{2}u
$$
$$
- \
\left[\widetilde{\Delta}\widetilde{{\rm Rm}}
+\tilde{g}^{-1}\ast\widetilde{{\rm Rm}}
\ast\widetilde{{\rm Rm}}+\tilde{g}^{-1}
\ast\widetilde{{\rm Rm}}\ast\widetilde{\nabla}
\tilde{u}\ast\widetilde{\nabla}\tilde{u}
+\tilde{g}^{-1}\ast\widetilde{\nabla}{}^{2}
\tilde{u}\ast\widetilde{\nabla}{}^{2}
\tilde{u}\right]
$$
$$
+ \ \left[\widetilde{\Delta}
\widetilde{{\rm Rm}}
+h\ast\tilde{g}^{-1}\ast\widetilde{\nabla}{}^{2}
\widetilde{{\rm Rm}}
+A\ast\widetilde{\nabla}\widetilde{{\rm Rm}}
+B\ast\widetilde{{\rm Rm}}
+A\ast A\ast\widetilde{{\rm Rm}}
\right]
$$
$$
= \ \
h\ast\tilde{g}^{-1}\ast\widetilde{\nabla}{}^{2}\widetilde{{\rm Rm}}
+A\ast\widetilde{\nabla}\widetilde{{\rm Rm}}
+B\ast\widetilde{{\rm Rm}}+A\ast A\ast\widetilde{{\rm Rm}}
$$
$$
+ \ g^{-1}\ast\left(T+\widetilde{{\rm Rm}}
\right)\ast\left(T+\widetilde{{\rm Rm}}\right)
-\tilde{g}^{-1}\ast\widetilde{{\rm Rm}}
\ast\widetilde{{\rm Rm}}
$$
$$
+ \ g^{-1}\ast\left(T+\widetilde{{\rm Rm}}
\right)\ast\left(w+\widetilde{\nabla}\tilde{u}
\right)\ast\left(w+\widetilde{\nabla}\tilde{u}
\right)
-\tilde{g}^{-1}\ast\widetilde{{\rm Rm}}
\ast\widetilde{\nabla}\tilde{u}\ast\widetilde{\nabla}
\tilde{u}
$$
$$
+ \ \tilde{g}^{-1}\ast\left(y+\widetilde{\nabla}{}^{2}
\tilde{u}\right)
\ast\left(y+\widetilde{\nabla}{}^{2}
\tilde{u}\right)
-\tilde{g}^{-1}\ast\widetilde{\nabla}{}^{2}
\tilde{u}\ast\widetilde{\nabla}{}^{2}
\tilde{u}.
$$
Simplifying terms gives {\color{blue}{(\ref{6.53})}}. Similarly,
$$
\Box U \ \ = \ \ h\ast\tilde{g}^{-1}\ast\widetilde{\nabla}{}^{3}
\widetilde{{\rm Rm}}
+A\ast\widetilde{\nabla}{}^{2}\widetilde{{\rm Rm}}
+B\ast\widetilde{\nabla}\widetilde{{\rm Rm}}
+A\ast A\ast\widetilde{\nabla}\widetilde{{\rm Rm}}
$$
$$
+\left[g^{-1}\ast\left(T+\widetilde{{\rm Rm}}
\right)\ast\left(U+\widetilde{\nabla}\widetilde{{\rm Rm}}
\right)-\tilde{g}^{-1}\ast\widetilde{{\rm Rm}}
\ast\widetilde{\nabla}\widetilde{{\rm Rm}}
\right]
$$
$$
+ \ \left[g^{-1}\ast
\left(U+\widetilde{\nabla}\widetilde{{\rm Rm}}\right)
\ast\left(w+\widetilde{\nabla}\tilde{u}
\right)\ast\left(w+\widetilde{\nabla}\tilde{u}
\right)-\tilde{g}^{-1}\ast\widetilde{\nabla}\widetilde{{\rm Rm}}
\ast\widetilde{\nabla}\tilde{u}\ast\widetilde{\nabla}
\tilde{u}\right]
$$
$$
+ \ \left[g^{-1}\ast\left(T
+\widetilde{{\rm Rm}}\right)
\ast\left(w+\widetilde{\nabla}\tilde{u}
\right)\ast\left(y+\widetilde{\nabla}{}^{2}
\tilde{u}\right)
-\tilde{g}^{-1}\ast\widetilde{{\rm Rm}}
\ast\widetilde{\nabla}\tilde{u}
\ast\widetilde{\nabla}{}^{2}\tilde{u}
\right]
$$
$$
+ \ \left[g^{-1}
\ast\left(y+\widetilde{\nabla}{}^{2}
\tilde{u}\right)\ast
\left(z+\widetilde{\nabla}{}^{3}
\tilde{u}\right)
-\tilde{g}^{-1}\ast\widetilde{\nabla}{}^{2}
\tilde{u}
\ast\widetilde{\nabla}{}^{3}
\tilde{u}\right].
$$
Simplifying terms gives {\color{blue}{(\ref{6.54})}}.
\end{proof}

According to {\color{red}{Theorem \ref{tB.2}}}, the condition $|{\rm Rm}|_{g(t)}
+|\widetilde{{\rm Rm}}|_{\tilde{g}(t)}\leq K$ implies that, for all
$m\geq0$, there exist constants $C_{m}=C_{m}(\delta, K, n, T)>0$ such that
\begin{equation}
\left|\nabla^{m}{\rm Rm}\right|
+\left|\nabla^{m+1}u\right|
+\left|\widetilde{\nabla}{}^{m}\widetilde{{\rm Rm}}
\right|_{\tilde{g}(t)}
+\left|\widetilde{\nabla}{}^{m+1}\tilde{u}
\right|_{\tilde{g}(t)}\leq C_{m}\label{6.57}
\end{equation}
on $M\times[0,T]$. We also have
\begin{equation*}
\frac{1}{\gamma}g(t)\leq\tilde{g}(t)\leq\gamma\!\ g(t)
\end{equation*}
on $M\times[0,T]$, for some positive constant $\gamma=\gamma(K, T)$. Hence $\nabla^{m}{\rm Rm}, \nabla^{m+1}u$, $\widetilde{\nabla}^{m}\widetilde{{\rm Rm}}, \widetilde{\nabla}^{m+1}\tilde{u}$, $m\geq0$, and $\tilde{g}^{-1}$
are uniformly bounded with respect to $g(t)$ on $[0,T]$ so that we can
replace the norm $|\cdot|_{\tilde{g}(t)}$ by $|\cdot|:=|\cdot|_{g(t)}$.

\begin{lemma}\label{l6.8} $h, A, B, T, U$ are uniformly bounded with respect to $g(t)$
on $[0, T]$. Moreover, $v, w, x, y, z$ are also uniformly bounded with respect to $g(t)$ on $[0,T]$.
\end{lemma}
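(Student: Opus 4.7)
The plan combines Bernstein--Bando--Shi (BBS) estimates with backward integration from the common terminal data at $t=T$. First, under the curvature hypothesis {\color{blue}{(\ref{1.40})}}, applying {\color{red}{Theorem \ref{tB.2}}} separately to each of the two complete solutions yields, for every $m\geq 0$, a constant $C_{m}=C_{m}(n,K,T,g_{0},u_{0})$ with
$$|\nabla^{m}{\rm Rm}|+|\nabla^{m+1}u|+|\widetilde{\nabla}{}^{m}\widetilde{{\rm Rm}}|_{\tilde{g}}+|\widetilde{\nabla}{}^{m+1}\tilde{u}|_{\tilde{g}}\leq C_{m}$$
on $M\times[0,T]$. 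Moreover $|\partial_{t}g|_{g}\lesssim|{\rm Ric}|+|\nabla u|^{2}\lesssim 1$, so each metric family remains in a uniformly equivalent conformal class; combined with $g(T)=\tilde{g}(T)$ this propagates to $\gamma^{-1}g(t)\leq\tilde{g}(t)\leq\gamma\!\ g(t)$ on $[0,T]$ for some $\gamma=\gamma(K,T)>0$, and we may harmlessly replace $|\cdot|_{\tilde{g}(t)}$ by $|\cdot|=|\cdot|_{g(t)}$ throughout.

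With the equivalence in place, the differences $T, U, y, z, w$ are uniformly bounded by the triangle inequality directly from the BBS bounds. The scalar $v=u-\tilde{u}$ is handled either from List's potential bound ({\color{red}{Theorem \ref{tA.3}}}) or, failing that, by integrating $|\partial_{t}v|=|\Delta u-\widetilde{\Delta}\tilde{u}|\leq C$ backward from the terminal value $v(T)=0$.

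For the four remaining objects $h,A,B,x$, the terminal condition $g(T)=\tilde{g}(T)$ forces $h(T)=A(T)=B(T)=0$ (equality of two metrics at a single time implies equality of their Christoffels and of all spatial covariant derivatives), and the identity {\color{blue}{(\ref{6.9})}} then yields $x(T)=0$. {\color{red}{Lemma \ref{l6.7}}} expresses $\partial_{t}h,\partial_{t}A,\partial_{t}B$ as $\ast$-products of the tensors $T,U,h,A,y,w$ with BBS-bounded factors $\tilde{g}^{-1}, \widetilde{\nabla}\widetilde{{\rm Rm}}, \widetilde{\nabla}{}^{2}\widetilde{{\rm Rm}}, \widetilde{\nabla}\tilde{u}, \widetilde{\nabla}{}^{2}\tilde{u}, \widetilde{\nabla}{}^{3}\tilde{u}$, apart from the single term $\nabla U$ appearing in {\color{blue}{(\ref{6.52})}}; the latter is rewritten as $\nabla U=\nabla^{2}{\rm Rm}-\widetilde{\nabla}{}^{2}\widetilde{{\rm Rm}}+A\ast\widetilde{\nabla}\widetilde{{\rm Rm}}$, which is bounded as soon as $A$ is. Sequential backward integration from $t=T$ in the order $h\mapsto A\mapsto B$ then yields $|h(t)|+|A(t)|+|B(t)|\lesssim T-t$ uniformly on $M$, and $x=\nabla w=y-A\ast\widetilde{\nabla}\tilde{u}$ (again from {\color{blue}{(\ref{6.9})}}) is bounded \emph{a posteriori}.

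The only substantive point requiring care is that every BBS constant and the equivalence constant $\gamma$ depend solely on $n,K,T$, and the initial data, not on the spatial point or on the individual solutions; this is precisely the global character of {\color{red}{Theorem \ref{tB.2}}} under the uniform curvature bound {\color{blue}{(\ref{1.40})}} and is thus more bookkeeping than a genuine obstacle. Consequently there is no real analytic difficulty in the lemma itself: all ten quantities are controlled by a finite chain of triangle inequalities, backward integrations of bounded time-derivatives, and the algebraic identity {\color{blue}{(\ref{6.9})}}.
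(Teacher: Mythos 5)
Your proposal is correct and follows essentially the same route as the paper, which simply cites Kotschwar's argument from \cite{K2010} for the first five quantities and the BBS estimates for the rest; you have filled in the details the paper leaves to the reader. In particular, your three observations at the heart of the matter are sound: the terminal coincidence $g(T)=\tilde g(T)$ forces $h(T)=A(T)=B(T)=x(T)=0$; the differences $T,U,y,z,w$ (and then $x$) are controlled by {\color{red}{Theorem \ref{tB.2}}} together with the uniform metric equivalence; and the only term in {\color{red}{Lemma \ref{l6.7}}} not immediately bounded, namely $\nabla U$ in {\color{blue}{(\ref{6.52})}}, is reduced to BBS-bounded tensors plus $A\ast\widetilde{\nabla}\widetilde{{\rm Rm}}$, so that a backward Gr\"onwall in the order $h\to A\to B$ closes the argument. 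One trivial imprecision: you call the metrics ``uniformly equivalent conformal class''; they are uniformly equivalent but not conformally related, though this is a slip of terminology and does not affect the proof.
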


\begin{proof} The first part was proved in \cite{K2010} in the exact
manner. The second part follows immediately from {\color{red}{Lemma \ref{lA.1}}}.
\end{proof}

\begin{lemma}\label{l6.9} Using the above lemma, one has
\begin{eqnarray*}
|\partial_{t}h|^{2}&\lesssim&
|T|^{2}+|w|^{2},\\
|\partial_{t}A|^{2}&\lesssim&
|U|^{2}+|h|^{2}
+|y|^{2}+|w|^{2},\\
|\partial_{t}B|^{2}&\lesssim&
|\nabla U|^{2}+|h|^{2}
+|A|^{2}+|U|^{2}+|y|^{2}+|\nabla y|^{2}
+|x|^{2}+|w|^{2},\\
|\partial_{t}v|^{2}&\lesssim&
|y|^{2}+|h|^{2},\\
|\partial_{t}w|^{2}&\lesssim&|z|^{2}
+|A|^{2}
+|h|^{2}+|v|^{2},\\
|\partial_{t}x|^{2}&\lesssim&
|\nabla z|^{2}
+|B|^{2}+|A|^{2}
+|h|^{2}+|v|^{2}+|w|^{2},\\
|\Box y|^{2}&\lesssim&|h|^{2}
+|A|^{2}+|B|^{2}
+|T|^{2}+|w|^{2}
+|y|^{2},\\
|\Box z|^{2}&\lesssim&|h|^{2}
+|A|^{2}+|B|^{2}+|T|^{2}+|z|^{2}
+|U|^{2}+|y|^{2}+|w|^{2}.
\end{eqnarray*}
\end{lemma}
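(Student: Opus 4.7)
The plan is to derive the evolution equation of each of the eight difference tensors in the schematic $\ast$-notation and then reduce every term using Lemma~\ref{l6.8} together with the Bernstein-Bando-Shi bounds~(\ref{6.57}). These give pointwise control of every ambient factor ($\tilde g^{-1}$, $\nabla^{m}{\rm Rm}$, $\nabla^{m+1}u$, $\widetilde\nabla^{m}\widetilde{\rm Rm}$, $\widetilde\nabla^{m+1}\tilde u$) and of every difference tensor ($h, A, B, T, U, v, w, x, y, z$) in terms of a uniform constant. Under this dual boundedness, any schematic monomial $Z_{1}\ast\cdots\ast Z_{r}$ containing a single difference-tensor factor $Z_{i}$ satisfies $|\,\cdot\,|^{2}\lesssim|Z_{i}|^{2}$, and any monomial with two difference-tensor factors $Z_{i},Z_{j}$ satisfies $|\,\cdot\,|^{2}\lesssim|Z_{i}|^{2}+|Z_{j}|^{2}$ by AM--GM after absorbing one factor into a constant.

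Granting this reduction principle, the first three estimates --- for $\partial_{t}h$, $\partial_{t}A$, $\partial_{t}B$ --- follow directly by substituting (\ref{6.50})--(\ref{6.52}) and reading off term by term. For $\partial_{t}v$, I would rewrite $\partial_{t}v=\Delta u-\widetilde\Delta\tilde u$ in the ``undivided'' form $\partial_{t}v=\tilde g^{-1}\ast h\ast\widetilde\nabla^{2}\tilde u+g^{-1}\ast y$, which avoids the spurious $|A|^{2}$-term that would otherwise appear when one first splits off $\Delta v$ as in (\ref{6.15}), and read off $|\partial_{t}v|^{2}\lesssim|h|^{2}+|y|^{2}$. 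The evolutions of $w=\nabla v$ and $x=\nabla w$ are then obtained by differentiating this cleaner form in space, paying the Christoffel cost $\partial_{t}\Gamma\ast\nabla^{k-1}v$ drawn from Lemma~\ref{lA.2}, and converting each resulting $\nabla$-difference into a schematic combination of $h, A, B, v, w, x, y, z$ via the standard identities $\nabla h=\tilde g\ast A$, $\nabla\tilde g^{-1}=\tilde g^{-1}\ast A$, and $\nabla W=\widetilde\nabla W+A\ast W$ for any tensor $W$.

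The two box estimates on $y$ and $z$ are derived in parallel with (\ref{6.53})--(\ref{6.54}). Starting from the tensor Weitzenb\"ock identity $\nabla^{2}\Delta u=\Delta\nabla^{2}u+{\rm Rm}\ast\nabla^{2}u+\nabla{\rm Rm}\ast\nabla u$ (and its one-derivative-higher analogue for $\nabla^{3}\Delta u$), together with Lemma~\ref{lA.2} for $\partial_{t}\Gamma$, one computes $\partial_{t}\nabla^{k}u-\Delta\nabla^{k}u$ for $k=2,3$, subtracts the $\tilde u$-version, and expands each remaining bracket by the add-and-subtract technique that drove (\ref{6.53})--(\ref{6.54}): differences of contractions against $g^{-1}$ collapse into $\tilde g^{-1}\ast h\ast(\cdots)$; differences $\nabla{\rm Rm}-\widetilde\nabla\widetilde{\rm Rm}$ collapse into $A\ast\widetilde\nabla\widetilde{\rm Rm}+B\ast\widetilde{\rm Rm}+A\ast A\ast\widetilde{\rm Rm}+h\ast\tilde g^{-1}\ast\widetilde\nabla^{2}\widetilde{\rm Rm}$; and differences $\nabla u-\widetilde\nabla\tilde u$ are exactly $w$. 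Applying the reduction principle of the first paragraph to each resulting monomial produces the two stated right-hand sides.

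The main obstacle is the term $\nabla{\rm Rm}\ast\nabla u-\widetilde\nabla\widetilde{\rm Rm}\ast\widetilde\nabla\tilde u$ appearing in $\Box y$: its naive decomposition as $U\ast\nabla u+\widetilde\nabla\widetilde{\rm Rm}\ast w$ introduces a $|U|$-contribution not explicitly listed on the stated right-hand side of $|\Box y|^{2}$. The resolution is to rewrite $U=\nabla T+A\ast\widetilde{\rm Rm}$, so that the offending monomial splits into $\nabla T\ast\nabla u+A\ast\widetilde{\rm Rm}\ast\nabla u+\widetilde\nabla\widetilde{\rm Rm}\ast w$: the latter two fall under the reduction principle, and the $\nabla T$-piece is admissible in $|\nabla{\bf X}|^{2}$ on the right-hand side of hypothesis~(\ref{6.43}) of Theorem~\ref{t6.5}, under which this lemma will be invoked in the proof of Theorem~\ref{t6.6}. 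The analogous step for $\Box z$ keeps $U$ explicitly, producing the $|U|^{2}$ and $|z|^{2}$ terms retained in the stated bound. Once this bookkeeping is settled, Lemma~\ref{l6.9} follows.
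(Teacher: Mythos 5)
Your overall strategy matches the paper's: substitute the schematic evolutions from Lemma~\ref{l6.7} (and their analogues for $v,w,x,y,z$), and collapse each multilinear $\ast$-monomial by AM--GM after absorbing the boundedness from Lemma~\ref{l6.8} and the BBS bounds~(\ref{6.57}). Your treatments of $\partial_{t}h,\partial_{t}A,\partial_{t}B,\partial_{t}v,\partial_{t}w,\partial_{t}x$, and $\Box z$ (where a genuine $U$-term does appear from $\nabla{\rm Rm}\ast\nabla^{2}u-\widetilde\nabla\widetilde{\rm Rm}\ast\widetilde\nabla^{2}\tilde u$) are all fine and essentially identical to the paper's.

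The problem is your last paragraph, on $\Box y$. You posit a term $\nabla{\rm Rm}\ast\nabla u-\widetilde\nabla\widetilde{\rm Rm}\ast\widetilde\nabla\tilde u$ in $\Box y$ and engineer a workaround via $U=\nabla T+A\ast\widetilde{\rm Rm}$. This term does not actually occur. Your premise is the static Weitzenb\"ock identity $\nabla^{2}\Delta u=\Delta\nabla^{2}u+{\rm Rm}\ast\nabla^{2}u+\nabla{\rm Ric}\ast\nabla u$, which is correct on a fixed metric; but under the flow one computes $\partial_{t}(\nabla_{i}\nabla_{j}u)=\nabla_{i}\nabla_{j}(\Delta u)-(\partial_{t}\Gamma^{k}_{ij})\nabla_{k}u$, and the $\nabla{\rm Ric}\ast\nabla u$ piece of $\partial_{t}\Gamma^{k}_{ij}$ (equation~(\ref{A.3})) cancels the $\nabla{\rm Ric}\ast\nabla u$ piece of the Weitzenb\"ock commutator \emph{exactly}. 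The net evolution is~(\ref{A.5}): $\Box\nabla^{2}u={\rm Rm}\ast\nabla^{2}u+{\rm Ric}\ast\nabla^{2}u+|\nabla u|^{2}\nabla^{2}u$ --- no $\nabla{\rm Rm}$ at all. Consequently $\Box y$ contains neither $U$ nor $\nabla T$, and the stated bound $|\Box y|^{2}\lesssim|h|^{2}+|A|^{2}+|B|^{2}+|T|^{2}+|w|^{2}+|y|^{2}$ falls out directly. Your workaround would instead yield the weaker inequality $|\Box y|^{2}\lesssim(\text{stated RHS})+|\nabla T|^{2}$, which is not Lemma~\ref{l6.9} as stated (even though it would still suffice for the application of Theorem~\ref{t6.5}, since $|\nabla T|\leq|\nabla{\bf X}|$). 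To repair the argument, start from the closed-form evolution~(\ref{A.5}) (or~(\ref{C.7})) rather than re-deriving $\Box\nabla^{2}u$ from a fixed-metric Bochner formula, and the spurious obstacle disappears.
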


\begin{proof} The first four inequality follows from {\color{red}{Lemma \ref{l6.9}}}. For the next three inequalities, we verify only the inequality for $|\partial_{t}v|^{2}$. By definition,
\begin{eqnarray*}
\partial_{t}v&=&\Delta u-\widetilde{\Delta}\tilde{u} \ \
= \ \ g^{ij}\nabla_{i}\nabla_{j}u-\tilde{g}^{ij}\widetilde{\nabla}_{i}
\widetilde{\nabla}_{j}\tilde{u}\\
&=&g^{ij}\left(\nabla_{i}\nabla_{j}u
-\widetilde{\nabla}_{i}\widetilde{\nabla}_{j}
\tilde{u}\right)+(g^{ij}-\tilde{g}^{ij})
\widetilde{\nabla}_{i}\widetilde{\nabla}_{j}
\tilde{u} \ \ = \ \ g^{ij}y_{ij}-\tilde{g}^{ia}g^{jb}h_{ab}
\widetilde{\nabla}_{i}\widetilde{\nabla}_{j}
\tilde{u}
\end{eqnarray*}
so that $\partial_{t}v=g^{-1}\ast y+\tilde{g}^{-1}\ast g^{-1}
\ast h\ast\widetilde{\nabla}^{2}\tilde{u}$. Similarly
\begin{eqnarray*}
\partial_{t}w&=&g^{-1}\ast\nabla y
+\tilde{g}^{-1}\ast A\ast h\ast\widetilde{\nabla}{}^{2}
\tilde{u}
+\tilde{g}^{-1}\ast\tilde{g}\ast A\ast\widetilde{\nabla}{}^{2}
\tilde{u}\\
&&+ \ \tilde{g}^{-1}\ast h\ast
\left(\widetilde{\nabla}{}^{3}\tilde{u}
+A\ast\widetilde{\nabla}{}^{2}
\tilde{u}\right)
\end{eqnarray*}
with $\nabla y=\nabla(\nabla^{2}u-\widetilde{\nabla}{}^{2}
\tilde{u})=z+A\ast\widetilde{\nabla}{}^{2}
\tilde{u}$. For the final two inequalities we only verify the inequality
for $|\Box y|^{2}$. From the identity
\begin{eqnarray*}
\widetilde{\Delta}\widetilde{\nabla}{}^{2}\tilde{u}&=&
\tilde{g}^{-1}\widetilde{\nabla}{}^{2}
\widetilde{\nabla}{}^{2}\tilde{u}\\
&=&\Delta\widetilde{\nabla}{}^{2}\tilde{u}
+h\ast\tilde{g}^{-1}\ast\widetilde{\nabla}{}^{4}\tilde{u}
+A\ast\widetilde{\nabla}{}^{3}\tilde{u}
+B\ast\widetilde{\nabla}{}^{2}\tilde{u}
+A\ast A\ast\widetilde{\nabla}{}^{2}\tilde{u},
\end{eqnarray*}
and
\begin{equation*}
\partial_{t}\nabla^{2}u
=\Delta\nabla^{2}u
+{\rm Rm}\ast\nabla^{2}u
+g^{-1}\ast\nabla u\ast\nabla u\ast\nabla^{2}u,
\end{equation*}
we obtain
\begin{eqnarray*}
\Box y&=&\left[{\rm Rm}\ast\nabla^{2}u
+g^{-1}\ast\nabla u\ast\nabla u\ast\nabla^{2}u
\right]-\partial_{t}\widetilde{\nabla}{}^{2}
\tilde{u}+\Delta\widetilde{\nabla}{}^{2}\tilde{u}\\
&=&h\ast\tilde{g}^{-1}\ast\widetilde{\nabla}{}^{4}
\tilde{u}+A\ast\widetilde{\nabla}{}^{3}
\tilde{u}+B\ast\widetilde{\nabla}{}^{2}
\tilde{u}+A\ast A\ast\widetilde{\nabla}{}^{2}\tilde{u}\\
&&+ \ \left(T+\widetilde{{\rm Rm}}
\right)\ast\nabla^{2}u-\widetilde{{\rm Rm}}\ast\widetilde{\nabla}{}^{2}
\tilde{u}\\
&&+ \ g^{-1}\ast\left(w+\widetilde{\nabla}
\tilde{u}\right)\ast\left(w+\widetilde{\nabla}\tilde{u}
\right)\ast\left(y+\widetilde{\nabla}{}^{2}
\tilde{u}\right)
-\tilde{g}^{-1}\ast\widetilde{\nabla}
\tilde{u}\ast\widetilde{\nabla}\tilde{u}
\ast\widetilde{\nabla}{}^{2}\tilde{u}\\
&=&h\ast\tilde{g}^{-1}\ast\widetilde{\nabla}{}^{4}
\tilde{u}
+A\ast\widetilde{\nabla}{}^{3}\tilde{u}
+B\ast\widetilde{\nabla}{}^{2}\tilde{u}
+A\ast A\ast A\ast\widetilde{\nabla}{}^{2}\tilde{u}
+T\ast\nabla^{2}u\\
&&+ \ w\ast w\ast y
+\widetilde{\nabla}\tilde{u}\ast w\ast y
+\widetilde{\nabla}\tilde{u}\ast\widetilde{\nabla}
\tilde{u}\ast y+w\ast w\ast\widetilde{\nabla}{}^{2}
\tilde{u}+w\ast\widetilde{\nabla}\tilde{u}
\ast\widetilde{\nabla}{}^{2}\tilde{u}\\
&&+ \ h\ast\tilde{g}^{-1}\ast\widetilde{\nabla}
\tilde{u}\ast\widetilde{\nabla}\tilde{u}
\ast\widetilde{\nabla}{}^{2}\tilde{u}.
\end{eqnarray*}
For $|\Box z|^{2}$, we need the evolution equation
\begin{eqnarray*}
\partial_{t}\nabla^{3}u&=&\Delta\nabla^{3}u
+g^{-1}\ast{\rm Rm}\ast\nabla^{3}u
+g^{-1}\ast\nabla{\rm Rm}\ast\nabla^{2}u\\
&&+ \ g^{-1}\ast\nabla u\ast\nabla^{2}u
\ast\nabla^{2}u+g^{-1}\ast\nabla u
\ast\nabla u\ast\nabla^{3}u
\end{eqnarray*}
and the identity
\begin{equation*}
\widetilde{\Delta}\widetilde{\nabla}{}^{3}\tilde{u}
=\Delta\widetilde{\nabla}{}^{3}u
+h\ast\tilde{g}^{-2}\ast\widetilde{\nabla}{}^{5}
u+A\ast\widetilde{\nabla}{}^{4}u
+B\ast\widetilde{\nabla}{}^{3}u+A\ast A\ast
\widetilde{\nabla}{}^{3}u.
\end{equation*}
Thus we prove the results.
\end{proof}

{\it The proof of {\color{red}{Theorem \ref{t6.6}}}.} The above two lemmas, {\color{red}{Lemma \ref{l6.8}}} and
{\color{red}{Lemma \ref{l6.9}}}, imply
\begin{eqnarray*}
|\Box T|^{2}&\lesssim&|h|^{2}
+|A|^{2}+|T|^{2}
+|B|^{2}
+|w|^{2}+|y|^{2},\\
|\Box U|^{2}&\lesssim&|h|^{2}
+|A|^{2}+|B|^{2}
+|U|^{2}+|T|^{2}
+|w|^{2}+|y|^{2}+|z|^{2}.
\end{eqnarray*}
Together {\color{red}{Lemma \ref{l6.9}}}, we have
\begin{eqnarray*}
|\Box{\bf X}|^{2}_{g(t)}&\lesssim&|{\bf X}|^{2}_{g(t)}
+|{\bf Y}|^{2}_{g(t)},\\
|\partial_{t}{\bf Y}|^{2}_{g(t)}
&\lesssim&|{\bf X}|^{2}_{g(t)}
+|{\bf Y}|^{2}_{g(t)}
+|\nabla {\bf X}|^{2}_{g(t)}.
\end{eqnarray*}
To apply {\color{red}{Theorem \ref{t6.5}}}, we need to check the boundedness of ${\bf X}, \nabla{\bf X}$ and ${\bf Y}$ on $[0, T]$, which are however,
followed from {\color{red}{Lemma \ref{lA.1}}}. Therefore, ${\bf X}={\bf Y}\equiv0$ on $
[0,T]$. Thus, $(g(t), u(t))=(\tilde{g}(t), \tilde{u}(t))\equiv0$ on $[0,T]$.

\appendix

\section{Evolution equations of the Ricci-harmonic flow}\label{sectionA}

We review some basic evolution equations of the Ricci-harmonic flow. Consider a Ricci-harmonic flow
\begin{equation}
\partial_{t}g(t)=-2\!\ {\rm Ric}_{g(t)}+
4\!\ du(t)\otimes du(t), \ \ \ \partial_{t}u(t)=\Delta_{g(t)}
u(t)\label{A.1}
\end{equation}
on a smooth manifold $M$. As before, we follow the convention
in Section \ref{section1}.

\begin{lemma}\label{lA.1} Under the flow {\color{blue}{(\ref{A.1})}}, we have
\begin{eqnarray}
\Box R_{ij}&=&-2R_{ik}R^{k}{}_{j}+2R_{pijq}R^{pq}
-4R_{pijq}\nabla^{p}u\nabla^{q}u\nonumber\\
&&+ \ 4\Delta u\!\ \nabla_{i}\nabla_{j}u
-4\nabla_{i}\nabla_{k}u\nabla^{k}\nabla_{j}u,\label{A.2}\\
\partial_{t}\Gamma^{k}_{ij}&=&g^{k\ell}
\left(-\nabla_{i}R_{j\ell}-\nabla_{j}R_{i\ell}
+\nabla_{\ell}R_{ij}+4\nabla_{i}\nabla_{j}u
\nabla_{\ell}u\right),\label{A.3}\\
\Box\partial_{i}u&=&-R_{ij}\nabla^{j}u,\label{A.4}\\
\Box\nabla_{i}\nabla_{j}u
&=&2R_{pijq}\nabla^{p}\nabla^{q}u
-R_{ip}\nabla_{j}\nabla^{p}u
-R_{jp}\nabla_{i}\nabla^{p}u
-4|\nabla u|^{2}\nabla_{i}\nabla_{j}u,\label{A.5}\\
\Box R_{ijk\ell}&=&2\left(B_{ijk\ell}-B_{ij\ell k}
-B_{i\ell jk}+B_{ikj\ell}\right)\nonumber\\
&&- \ \left(R_{i}{}^{p}R_{pjk\ell}+R_{j}{}^{p}R_{ijp\ell}
+R_{\ell}{}^{p}R_{ijkp}\right)\label{A.6}\\
&&+ \ 4\left(\nabla_{i}\nabla_{\ell}u\nabla_{j}\nabla_{k}u
-\nabla_{i}\nabla_{k}u\nabla_{j}\nabla_{\ell}u\right),\nonumber
\end{eqnarray}
where $B_{ijk\ell}:=-g^{pr}g^{qs}R_{ipjq}R_{kr\ell s}$, and
\begin{eqnarray}
\Box R&=&2|{\rm Ric}|^{2}
+4|\Delta u|^{2}-4|\nabla^{2}u|^{2}
-8\!\ {\rm Ric}(\nabla u,\nabla u),\label{A.7}\\
\Box|\nabla u|^{2}&=&-2|\nabla^{2}u|^{2}
-4|\nabla u|^{4},\label{A.8}\\
\Box\left(R-2|\nabla u|^{2}\right)&=&2\left|{\rm Ric}
-2\nabla u\otimes\nabla u\right|^{2}+4|\Delta u|^{2},\label{A.9}\\
\partial_{t}dV&=&-\left(R-2|\nabla u|^{2}\right)dV.\label{A.10}
\end{eqnarray}
\end{lemma}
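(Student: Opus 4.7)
The plan is to obtain all formulas \eqref{A.2}--\eqref{A.10} from the general first-variation formulas for curvature under an arbitrary metric deformation, applied to the particular deformation $\partial_{t}g_{ij}=-2R_{ij}+4\nabla_{i}u\nabla_{j}u$, and combined with the scalar equation $\partial_{t}u=\Delta u$. The key ingredient is that the ``Ricci-flow part'' $-2R_{ij}$ already produces the familiar Hamilton evolution formulas, so we only need to compute the additional contribution of $v_{ij}:=4\nabla_{i}u\nabla_{j}u$ and add it in. Concretely, if $\partial_{t}g_{ij}=w_{ij}$ is a general variation, then
\begin{equation*}
\partial_{t}\Gamma^{k}_{ij}=\tfrac{1}{2}g^{k\ell}\bigl(\nabla_{i}w_{j\ell}+\nabla_{j}w_{i\ell}-\nabla_{\ell}w_{ij}\bigr),\qquad \partial_{t}dV=\tfrac{1}{2}g^{ij}w_{ij}\,dV,
\end{equation*}
and $\partial_{t}R_{ij}$ is given by the Lichnerowicz-type formula. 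I would first record these three templates, then substitute $w_{ij}=-2R_{ij}+4\nabla_{i}u\nabla_{j}u$.

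For \eqref{A.10}, substituting immediately yields $-(R-2|\nabla u|^{2})dV$. For \eqref{A.3}, I substitute and use $\nabla_{i}\nabla_{j}u\,\nabla_{\ell}u+\nabla_{j}u\,\nabla_{i}\nabla_{\ell}u+(i\leftrightarrow j)$ type identities; the symmetric antisymmetric pieces collapse to the stated $4\nabla_{i}\nabla_{j}u\,\nabla_{\ell}u$ after adding to the Ricci part. For \eqref{A.2}, I use the Ricci-flow side $\Box R_{ij}=2R_{pijq}R^{pq}-2R_{ik}R^{k}{}_{j}$ and then compute the extra piece coming from $w=4\,du\otimes du$ via the Lichnerowicz formula; this produces exactly the three remaining terms $-4R_{pijq}\nabla^{p}u\nabla^{q}u+4\Delta u\,\nabla_{i}\nabla_{j}u-4\nabla_{i}\nabla_{k}u\nabla^{k}\nabla_{j}u$, after repeated use of the Ricci identity to commute derivatives. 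Equation \eqref{A.6} is obtained in the same spirit from the Ricci-flow evolution of $R_{ijk\ell}$ plus the contribution of $w$.

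For \eqref{A.4} and \eqref{A.5}, I differentiate $\nabla_{i}u$ and $\nabla_{i}\nabla_{j}u$ in $t$, using the commutation $\partial_{t}(\nabla_{i}u)=\nabla_{i}\partial_{t}u=\nabla_{i}\Delta u$ together with the Bochner identity $\nabla_{i}\Delta u=\Delta\nabla_{i}u-R_{ij}\nabla^{j}u$. For the Hessian, I expand $\partial_{t}\nabla_{i}\nabla_{j}u=\nabla_{i}\nabla_{j}\partial_{t}u-(\partial_{t}\Gamma^{k}_{ij})\nabla_{k}u$, plug in \eqref{A.3} and $\partial_{t}u=\Delta u$, and commute Laplacian with Hessian using $\Delta\nabla_{i}\nabla_{j}f=\nabla_{i}\nabla_{j}\Delta f+2R_{pijq}\nabla^{p}\nabla^{q}f-R_{ip}\nabla_{j}\nabla^{p}f-R_{jp}\nabla_{i}\nabla^{p}f+(\nabla_{i}R_{jp}+\nabla_{j}R_{ip}-\nabla_{p}R_{ij})\nabla^{p}f$; the second-Bianchi-type terms cancel against the contribution of $\partial_{t}\Gamma^{k}_{ij}$, leaving the stated formula with the extra $-4|\nabla u|^{2}\nabla_{i}\nabla_{j}u$ from the $4\,du\otimes du$ piece.

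Finally, \eqref{A.7} follows by tracing \eqref{A.2} against $g^{ij}$ while remembering $\partial_{t}g^{ij}=2R^{ij}-4\nabla^{i}u\nabla^{j}u$, so one picks up $2|{\rm Ric}|^{2}-4{\rm Ric}(\nabla u,\nabla u)$ and a further $-4{\rm Ric}(\nabla u,\nabla u)+4|\Delta u|^{2}-4|\nabla^{2}u|^{2}$ from the traces of the extra terms. Equation \eqref{A.8} comes from $\Box|\nabla u|^{2}=2\nabla^{i}u\,\Box\nabla_{i}u-2|\nabla^{2}u|^{2}+(\partial_{t}g^{ij})\nabla_{i}u\nabla_{j}u$; the $\Box\nabla_{i}u$ term supplies $-2{\rm Ric}(\nabla u,\nabla u)$ by \eqref{A.4}, which combines with $2R^{ij}\nabla_{i}u\nabla_{j}u$ from $\partial_{t}g^{ij}$ to cancel the Ricci contribution, leaving only $-2|\nabla^{2}u|^{2}-4|\nabla u|^{4}$. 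Then \eqref{A.9} is the algebraic combination of \eqref{A.7} and \eqref{A.8}, where the cross term $-8{\rm Ric}(\nabla u,\nabla u)+8|\nabla u|^{4}$ completes the square $|{\rm Ric}-2du\otimes du|^{2}$. The main obstacle, as usual in such computations, is bookkeeping: one has to keep track of the Lichnerowicz/Bochner commutators, the variation $\partial_{t}g^{ij}$ of the inverse metric, and the precise cancellations of the Bianchi-type terms between $\partial_{t}\Gamma$ and the commutator of $\Delta$ with $\nabla^{2}$; everything else is routine.
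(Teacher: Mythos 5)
The paper gives no proof of Lemma~\ref{lA.1}: it simply cites \cite{LY1, List2005, List2008, Muller2009, Muller2012}, with a remark that its curvature tensor $R^{\ell}_{ijk}$ is the negative of List's. Your derivation is therefore doing real work that the paper delegates to references, and it is correct in outline. The decomposition $w_{ij}=-2R_{ij}+4\nabla_{i}u\nabla_{j}u$, the first-variation templates for $\Gamma$, ${\rm Ric}$, ${\rm Rm}$, and $dV$, and the Bochner/Lichnerowicz commutations are exactly the ingredients List and M\"uller use. I checked several of the steps explicitly: for {\color{blue}{(\ref{A.3})}} your symmetrized-Hessian cancellation does leave exactly $4\nabla_{i}\nabla_{j}u\,\nabla_{\ell}u$; for {\color{blue}{(\ref{A.8})}} the identity $\Box|\nabla u|^{2}=2\nabla^{i}u\,\Box\nabla_{i}u-2|\nabla^{2}u|^{2}+(\partial_{t}g^{ij})\nabla_{i}u\nabla_{j}u$ does give $-2|\nabla^{2}u|^{2}-4|\nabla u|^{4}$ once the ${\rm Ric}(\nabla u,\nabla u)$ terms cancel; tracing {\color{blue}{(\ref{A.2})}} with $\partial_{t}g^{ij}=2R^{ij}-4\nabla^{i}u\nabla^{j}u$ gives {\color{blue}{(\ref{A.7})}}; and the algebraic recombination $2|{\rm Ric}|^{2}-8\,{\rm Ric}(\nabla u,\nabla u)+8|\nabla u|^{4}=2|{\rm Ric}-2\,du\otimes du|^{2}$ gives {\color{blue}{(\ref{A.9})}}.

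The one place where you should be careful is the commutation identity you quote for $\Delta\nabla_{i}\nabla_{j}f-\nabla_{i}\nabla_{j}\Delta f$: the sign both of the $2R_{pijq}\nabla^{p}\nabla^{q}f$ term and of the cubic-derivative block $(\nabla_{i}R_{jp}+\nabla_{j}R_{ip}-\nabla_{p}R_{ij})\nabla^{p}f$ depends on how $R_{ijk\ell}$ is oriented. Since this paper explicitly uses the opposite sign from List, you need to either flip those signs or use the paper's convention throughout; otherwise {\color{blue}{(\ref{A.5})}} comes out with $-2R_{pijq}\nabla^{p}\nabla^{q}u$ instead of $+2R_{pijq}\nabla^{p}\nabla^{q}u$. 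Once the convention is fixed consistently, the Bianchi-type cubic terms from $\partial_{t}\Gamma^{k}_{ij}$ and from the $\Delta$--$\nabla^{2}$ commutator do cancel exactly as you describe, and the only surviving new term is $-4|\nabla u|^{2}\nabla_{i}\nabla_{j}u$. So your argument is sound modulo this convention bookkeeping; the structure and cancellations are right.
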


\begin{proof} See for example \cite{LY1, List2005, List2008, Muller2009, Muller2012}. Note that in our notation for $R^{\ell}_{ijk}$ defined by
\begin{equation*}
R^{\ell}_{ijk}=\partial_{i}\Gamma^{\ell}_{jk}
-\partial_{j}\Gamma^{\ell}_{ik}+\Gamma^{p}_{jk}\Gamma^{\ell}_{ip}
-\Gamma^{p}_{ik}\Gamma^{\ell}_{jp},
\end{equation*}
the tensors $R^{\ell}_{ijk}$ is the minus of those used in \cite{List2005}.
\end{proof}

\begin{lemma}\label{lA.2} As $(1,3)$-tensor, we have
\begin{equation}
\partial_{t}{\rm Rm}
=g^{-1}\nabla^{2}{\rm Ric}
+g^{-1}{\rm Ric}\ast{\rm Rm}
+g^{-1}\nabla^{2}u\ast\nabla^{2}u+g^{-1}{\rm Rm}\ast\nabla u
\ast\nabla u,\label{A.11}
\end{equation}
and
\begin{eqnarray}
\partial_{t}R^{\ell}_{ijk}
&=&\left[\Delta R^{\ell}_{ijk}
+g^{pq}\left(R^{r}_{ijp}R^{\ell}_{rqk}
-2R^{r}_{pik}R^{\ell}_{jqr}
+2R^{\ell}_{pir}R^{r}_{jqk}\right)
-g^{pq}\left(R_{ip}R^{\ell}_{qjk}\right.\right.\nonumber\\
&&+ \ \left.\left.R_{jp}R^{\ell}_{iqk}
+R_{kp}R^{\ell}_{ijq}\right)+g^{p\ell}R_{pq}
R^{q}_{ijk}\right]-4 g^{p\ell}R^{q}_{ijk}\nabla_{q}u\nabla_{p}u\label{A.12}\\
&&+ \ 4g^{p\ell}\left(\nabla_{i}\nabla_{p}u
\nabla_{k}\nabla_{j}u-\nabla_{i}\nabla_{k}u\nabla_{j}
\nabla_{p}u\right),\nonumber\\
\partial_{t}\nabla_{a}R^{\ell}_{ijk}
&=&\left[\Delta\nabla_{a}R^{\ell}_{ijk}
+g^{pq}\nabla_{a}
\left(R^{r}_{ijp}R^{\ell}_{rqk}
-2R^{r}_{pik}R^{\ell}_{jqr}+2R^{\ell}_{pir}R^{r}_{jqk}\right)\right.
\nonumber\\
&&- \ \left.g^{pq}\left(R_{ip}\nabla_{a}R^{\ell}_{qjk}
-R_{jp}\nabla_{a}R^{\ell}_{iqk}
-R_{kp}\nabla_{a}R^{\ell}_{ijq}\right)
+g^{p\ell}R_{pq}\nabla_{a}R^{q}_{ijk}\right]\nonumber\\
&&- \ 4 g^{p\ell}\nabla_{a}R^{q}_{ijk}\nabla_{q}u\nabla_{p}u
-4g^{p\ell}R^{p}_{ijk}\nabla_{a}\nabla_{q}u\nabla_{p}u
-4g^{p\ell}R^{q}_{ijk}\nabla_{q}u\nabla_{a}\nabla_{p}u\nonumber\\
&&+ \ 4g^{p\ell}\nabla_{a}\nabla_{i}\nabla_{p}u
\nabla_{k}\nabla_{j}u
+4g^{p\ell}\nabla_{i}\nabla_{p}u\nabla_{a}\nabla_{k}\nabla_{j}u\label{A.13}\\
&&- \ 4g^{p\ell}\nabla_{a}\nabla_{i}\nabla_{k}u\nabla_{j}\nabla_{p}u
-4g^{p\ell}\nabla_{i}\nabla_{k}u\nabla_{a}\nabla_{j}\nabla_{p}u\nonumber\\
&&+ \ 4g^{\ell k}R^{b}_{ijk}\nabla_{a}\nabla_{b}u\nabla_{k}u
-4g^{nk}\nabla_{a}\nabla_{i}u\nabla_{k}uR^{\ell}_{bjk}\nonumber\\
&&- \ 4g^{bk}\nabla_{a}\nabla_{i}u\nabla_{k}u
R^{\ell}_{ibk}
-4g^{b\ell}\nabla_{a}\nabla_{k}u\nabla_{\ell}uR^{\ell}_{ijb}.\nonumber
\end{eqnarray}
\end{lemma}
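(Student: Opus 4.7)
The plan is to deduce (\ref{A.11}) and (\ref{A.12}) from the already-established (\ref{A.6}), and then to pass to (\ref{A.13}) by commuting $\nabla_a$ with $\partial_t$. With the paper's convention $R_{ijk\ell}=g_{\ell m}R^{m}_{ijk}$, one has $R^{\ell}_{ijk}=g^{\ell m}R_{ijkm}$, so
\begin{equation*}
\partial_{t}R^{\ell}_{ijk}=(\partial_{t}g^{\ell m})R_{ijkm}+g^{\ell m}\partial_{t}R_{ijkm}.
\end{equation*}
Substituting
\begin{equation*}
\partial_{t}g^{\ell m}=-g^{\ell p}g^{mq}\partial_{t}g_{pq}=2g^{\ell p}g^{mq}R_{pq}-4g^{\ell p}g^{mq}\nabla_{p}u\nabla_{q}u,
\end{equation*}
and then using (\ref{A.6}) together with the fact that $\nabla g=0$ (so $\Delta$ commutes with raising an index at fixed time), one arrives at the explicit formula (\ref{A.12}); the schematic statement (\ref{A.11}) is then immediate by collecting terms into the symbols ${\rm Ric}\ast{\rm Rm}$, $\nabla^{2}u\ast\nabla^{2}u$, and ${\rm Rm}\ast\nabla u\ast\nabla u$, while $\Delta R^{\ell}_{ijk}=g^{-1}\nabla^{2}R^{\ell}_{ijk}$ supplies the $g^{-1}\nabla^{2}{\rm Ric}$ piece after invoking the contracted second Bianchi identity on ${\rm Ric}$.

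For (\ref{A.13}), I would apply $\nabla_{a}$ to (\ref{A.12}) and use the identity
\begin{equation*}
\partial_{t}\nabla_{a}T=\nabla_{a}\partial_{t}T+(\partial_{t}\Gamma)\ast T,
\end{equation*}
valid for any tensor $T$. The correction $(\partial_{t}\Gamma)\ast R^{\ell}_{ijk}$ is given explicitly by (\ref{A.3}); this accounts for the terms on the last three lines of (\ref{A.13}) involving $\nabla_{a}\nabla_{i}u\,\nabla_{k}u$ contracted against ${\rm Rm}$. Differentiating the bracketed Ricci-flow terms of (\ref{A.12}) and invoking the standard commutator $[\Delta,\nabla_{a}]={\rm Rm}\ast\nabla+\nabla{\rm Rm}\ast(\cdot)$ reorganizes everything into the Laplacian of $\nabla_{a}R^{\ell}_{ijk}$ together with the cubic-curvature contractions displayed inside the bracket in (\ref{A.13}). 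The remaining non-bracketed pieces are produced by applying the Leibniz rule to the $\nabla^{2}u\ast\nabla^{2}u$ and ${\rm Rm}\ast\nabla u\ast\nabla u$ terms on the last lines of (\ref{A.12}).

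The main obstacle is bookkeeping rather than any conceptual difficulty: one must keep track of a large number of contractions of the form ${\rm Rm}\ast\nabla^{2}u$, $\nabla u\ast\nabla^{3}u$, and $(\partial_{t}\Gamma)\ast{\rm Rm}$ with the correct signs and index placements, especially given that the sign convention for $R^{\ell}_{ijk}$ used in this paper differs from that of \cite{List2005} by a sign, as noted just after (\ref{A.10}). A minor but important check is that the ``metric-derivative'' terms arising when $\nabla_{a}$ is passed through $g^{p\ell}$ or $g^{pq}$ in (\ref{A.12}) genuinely vanish in view of $\nabla g=0$, so that no stray terms appear in the final formula; consequently every inverse metric in (\ref{A.12}) passes unchanged under $\nabla_{a}$, and the structure of (\ref{A.13}) is obtained by a term-by-term differentiation.
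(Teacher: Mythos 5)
The paper does not actually supply a proof of Lemma~\ref{lA.2} (there is no proof environment following it), so your proposal is a blind reconstruction rather than a comparison. The skeleton is sound and standard: raise one index of (\ref{A.6}) using $\partial_{t}g^{-1}=g^{-1}\ast g^{-1}\ast\partial_{t}g$ to obtain (\ref{A.12}), and then pass to (\ref{A.13}) via $\partial_{t}\nabla_{a}=\nabla_{a}\partial_{t}+(\partial_{t}\Gamma)\ast(\cdot)$ together with the commutator $[\Delta,\nabla_{a}]={\rm Rm}\ast\nabla+\nabla{\rm Rm}\ast(\cdot)$, with $\partial_{t}\Gamma$ read off from (\ref{A.3}). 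Your remark that $\nabla g=0$ lets $\nabla_{a}$ pass through all inverse metrics is correct, and the $(\partial_{t}\Gamma)\ast{\rm Rm}$ correction does indeed account for the terms on the final lines of (\ref{A.13}). None of this would look different from what the author presumably did.

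There is, however, a genuine gap in the step from (\ref{A.12}) to (\ref{A.11}). You claim that (\ref{A.11}) follows ``immediately by collecting terms,'' but the quadratic curvature terms in (\ref{A.12}) — the $B$-type combination $2(B_{ijk\ell}-B_{ij\ell k}+B_{ikj\ell}-B_{i\ell jk})$ imported from (\ref{A.6}) — are of schematic type $g^{-1}\,{\rm Rm}\ast{\rm Rm}$, not $g^{-1}\,{\rm Ric}\ast{\rm Rm}$, and the Bianchi rewrite of $\Delta R^{\ell}_{ijk}$ into $g^{-1}\nabla^{2}{\rm Ric}$ likewise produces additional full $g^{-1}\,{\rm Rm}\ast{\rm Rm}$ terms, not a Ricci contraction. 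Equation (\ref{A.11}) nonetheless asserts that the reaction term is $g^{-1}\,{\rm Ric}\ast{\rm Rm}$, and this distinction is load-bearing: it is exactly what makes (\ref{2.34}) useful in Section~\ref{section2}, where only $|{\rm Ric}|$ is assumed bounded. To close the gap you must verify that the full-curvature quadratics cancel, which is cleanest if you derive (\ref{A.11}) \emph{directly} from the variation formula $\partial_{t}R^{\ell}_{ijk}=\nabla_{i}(\partial_{t}\Gamma^{\ell}_{jk})-\nabla_{j}(\partial_{t}\Gamma^{\ell}_{ik})$ with $\partial_{t}\Gamma$ as in (\ref{A.3}): the only quadratic curvature term then arises from the commutator $[\nabla_{i},\nabla_{j}]R_{km}$, which is precisely $g^{-1}\,{\rm Ric}\ast{\rm Rm}$, and the $\nabla^{3}u\ast\nabla u$ terms combine via $[\nabla_{i},\nabla_{j}]\nabla_{k}u=-R^{p}_{ijk}\nabla_{p}u$ into the ${\rm Rm}\ast\nabla u\ast\nabla u$ piece. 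With that route (\ref{A.11}) and (\ref{A.12}) both fall out; deriving (\ref{A.11}) \emph{from} (\ref{A.12}) is possible but requires explicitly tracking the cancellation you glossed over.
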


We also need the existence result for the Ricci-harmonic flow.

\begin{theorem}\label{tA.3}{\bf (List, 2005)} Let $(M, g_{0})$ be a smooth complete $n$-dimensional Riemannian manifold with bounded curvature $|{\rm Rm}_{g_{0}}|_{g_{0}}
\leq K_{0}$. Consider a smooth function $u_{0}$ on $M$ satisfying
\begin{equation}
|u_{0}|^{2}_{g_{0}}+|\nabla_{g_{0}}u_{0}|^{2}_{g_{0}}
\leq C_{0}, \ \ \ |\nabla^{2}_{g_{0}}u_{0}|^{2}_{g_{0}}
\leq C_{1}.\label{A.14}
\end{equation}
Here $K_{0}, C_{0}, C_{1}$ are some positive constants. Then there exists a positive constant $T:=T(n, K_{-}, C_{0})$ such that the
initial value problem {\color{blue}{(\ref{A.1})}} with $(g(0), u(0))=(g_{0}, u_{0})$ has
a smooth solution $(g(t), u(t))$ on $M\times[0,T]$. Moreover, the solution satisfies
\begin{equation}
\frac{1}{C}g_{0}\leq g(t)\leq C\!\ g_{0}, \ \ \ t\in[0,T],\label{A.15}
\end{equation}
for some constant $C:=C(n, K_{0}, C_{0}, C_{1})$, and on $M\times[0,T]$ there is a bound
\begin{equation}
|{\rm Rm}_{g(t)}|^{2}_{g(t)}
+|u(t)|^{2}+|du(t)|^{2}_{g(t)}
+|\nabla^{2}_{g(t)}u(t)|^{2}_{g(t)}
\leq C'\label{A.16}
\end{equation}
for another constant $C':=C'(n, K_{0}, C_{0}, C_{1})$.
\end{theorem}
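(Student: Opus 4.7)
The plan is to adapt Shi's short-time existence argument for the Ricci flow to the coupled system \eqref{A.1}, following the strategy originally carried out by List. The system \eqref{A.1} is only weakly parabolic, so I would first pass to a strictly parabolic modification by means of the DeTurck trick. Fixing the background metric $\tilde g := g_0$ with Levi-Civita connection $\widetilde\nabla$, I would consider the Ricci--DeTurck--harmonic flow
\begin{equation*}
\partial_{t}g_{ij}=-2R_{ij}+4\nabla_{i}u\nabla_{j}u+\nabla_{i}W_{j}+\nabla_{j}W_{i},\qquad \partial_{t}u=\Delta u,
\end{equation*}
where $W^{k}:=g^{pq}(\Gamma^{k}_{pq}-\widetilde{\Gamma}{}^{k}_{pq})$; the first equation is strictly parabolic in $g$ (its principal symbol is $g^{ab}\partial_{a}\partial_{b}$) and the second is strictly parabolic in $u$. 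Once a solution $(g,u)$ of this modified system is produced, the solution of the original flow is recovered by pulling back along the flow $\varphi_{t}$ of the time-dependent vector field $-W(t)$ emanating from $\varphi_{0}=\mathrm{id}$, which on bounded-curvature data yields a well-defined diffeomorphism by standard ODE theory.

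Next I would construct the solution on $M$ by exhaustion. Choose an increasing sequence of precompact open sets $\Omega_{k}\Subset M$ with $\bigcup_{k}\Omega_{k}=M$, and solve the initial-boundary value problem for the DeTurck--harmonic system on $\Omega_{k}\times[0,T_{k}]$ with Dirichlet data $(g_{0}|_{\partial\Omega_{k}},u_{0}|_{\partial\Omega_{k}})$ and initial data $(g_{0},u_{0})$ on $\Omega_{k}$. Standard quasilinear parabolic theory (Ladyzhenskaya--Solonnikov--Ural'ceva) gives smooth short-time existence on each $\Omega_{k}$. The crux is then to show these local solutions admit uniform-in-$k$ interior estimates on a common time interval $[0,T]$ with $T=T(n,K_{0},C_{0})$, which will allow an Arzel\`a--Ascoli diagonal extraction.

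The a priori estimates proceed in several layers, all driven by the maximum principle applied to the evolution equations of Lemma~\ref{lA.1}. Equation \eqref{A.8} together with the initial bound $|\nabla_{g_{0}}u_{0}|^{2}\le C_{0}$ gives $|\nabla u|^{2}\le C_{0}$ for all time; since $\partial_{t}u=\Delta u$ and $|u_{0}|\le C_{0}^{1/2}$, the maximum principle yields $|u|\le C_{0}^{1/2}$. For the metric equivalence \eqref{A.15}, the bound on $|\mathrm{Ric}|$ (once obtained) combined with $|\nabla u|^{2}\le C_{0}$ and integration of $\partial_{t}g$ gives $e^{-Ct}g_{0}\le g(t)\le e^{Ct}g_{0}$. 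The curvature estimate is the heart of the matter: using \eqref{A.6} to write
\begin{equation*}
\Box|\mathrm{Rm}|^{2}\le -2|\nabla\mathrm{Rm}|^{2}+C_{n}|\mathrm{Rm}|^{3}+C_{n}|\mathrm{Rm}|\,|\nabla^{2}u|^{2}+C_{n}|\mathrm{Rm}|^{2}|\nabla u|^{2},
\end{equation*}
and \eqref{A.5} to estimate $|\nabla^{2}u|^{2}$, one forms a Bernstein-type combination $F:=|\mathrm{Rm}|^{2}+\lambda|\nabla^{2}u|^{2}+\mu|\nabla u|^{4}$ on which $\Box F\le \tilde C\cdot F^{3/2}+\tilde C F$ for suitable constants $\lambda,\mu$; ODE comparison then yields a uniform bound $F\le 2F(0)$ on a time interval $T=T(n,K_{0},C_{0})$. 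Higher-derivative Shi-type estimates for $\nabla^{m}\mathrm{Rm}$ and $\nabla^{m+1}u$ follow inductively from the analogous parabolic inequalities (a Ricci-harmonic analog of \cite{List2005, Muller2009}, cf.\ Theorem~\ref{tB.2}), using suitably weighted Bernstein combinations to absorb the cross terms.

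With uniform $C^{m}$-bounds on $(g_{k},u_{k})$ on compact subsets of $M\times[0,T]$ independent of $k$, Arzel\`a--Ascoli produces a smooth limit $(g(t),u(t))$ solving the DeTurck--harmonic system on all of $M\times[0,T]$, verifying \eqref{A.15} and \eqref{A.16}. Composing with the DeTurck diffeomorphisms $\varphi_{t}$ (which exist on $[0,T]$ because $W$ is uniformly bounded in $C^{1}$ thanks to the curvature and metric-equivalence estimates) gives the required solution of \eqref{A.1}. The main obstacle is the curvature-derivative estimate: unlike Shi's pure Ricci flow case, the evolution of $|\mathrm{Rm}|^{2}$ in \eqref{A.6} is forced by $|\nabla^{2}u|^{2}$, and conversely the evolution of $|\nabla^{2}u|^{2}$ in \eqml{A.5} contains $R_{pijq}\nabla^{p}\nabla^{q}u$; closing the loop requires the coupled Bernstein quantity $F$ above and a careful choice of $\lambda,\mu$ so that the indefinite cross terms cancel.
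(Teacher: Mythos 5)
The paper does not give its own proof of Theorem~\ref{tA.3}; the cited proof is simply ``See \cite{LY1, List2005, List2008, Muller2009, Muller2012}.'' Your outline is, in its essentials, a reconstruction of List's original argument: DeTurck gauge fixing to make the system strictly parabolic, solving Dirichlet initial--boundary problems on a precompact exhaustion, uniform interior Bernstein-type estimates to get domain-independent bounds on a common time interval, Arzel\`a--Ascoli extraction, and finally reverting the gauge by the DeTurck diffeomorphisms. So the approach matches.

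There is one genuine gap in the outline as written. When you pass to the DeTurck gauge, the scalar equation must also be modified. If $(g,u)$ solves the gauge-fixed system and $\varphi_t$ is the flow of $-W$, then
\[
\partial_t(\varphi_t^*u)=(\partial_t u)\circ\varphi_t-\langle W,\nabla u\rangle\circ\varphi_t,
\]
while $\Delta_{\varphi_t^*g}(\varphi_t^*u)=(\Delta_g u)\circ\varphi_t$. Hence for the pulled-back pair to solve {\color{blue}(\ref{A.1})}, the gauge-fixed scalar equation must read $\partial_t u=\Delta_g u+\langle W,\nabla u\rangle_g$, \emph{not} $\partial_t u=\Delta_g u$ as you wrote; omitting the drift term $\langle W,\nabla u\rangle$ means the final gauge reversal does not recover the harmonic map heat flow. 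Correspondingly, the evolution identities of Lemma~\ref{lA.1} that you feed into the Bernstein step hold for the un-gauged flow; in the DeTurck gauge the equations for $u$, $\nabla u$, $\nabla^2 u$ (and hence for the combination $F$) acquire first-order terms in $W$ that must be absorbed, which requires first establishing a uniform bound on $W$ from the metric-equivalence and curvature estimates before the maximum principle closes. With this correction the scheme is the cited one.
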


\begin{proof} See \cite{LY1, List2005, List2008, Muller2009, Muller2012}.
\end{proof}

\section{Some estimates of the Ricci-harmonic flow}\label{sectionB}

In this section we assume that $(g(t), u(t))$ is a solution to {\color{blue}{(\ref{A.1})}}
on $[0,T]$ where $M$ is a complete $n$-dimensional smooth manifold. Consider a geodesic ball $B_{g(T)}(x_{0},r)$ centered at a fixed point $x_{0}\in M$ with radius $R>0$.

\begin{theorem}\label{tB.1}{\bf (Interior estimates)} Under the above hypotheses, we have

\begin{itemize}

\item[(i)] If the following estimate
\begin{equation}
\sup_{B_{g(T)}(x_{0},R)}|{\rm Ric}_{g(t)}|_{g(t)}
\leq\frac{C}{R^{2}}\label{B.1}
\end{equation}
holds for some positive constant $C$, then, for all $t\in(0,T]$, there exists a constant $C_{n}$, depending only on $n$, such that
\begin{equation}
\sup_{B_{g(t)}(x_{0},R/2)}
|\nabla_{g(t)}u(t)|^{2}_{g(t)}
\leq CC_{n}
\left(\frac{1}{R^{2}}+\frac{1}{t}\right).\label{B.2}
\end{equation}

\item[(ii)] If the following estimate
\begin{equation}
\sup_{B_{g(T)}(x_{0},R)}|{\rm Rm}_{g(t)}|_{g(t)}
\leq\frac{C^{2}}{R^{4}}\label{B.3}
\end{equation}
holds for some positive constant $C$, then, for all $t\in(0,T]$ and all $m\geq0$, there exists a constant $C_{n,m}$, depending only on $n$ and $m$, such that
\begin{equation}
\sup_{B_{g(t)}(x_{0},R/2)}
\left[\left|\nabla^{m}_{g(t)}
{\rm Rm}_{g(t)}\right|^{2}_{g(t)}
+\left|\nabla^{m+2}_{g(t)}u(t)\right|^{2}_{g(t)}\right]
\leq C^{m+2}C_{n,m}
\left(\frac{1}{R^{2}}+\frac{1}{t}\right)^{m+2}.\label{B.4}
\end{equation}

\item[(iii)] If in addition $(g(t), u(t))$ is constructed in {\color{red}{Theorem \ref{tA.3}}}, then
    \begin{equation}
    \inf_{M}u_{0}\leq u(t)\leq\sup_{M}u_{0}
    \end{equation}
    for all $t\in(0,T]$ as long as the constructed solution exists, where $(g_{0},u_{0})$ is the initial data.

\end{itemize}

\end{theorem}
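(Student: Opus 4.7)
The three parts are three different applications of the parabolic maximum principle, all making essential use of the fact that the Ricci (or Riemann) bound on $B_{g(T)}(x_{0},R)$ of the stated size implies that the metrics $g(t)$ are uniformly equivalent on the ball, and that the $g(t)$-derivatives of a spatial cutoff function built from the $g(T)$-distance are controlled by $C/R^{2}$.

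For part (i), the essential input is the remarkable identity {\color{blue}{(\ref{A.8})}},
\begin{equation*}
\Box|\nabla u|^{2}=-2|\nabla^{2}u|^{2}-4|\nabla u|^{4}\leq -4|\nabla u|^{4},
\end{equation*}
which exhibits a \emph{quadratic self-absorbing} term. I would fix $\phi\in C^{\infty}(M)$ with $\phi\equiv 1$ on $B_{g(T)}(x_{0},R/2)$, $\phi\equiv0$ outside $B_{g(T)}(x_{0},R)$, and $|\nabla_{g(T)}\phi|_{g(T)}^{2}+|\Delta_{g(T)}\phi|\lesssim R^{-2}$, then apply the parabolic maximum principle to $G:=t\phi^{2}f$ with $f:=|\nabla u|^{2}$ on $B_{g(T)}(x_{0},R)\times[0,T]$. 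At an interior space-time maximum the constraint $\nabla G=0$ gives $\nabla\phi^{2}\cdot\nabla f=-4f|\nabla\phi|^{2}$, whence $0\leq\Box G\leq \phi^{2}f+tf\,\Box\phi^{2}+8tf|\nabla\phi|^{2}-4t\phi^{2}f^{2}$; dividing by $f$ and rearranging yields $G\leq 1/4+Ct/R^{2}$ and hence {\color{blue}{(\ref{B.2})}} on $B_{g(T)}(x_{0},R/2)$. The routine but key step is to verify that {\color{blue}{(\ref{B.1})}} yields $|\Box\phi^{2}|_{g(t)}+|\nabla\phi|_{g(t)}^{2}\lesssim R^{-2}$ uniformly in $t$, via the metric comparison $e^{-C}g(T)\leq g(t)\leq e^{C}g(T)$ on the ball obtained by integrating $|\partial_{t}g|\leq C/R^{2}$ in time.

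For part (ii), I would run Shi's Bernstein--Bando--Shi induction on $m$, adapted to the coupled system. From Lemmas {\color{red}{\ref{lA.1}}}--{\color{red}{\ref{lA.2}}}, $\nabla^{m}{\rm Rm}$ and $\nabla^{m+2}u$ satisfy parabolic systems schematically of the form
\begin{gather*}
\Box\nabla^{m}{\rm Rm}=\sum_{i+j=m}\nabla^{i}{\rm Rm}\ast\nabla^{j}{\rm Rm}+\sum_{i+j=m+2}\nabla^{i}u\ast\nabla^{j}u+\nabla u\ast\nabla u\ast\nabla^{m}{\rm Rm},\\
\Box\nabla^{m+2}u=\sum_{i+j=m+2}\nabla^{i}{\rm Rm}\ast\nabla^{j}u+\nabla u\ast\nabla u\ast\nabla^{m+2}u,
\end{gather*}
so the coupled quantity $P_{m}:=|\nabla^{m}{\rm Rm}|^{2}+|\nabla^{m+2}u|^{2}$ obeys $\Box P_{m}\leq -2(|\nabla^{m+1}{\rm Rm}|^{2}+|\nabla^{m+3}u|^{2})+B_{m}P_{m}+(\text{lower order})$, where $B_{m}$ depends only on the already-bounded norms at levels $\leq m-1$. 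With $\phi$ as in (i), I would test the parabolic maximum principle on $G_{m}:=t^{m+2}\phi^{2(m+2)}(A_{m}+P_{m})\prod_{k<m}(A_{k}+P_{k})$, choosing the constants $A_{k}$ large enough so that the negative gradient term at level $k+1$ from one factor absorbs the $P_{k+1}$-contribution in the other factors, and conclude by induction. The hard part will be this bookkeeping: the negative quadratic terms $-|\nabla^{m+1}{\rm Rm}|^{2}-|\nabla^{m+3}u|^{2}$ must dominate the \emph{coupling} terms ${\rm Rm}\ast\nabla^{2}u\ast\nabla^{2}u$ and $\nabla{\rm Rm}\ast\nabla u\ast\nabla^{2}u$ that tie the two hierarchies together, which is absent in the pure Ricci flow prototype but manageable because $P_{m}$ is symmetric in the two types of derivatives.

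For part (iii), the solution constructed in Theorem {\color{red}{\ref{tA.3}}} satisfies {\color{blue}{(\ref{A.16})}}, so $(g(t),u(t))$ has uniformly bounded geometry and $u$ is bounded on $M\times[0,T]$. The potential equation $\partial_{t}u=\Delta_{g(t)}u$ is a linear heat equation on a complete manifold whose Ricci curvature is uniformly bounded below. Applying the Karp--Yau weak maximum principle separately to the bounded functions $u-\sup_{M}u_{0}$ and $\inf_{M}u_{0}-u$, each satisfying $\Box(\cdot)=0$ with initial value of the appropriate sign, immediately yields $\inf_{M}u_{0}\leq u(t)\leq\sup_{M}u_{0}$.
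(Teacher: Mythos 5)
The paper itself gives no proof of Theorem~\ref{tB.1}, only a citation to List's thesis, so your reconstruction is the actual content here. The overall strategy --- the self-absorbing evolution inequality $\Box|\nabla u|^{2}\leq -4|\nabla u|^{4}$ from (\ref{A.8}) combined with a spatial cutoff and the maximum principle for (i), a BBS-type induction on the coupled system for (ii), and a weak maximum principle on a complete manifold of bounded geometry for (iii) --- is the natural one and matches the standard approach. Parts (ii) and (iii) are sound as outlined; in (ii) the coupling terms are, as you note yourself, the one genuinely new bookkeeping issue relative to the Ricci-flow prototype.

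The gap is in the step you describe as routine in part (i). You claim $|\partial_{t}g|\leq C/R^{2}$ and integrate it to a two-sided comparison $e^{-C}g(T)\leq g(t)\leq e^{C}g(T)$. But $\partial_{t}g=-2\,{\rm Ric}_{g(t)}+4\,\nabla u\otimes\nabla u$, so hypothesis (\ref{B.1}) only yields $|\partial_{t}g|_{g(t)}\leq 2C/R^{2}+4|\nabla u|^{2}_{g(t)}$, and $4|\nabla u|^{2}$ is precisely the quantity the theorem is estimating; the metric-comparison step as written is circular. The sign $\nabla u\otimes\nabla u\geq0$ does give the one-sided bound $\partial_{t}g\geq-\tfrac{2C}{R^{2}}g$ and hence $g(t)\geq e^{-2C(T-t)/R^{2}}g(T)$, which controls $|\nabla\phi|^{2}_{g(t)}$ (with a factor that deteriorates unless $T\lesssim R^{2}$), but it does not control $\Delta_{g(t)}\phi$: that quantity involves $\Gamma_{g(t)}$, whose deviation from $\Gamma_{g(T)}$ is a time-integral of $\nabla{\rm Ric}$ and $\nabla(\nabla u\otimes\nabla u)$, neither of which the Ricci bound controls. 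The standard repair is to build $\phi$ from the evolving distance $r_{g(t)}(x_{0},\cdot)$, using the Laplacian comparison theorem to bound $\Delta_{g(t)}r_{g(t)}$ from the Ricci lower bound, and a Perelman-type estimate to control $\partial_{t}r_{g(t)}$ by integrating ${\rm Ric}$ along minimizing $g(t)$-geodesics. That is also where the factor $C$ in the conclusion $CC_{n}(1/R^{2}+1/t)$ of (\ref{B.2}) actually enters; note that in the maximum-principle computation you wrote down, $C$ never appears on the right-hand side, which should have been a signal that the cutoff bounds were being taken for granted.
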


\begin{proof} See \cite{List2005, List2008}.
\end{proof}

\begin{theorem}\label{tB.2} Suppose that $(g(t), u(t))$ is a solution to {\color{blue}{(\ref{A.1})}} on $M\times[0,T]$, where $M$ is a complete $n$-dimensional
smooth manifold and $T\in(0,\infty)$.

\begin{itemize}

\item[(i)] If the following estimate
\begin{equation}
\sup_{M\times[0,T]}|{\rm Ric}_{g(t)}|_{g(t)}
\leq K\label{B.6}
\end{equation}
holds for some positive constant $K$, then
\begin{equation}
|\nabla_{g(t)}u(t)|^{2}_{g(t)}\leq 2K C_{n}\label{B.7}
\end{equation}
on $M\times[0,T]$, for some positive constant $C_{n}$
depending only on $n$.

\item[(ii)] If the following estimate
\begin{equation}
\sup_{M\times[0,T]}|{\rm Rm}_{g(t)}|_{g(t)}
\leq K\label{B.8}
\end{equation}
holds for some positive constant $K$, then
\begin{equation}
\left|\nabla^{m}_{g(t)}{\rm Rm}_{g(t)}
\right|^{2}_{g(t)}+\left|\nabla^{m+2}_{g(t)}u(t)\right|^{2}_{g(t)}\leq
C_{n,m}(4K)^{1+\frac{m}{2}}, \ \ \ m\geq0,\label{B.9}
\end{equation}
on $M\times[0,T]$, for some positive constant $C_{n,m}$ depending only on $n$ and $m$.

\item[(iii)] If $(g(t), u(t))$
is constructed in {\color{red}{Theorem \ref{tA.3}}} with the initial data
$(g_{0}, u_{0})$ satisfying the condition {\color{blue}{(\ref{A.14})}}, then
\begin{equation}
\left|\nabla^{m}_{g(t)}{\rm Rm}_{g(t)}
\right|^{2}_{g(t)}+\left|\nabla^{m+2}_{g(t)}u(t)\right|^{2}_{g(t)}\leq
C'_{n,m}, \ \ \ m\geq0,\label{B.10}
\end{equation}
on $M\times[0,T]$, for some positive constant $C'_{n,m}$ depending only on $n, m, K_{0}, C_{0}, C_{1}$.

\end{itemize}

\end{theorem}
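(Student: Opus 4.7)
The plan is to extract both global estimates (i) and (ii) from the localized interior estimates of Theorem \ref{tB.1} by a radius-balancing trick. Fix an arbitrary target point $(x_0,t_0) \in M \times (0,T]$ and set $R := \sqrt{t_0}$; completeness of $M$ makes the geodesic ball $B_{g(T)}(x_0,R)$ well-defined. Since the bound $|{\rm Ric}| \leq K$ (resp.\ $|{\rm Rm}| \leq K$) holds globally on $M \times [0,T]$, the pointwise hypothesis of Theorem \ref{tB.1} in this ball is trivially satisfied.

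For part (i), the global Ricci bound gives
\[
\sup_{B_{g(T)}(x_0,R)} |{\rm Ric}_{g(t)}|_{g(t)} \leq K = \frac{Kt_0}{R^2},
\]
so Theorem \ref{tB.1}(i) applies with the constant taken to be $C := Kt_0$. Evaluating the conclusion (B.2) at $t = t_0$ and noting $x_0 \in B_{g(t_0)}(x_0,R/2)$ yields
\[
|\nabla_{g(t_0)} u(t_0)|^2_{g(t_0)}(x_0) \leq Kt_0 \cdot C_n \Bigl(\tfrac{1}{t_0} + \tfrac{1}{t_0}\Bigr) = 2KC_n.
\]
As $(x_0,t_0)$ was arbitrary with $t_0 > 0$, this produces the bound on $M \times (0,T]$, and smoothness of $u$ extends it to $t = 0$ by continuity.

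Part (ii) is the same argument applied to the derivative estimate: the choice $R = \sqrt{t_0}$ turns hypothesis (B.3) into a valid statement with $C := \sqrt{K}\,t_0$, and at $t = t_0$ the conclusion (B.4) becomes
\[
|\nabla^{m}_{g(t_0)} {\rm Rm}_{g(t_0)}|^2 + |\nabla^{m+2}_{g(t_0)} u(t_0)|^2 \leq (\sqrt{K}\,t_0)^{m+2} C_{n,m} \Bigl(\tfrac{2}{t_0}\Bigr)^{m+2} = C_{n,m}\,2^{m+2}K^{(m+2)/2} = C_{n,m}\,(4K)^{1+m/2},
\]
which is exactly the target bound. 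Part (iii) is then an immediate corollary: Theorem \ref{tA.3} furnishes an a priori uniform bound $|{\rm Rm}_{g(t)}|^2_{g(t)} \leq C'(n,K_0,C_0,C_1)$ on $M \times [0,T]$, so feeding $K := \sqrt{C'}$ into part (ii) yields constants $C'_{n,m}$ depending only on $n,m,K_0,C_0,C_1$.

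The only step that requires actual thought is spotting the scaling $R = \sqrt{t_0}$, which simultaneously (a) produces a valid constant $C$ for the interior-estimate hypothesis from the global bound $K$ and (b) makes both contributions $C/R^2$ and $C/t$ in the conclusion equal to $K$, eliminating all $R$- and $t$-dependence. The factor of $2$ in (B.7) and the base $4K$ in (B.9) arise directly from the identity $1/R^2 + 1/t_0 = 2/t_0$. Beyond this bookkeeping, no further technical input is needed: the entire theorem is a clean corollary of Theorems \ref{tA.3} and \ref{tB.1}.
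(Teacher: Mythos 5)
Your proposal is correct and follows essentially the same argument as the paper: fix the evaluation point $(x_0,t_0)$, choose the radius $R=\sqrt{t_0}$ so that the global bound $K$ matches the localized hypothesis of Theorem \ref{tB.1} with constant $C=Kt_0$ (resp.\ $C=\sqrt{K}\,t_0$ for (ii)), and read off the conclusion at $t=t_0$ where both terms $1/R^2$ and $1/t$ collapse to $1/t_0$, yielding the constants $2KC_n$ and $C_{n,m}(4K)^{1+m/2}$ exactly as claimed; part (iii) then follows by substituting the a priori curvature bound of Theorem \ref{tA.3} into (ii).
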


\begin{proof} Given a space-time point $(x_{0}, t)\in M
\times(0,T]$ and consider the geodesic ball $B_{g(t)}(x_{0}, \sqrt{t}/2)$. Since
\begin{equation*}
\sup_{B_{g(T)}(x_{0},\sqrt{t})}
(\sqrt{t})^{2}|{\rm Ric}_{g(t)}|_{g(t)}
\leq (\sqrt{t})^{2} K
\end{equation*}
by {\color{blue}{(\ref{B.1})}} and {\color{blue}{(\ref{B.6})}}, it follows that
\begin{equation*}
\sup_{B_{g(t)}(x_{0},\sqrt{t}/2)}
|\nabla_{g(t)}u(t)|^{2}_{g(t)}\leq (\sqrt{t})^{2}K C_{n}\left(\frac{1}{(\sqrt{t})^{2}}
+\frac{1}{t}\right)=2K C_{n}.
\end{equation*}
In particular, $|\nabla_{g(t)}u(t)|^{2}_{g(t)}(x_{0})\leq 2K C_{n}$.

For (ii), consider the same geodesic ball $B_{g(t)}(x_{0},\sqrt{t}/2)$
and apply {\color{blue}{(\ref{B.4})}}. The part follows from {\color{red}{Theorem \ref{tA.3}}} and
the second one.
\end{proof}

\section{Evolution equations of the Ricci-harmonic flow}\label{sectionC}

Consider the $(\alpha_{1},0,\beta_{1},\beta_{2})$-Ricci flow:
\begin{eqnarray}
\partial_{t}g(t)&=&-2\!\ {\rm Ric}_{g(t)}
+2\alpha_{1}\nabla_{g(t)}u(t)\otimes\nabla_{g(t)}
u(t),\label{C.1}\\
\partial_{t}u(t)&=&\Delta_{g(t)}u(t)+\beta_{1}|\nabla_{g(t)}
u(t)|^{2}_{g(t)}
+\beta_{2}u(t)\label{C.2}
\end{eqnarray}
on a smooth manifold $M$, where $\alpha_{1},\beta_{1},\beta_{2}$ are given constants.

\begin{lemma}\label{lC.1} Under the flow {\color{blue}{(\ref{C.1})}} -- {\color{blue}{(\ref{C.2})}}, we have
\begin{eqnarray}
\Box R_{ij}&=&-2R_{ik}R^{k}{}_{j}
+2R_{pijq}R^{pq}-2\alpha_{1}R_{pijq}\nabla^{p}u\nabla^{q}u\nonumber\\
&&+ \ 2\alpha_{1}\Delta u\nabla_{i}\nabla_{j}u
-2\alpha_{1}\nabla_{i}\nabla_{k}u\nabla^{k}\nabla_{j}u,\label{C.3}\\
\Box R&=&2|{\rm Ric}|^{2}+2\alpha_{1}|\Delta u|^{2}-2\alpha_{1}|\nabla^{2}
u|^{2}- 4\alpha_{1}\langle{\rm Ric},\nabla u\otimes\nabla u\rangle,\label{C.4}\\
\Box R_{ijk\ell}&=&2(B_{ijk\ell}-B_{ij\ell k}+B_{ik j\ell}
-B_{i\ell jk})-(R_{i}{}^{p}R_{pijk\ell}+R_{j}{}^{p}R_{ipk\ell}\nonumber\\
&&+ \ R_{k}{}^{p}
R_{ijp\ell}+R_{\ell}{}^{p}R_{ijkp})+2\alpha_{1}(\nabla_{i}\nabla_{\ell}u
\nabla_{j}\nabla_{k}u-\nabla_{i}\nabla_{k}u\nabla_{j}\nabla_{\ell}u),
\label{C.5}\\
\Box|\nabla u|^{2}&=&2\beta_{2}|\nabla u|^{2}
-2|\nabla^{2}u|^{2}-2\alpha_{1}|\nabla u|^{4}+4\beta_{1}
\langle\nabla u\otimes\nabla u,\nabla^{2}u\rangle,\label{C.6}\\
\Box\nabla_{i}\nabla_{j}u&=&2R_{pijq}\nabla^{p}\nabla^{q}u
+\beta_{2}\nabla_{i}\nabla_{j}u-R_{ip}\nabla^{p}\nabla_{j}u
-R_{jp}\nabla^{p}\nabla_{i}u\nonumber\\
&&- \ 2\alpha_{1}|\nabla u|^{2}\nabla_{i}\nabla_{j}u
+2\beta_{1}\nabla^{k}u\nabla_{k}\nabla_{i}\nabla_{j}u\label{C.7}\\
&&+ \ 2\beta_{1}\nabla_{i}\nabla^{k}u\nabla_{j}\nabla_{k}u
+2\beta_{1}R_{pijq}\nabla^{o}u\nabla^{q}u,\nonumber\\
\Box(\nabla_{i}u\nabla_{j}u)&=&-\nabla^{k}u(R_{ik}\nabla_{j}u
+R_{jk}\nabla_{i}u)-2\nabla_{i}\nabla^{k}u\nabla_{j}\nabla_{k}u
+2\beta_{2}\nabla_{i}u\nabla_{j}u\nonumber\\
&&+ \ 2\beta_{1}\nabla^{k}u(\nabla_{i}u\nabla_{j}\nabla_{k}u
+\nabla_{j}u\nabla_{i}\nabla_{k}u),\label{C.8}
\end{eqnarray}
where $B_{ijk\ell}:=-g^{pr}g^{qs}R_{ipjq}R_{kr\ell s}$.
\end{lemma}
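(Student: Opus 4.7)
The plan is to derive each evolution equation by combining the known formulas under pure Ricci flow with correction terms arising from the extra metric perturbation $2\alpha_{1}\nabla u\otimes\nabla u$ and the modified potential equation. Writing the metric evolution as $\partial_{t}g_{ij}=h_{ij}$ with $h_{ij}:=-2R_{ij}+2\alpha_{1}\nabla_{i}u\nabla_{j}u$, I will invoke the standard first variation formulas
\begin{equation*}
\partial_{t}\Gamma^{k}_{ij}=\tfrac{1}{2}g^{k\ell}\left(\nabla_{i}h_{j\ell}+\nabla_{j}h_{i\ell}-\nabla_{\ell}h_{ij}\right),
\end{equation*}
\begin{equation*}
\partial_{t}R_{ij}=-\tfrac{1}{2}\Delta_{L}h_{ij}-\tfrac{1}{2}\nabla_{i}\nabla_{j}H+\tfrac{1}{2}\left(\nabla_{i}(\operatorname{div}h)_{j}+\nabla_{j}(\operatorname{div}h)_{i}\right),
\end{equation*}
and the analogous identity for $R_{ijk\ell}$, where $H:=g^{ij}h_{ij}$ and $\Delta_{L}$ is the Lichnerowicz Laplacian. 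The Ricci-flow piece $-2R_{ij}$ contributes the standard expressions producing $\Delta R_{ij}+2R_{pijq}R^{pq}-2R_{ik}R^{k}{}_{j}$ (after using the contracted second Bianchi identity $\nabla^{j}R_{ij}=\tfrac{1}{2}\nabla_{i}R$). The piece $2\alpha_{1}\nabla_{i}u\nabla_{j}u$ has trace $2\alpha_{1}|\nabla u|^{2}$, divergence $2\alpha_{1}(\Delta u\nabla_{j}u+\nabla_{i}u\nabla^{i}\nabla_{j}u)$, and a Lichnerowicz Laplacian that generates precisely the terms $2\alpha_{1}\Delta u\nabla_{i}\nabla_{j}u-2\alpha_{1}\nabla_{i}\nabla_{k}u\nabla^{k}\nabla_{j}u-2\alpha_{1}R_{pijq}\nabla^{p}u\nabla^{q}u$ once commutators of covariant derivatives are handled via the Ricci identity. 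Collecting these yields \eqref{C.3}; tracing with $g^{ij}$ and using $\partial_{t}g^{ij}=2R^{ij}-2\alpha_{1}\nabla^{i}u\nabla^{j}u$ then gives \eqref{C.4}.

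For \eqref{C.5} I will apply the general variation of $R_{ijk\ell}$, which under $\partial_{t}g_{ij}=h_{ij}$ reads
\begin{equation*}
\partial_{t}R_{ijk\ell}=\tfrac{1}{2}\left(\nabla_{i}\nabla_{k}h_{j\ell}+\nabla_{j}\nabla_{\ell}h_{ik}-\nabla_{i}\nabla_{\ell}h_{jk}-\nabla_{j}\nabla_{k}h_{i\ell}\right)+(\text{lower order}),
\end{equation*}
again splitting $h$ into the Ricci and $\nabla u\otimes\nabla u$ parts. The Ricci part produces the familiar $2(B_{ijk\ell}-B_{ij\ell k}+B_{ikj\ell}-B_{i\ell jk})$ plus the quadratic terms in $R$, while the $\alpha_{1}$ part, after commuting $\nabla^{2}$ on $\nabla u\otimes\nabla u$ and using the first Bianchi identity to collapse the eight index permutations into two, contributes $2\alpha_{1}(\nabla_{i}\nabla_{\ell}u\nabla_{j}\nabla_{k}u-\nabla_{i}\nabla_{k}u\nabla_{j}\nabla_{\ell}u)$.

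For the potential-theoretic quantities I will proceed more directly. For \eqref{C.6}, use $\partial_{t}|\nabla u|^{2}=(\partial_{t}g^{ij})\nabla_{i}u\nabla_{j}u+2\langle\nabla u,\nabla(\partial_{t}u)\rangle$ combined with the Bochner identity $\Delta|\nabla u|^{2}=2|\nabla^{2}u|^{2}+2\langle\nabla u,\nabla\Delta u\rangle+2\operatorname{Ric}(\nabla u,\nabla u)$; the Ricci terms cancel after substituting $\partial_{t}g^{ij}$, leaving exactly the stated expression. For \eqref{C.7}, exchange $\partial_{t}$ with $\nabla^{2}$ using $\partial_{t}\nabla_{i}\nabla_{j}u=\nabla_{i}\nabla_{j}(\partial_{t}u)-(\partial_{t}\Gamma^{k}_{ij})\partial_{k}u$, plug in \eqref{C.2} for $\partial_{t}u$, commute $\Delta$ past $\nabla_{i}\nabla_{j}$ (which generates the $R_{pijq}\nabla^{p}\nabla^{q}u$ and $R_{ip}\nabla^{p}\nabla_{j}u$ terms), and compute $\partial_{t}\Gamma$ from $h_{ij}$ above. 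Finally, \eqref{C.8} follows by the Leibniz rule applied to $\Box$ together with the relation $\Box\nabla_{i}u=-R_{ik}\nabla^{k}u+\beta_{2}\nabla_{i}u+2\beta_{1}\nabla^{k}u\nabla_{i}\nabla_{k}u$ (itself a direct consequence of $\nabla_{i}$ applied to \eqref{C.2}), absorbing the Hessian-squared cross term.

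The main bookkeeping obstacle will be the derivation of \eqref{C.5} and \eqref{C.7}: one must carefully track how the Lichnerowicz Laplacian acts on $\nabla u\otimes\nabla u$, use the Ricci identity $[\nabla_{i},\nabla_{j}]\nabla_{k}u=-R^{p}{}_{kij}\nabla_{p}u$ to move derivatives, and invoke the first and second Bianchi identities to consolidate symmetrized combinations of fourth-order terms. Apart from this, everything reduces to mechanical substitution, and as a consistency check the specialization $(\alpha_{1},\beta_{1},\beta_{2})=(2,0,0)$ recovers the Ricci-harmonic formulas of Lemma \ref{lA.1}.
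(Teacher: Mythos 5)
The paper gives no proof of Lemma~\ref{lC.1}; it just writes ``See \cite{LY1},'' so there is no in-paper argument to compare against. Your sketch is a correct reconstruction along the only natural route: split the deformation $h_{ij}=-2R_{ij}+2\alpha_{1}\nabla_{i}u\nabla_{j}u$, feed it through the standard first-variation formulas for $\Gamma$, ${\rm Ric}$, ${\rm Rm}$ (Lichnerowicz Laplacian plus contracted second Bianchi for \eqref{C.3} and \eqref{C.5}), and handle the potential quantities directly with the Bochner formula and the Ricci identity. I spot-checked the pieces: your auxiliary identity $\Box\nabla_{i}u=-R_{ik}\nabla^{k}u+\beta_{2}\nabla_{i}u+2\beta_{1}\nabla^{k}u\nabla_{i}\nabla_{k}u$ is correct and, via the Leibniz rule $\Box(AB)=(\Box A)B+A(\Box B)-2\langle\nabla A,\nabla B\rangle$, reproduces \eqref{C.8} exactly; the Bochner argument gives \eqref{C.6} after the $\operatorname{Ric}(\nabla u,\nabla u)$ terms cancel between $\partial_{t}g^{ij}$ and $\Delta|\nabla u|^{2}$; and in the $h^{(2)}$-contribution to $\partial_{t}R_{ij}$ the third-derivative terms collapse to $\alpha_{1}\nabla^{k}u[\nabla_{j},\nabla_{i}]\nabla_{k}u$, which vanishes by the antisymmetry $R_{pkji}=-R_{kpji}$ contracted against the symmetric $\nabla^{p}u\nabla^{k}u$, leaving precisely the three $\alpha_{1}$-terms in \eqref{C.3}. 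The one place to be careful, as you note, is tracking the Ricci-identity corrections in \eqref{C.5} and \eqref{C.7}; there the surviving curvature terms must be reassembled using $R_{ijk\ell}=R_{k\ell ij}$ and the first Bianchi identity (and for \eqref{C.7}, the $\beta_{1}R_{pijq}\nabla^{p}u\nabla^{q}u$ term arises exactly from commuting $\nabla_{k}$ past $\nabla_{i}\nabla_{j}$ in $\nabla^{k}u\nabla_{i}\nabla_{j}\nabla_{k}u$). Also, the specialization check $(\alpha_{1},\beta_{1},\beta_{2})=(2,0,0)$ against Lemma~\ref{lA.1} is a good sanity test and does succeed. As a small editorial note unrelated to your argument, the term $2\beta_{1}R_{pijq}\nabla^{o}u\nabla^{q}u$ in \eqref{C.7} has a typo; the $\nabla^{o}u$ should be $\nabla^{p}u$.
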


\begin{proof} See \cite{LY1}.
\end{proof}



\end{document}